\documentclass[11pt,reqno]{amsart}

\usepackage[T1]{fontenc}
\usepackage{lmodern}
\usepackage{amsmath,amssymb,amsthm,mathtools}
\usepackage{mathrsfs}
\usepackage{enumitem}
\usepackage{microtype}
\usepackage{tikz-cd}
\usepackage[colorlinks=true,linkcolor=blue,citecolor=blue,urlcolor=blue,pagebackref=true]{hyperref}

\newtheorem{theorem}{Theorem}[section]
\newtheorem{proposition}[theorem]{Proposition}
\newtheorem{lemma}[theorem]{Lemma}
\newtheorem{corollary}[theorem]{Corollary}
\theoremstyle{definition}
\newtheorem{definition}[theorem]{Definition}
\newtheorem{hypothesis}[theorem]{Hypothesis}
\newtheorem{example}[theorem]{Example}

\theoremstyle{remark}
\newtheorem{remark}[theorem]{Remark}

\newcommand{\kk}{\Bbbk}

\newcommand{\cC}{\mathcal C}

\newcommand{\Hom}{\operatorname{Hom}}
\newcommand{\End}{\operatorname{End}}
\newcommand{\Ext}{\operatorname{Ext}}

\newcommand{\cR}{\mathcal R}

\DeclareMathOperator{\Frac}{Frac}
\DeclareMathOperator{\Sym}{Sym}
\DeclareMathOperator{\Mat}{Mat}

\DeclareMathOperator{\Stab}{Stab}

\DeclareMathOperator{\rad}{rad}

\numberwithin{equation}{section}
\allowdisplaybreaks[2]
\hypersetup{pdfauthor={Zhi-Wei Li},pdftitle={Quiver Demazure algebras I: Residue descent, finite quotients, and PBW bases}}
\title[Quiver Demazure algebras I]{Quiver Demazure algebras I:\protect\\ Residue descent, finite quotients, and PBW bases}

\author{Zhi-Wei Li}
\address{School of Mathematics and Statistics, Jiangsu Normal University,
Xuzhou 221116, Jiangsu, P. R. China}
\email{zhiweili@jsnu.edu.cn}
\subjclass[2020]{20C08, 16G20, 16S32}
\keywords{quiver Demazure algebra, residue descent, PBW basis,
root-power quotient, nil-Hecke algebra, coinvariant algebra, Cartan matrix}
\date{}

\begin{document}
\begin{abstract}
We study quiver Demazure algebras through residue covers and
root-power quotients.  The cover determined by the zero-root
subsystem identifies the additive algebra with a finite fixed
algebra of Sauter's generalized quiver Hecke construction.
Her basis theorem then gives additive PBW in every finite Weyl
type, for arbitrary admissible residue data and uniform quivers.
A sequence-cover argument gives global multiplicative PBW in
simply-laced type.  Under a directed-path condition, root-power
quotients are finite dimensional and the formal exponential
identifies their additive and multiplicative forms.
For arrow-free quivers, we reduce mixed-residue quotients to
matrix algebras over finite skew group rings, obtain sharp
nonvanishing criteria from coinvariant geometry, and determine
their simple modules.  In a mixed type-$D_4$ family we compute
an explicit coefficient presentation, dimensions, Loewy lengths,
and Cartan matrices.  We also specify the direct and finite-cover
comparisons with Liu's algebras.

\end{abstract}
\maketitle
\clearpage
\setcounter{tocdepth}{2}
{\small\tableofcontents}
\clearpage

\section{Introduction}\label{sec:introduction}

Quiver Demazure algebras combine divided differences at zero root
residue with weighted Weyl reflections at nonzero residue.  A
finite Weyl group, a uniform residue quiver, and a coordinate law
are specified independently.  This flexibility raises two
questions: which residue data have an integral reduced-product
basis, and when do their root-power quotients admit a usable
description of their representations?

We answer the basis question in all finite Weyl types for additive
coordinates, and in simply-laced finite types for uncompleted
multiplicative coordinates.  The additive result is obtained by
identifying a canonical finite cover with a generalized quiver
Hecke algebra of Sauter and descending her PBW basis.  This places the additive basis theorem within an existing
geometric construction.  We then use the residue data to develop
explicit applications to mixed-residue root-power quotients.  For
arrow-free quivers we give a matrix description, a sharp
nonvanishing criterion, and the simple modules.  A type-$D_4$
family has explicit dimensions, radical lengths, and Cartan
matrices.

\vskip5pt
\noindent\textbf{The operator construction and PBW.}
Let $\Phi$ be a finite reduced crystallographic root system,
$Q$ its root lattice, and $W=\langle s_i:i\in I\rangle$ its
Weyl group.  Let $\mathbb I$ be an abelian group.  A uniform
residue quiver has $d(a-b)$ arrows from $a$ to $b$, for
$d:\mathbb I\setminus\{0\}\to\mathbb Z_{\ge0}$.
An admissible residue datum is a finite transitive $W$-set
$\cC$ with an equivariant map
$\rho:\cC\to\Hom_{\mathbb Z}(Q,\mathbb I)$, such that
$m_i(c):=\rho_c(\alpha_i)=0$ implies $s_ic=c$.
Injectivity of $\rho$ is not required.

For a coefficient domain $R$ with fraction field $K$ and root
coordinates $y_\alpha$, put $D_R=\bigoplus_c R e_c$ and
$D_K=\bigoplus_c K e_c$.  The QDA $\cR$ is generated inside
$D_K\rtimes W$ by $D_R$ and
\[
 \psi_i e_c=
 \begin{cases}
 y_{\alpha_i}^{-1}([s_i]-1)e_c,&m_i(c)=0,\\
 y_{-\alpha_i}^{d(m_i(c))}[s_i]e_c,&m_i(c)\ne0.
 \end{cases}
\]
The coordinate assumptions in~\eqref{rc:eq:root-data} make
$D_R$ a faithful representation.
Put $V=\mathbb C\otimes_{\mathbb Z}Q$ and
$P=\Sym_{\mathbb C}(V)$.  The standard choices are
\[
 (R,y_\alpha)=(P,x_\alpha)
 \qquad\text{and}\qquad
 (R,y_\alpha)=(\mathbb C[Q],1-X_\alpha).
\]

\begin{theorem}[PBW in the standard coordinates]\label{intro:thm:pbw}
For arbitrary admissible residue data and uniform residue quivers,
\[
 \cR=\bigoplus_{c\in\cC}\bigoplus_{w\in W}R\psi_w e_c
\]
holds for any choices of reduced words in the following cases:
additive coordinates in every finite Weyl type; uncompleted
multiplicative coordinates in finite simply-laced type; and
completed additive or multiplicative coordinates in every finite
Weyl type.  The coefficient, idempotent, mixed, quadratic, and
integral rank-two braid relations give a presentation in each case.
\end{theorem}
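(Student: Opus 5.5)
The proof splits into a spanning half and an independence half, followed by the presentation. For spanning, I would show that the right-hand side $\sum_{c,w}R\psi_we_c$ is closed under left multiplication by the generators. Closure under $D_R$ comes from the mixed relations: $\psi_i f e_c=(s_if)\psi_ie_c$ plus, when $m_i(c)=0$, a divided-difference term $\partial_i(f)e_c$, which needs $y_{\alpha_i}^{-1}(f-s_if)\in R$ for the chosen coordinates. Closure under $\psi_i$ comes from the quadratic relations, which give $\psi_i^2e_c\in D_Re_c$ (zero for $m_i(c)=0$, and $y_{\alpha_i}^{d(m)}y_{-\alpha_i}^{d(-m)}$ times a unit otherwise). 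The integral rank-two braid relations then let me rewrite $\psi_i\psi_w$ as $\psi_{s_iw}$ plus lower-length terms with $R$-coefficients when $\ell(s_iw)>\ell(w)$. By induction on length, this also shows that the span is independent of the choice of reduced words. This step uses only that the rank-two braid defects lie in $\cR$ with integral coefficients, which I take from the relations established earlier in the paper.

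For independence, I would work inside $D_K\rtimes W$. Expanding $\psi_we_c$ there gives a triangular expression $\psi_we_c=g_{w,c}[w]e_c+\sum_{v<w}(\ast)[v]e_c$ with $g_{w,c}\in K^\times$. Since $D_K\rtimes W$ is free over $D_K$ on the $[v]e_c$, this triangularity gives $R$-linear independence at once. So independence is formal, and the real content is spanning, that is, the integrality of the braid relations.

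The main obstacle is that integrality. In the additive case I would get it by passing to the canonical finite cover attached to the zero-root subsystem, which the abstract identifies with a finite fixed algebra of Sauter's generalized quiver Hecke algebra. Her PBW basis descends by taking fixed points under the finite group, because the idempotent decomposition is compatible with the cover; this proves the additive case in all finite types at once. For uncompleted multiplicative coordinates, I would use the sequence-cover argument. In simply-laced type every rank-two parabolic is of type $A_1\times A_1$ or $A_2$. The $A_2$ braid defect $\psi_i\psi_j\psi_i-\psi_j\psi_i\psi_j$ on $e_c$ is then an explicit element of $D_K$, namely a difference quotient of $(1-X_\alpha)$-powers. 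Checking that it lies in $\mathbb C[Q]$ is a finite computation over residue triples $(m_i(c),m_j(c),m_i(c)+m_j(c))$, with the zero cases handled by the Demazure relations. This is where simply-laced is needed, since the $B_2$ and $G_2$ defects need not be integral in $\mathbb C[Q]$. For the completed cases, I would pass to the formal completion, where the formal exponential identifies additive and multiplicative coordinates up to unit changes of $y_\alpha$. The additive basis then transports to both, which removes the type restriction.

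Finally, the presentation. Let $\widetilde\cR$ be the algebra abstractly presented by the listed relations. The spanning argument used only those relations, so $\widetilde\cR$ is spanned by the $\psi_we_c$ over $D_R$. The surjection $\widetilde\cR\to\cR$ maps this spanning set onto a basis, so it is an isomorphism.
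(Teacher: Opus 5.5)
Your proposal is correct. It follows the paper in three of the four cases, and takes a different route for uncompleted multiplicative coordinates.

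\textbf{Where you match the paper.}
\begin{enumerate}
\item The general criterion is Theorem~\ref{rc:thm:pbw}: spanning from the relations once the rank-two braid coefficients are integral, independence from the leading Weyl terms, and the abstract presentation mapping onto a basis.
\item The additive case via the zero-root cover $W/K_0$, Sauter's basis and fixed-point descent is Theorem~\ref{des:thm:additive-all} together with Theorem~\ref{des:thm:cover}. You use the identification with Sauter as a black box. Its real content is that on $W/K_0$ a simple reflection fixes a lift exactly when its root residue is zero, which can fail on $\cC$ itself; there is also a sign $(-1)^{h}$ at weighted crossings.
\item The completed cases via the exponential and a root-unit change are Corollary~\ref{des:cor:completed-all}.
\end{enumerate}

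\textbf{Where you differ: uncompleted multiplicative coordinates.} The paper's proof does not compute an $A_2$ defect. It proves global multiplicative PBW for every type-$A$ datum (Theorem~\ref{des:thm:multiplicative-A}): sequence covers, the KLR basis localized to Laurent polynomials, the unit change to $1-t_i/t_j$, and faithfully flat descent to $\mathbb C[Q]$. It then restricts to rank-two subsystems (Theorem~\ref{des:thm:ADE}).

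Your direct check works, but you should record its outcome. Put $\beta=\alpha_i+\alpha_j$ when $m_{ij}=3$.
\begin{enumerate}
\item If $m_i(c)$, $m_j(c)$ and $m_\beta(c)$ are all nonzero, every crossing is a weighted reflection, so only leading terms occur.
\item If exactly one of $m_i(c),m_j(c)$ vanishes, the lower terms of the two words coincide, using $s_is_j\alpha_i=\alpha_j$ or its mirror.
\item If both vanish, the Demazure braid relations apply.
\item In the remaining case $m:=m_i(c)=-m_j(c)\ne0$, the middle crossing of each word is a divided difference. Its scalar part gives
\[
 (\psi_j\psi_i\psi_j-\psi_i\psi_j\psi_i)e_c
 =\frac{f-s_\beta(f)}{y_\beta}\,e_c,\qquad
 f=y_i^{d(m)}y_{-\alpha_i}^{d(-m)}.
\]
This lies in $\mathbb C[Q]$ because $\lambda-s_\beta\lambda\in\mathbb Z\beta$. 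It reproduces the correction $e_c$ of Example~\ref{str:ex:mixed-sequence-pbw}.
\end{enumerate}
You should also state the $A_1\times A_1$ case: the crossings commute, so the defect is zero.

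\textbf{Comparison.} Your route is self-contained and explicit, and needs neither the KLR theorem nor flatness. It also shows that the only coordinate-specific input is the braid relation at a $W_{ij}$-fixed zero residue. The paper's route needs no case analysis, and it yields type-$A$ multiplicative PBW and the classification by sequence covers as results in their own right.

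\textbf{Minor slips.}
\begin{enumerate}
\item In multiplicative coordinates the zero-residue quadratic relation is $\psi_i^2e_c=-\psi_ie_c$, not $0$. This is harmless for spanning.
\item You say you will use the sequence-cover argument, but no sequence cover enters the argument you actually describe.
\item Your claim that the $B_2$ and $G_2$ defects need not be integral is not established. The paper leaves those cases open, so simple-lacing marks where the argument is known to work, not a proven failure.
\end{enumerate}
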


\begin{proof}
See Theorems~\ref{des:thm:additive-all} and~\ref{des:thm:ADE}
and Corollary~\ref{des:cor:completed-all}.
\end{proof}

For general root coordinates, Theorem~\ref{rc:thm:pbw} gives
an explicit rank-two integrality criterion.  Preservation of the
coefficient ring by the individual operators is not alone a
proof of that criterion.  In particular, our completed
multiplicative theorem does not assert global multiplicative PBW
for all non-simply-laced types.

\vskip5pt
\noindent\textbf{The cover and coefficient-generated quotients.}
Fix $c_0\in\cC$, put $H=\Stab_W(c_0)$, and let
\[
 \Psi=\{\alpha\in\Phi:\rho_{c_0}(\alpha)=0\},\qquad
 K_0=W_\Psi.
\]
We prove $K_0\triangleleft H$ and construct the canonical cover
$W/K_0\to W/H=\cC$.  Its deck group is $F=H/K_0$.
The cover has a useful exact property: a simple reflection fixes
an index if and only if its root residue is zero.  This removes
the fixed-index obstruction to matching Sauter's operators.

\begin{theorem}[Descent of the algebra and its coefficient quotients]
\label{intro:thm:descent}
Let $\widetilde{\cR}$ be the QDA on any finite free residue
cover with deck group $F$.  Pullback identifies
$\cR=\widetilde{\cR}^{F}$, and PBW holds upstairs if and
only if it holds downstairs.  If $I$ is generated by a family
in $D_R$ and $\widetilde I$ by its pullback, then
\[
 \widetilde I\cap\cR=I,\qquad
 \cR/I\cong(\widetilde{\cR}/\widetilde I)^F,
 \qquad
 (\widetilde{\cR}/\widetilde I)\rtimes F
                         \cong\Mat_{|F|}(\cR/I).
\]
The matrix isomorphism depends on choices of representatives.
\end{theorem}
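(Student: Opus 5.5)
The plan is to realize everything inside the ambient algebra $\widetilde D_K\rtimes W$ and let $F$ act there by deck transformations. Write $\pi:\widetilde\cC\to\cC$ for the cover. $F$ acts freely on $\widetilde\cC$, commutes with $W$, and has orbits the fibres of $\pi$. It therefore acts on $\widetilde D_R=\bigoplus_{\tilde c}Re_{\tilde c}$ by permuting idempotents. This action commutes with $[s_i]$ and with multiplication by root coordinates, so it extends to an action on $\widetilde D_K\rtimes W$ by algebra automorphisms. Pullback $e_c\mapsto\sum_{\pi(\tilde c)=c}e_{\tilde c}$ embeds $D_K\rtimes W$ as the $F$-fixed subalgebra, since a fixed element of $\widetilde D_K$ is constant on fibres.

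Because residues are pulled back, $m_i(\tilde c)=m_i(\pi\tilde c)$. Hence the image of $\psi_ie_c$ is $\sum_{\tilde c\mapsto c}\psi_ie_{\tilde c}$, and the generators of $\cR$ land in $\widetilde\cR^F$, giving $\cR\subseteq\widetilde\cR^F$.

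For the reverse inclusion and the PBW equivalence, I would use the spanning set. Both algebras are spanned over their coefficient rings by $\psi_w e$, because the quadratic and mixed relations let us reorder products up to lower terms. The elements $\psi_we_{\tilde c}$ are linearly independent over $K$, as their leading terms $[w]e_{\tilde c}$ are. The key step is to show that $\widetilde\cR=\bigoplus R\psi_we_{\tilde c}$ if and only if $\cR=\bigoplus R\psi_we_c$. The tool is the $F$-equivariant averaging/projection $\widetilde\cR\to\widetilde\cR^F$. Freeness makes this work without inverting $|F|$: a fixed element $\sum_{\tilde c}a_{\tilde c}$ is determined by its components on a set of fibre representatives. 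Concretely, $x\in\widetilde D_K\rtimes W$ is fixed if and only if $x=\sum_{f\in F}f(xe_{T})$, where $e_T$ is the sum of the idempotents at chosen representatives $T$, one per fibre.

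Conversely, $\widetilde\cR=\bigoplus_{f}f(\cR)\,e_{fT}$, using $\widetilde\cR e_{\tilde c}=\cR e_{\tilde c}$, which holds because the idempotents $e_{\tilde c}$ lie in $\widetilde D_R$. This identity reduces each direction of the equivalence to the other. It also gives $\cR=\widetilde\cR^F$ in the PBW-free sense: any fixed element of $\widetilde\cR$ is $\sum_f f(ye_T)$ with $y e_T\in\cR e_T$. Deducing that such an element lies in $\cR$ is the main obstacle. I would first try to handle it by showing that $\widetilde\cR$ is generated by $\cR$ together with $\widetilde D_R$ and that $\widetilde D_R=\bigoplus_f D_R e_{fT}$. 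A fixed combination then collapses to an element of $\cR$.

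For the quotient statements, let $I=\cR J\cR$ with $J\subseteq D_R$, and let $\widetilde I=\widetilde\cR\widetilde J\widetilde\cR$ with $\widetilde J$ the pullback. The decomposition $\widetilde\cR=\bigoplus_f f(\cR e_T)$-type splitting and the identity $\widetilde\cR=\cR\widetilde D_R$ give $\widetilde I=\sum_{\tilde c,\tilde c'}e_{\tilde c}\,I\,e_{\tilde c'}$. Applying the Reynolds-free fixed-point description yields $\widetilde I\cap\cR=I$. Exactness of $F$-invariants then follows for the same reason, since the free action makes the invariants a direct summand via $x\mapsto xe_T$-reconstruction. That gives $\cR/I\cong(\widetilde\cR/\widetilde I)^F$.

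Finally, for the matrix isomorphism, I would use the standard fact that for a free action with $\widetilde A=\bigoplus_{f\in F}e_{f}\widetilde A$, where $e_f=f(e_T)$ are orthogonal idempotents summing to $1$ and permuted regularly by $F$, one has $\widetilde A\rtimes F\cong\End_{\widetilde A^F}(\widetilde A)\cong\Mat_{|F|}(\widetilde A^F)$. The explicit map sends $e_{f}\,a\,e_{g}\,g'$ to the matrix unit $E_{f,\,\cdot}$ with entry determined by transporting via $f^{-1}$. The choice of $T$ is exactly the choice of representatives mentioned in the statement. Applied to $\widetilde A=\widetilde\cR/\widetilde I$, this yields the last isomorphism.
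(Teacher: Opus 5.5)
Your skeleton is sound and close to the paper's, but the step everything rests on is not proved, and you have put the main obstacle in the wrong place. The identity you need is the normal form $\widetilde{\cR}=\cR E=E\cR$ with $E=\widetilde D_R$, equivalently $\widetilde{\cR}e_{\tilde c}=\cR e_{\tilde c}$. It does not follow from the $e_{\tilde c}$ lying in $\widetilde D_R$. It also does not follow from $\widetilde{\cR}$ being generated by $\cR$ and $E$, since generation only gives sums of alternating products $a_1\epsilon_1a_2\epsilon_2\cdots$.

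The missing input is idempotent transport, $\widetilde\psi_i e_{\tilde c}=e_{s_i\tilde c}\widetilde\psi_i$, together with the fact that $R$ commutes with $E$. These let you push every fine idempotent to one end of a word, and this is exactly how the paper begins.

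Once you have this, your ``main obstacle'' is one line. If $x\in\widetilde{\cR}^F$, then $xe_T\in\widetilde{\cR}e_T=\cR e_T$, say $xe_T=ye_T$ with $y\in\cR$. Hence $x=\sum_{f\in F}f(ye_T)=y\sum_{f\in F}e_{fT}=y$.

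For the ideal statement you need $\widetilde I e_T=Ie_T$, i.e.\ $\widetilde I=IE$. This uses $E\cR=\cR E$ and the fact that generators in $D_R$ commute with $E$; that is where the hypothesis on the generators enters. Your formula $\widetilde I=\sum e_{\tilde c}Ie_{\tilde c'}$ is not enough on its own.

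Also drop the claim that both algebras are spanned over their coefficient rings by the $\psi_w e$ using the quadratic and mixed relations. That spanning is the PBW property itself. It needs rank-two integrality, because braid moves produce lower terms whose coefficients are only known to lie in $K$. The equality $\cR=\widetilde{\cR}^F$ is asserted unconditionally, so nothing may rest on that claim.

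With these repairs, your argument differs from the paper's in one respect: you replace the Reynolds operator $|F|^{-1}\sum_{f}f$ by reconstruction from fibre representatives, $x=\sum_{f}f(xe_T)$. The paper uses this same device in its matrix corollary (its $q_g$ and $b_g(a)$). It has the mild advantage of not dividing by $|F|$, though the paper works over $\mathbb C$ anyway.

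Your surjectivity argument is fine. For a lift $x$ of an $F$-fixed class, put $y=\sum_f f(xe_T)$; then $y\in\widetilde{\cR}^F=\cR$ and $y-x=\sum_f (f(x)-x)e_{fT}\in\widetilde I$. Your matrix isomorphism matches the paper's corollary.
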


\begin{proof}
This is Theorem~\ref{des:thm:cover} and
Corollary~\ref{des:cor:matrix}.
\end{proof}

In type $A$, the cover can be classified by sequences: every
admissible datum is $(W\mathbf a)/T$, where $T$ is a finite
group of simultaneous translations preserving the multiset of
$\mathbf a$.  It is abelian and its order divides the sequence
length.  This gives an independent sequence proof of additive PBW
and, through a faithfully flat Laurent extension, the global
multiplicative theorem.  Rank-two reduction then gives the latter
in types $D$ and $E$.

For a distinguished simple root $\alpha_1$, consider
\[
 A_\star^\Lambda=\cR_\star/
 \langle y_1^{\Lambda_{m_1(c)}}e_c:c\in\cC\rangle,
 \qquad \star\in\{\mathrm{add},\mathrm{mult}\}.
\]
Under the directed-path condition of
Proposition~\ref{des:prop:root-nilpotence}, these quotients are
finite dimensional for every uniform quiver, with explicit
nilpotence and dimension bounds.  The condition holds at any
vertex of a connected simply-laced diagram, and at a short-root
vertex in the other irreducible finite types.  The formal
exponential identifies the corresponding uncompleted quotients.
The proof uses central completion of a finite invariant-ring
module, justified before the quotient comparison; it does not
assume global multiplicative PBW in the remaining types.

\vskip5pt
\noindent\textbf{A representation-theoretic application.}
Suppose now that the residue quiver has no arrows.  Choose
$\Psi^+$ and put $\Gamma=\{h\in H:h\Psi^+=\Psi^+\}$.
Then $H=K_0\rtimes\Gamma$.  Let $S_\Psi=P^{K_0}$ and let
$J_\Lambda\subseteq S_\Psi$ be generated by the matrix entries
of multiplication by
$x_\beta^{\Lambda_{\rho_{c_0}(\beta)}}$ on the free
$S_\Psi$-module $P$, for $\beta\in W\alpha_1$.

\begin{theorem}[Mixed-residue quotient theorem]
\label{intro:thm:mixed}
For arrow-free data, with $N=|W|/|\Gamma|$, one has
\[
 A_{\mathrm{add}}^\Lambda\cong
 \Mat_N\bigl((S_\Psi/J_\Lambda)\rtimes\Gamma\bigr).
\]
This quotient is nonzero if and only if, for every
$\beta\in W\alpha_1$,
\[
 \Lambda_{\rho_{c_0}(\beta)}
 \ge 1+\#\{\gamma\in\Psi^+:
                       \langle\beta,\gamma^\vee\rangle\ne0\}.
\]
Under the path condition, its simple modules are indexed by
$\operatorname{Irr}(\Gamma)$ and have dimensions $N\dim\chi$.
Under that path condition the same conclusions hold for the
multiplicative root-power quotient.
\end{theorem}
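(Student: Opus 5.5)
We reduce to the canonical cover $W/K_0\to W/H=\cC$ of Theorem~\ref{intro:thm:descent}, but we take the upstairs quotient by $K_0$ alone and treat the deck group separately. Upstairs every index $\tilde c\in W/K_0$ has stabilizer $K_0=W_\Psi$, a reflection subgroup. Since the quiver has no arrows, every $\psi_i e_c$ with $m_i(c)\neq0$ equals $[s_i]e_c$. The upstairs algebra $\widetilde{\cR}$ is therefore generated by the idempotents $e_{\tilde c}$, by $P$, by the plain transports $[s_i]$ between cosets with nonzero residue, and by the Demazure operators of $\Psi$ inside each coset.

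\textbf{Step 1: a nil-Hecke matrix description upstairs.} We identify the subalgebra $e_{\tilde c_0}\widetilde{\cR}e_{\tilde c_0}$ with the nil-Hecke algebra $\mathrm{NH}_\Psi$ of the reflection group $K_0$ acting on $P$. Because $\alpha_i\notin\Psi$ forces a pure transport, and transports compose freely between the $|W/K_0|$ cosets, we expect
\[
\widetilde{\cR}\cong \Mat_{|W/K_0|}(\mathrm{NH}_\Psi),\qquad \mathrm{NH}_\Psi\cong\End_{S_\Psi}(P)\cong\Mat_{|K_0|}(S_\Psi).
\]
The last isomorphism is the classical fact that the nil-Hecke algebra is the endomorphism ring of $P$ over $P^{K_0}$, with $P$ free of rank $|K_0|$ by Chevalley. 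PBW (Theorem~\ref{intro:thm:pbw}) gives the rank count needed to compare the two sides.

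\textbf{Step 2: passing to the quotient.} The defining ideal is generated by the elements $y_1^{\Lambda}e_c$. After conjugating by transports, these become multiplication by $x_\beta^{\Lambda_{\rho_{c_0}(\beta)}}$ on $P$ for all $\beta\in W\alpha_1$. A two-sided ideal of $\Mat(S_\Psi)$ generated by matrices is $\Mat(J)$, where $J$ is the ideal generated by their entries. This gives $\widetilde{\cR}/\widetilde I\cong\Mat_{|W|}(S_\Psi/J_\Lambda)$.

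\textbf{Step 3: descending by the deck group.} We take invariants under $F=H/K_0\cong\Gamma$. This $\Gamma$ acts on $S_\Psi$ and permutes the cosets freely. Using Corollary~\ref{des:cor:matrix} in its fixed-algebra form, the invariants of a matrix algebra over $S/J$ under such a free action become $\Mat_N((S_\Psi/J_\Lambda)\rtimes\Gamma)$, with $N=|W|/|\Gamma|$. We expect this bookkeeping to be routine once the $\Gamma$-action is made equivariant with respect to the choice of $\Psi^+$, which is exactly why $\Gamma$ is defined as the stabilizer of $\Psi^+$.

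\textbf{Step 4: nonvanishing, the main obstacle.} The quotient is nonzero iff $J_\Lambda\ne S_\Psi$. Since everything is graded, this holds iff $J_\Lambda\subseteq S_\Psi^+$, i.e.\ iff multiplication by each $x_\beta^{\Lambda}$ on the coinvariant algebra $P/(S_\Psi^+)P$ is zero. We therefore need the nilpotency index of $x_\beta$ in the coinvariant algebra of $K_0$. The plan is to decompose $\beta=\beta_\parallel+\beta_\perp$ relative to $\mathrm{span}\,\Psi$. The part $\beta_\perp$ is $K_0$-invariant, so it vanishes in the coinvariants, and $x_\beta\equiv x_{\beta_\parallel}$. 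For a vector $v$ in the reflection representation, $x_v^k\neq0$ in the coinvariants exactly for $k\le\#\{\gamma\in\Psi^+:\langle v,\gamma^\vee\rangle\neq0\}$. This follows from pairing with the top class $\prod_{\gamma}x_\gamma$ via the longest Demazure operator, with generic-to-special argument, or from the cohomology of the partial flag variety $G/P_v$. This yields precisely the stated inequality. It is the step needing real geometry, and we would first try the BGG pairing $\partial_{w_0}(x_v^{k}f)$.

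\textbf{Step 5: simples and the multiplicative case.} Under the path condition, $S_\Psi/J_\Lambda$ is finite-dimensional and local with residue field $\mathbb C$. Its radical generates a nilpotent ideal of the skew ring, so the simples of $(S_\Psi/J_\Lambda)\rtimes\Gamma$ are those of $\mathbb C\Gamma$. Morita invariance then gives the dimensions $N\dim\chi$. Finally, the formal exponential comparison of additive and multiplicative root-power quotients, established under the path condition, transports all conclusions to the multiplicative quotient.
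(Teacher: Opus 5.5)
Steps 1, 2, 4 and 5 are essentially the paper's argument. The one difference is that you compute the corner on the cover $W/K_0$, where it is just $NH_\Psi$. The paper computes it downstairs at $c_0$, obtaining $NH_\Psi\rtimes\Gamma$ from the splitting $H=K_0\rtimes\Gamma$, and then untwists with a $\Gamma$-stable graded lift of $C_\Psi$. In Step 4, use the partial flag variety as the paper does; a generic-to-special argument alone cannot give the exact index at non-regular $\beta$.

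The real gap is Step 3, which asserts its conclusion rather than proving it. First, Corollary~\ref{des:cor:matrix} only gives $\Mat_{|\Gamma|}(A^\Lambda_{\mathrm{add}})\cong(\widetilde{\cR}/\widetilde I)\rtimes F$, and you cannot cancel matrix sizes. You need $A^\Lambda_{\mathrm{add}}\cong(\widetilde{\cR}/\widetilde I)^F$ from Theorem~\ref{des:thm:cover}, followed by an actual computation of these invariants. Second, that computation depends on what the deck action on $\Mat_{|W|}(S_\Psi/J_\Lambda)$ is, and ``a free action'' does not determine the answer. If $F$ merely permuted matrix indices, the invariants would be $\Mat_N\bigl((S_\Psi/J_\Lambda)[\Gamma]\bigr)$ with the untwisted group algebra. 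That is a different algebra: already before the quotient its center is not a domain, while the center of $S_\Psi\rtimes\Gamma$ is $S_\Psi^\Gamma$. In your coordinates (chosen transports and an $S_\Psi$-basis of $P$), the action permutes coset indices, acts on entries through $\Gamma$, and carries an inner twist. So the bookkeeping is not routine as stated, and the reason you give ($\Gamma$ stabilizing $\Psi^+$) is not the operative one.

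What fills the gap is an endomorphism-ring formulation. Put $M=\bigoplus_{\tilde c}Pe_{\tilde c}$ and $Z=\{z_f=\sum_{wK_0}w(f)e_{wK_0}:f\in P^{K_0}\}\cong S_\Psi$. This $Z$ is central; note that it acts on $Pe_{wK_0}$ through $w$, because that index has stabilizer $wK_0w^{-1}$, not $K_0$. Your Steps 1--2 then say that $\widetilde{\cR}=\End_Z(M)$, that $M$ is $Z$-free of rank $|W|$, and that $\widetilde{\cR}/\widetilde I=\End_{Z/J}(M/JM)$. Here $J\subseteq Z$ is the $F$-stable ideal corresponding to $J_\Lambda$.

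Now consider the deck transformation $wK_0\mapsto wh^{-1}K_0$ for $h\in H$. It acts on $\widetilde{\cR}$ by conjugation with $fe_{wK_0}\mapsto fe_{wh^{-1}K_0}$ on $M$, and it sends $z_f$ to $z_{h(f)}$. So it is $Z$-semilinear through $F=H/K_0$ acting on $P^{K_0}$, and it permutes the summands $Pe_{\tilde c}$ freely. Hence $M\cong\bigoplus_{\text{orbits}}(Z\rtimes F)\otimes_Z Pe_{\tilde c}$ is free of rank $N=|\cC|\,|K_0|$ over $Z\rtimes F$, and
\[
 (\widetilde{\cR}/\widetilde I)^F=\End_{(Z/J)\rtimes F}(M/JM)\cong\Mat_N\bigl(((Z/J)\rtimes F)^{\mathrm{op}}\bigr)\cong\Mat_N\bigl((S_\Psi/J_\Lambda)\rtimes\Gamma\bigr).
\]
The last isomorphism holds because a skew group ring over a commutative ring is isomorphic to its opposite, and $\Gamma\to H/K_0$ is an isomorphism compatible with the actions on $P^{K_0}$.

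With this, your route is a valid alternative. It needs no choice of $\Psi^+$, no splitting of $H$, and no $\Gamma$-stable lift; those are needed only for the paper's downstairs corner computation. The cost is reliance on the descent theorem (characteristic-zero averaging) and on freeness over the skew group ring. The paper's route is direct and never leaves $\cC$.
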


\begin{proof}
See Theorem~\ref{mix:thm:quotient} and
Corollary~\ref{des:cor:cyclotomic-exp}.
\end{proof}

For the type-$D_n$ parity residues
$\rho_J(v)=\sum_{j\in J}v_j\pmod2$, with $|J|=k$ and
$2\le k\le n-2$, the criterion becomes
\[
 \Lambda_{\overline0}\ge\max\{4k-6,4(n-k)-6\},
 \qquad \Lambda_{\overline1}\ge2n-3.
\]
The group $\Gamma$ has order two, or four when $2k=n$.
For $D_4$ and $k=2$, put $p=\lfloor\Lambda_{\overline0}/2\rfloor$.
When $p\ge1$ and $\Lambda_{\overline1}\ge8p-3$, the quotient is
\[
 \Mat_{48}\left(
 \bigl(\mathbb C[z_1,z_2,z_3,z_4]/(z_1^p,z_2^p,z_3^p,z_4^p)\bigr)
                         \rtimes(\mathbb Z/2\mathbb Z)^2\right),
\]
with the group acting by double transpositions.  Its dimension is
$9216p^4$, its Loewy length is $4p-3$, and its Cartan matrix has
diagonal entries $(p^4+3p^2)/4$ and off-diagonal entries
$(p^4-p^2)/4$.  Theorem~\ref{mix:thm:D4} also gives an explicit
coefficient-ideal presentation for all exponents.

\vskip5pt
\noindent\textbf{Relation to earlier work.}
The divided-difference and nil-Hecke results are classical
\cite{BGG,Demazure,KostantKumar}; the sequence PBW input is the
quiver Hecke theorem of Khovanov--Lauda and Rouquier
\cite{KL,Rouquier}.  Rostam's fixed-point PBW and cyclotomic
descent results~\cite{RostamFixed} precede the method used here.
Our construction specifies the cover attached to an arbitrary
root-residue datum and its coefficient ideals.  Theorem~\ref{des:thm:additive-all}
matches that cover explicitly with Sauter's centralizer, index
set, and root weights~\cite{Sauter}, and uses her PBW theorem.
The mixed quotient calculation combines classical nil-Hecke
endomorphisms and coinvariant geometry with this residue data.

When there is a compatible weight-torus realization, the additive
algebra is Liu's finite-root algebra $B^\Omega$
\cite{LiuKZ}.  We retain the attribution of its PBW, Frobenius,
and affine-corner results.  Our mixed $D_4$ orbit has no direct
weight-torus realization, but it is a fourfold quotient of a
twelve-point weight-torus orbit.  Thus even this example is
covered by finite descent from Liu's theory; the direct
realization obstruction is not a separation from that theory.

The Kong--Li root-coordinate and Brundan--Kleshchev--Rouquier
strand comparisons are recovered results
\cite{BK,KongLiD,KongLiND,KongLiTypeA}.  We place the explicit
maps and their hypotheses in an appendix.  They require the
specified residue quiver and spectral parameters.  Neither our
general PBW theorem nor the same-data exponential comparison
implies an unrestricted isomorphism with a fixed affine Hecke
target.  The zero-residue restriction theorem uses Huang's
regular coinvariant representation~\cite{HuangCoinvariants}.
Questions concerning centers, matrix orders, fibers, and root-power
invariant theory lie beyond the scope of the present paper.

\vskip5pt
\noindent\textbf{Organization and notation.}
Sections~\ref{rc:sec:operators}--\ref{str:sec:pbw-families}
give the operators, the general PBW criterion, and its elementary
inputs.  Section~\ref{des:sec:descent} proves descent, the Sauter
comparison, global multiplicative PBW in simply-laced type, and
finite root-power comparison.  Section~\ref{app:sec:mixed-quotients}
contains the mixed-residue quotient calculations.
Sections~\ref{liu:sec:comparison} and~\ref{sec:zeroorbit} treat
the Liu comparison and zero-residue restriction.
Appendix~\ref{rc:sec:specializations} records the recovered
Hecke specializations; Appendix~\ref{app:proof-root-comparison}
supplies the inverse-map details.

The root index set is $I$ and the residue group is $\mathbb I$.
For $\lambda\in Q$, write $x_\lambda=1\otimes\lambda\in V$
for its degree-one polynomial generator and $X_\lambda$ for
its group-algebra character, so
$X_\lambda X_\mu=X_{\lambda+\mu}$.  Put $Y_\lambda=1-X_\lambda$.
All statements about a zero algebra allow some exponent to be
zero; an exponent zero kills its residue idempotent.

\section{Quiver Demazure algebras}
\label{rc:sec:operators}

We define quiver Demazure algebras as operator algebras and prove
that their coefficient representation is faithful. The definition
keeps the finite Weyl group, the residue quiver, and the root
coordinates separate. A PBW presentation will require the additional
condition stated in Section~\ref{rc:sec:pbw}.

Fix the root data of the introduction. In particular,
$Q=\mathbb Z\Phi=\bigoplus_{i\in I}\mathbb Z\alpha_i$.
For $\alpha\in\Phi$, let $\alpha^\vee$ be its coroot and use
the reflection convention
\begin{equation*}\label{rc:eq:root-reflections}
 s_\alpha(\lambda)
 =\lambda-\langle\lambda,\alpha^\vee\rangle\alpha
 \quad(\lambda\in Q),\qquad s_i=s_{\alpha_i}.
\end{equation*}
The pairing is the root--coroot pairing; on the real root space
it is $\langle\lambda,\alpha^\vee\rangle
=2(\lambda,\alpha)/(\alpha,\alpha)$ for a $W$-invariant
inner product. These integers make $Q$ stable under $W$.
Write $m_{ij}$ for the order of $s_i s_j$.
The coefficient field in this section is an arbitrary field
$\kk$ subject to the hypotheses below. Let $\mathbb I$ be an
abelian group.
A uniform loop-free residue quiver is specified by a function
\[
 d\colon \mathbb I\setminus\{0\}\longrightarrow\mathbb Z_{\geq0};
\]
there are $d(a-b)$ arrows from $a$ to $b$, for $a\ne b$.
We denote this quiver by $\Gamma_d$. The function $d$ is not
assumed symmetric. Translation by any element of $\mathbb I$
is a quiver automorphism. Conversely, translation invariance of the
arrow multiplicities gives this form by taking $d(r)$ to be the
number of arrows from $r$ to $0$ for $r\ne0$. Thus the present construction concerns
uniform quivers; it does not assert that every finite Dynkin
quiver can be obtained in this form. The Dynkin diagram of
$\Phi$ and the residue quiver $\Gamma_d$ are independent.

\begin{example}[Uniform residue quivers]\label{rc:ex:uniform-quivers}
The convention is that an arrow $a\to a+h$ is counted by $d(-h)$.
The following examples make both uniformity and this sign convention
explicit.
\begin{enumerate}[label=\textup{(\roman*)},leftmargin=*]
\item \emph{The oriented integer line.}
Take $\mathbb I=\mathbb Z$, set $d(-1)=1$, and set $d(r)=0$
for every other nonzero integer $r$. There is exactly one arrow
$a\to a+1$ at every vertex. The underlying graph is the infinite
line of type $A_\infty$:
\[
\begin{tikzcd}[column sep=1.6em]
\cdots \arrow[r] & -2 \arrow[r] & -1 \arrow[r] & 0 \arrow[r] & 1 \arrow[r] & 2 \arrow[r] & \cdots
\end{tikzcd}
\]
Reversing all arrows instead gives $d(1)=1$.

\item \emph{An oriented cycle.}
Take $\mathbb I=\mathbb Z/N\mathbb Z$ with $N\geq3$, and put
$d(-\overline1)=1$ and $d(r)=0$ otherwise. The arrows are
$\overline a\to\overline{a+1}$, including
$\overline{N-1}\to\overline0$. The underlying graph is the
cycle of affine type $\widetilde A_{N-1}$:
\[
\begin{tikzcd}[column sep=1.8em]
\overline0 \arrow[r] & \overline1 \arrow[r] & \cdots \arrow[r] & \overline{N-2} \arrow[r] & \overline{N-1} \arrow[llll,bend left=20]
\end{tikzcd}
\]
For $N=2$ the same rule gives one arrow in each direction between
the two vertices, since $-\overline1=\overline1$; it does not
give a single oriented edge.

\item \emph{An enlarged integer-line quiver.}
Take $\mathbb I=\mathbb Z$ with $d(-1)=d(-2)=1$ and all
other values zero. Each vertex has arrows $a\to a+1$ and
$a\to a+2$. Thus $a,a+1,a+2$ span a triangle in the underlying
undirected graph. A local picture is
\[
\begin{tikzcd}[column sep=2.2em]
a \arrow[r] \arrow[rr,bend left=18] & a+1 \arrow[r] & a+2
\end{tikzcd}
\]
This quiver is uniform but its underlying graph is not a finite or
affine Dynkin diagram. It is used in
Example~\ref{str:ex:mixed-sequence-pbw}.

\item \emph{Parallel arrows and arrows in both directions.}
Take $\mathbb I=\mathbb Z$ with $d(-1)=2$, $d(1)=1$, and
all other values zero. Between $a$ and $a+1$ there are two
arrows $a\to a+1$ and one arrow $a+1\to a$:
\[
\begin{tikzcd}[column sep=3.2em]
a \arrow[r,shift left=.7ex] \arrow[r,shift right=.7ex] & a+1 \arrow[l]
\end{tikzcd}
\]
Uniformity therefore allows multiple arrows and does not require
either $d(r)=d(-r)$ or an acyclic orientation.

\item \emph{Disconnected and arrow-free quivers.}
For $\mathbb I=\mathbb Z$, the choice $d(-2)=1$ and all other
values zero gives two disjoint oriented lines, on the even and odd
integers:
\[
\begin{tikzcd}[column sep=1.4em]
\cdots \arrow[r] & -4 \arrow[r] & -2 \arrow[r] & 0 \arrow[r] & 2 \arrow[r] & 4 \arrow[r] & \cdots
\end{tikzcd}\]
\[\begin{tikzcd}[column sep=1.4em]
\cdots \arrow[r] & -3 \arrow[r] & -1 \arrow[r] & 1 \arrow[r] & 3 \arrow[r] & 5 \arrow[r] & \cdots
\end{tikzcd}
\]
The choice $d=0$ gives no arrows, for any abelian residue group
$\mathbb I$; its vertices are isolated points, for example
\[
\cdots \qquad -2 \qquad -1 \qquad 0 \qquad 1 \qquad 2 \qquad \cdots
\]
\end{enumerate}
In every case translation preserves all arrow multiplicities. More
generally, choosing a finite set
$H\subseteq\mathbb I\setminus\{0\}$ and positive integers
$r_h$ gives a uniform quiver with $r_h$ arrows $a\to a+h$ by
setting $d(-h)=r_h$ and $d=0$ elsewhere. Finite support is
useful for examples but is not required in the definition.
\end{example}

\begin{remark}[Uniformity and Dynkin diagrams]\label{rc:rem:uniform-dynkin}
Uniformity is a condition on the arrow multiplicities on the whole
residue group. In particular, all vertices have the same incoming
and outgoing arrow multiplicities. An oriented finite path with at
least three vertices, or a finite Dynkin diagram of type $D$ or
$E$, has vertices of different degrees and cannot be the whole
underlying graph of a uniform residue quiver. Finite paths do occur
as full subquivers of the integer-line example. The term
``uniform'' does not describe the Dynkin diagram of the finite Weyl
group: any permitted Weyl type can be chosen independently of the
examples above, subject to the coefficient and residue hypotheses.
\end{remark}

\begin{definition}\label{rc:def:residues}
An admissible residue datum consists of a nonempty finite transitive
$W$-set $\cC$ and a $W$-equivariant map
\[
 \rho\colon \cC\longrightarrow\Hom_{\mathbb Z}(Q,\mathbb I)
\]
such that, with $m_\alpha(c)=\rho_c(\alpha)$ and
$m_i(c)=m_{\alpha_i}(c)$,
\begin{equation}\label{rc:eq:admissible}
 m_i(c)=0\quad\Longrightarrow\quad s_i c=c.
\end{equation}
Equivariance means
$m_\alpha(wc)=m_{w^{-1}\alpha}(c)$.
\end{definition}

An orbit in $\Hom_{\mathbb Z}(Q,\mathbb I)$ with its inclusion is
admissible: a homomorphism vanishing on $\alpha_i$ is fixed by
$s_i$. The formulation above also keeps track of residue sequences
whose restriction to the root lattice records only differences.

\begin{example}[An admissible noninjective residue datum]
\label{rc:ex:admissible-noninjective}
Let $\Phi$ be of type $A_3$, so that $
 W=\mathfrak S_4$, $Q=\bigoplus_{i=1}^3\mathbb Z\alpha_i$ and $\alpha_i=\varepsilon_i-\varepsilon_{i+1}$.
Take $\mathbb I=\mathbb Z/2\mathbb Z$
and let $\cC$ be the $\mathfrak S_4$-orbit of $c_0=(\overline0,\overline0,\overline1,\overline1)$.
Thus
\[
\cC=
\left\{
(\overline0,\overline0,\overline1,\overline1),(\overline0,\overline1,\overline0,\overline1),(\overline0,\overline1,\overline1,\overline0),
(\overline1,\overline0,\overline0,\overline1),(\overline1,\overline0,\overline1,\overline0),
(\overline1,\overline1,\overline0,\overline0)
\right\},
\]
where $W$ acts by permuting the four positions.

For $c=(a_1,a_2,a_3,a_4)\in\cC$,
define
\[
 \rho_c(\varepsilon_i-\varepsilon_j)=a_i-a_j.
\]
Since the roots of $A_3$ are the differences
$\varepsilon_i-\varepsilon_j$, this determines a homomorphism
\[
 \rho_c\colon Q\longrightarrow\mathbb Z/2\mathbb Z.
\]
In particular,
\[
 m_i(c)=m_{\alpha_i}(c)
       =\rho_c(\alpha_i)
       =a_i-a_{i+1},
 \qquad 1\le i\le3.
\]
More generally, $
 m_{\varepsilon_i-\varepsilon_j}(c)=a_i-a_j$.

For the six elements of $\cC$, the simple-root residues are
\[
\begin{array}{c|c}
c &(m_1(c),m_2(c),m_3(c))\\
\hline
(\overline0,\overline0,\overline1,\overline1)
 &(\overline0,\overline1,\overline0)\\
(\overline0,\overline1,\overline0,\overline1)
 &(\overline1,\overline1,\overline1)\\
(\overline0,\overline1,\overline1,\overline0)
 &(\overline1,\overline0,\overline1)\\
(\overline1,\overline0,\overline0,\overline1)
 &(\overline1,\overline0,\overline1)\\
(\overline1,\overline0,\overline1,\overline0)
 &(\overline1,\overline1,\overline1)\\
(\overline1,\overline1,\overline0,\overline0)
 &(\overline0,\overline1,\overline0)
\end{array}
\]

The map
\[
 \rho\colon \cC\longrightarrow
 \Hom_{\mathbb Z}(Q,\mathbb Z/2\mathbb Z)
\]
is $W$-equivariant.  Indeed,
\[
 m_\alpha(wc)=m_{w^{-1}\alpha}(c)
\]
for every $w\in W$, $\alpha\in\Phi$, and $c\in\cC$.

The admissibility condition can also be seen directly.  If
\[
 m_i(c)=a_i-a_{i+1}=0,
\]
then $a_i=a_{i+1}$.  Hence the simple reflection $s_i$, which
interchanges the $i$th and $(i+1)$st entries, fixes $c$.
Thus
\[
 m_i(c)=0\quad\Longrightarrow\quad s_i c=c.
\]

For example, at $
 c_0=(\overline0,\overline0,\overline1,\overline1)$
one has $
 (m_1(c_0),m_2(c_0),m_3(c_0))
   =(\overline0,\overline1,\overline0)$.
Hence
\[
 s_1c_0=c_0=s_3c_0,
 \qquad
 s_2c_0\ne c_0.
\]
Thus a simple-root residue may vanish even though
$\cC$ has several elements.  Vanishing of $m_i(c)$ only forces
the corresponding simple reflection $s_i$ to fix $c$; it does
not force the whole Weyl group to fix $c$.

Finally, $\rho$ is not injective.  Translation of all four
entries by $\overline1$ does not change any difference:
\[
 \rho_{c+\overline1}=\rho_c.
\]
Explicitly,
\[
\begin{aligned}
\rho_{(\overline0,\overline0,\overline1,\overline1)}
 &=\rho_{(\overline1,\overline1,\overline0,\overline0)},\\
\rho_{(\overline0,\overline1,\overline0,\overline1)}
 &=\rho_{(\overline1,\overline0,\overline1,\overline0)},\\
\rho_{(\overline0,\overline1,\overline1,\overline0)}
 &=\rho_{(\overline1,\overline0,\overline0,\overline1)}.
\end{aligned}
\]
Hence $\rho$ is two-to-one in this example.

It is useful to distinguish this from the zero-residue case.
If $\rho_c=0$ for some $c\in\cC$, then $m_i(c)=0$ for every $i\in I$.
By admissibility, every simple reflection fixes $c$: $s_i c=c$ for every $i\in I$.
Since the simple reflections generate $W$, one has $wc=c$ for
all $w\in W$.  Because $\cC$ is transitive, it follows that $\cC=Wc=\{c\}$.
Thus the zero-residue datum is necessarily a singleton.  This is
different from the situation above, where only some simple-root
residues vanish and $\cC$ still has several elements.
\end{example}

Let $R$ be a commutative $\kk$-domain on which $W$ acts faithfully,
and set $K=\Frac(R)$. Choose nonzero elements $y_\alpha\in R$ for
$\alpha\in\Phi$ satisfying
\begin{equation}\label{rc:eq:root-data}
 \begin{aligned}
 &w(y_\alpha)=y_{w\alpha},\qquad
 \nabla_i(f)=\frac{s_i(f)-f}{y_{\alpha_i}},\\
 &\nabla_i(R)\subseteq R,\qquad
 \kappa_i:=\frac1{y_{\alpha_i}}+\frac1{y_{-\alpha_i}}\in R.
 \end{aligned}
\end{equation}
The notation $y_i=y_{\alpha_i}$ will be used. No addition law for
the coordinates is assumed. The coefficient condition in
\eqref{rc:eq:root-data} is part of the definition of the permitted data.
In the additive specialization $y_\alpha=x_\alpha$; in the
multiplicative specialization $y_\alpha=Y_\alpha=1-X_\alpha$.
The symbol $x_\alpha$ always denotes an additive linear
coordinate, whereas $X_\lambda$ denotes a group-algebra character.

Put $D_R=\bigoplus_{c\in\cC}R e_c$ and
$D_K=\bigoplus_{c\in\cC}K e_c$, with componentwise multiplication, where the $e_c$ are pairwise orthogonal central idempotents summing to $1$. We identify $R$ and $K$ with their diagonal images. Give these rings the diagonal action
$w(fe_c)=w(f)e_{wc}$. In the skew group algebra
$D_K\rtimes W$, the convention is
\begin{equation}\label{rc:eq:skew}
 [w]a=w(a)[w],\qquad [w][v]=[wv].
\end{equation}

\begin{definition}[Quiver Demazure algebra]\label{rc:def:root-algebra}
For $i\in I$ define $\psi_i$ by
\begin{equation}\label{rc:eq:psi}
 \psi_i e_c=
 \begin{cases}
 y_i^{-1}([s_i]-1)e_c,&m_i(c)=0,\\
 y_{-\alpha_i}^{\,d(m_i(c))}[s_i]e_c,&m_i(c)\ne0.
 \end{cases}
\end{equation}
The \emph{quiver Demazure algebra} associated with the chosen
coefficient ring, root coordinates, residue quiver, and admissible
residue datum is the subalgebra $\cR\subseteq D_K\rtimes W$
generated by $D_R$ and all $\psi_i$.
All rational coefficients in \eqref{rc:eq:psi} are on the left of
the group symbol.
\end{definition}

The name records the two ingredients of the crossing operators:
the residue quiver determines their weights, and the zero-residue
operators are of Demazure type. The root coordinates remain part
of the defining data. The additive strand specialization recovers
the usual polynomial realization of quiver Hecke algebras, while
the multiplicative singleton zero residue gives the Demazure
algebra used below; see Section~\ref{rc:sec:specializations}.
The term applies to the operator algebra without a PBW assumption.
The presentation theorem will use the explicit rank-two condition
in Hypothesis~\ref{rc:hyp:integral}.

\begin{proposition}\label{rc:prop:basic}
The natural action of $\cR$ on $D_R$ is well defined and faithful.
For $f\in R$ one has
\begin{align}
 \psi_i e_c&=e_{s_i c}\psi_i,\label{rc:eq:transport}\\
 \psi_i f e_c&=
 \bigl(s_i(f)\psi_i+\delta_{m_i(c),0}\nabla_i(f)\bigr)e_c,
 \label{rc:eq:mixed}\\
 \psi_i^2 e_c&=
 \begin{cases}
 -\kappa_i\psi_i e_c,&m_i(c)=0,\\
 y_i^{d(m_i(c))}y_{-\alpha_i}^{d(-m_i(c))}e_c,&m_i(c)\ne0.
 \end{cases}\label{rc:eq:quadratic}
\end{align}
\end{proposition}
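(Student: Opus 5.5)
The plan is to work entirely inside the skew group algebra $D_K\rtimes W$, which acts on $D_K$ by letting $[w]$ act through $w$ and $D_K$ act by multiplication. This action is faithful: $D_K$ is a direct product of copies of the field $K$, and $W$ acts faithfully on $D_K$ because it acts faithfully on $R$. By Dedekind--Artin independence of distinct automorphisms, each component action becomes injective. Concretely, if $\sum_w a_w w(g)=0$ for all $g\in D_K$, then multiplying by idempotents $e_c$ reduces this to finitely many relations $\sum_w a_{w,c}\,w(f)=0$ over the field $K$. These force $a_{w,c}=0$.

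\textbf{Faithfulness on $D_R$.} Any element of $D_K\rtimes W$ acting as zero on $D_R$ also acts as zero on $D_K=\Frac$-localization of $D_R$. To see this, write $g=f^{-1}h$ with $f\in R$ a $W$-invariant common denominator, for example the product of the $W$-conjugates. Since $w(f^{-1}h)=f^{-1}w(h)$, the action is $K$-semilinear in this sense. So faithfulness on $D_R$ follows from faithfulness on $D_K$.

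\textbf{Well-definedness.} It suffices to show each generator preserves $D_R$. This is clear for $D_R$. For $\psi_i$ applied to $ge_c$, write $g=\sum_{c'} g_{c'}e_{c'}$ with $g_{c'}\in R$. Then $\psi_ie_c$ acts on $ge_c$ in one of two ways:
\begin{itemize}
\item if $m_i(c)=0$, then $s_ic=c$ by admissibility, so the result is $y_i^{-1}(s_i(g_c)-g_c)e_c=\nabla_i(g_c)e_c\in D_R$;
\item if $m_i(c)\neq0$, the result is $y_{-\alpha_i}^{d}s_i(g_c)e_{s_ic}\in D_R$.
\end{itemize}
This admissibility step is the only subtle point: without $s_ic=c$ the Demazure operator would mix two distinct idempotents and fail to be defined with an integral result.

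\textbf{Relations.} Each identity is a direct computation in $D_K\rtimes W$ using the convention \eqref{rc:eq:skew}, $[s_i]e_c=e_{s_ic}[s_i]$, and $s_i(y_{\pm\alpha_i})=y_{\mp\alpha_i}$.
\begin{itemize}
\item \eqref{rc:eq:transport}: for $m_i(c)=0$ we have $s_ic=c$, so $e_c$ commutes with both $[s_i]$ and $1$. For $m_i(c)\neq0$, the relation follows from $e_{s_ic}[s_i]=[s_i]e_c$ and the centrality of $e$'s with scalars.
\item \eqref{rc:eq:mixed}: expand $y_i^{-1}([s_i]-1)f=y_i^{-1}(s_i(f)[s_i]-f)=s_i(f)y_i^{-1}([s_i]-1)+\nabla_i(f)$. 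In the nonzero case, $[s_i]f=s_i(f)[s_i]$ directly.
\item \eqref{rc:eq:quadratic}, zero case: $\psi_i^2e_c=y_i^{-1}([s_i]-1)y_i^{-1}([s_i]-1)e_c$. Moving $[s_i]$ past $y_i^{-1}$ turns it into $y_{-\alpha_i}^{-1}$, giving $(y_i^{-1}y_{-\alpha_i}^{-1}-y_i^{-2})[s_i]^2+\dots$. Collecting terms and using $[s_i]^2=1$ gives $-(y_i^{-1}+y_{-\alpha_i}^{-1})y_i^{-1}([s_i]-1)=-\kappa_i\psi_i$.
\item \eqref{rc:eq:quadratic}, nonzero case: $\psi_i^2e_c=\psi_ie_{s_ic}\psi_ie_c$. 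Since $m_i(s_ic)=\rho_c(s_i\alpha_i)=-m_i(c)$, we get $y_{-\alpha_i}^{d(-m)}[s_i]y_{-\alpha_i}^{d(m)}[s_i]e_c=y_{-\alpha_i}^{d(-m)}y_i^{d(m)}e_c$.
\end{itemize}
None of these steps is a real obstacle; the admissibility bookkeeping is the only place where care is needed.
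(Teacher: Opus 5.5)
Your proposal is correct and is essentially the paper's proof. It uses the same check that each generator preserves $D_R$ (via admissibility and $\nabla_i(R)\subseteq R$), the same skew-group computations for the three relations, and faithfulness by componentwise linear independence of distinct automorphisms. The one variation is that you first extend the vanishing from $D_R$ to $D_K$ with a $W$-invariant common denominator so you can quote Dedekind--Artin over the field $K$, whereas the paper runs the independence argument directly for automorphisms of $R$ viewed as maps $R\to K$, which makes that detour unnecessary.

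There is also a small slip in your zero-residue square. The coefficient of $[s_i]^2$ there is $y_i^{-1}y_{-\alpha_i}^{-1}$ alone; the $-y_i^{-2}$ term multiplies $[s_i]$. Your final value $-\kappa_i\psi_i e_c$ is nevertheless correct.
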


\begin{proof}
On a coefficient vector the action is
\[
 \psi_i(fe_c)=
 \begin{cases}
 \nabla_i(f)e_c,&m_i(c)=0,\\
 y_{-\alpha_i}^{d(m_i(c))}s_i(f)e_{s_i c},&m_i(c)\ne0.
 \end{cases}
\]
It preserves $D_R$ by \eqref{rc:eq:root-data}. Equations
\eqref{rc:eq:transport} and~\eqref{rc:eq:mixed} follow from
\eqref{rc:eq:skew}. If $u=y_i^{-1}$, then
\[
 \bigl(u([s_i]-1)\bigr)^2
 =-(u+s_i(u))u([s_i]-1).
\]
This gives the first line of \eqref{rc:eq:quadratic}. The other line
uses $m_i(s_i c)=-m_i(c)$ and $s_i(y_{-\alpha_i})=y_i$.

For faithfulness, let $A\in\mathcal R$ act as zero on $D_R$.
In the skew group algebra $D_K\rtimes W$, write uniquely
\[
 A=\sum_{c\in\mathcal C}\sum_{w\in W}
 a_{w,c}[w]e_c,\qquad a_{w,c}\in K.
\]
For every $c\in\mathcal C$ and $f\in R$, the action gives
\[
 0=A(fe_c)=\sum_{w\in W}a_{w,c}\,w(f)e_{wc}.
\]
Taking the component indexed by $d\in\mathcal C$, we obtain
\[
 \sum_{\substack{w\in W\\wc=d}}a_{w,c}\,w(f)=0
 \qquad\text{for every }f\in R.
\]

We now show that distinct automorphisms of $R$, viewed as maps
$R\to K$, are linearly independent over $K$. Suppose otherwise,
and choose a nontrivial relation
\[
 b_1\sigma_1(f)+\cdots+b_t\sigma_t(f)=0
 \qquad(f\in R)
\]
with the smallest possible number of nonzero coefficients,
where all $b_j\in K^\times$ and the $\sigma_j$ are distinct.
Evaluating at $f=1$ shows that $t\ge2$. Choose $g\in R$ such that \(\sigma_1(g)\ne\sigma_t(g)\).
Evaluate the relation at $gf$, and subtract $\sigma_1(g)$ times
the relation at $f$. Since each $\sigma_j$ is multiplicative,
this gives
\[
 \sum_{j=2}^{t}
 b_j\bigl(\sigma_j(g)-\sigma_1(g)\bigr)\sigma_j(f)=0
 \qquad(f\in R).
\]
The coefficient of $\sigma_t(f)$ is nonzero. Thus this is a
nontrivial relation with fewer than $t$ nonzero terms,
contradicting minimality.

Since $W$ acts faithfully on $R$, distinct elements of $W$
induce distinct automorphisms of $R$. Applying the preceding
independence argument to the componentwise relation gives \(
 a_{w,c}=0\) whenever \(wc=d\).
As $c$ and $d$ vary, every coefficient vanishes. Hence $A=0$,
which proves faithfulness. This argument does not require the
action of $W$ on $\mathcal C$ to be free.
\end{proof}

\begin{example}[Crossing weights for the uniform quivers]
\label{rc:ex:uniform-crossings}
Fix the coefficient and admissible residue data of
Definition~\ref{rc:def:root-algebra}, and put $r=m_i(c)\ne0$.
For the oriented integer line in
Example~\ref{rc:ex:uniform-quivers}\textup{(i)}, the crossing is
\[
 \psi_i e_c=
 \begin{cases}
 y_{-\alpha_i}[s_i]e_c,&r=-1,\\
 [s_i]e_c,&r\ne-1.
 \end{cases}
\]
For the enlarged quiver in \textup{(iii)}, the weighted case occurs
at both $r=-1$ and $r=-2$. For \textup{(iv)}, the weights at
$r=-1$ and $r=1$ are respectively $y_{-\alpha_i}^2$ and
$y_{-\alpha_i}$, so the corresponding quadratic coefficients are
\[
 \psi_i^2e_c=
 \begin{cases}
 y_i^2y_{-\alpha_i}e_c,&r=-1,\\
 y_i y_{-\alpha_i}^2e_c,&r=1.
 \end{cases}
\]
These formulas follow directly from
\eqref{rc:eq:psi} and~\eqref{rc:eq:quadratic}; they do not require
PBW. For the arrow-free quiver every nonzero-residue crossing is
an unweighted reflection. In all these examples a zero-residue
crossing is still $y_i^{-1}([s_i]-1)e_c$, independently of $d$.
\end{example}

\section{Rank-two corrections and a PBW presentation}
\label{rc:sec:pbw}

We pass from the operator definition to a presentation. The leading
coefficient of a reduced product is independent of the reduced word.
The two rank-two braid words therefore differ by terms of smaller
Bruhat order. Requiring the coefficients of these terms to lie in
the coefficient ring gives the integrality hypothesis for PBW.

For each $w\in W$ choose a reduced expression and denote its
operator product by $\psi_w$. Define
\[
 N(w)=\{\beta\in\Phi^+:w^{-1}\beta\in-\Phi^+\}.
\]

\begin{lemma}[Leading coefficient]\label{rc:lem:leading}
For $c\in\cC$ and $j=wc$,
\begin{equation}\label{rc:eq:leading}
 \psi_w e_c=a_{w,c}[w]e_c+
 \sum_{v<w}a_{v,w,c}[v]e_c,
\end{equation}
where $v<w$ denotes strict Bruhat order and
\begin{equation}\label{rc:eq:leading-product}
 a_{w,c}=
 \prod_{\beta\in N(w)}
 \begin{cases}
 y_\beta^{-1},&m_\beta(j)=0,\\
 y_{-\beta}^{d(-m_\beta(j))},&m_\beta(j)\ne0.
 \end{cases}
\end{equation}
In particular, $a_{w,c}\ne0$ and it is independent of the reduced
expression. The family $(\psi_w e_c)_{w,c}$ is linearly independent
over $R$ and is a $K$-basis of $D_K\rtimes W$.
\end{lemma}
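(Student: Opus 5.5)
Induction on $\ell(w)$ along the chosen reduced word. The base case $w=1$ is trivial. For the step, I write $w=s_iw'$ with $\ell(w)=\ell(w')+1$, where the word for $w'$ is the tail of the chosen word for $w$. Put $c'=w'c$, so that $j=wc=s_ic'$.

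By induction,
\[
 \psi_{w'}e_c=a_{w',c}[w']e_c+\sum_{v'<w'}a_{v',w',c}[v']e_c ,
\]
and $\psi_w e_c=\psi_i e_{c'}\psi_{w'}e_c$ by \eqref{rc:eq:transport}.

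I then apply \eqref{rc:eq:psi} at $c'$. If $m_i(c')=0$, then $\psi_ie_{c'}=y_i^{-1}([s_i]-1)e_{c'}$. Multiplying, the top term is
\[
 y_i^{-1}s_i(a_{w',c})[s_iw']e_c ,
\]
and the remaining terms are $[w']$, $[s_iv']$ and $[v']$ with $v'<w'$. All of these are strictly below $w$ in Bruhat order, by the lifting property: $v'<w'$ and $s_iw'>w'$ imply $s_iv'\le s_iw'$ or $s_iv'\le w'$, and in either case $s_iv'<w$. If $m_i(c')\ne0$, the top coefficient is instead $y_{-\alpha_i}^{d(m_i(c'))}s_i(a_{w',c})$ and there are no extra terms.

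Next I match this with \eqref{rc:eq:leading-product}. I use $N(s_iw')=\{\alpha_i\}\sqcup s_iN(w')$. Applying $s_i$ to the product for $a_{w',c}$ (taken at $c'$) gives a product over $\beta\in s_iN(w')$, with factors $y_\beta^{-1}$ or $y_{-\beta}^{d(\cdot)}$, using $w(y_\alpha)=y_{w\alpha}$. The residues transform correctly because
\[
 m_{s_i\gamma}(j)=m_{s_i\gamma}(s_ic')=m_\gamma(c')
\]
by equivariance.

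The new root $\beta=\alpha_i$ has $m_{\alpha_i}(j)=-m_i(c')$. It contributes $y_{\alpha_i}^{-1}$ when the residue vanishes. Otherwise it contributes
\[
 y_{-\alpha_i}^{d(-m_{\alpha_i}(j))}=y_{-\alpha_i}^{d(m_i(c'))}.
\]
This matches the factor produced by $\psi_ie_{c'}$. The main bookkeeping obstacle is checking these sign conventions, namely $d(-m_\beta(j))$ against $d(m_i(c'))$, together with the action of $s_i$ on the inductively given factors. Equivariance handles both.

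Once the formula is established, nonvanishing of $a_{w,c}$ is clear because every $y_\beta$ is nonzero in the domain $R$. Independence of the reduced expression is also clear: $N(w)$, $j$ and the residues depend only on $w$ and $c$.

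For the basis claims, I use that $(\,[w]e_c\,)_{w,c}$ is a $K$-basis of $D_K\rtimes W$, since $D_K=\bigoplus_c Ke_c$. Relative to that basis, the family $(\psi_we_c)$ is unitriangular up to the invertible diagonal entries $a_{w,c}\in K^\times$, with respect to any linear extension of Bruhat order within each fixed $c$. Hence the family is a $K$-basis, and in particular it is linearly independent over $R\subseteq K$. Here coefficients from $R$ or $K$ are placed on the left, which is the relevant convention.
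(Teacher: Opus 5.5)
Your proof is correct and is essentially the paper's argument. The paper expands the whole product in one pass: it enumerates $N(w)$ as $\beta_k=u_k(\alpha_{i_k})$ along the word, and uses the subword property to place the non-reflection terms strictly below $w$. You carry out the same computation as an induction peeling off the leftmost crossing, using $N(s_iw')=\{\alpha_i\}\sqcup s_iN(w')$ and the lifting property in place of the subword property. The residue bookkeeping $m_{\alpha_i}(j)=-m_i(c')$ and the closing triangularity argument are identical in the two proofs.
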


\begin{proof}
Fix a reduced word $w=s_{i_1}\cdots s_{i_\ell}$ and put
\[
 u_k=s_{i_1}\cdots s_{i_{k-1}},\qquad
 v_k=s_{i_{k+1}}\cdots s_{i_\ell},\qquad
 \beta_k=u_k(\alpha_{i_k}).
\]
The roots $\beta_k$ enumerate $N(w)$ without repetition:
apply $N(us_i)=N(u)\sqcup\{u(\alpha_i)\}$ along the prefixes
of the word.  These are the standard inversion-set identities
\cite[Section 4.4]{BB}.

Choose the reflection term at every crossing.  At the $k$th
factor the source residue is $c_k=v_kc$, so its coefficient is
$y_{\alpha_{i_k}}^{-1}$ if $m_{\alpha_{i_k}}(c_k)=0$, and
$y_{-\alpha_{i_k}}^{d(m_{\alpha_{i_k}}(c_k))}$ otherwise.
Since $j=u_ks_{i_k}c_k$, residue equivariance gives
\[
 m_{\beta_k}(j)=m_{\alpha_{i_k}}(s_{i_k}c_k)
                         =-m_{\alpha_{i_k}}(c_k).
\]
Moving coefficients to the left applies $u_k$ to this factor.
Root-coordinate equivariance therefore gives the $\beta_k$ factor
in~\eqref{rc:eq:leading-product}.  Multiplication proves the
stated coefficient of $[w]e_c$, including the sign in the
argument of $d$.

Every other term selects a scalar part at some zero-residue
crossing.  Admissibility says that its simple reflection fixes
that residue, so omitting the reflection preserves idempotent
transport.  The resulting group element is represented by a
proper subword of the reduced word for $w$.  The deletion and
subword properties give $v<w$
\cite[Proposition 1.4.7 and Theorem 2.2.2]{BB}.  In particular
no such term contributes to $[w]e_c$.  Every term has target
$j$, so a nonzero lower coefficient also requires $vc=j$.

Each factor in~\eqref{rc:eq:leading-product} is nonzero in
$K$, and the product depends only on $N(w)$ and $j$, not the
chosen word.  For each fixed $c$, the change from $[w]e_c$
to $\psi_w e_c$ is thus triangular in a total order extending
Bruhat order, with nonzero diagonal.  It is invertible over
$K$.  This proves the basis and independence assertions.
\end{proof}

Fix $i\ne j$ and write $W_{ij}=\langle s_i,s_j\rangle$.
Its longest element $w_{ij}$ has length $m_{ij}$. Choose reduced
words for its elements of smaller length. The difference between
the two alternating products of length $m_{ij}$ has a unique
expansion
\begin{equation}\label{rc:eq:ranktwo}
 \left(\underbrace{\psi_i\psi_j\psi_i\cdots}_{m_{ij}}
 -\underbrace{\psi_j\psi_i\psi_j\cdots}_{m_{ij}}\right)e_c
 =\sum_{\substack{v\in W_{ij}\\\ell(v)<m_{ij}}}
 b^{ij}_{v,c}\psi_v e_c,
 \qquad b^{ij}_{v,c}\in K.
\end{equation}
The leading terms cancel by Lemma~\ref{rc:lem:leading}. All remaining
terms belong to the rank-two skew group algebra, so the same lemma
gives existence and uniqueness of the displayed expansion.
The coefficients are computable: expand in $[v]e_c$, remove terms
in decreasing length by division by $a_{v,c}$, and continue to the
identity. There are only finitely many operations.
Idempotent transport also forces $b^{ij}_{v,c}=0$ unless
$vc=w_{ij}c$, which can reduce the number of coefficients to check.

\begin{hypothesis}[Rank-two integrality]\label{rc:hyp:integral}
For every pair $i\ne j$, every $c\in\cC$, and every
$v\in W_{ij}$ of length less than $m_{ij}$, the coefficient
$b^{ij}_{v,c}$ in \eqref{rc:eq:ranktwo} belongs to $R$.
\end{hypothesis}

\begin{theorem}[PBW and presentation]\label{rc:thm:pbw}
Under Hypothesis~\ref{rc:hyp:integral},
\begin{equation*}\label{rc:eq:pbw}
 \cR=\bigoplus_{c\in\cC}\bigoplus_{w\in W}R\psi_w e_c.
\end{equation*}
The algebra has a presentation by the coefficient algebra $D_R$,
the generators $\psi_i$, and the relations
\eqref{rc:eq:transport}--\eqref{rc:eq:quadratic} and~\eqref{rc:eq:ranktwo}.
In particular, its rank over $R$ is $|W|\,|\cC|$.
The basis assertion holds for any choice of reduced words.
\end{theorem}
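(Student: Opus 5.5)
Let $M=\bigoplus_{c}\bigoplus_{w}R\psi_w e_c$. By Lemma~\ref{rc:lem:leading} this sum is direct and free of rank $|W|\,|\cC|$. Since $M$ contains $D_R$ and every $\psi_i=\psi_{s_i}$, and $M\subseteq\cR$, it suffices to show that $M$ is closed under left multiplication by the generators. Closure under $D_R$ is immediate. The key step is therefore
\[
\psi_i\psi_w e_c\in M\qquad\text{for all } i,w,c.
\]

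I would prove this by induction on $\ell(w)$, with a secondary induction on a Bruhat-compatible total order. The first step is to handle the two cases separately.
\begin{itemize}
\item If $\ell(s_iw)>\ell(w)$, then $\psi_i\psi_w$ is a reduced product for $s_iw$, but possibly with a word different from the chosen one for $\psi_{s_iw}$.
\item If $\ell(s_iw)<\ell(w)$, choose a reduced word for $w$ beginning with $s_i$, so that $\psi_i\psi_w$ becomes $\psi_i^2\psi_{s_iw}'$ up to a change of word. The quadratic relation \eqref{rc:eq:quadratic} then reduces this either to $-\kappa_i\psi_i\psi'_{s_iw}$, a reduced product again, or to a coefficient times $\psi'_{s_iw}$.
\end{itemize}

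Both cases therefore reduce to a \emph{word-independence} statement: for any two reduced words of $u$, the operator products differ by an element of $\bigoplus_{v<u}R\psi_v e_c$. I would prove this using Matsumoto's theorem, which says that any two reduced words are connected by braid moves. A single braid move replaces a rank-two alternating factor. By \eqref{rc:eq:ranktwo} and Hypothesis~\ref{rc:hyp:integral}, the difference is $R$-integral lower terms $b^{ij}_{v,c}\psi_v$ inserted between a prefix and a suffix. The coefficients can be moved to the left using \eqref{rc:eq:mixed}, which only produces $\nabla_i$-terms in $R$ and shorter products, since each $\nabla_i$ comes from dropping a crossing. This gives products of total length less than $\ell(u)$, possibly nonreduced, and the induction hypothesis on length shows these lie in $M$.

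The main obstacle is organizing this double induction so that it is not circular. My plan is to prove simultaneously, by induction on $n$, two statements:
\begin{enumerate}
\item[(a)] every product of generators of length at most $n$ lies in $M_{\le n}=\bigoplus_{\ell(v)\le n}R\psi_v e_c$;
\item[(b)] reduced products of length $n$ for the same element agree modulo $M_{<n}$.
\end{enumerate}
Statement (b) at level $n$ uses (a) at lengths below $n$. Statement (a) at level $n$ uses (b) at level $n$ together with the quadratic relation. Throughout, the idempotent transport \eqref{rc:eq:transport} keeps track of the residues.

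Once closure is established, $\cR=M$, which gives the basis for any choice of reduced words. For the presentation, let $\widehat{\cR}$ be the abstract algebra defined by the listed generators and relations. It maps onto $\cR$. The argument above used only those relations, so it shows that $\widehat{\cR}$ is spanned over $D_R$ by the images of the $\psi_w e_c$. These images map to an $R$-independent family by Lemma~\ref{rc:lem:leading}, so the surjection is injective. In particular the rank of $\cR$ over $R$ is $|W|\,|\cC|$.
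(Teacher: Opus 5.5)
Your proposal is correct and follows essentially the same route as the paper: an induction on the number of crossings, using Matsumoto's theorem, the quadratic relation and the integral rank-two corrections, shows spanning in both $\cR$ and the abstract presented algebra, and the leading-term independence of Lemma~\ref{rc:lem:leading} gives injectivity and the rank. The differences are minor: you reduce non-reduced products through the exchange property (a reduced word for $w$ beginning with $s_i$) rather than the word property, and independence of the reduced-word choice is automatic for you because the choice was arbitrary, whereas the paper gives a separate unitriangular change-of-basis argument.
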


\begin{proof}
Let $A$ be the abstract algebra generated by $D_R$ and symbols
$\widetilde\psi_i$, subject to the displayed relations.  Their
validity in the operator algebra gives a surjection
$\theta:A\to\cR$ which is the identity on $D_R$.
In particular $D_R$ embeds in $A$.

Filter $A$ by the number of crossings: $F_nA$ is spanned over
$\kk$ by words with at most $n$ crossings and arbitrary
interspersed coefficient factors.  Set $F_{-1}A=0$.
Idempotent transport moves idempotents to the right, and the
mixed relation
\[
 \widetilde\psi_i f=s_i(f)\widetilde\psi_i+
       \sum_c\delta_{m_i(c),0}\nabla_i(f)e_c
\]
moves coefficients to the left.  Its correction has one fewer
crossing and lies in $D_R$.  Thus $F_nA$ is the $R$-span of
the words $\widetilde\psi_{i_1}\cdots\widetilde\psi_{i_r}e_c$
with $r\le n$, and $F_pA F_qA\subseteq F_{p+q}A$.

The quadratic relation gives $\widetilde\psi_i^2e_c\in F_1A$.
For alternating words $B_{ij},B_{ji}$ of length $m_{ij}$,
rank-two integrality gives
\[
 (B_{ij}-B_{ji})e_c
   =\sum_{\ell(v)<m_{ij}}b^{ij}_{v,c}\widetilde\psi_v e_c
                  \in F_{m_{ij}-1}A.
\]
These reductions remain valid inside longer words by
multiplicativity of the filtration.  In particular a braid
move changes an $n$-crossing word by an element of $F_{n-1}A$.

Fix a reduced word for each $w$.  We prove inductively that
\[
 F_nA=\sum_{c\in\cC}\sum_{\ell(w)\le n}
                         R\widetilde\psi_w e_c.
\]
The case $n=0$ is the coefficient algebra.  If an $n$-letter
Coxeter word is not reduced, the word property allows braid
moves until an adjacent repetition occurs
\cite[Theorem 3.3.1(i)]{BB}.  The corresponding crossing word
then lies in $F_{n-1}A$ by the quadratic relation.  If it is
reduced for $w$, braid moves connect it to the chosen word
\cite[Theorem 3.3.1(ii)]{BB}, so their crossing products differ
by $F_{n-1}A$.  In either case induction gives the assertion.
All coefficients stay in $R$ throughout the reductions.

The resulting spanning family maps under $\theta$ to the
$R$-linearly independent operators of
Lemma~\ref{rc:lem:leading}.  A kernel element expressed in
this family has every coefficient zero.  Hence $\theta$ is
injective and gives both the presentation and PBW.

Finally, the same braid reductions compare any other chosen
reduced products with these products.  For each right idempotent,
the change has leading coefficient $1$ and only terms of smaller
length.  This finite unitriangular matrix is invertible over
$R$, proving independence of the choices of reduced words.
\end{proof}
\begin{remark}\label{rc:rem:pbw-scope}
The construction and its faithful representation require only
\eqref{rc:eq:admissible} and~\eqref{rc:eq:root-data}. The PBW assertion
additionally requires Hypothesis~\ref{rc:hyp:integral}; preservation of
$R$ by the simple divided differences alone is not being asserted
to imply that hypothesis. The latter is automatic in rank one, and
also when every $y_\alpha$ is a unit in $R$: the triangular
elimination then divides only by units. At the multiplicative
singleton zero residue the ordinary braid relations hold, so every
coefficient in \eqref{rc:eq:ranktwo} is zero.
\end{remark}

\section{Unconditional PBW families and changes of coordinates}
\label{str:sec:pbw-families}

We record two classes in which PBW holds without the general rank-two
integrality hypothesis.  Root-regular data have only weighted-reflection
crossings, while type-$A$ sequence data are controlled by the ordinary
quiver Hecke PBW theorem.  Between these two families we show that an
equivariant change of root coordinates by units does not alter the operator
algebra or the PBW property.

\subsection{PBW for root-regular data}

\begin{definition}\label{str:def:regular-residues}
An admissible residue datum is \emph{root-regular} if
$m_\alpha(c)\ne0$ for every $\alpha\in\Phi$ and $c\in\cC$.
\end{definition}

\begin{theorem}[Unconditional PBW away from zero root residues]
\label{str:thm:regular-pbw}
For root-regular residue data and every coefficient ring satisfying
\eqref{rc:eq:root-data}, all braid corrections
$b^{ij}_{v,c}$ vanish. Consequently
\[
 \cR=\bigoplus_{c\in\cC}\bigoplus_{w\in W}R\psi_w e_c
\]
without any additional integrality assumption. The generators
satisfy the ordinary Coxeter braid relations, and the mixed and
quadratic relations are those of
Proposition~\ref{rc:prop:basic} at nonzero residue.
\end{theorem}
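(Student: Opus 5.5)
The plan is to show that for root-regular data every crossing is a weighted reflection, $\psi_i e_c = y_{-\alpha_i}^{d(m_i(c))}[s_i]e_c$, and that any product of crossings along a word is a single monomial $a\,[u]e_c$ supported on the group element $u$ of the word. Then Hypothesis~\ref{rc:hyp:integral} holds trivially and Theorem~\ref{rc:thm:pbw} applies. Concretely, for a reduced word $u=s_{i_1}\cdots s_{i_\ell}$ I will expand $\psi_{i_1}\cdots\psi_{i_\ell}e_c$ by moving coefficients to the left through $[s_{i_k}]$ using \eqref{rc:eq:skew}. Since there is no zero-residue crossing, there are no scalar parts to select. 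The product is therefore exactly $a_{u,c}[u]e_c$, with no lower Bruhat terms. This is the case of Lemma~\ref{rc:lem:leading} in which every factor of \eqref{rc:eq:leading-product} is of the second type.

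The key step is the braid relation in each rank-two subgroup $W_{ij}$. Apply the previous paragraph to the two alternating reduced words of length $m_{ij}$ for $w_{ij}$. Both products equal $a[w_{ij}]e_c$. By Lemma~\ref{rc:lem:leading}, the coefficient $a_{w_{ij},c}$ depends only on $N(w_{ij})$ and the target residue $w_{ij}c$, not on the reduced word. Hence the two products coincide, and the left side of \eqref{rc:eq:ranktwo} is $0$. By the uniqueness of the expansion in \eqref{rc:eq:ranktwo}, which comes from the $K$-basis statement in Lemma~\ref{rc:lem:leading}, all $b^{ij}_{v,c}=0\in R$. This verifies Hypothesis~\ref{rc:hyp:integral} and gives the ordinary braid relations.

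Next I invoke Theorem~\ref{rc:thm:pbw} to get the PBW decomposition, with $R$-freeness of rank $|W||\cC|$ and independence of reduced words, together with the presentation. In that presentation the relations \eqref{rc:eq:ranktwo} become the homogeneous Coxeter braid relations. The mixed relation \eqref{rc:eq:mixed} loses its $\nabla_i$ term because $\delta_{m_i(c),0}=0$. The quadratic relation \eqref{rc:eq:quadratic} reduces to its second line. The coefficient condition \eqref{rc:eq:root-data} is used only to make $\cR$ well defined through Proposition~\ref{rc:prop:basic}.

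I expect the only delicate point to be the claim that all roots $\beta_k$ met along the word have nonzero residue at the relevant residue point. This is a one-line check from the computation in the proof of Lemma~\ref{rc:lem:leading}. The source residue at the $k$th factor is $c_k=v_kc\in\cC$, and $m_{\alpha_{i_k}}(c_k)\neq0$ by root-regularity applied to the point $c_k$. So root-regularity, which holds at every point of $\cC$ and for every root, suffices, and no additional integrality is needed.
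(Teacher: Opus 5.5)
Your proposal is correct and follows the paper's proof. Root-regularity makes every crossing a weighted reflection, so $\psi_w e_c=a_{w,c}[w]e_c$ with no lower Bruhat terms. Word-independence of $a_{w,c}$ from Lemma~\ref{rc:lem:leading} then makes the two alternating braid products equal, and Theorem~\ref{rc:thm:pbw} applies with all $b^{ij}_{v,c}=0$.
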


\begin{proof}
Every factor in a product of crossings is a weighted reflection;
there is no scalar divided-difference term. Thus the expansion
in Lemma~\ref{rc:lem:leading} has just one term:
\[
 \psi_w e_c=a_{w,c}[w]e_c.
\]
The coefficient $a_{w,c}$ is independent of the reduced
expression. The two alternating products in each braid relation
are therefore equal. Apply Theorem~\ref{rc:thm:pbw} with all
corrections zero.
\end{proof}

\subsection{Equivariant changes by units}

\begin{proposition}[Invariance under a change of root units]
\label{str:prop:unit-change}
Fix the residue datum and coefficient ring. Suppose $y_\alpha$
and $\widetilde y_\alpha=u_\alpha y_\alpha$ are two permitted
root-coordinate systems with $u_\alpha\in R^\times$ and
$w(u_\alpha)=u_{w\alpha}$. Their operator algebras are equal
inside $D_K\rtimes W$. If either system satisfies the PBW
assertion for chosen reduced words, so does the other.
\end{proposition}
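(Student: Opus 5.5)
Write $\widetilde\psi_i$ for the crossings built from $\widetilde y_\alpha$. The plan is to compare the two families of generators idempotent by idempotent and show that each $\widetilde\psi_i e_c$ equals $\psi_i e_c$ times an element of $D_R^\times$, and conversely. Since both algebras contain $D_R$, they will then be generated by the same elements and so coincide inside $D_K\rtimes W$.

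\emph{Zero residue.} Suppose $m_i(c)=0$. Then $\widetilde\psi_i e_c=u_{\alpha_i}^{-1}y_i^{-1}([s_i]-1)e_c=u_{\alpha_i}^{-1}\psi_i e_c$. This coefficient lies on the left and is a unit of $R$.

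\emph{Nonzero residue.} Suppose $m_i(c)\ne0$. Then $\widetilde\psi_i e_c=u_{-\alpha_i}^{d(m_i(c))}\psi_i e_c$, again with a unit of $R$ on the left.

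In both cases $\widetilde\psi_i e_c=g_{i,c}\psi_i e_c$ with $g_{i,c}\in R^\times$. Summing over $c$ gives $\widetilde\psi_i=\bigl(\sum_c g_{i,c}e_{s_ic}\bigr)\psi_i$, where I move the idempotent across using \eqref{rc:eq:transport}. Hence $\widetilde\psi_i$ lies in the algebra generated by $D_R$ and the $\psi_j$, and the inverse units give the reverse inclusion. The equivariance $w(u_\alpha)=u_{w\alpha}$ is what guarantees that both systems are permitted data, but equality of the algebras needs only that the $u_\alpha$ are units.

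For PBW, fix reduced words and expand $\widetilde\psi_w e_c$ for $w=s_{i_1}\cdots s_{i_\ell}$ as a product of $g$-factors and $\psi$-factors. I would commute each unit coefficient leftward using the mixed relation \eqref{rc:eq:mixed}: $\psi_i f=s_i(f)\psi_i+\nabla_i(f)$ on zero-residue idempotents. This yields
\[
 \widetilde\psi_w e_c=h_{w,c}\,\psi_w e_c+\sum_{\ell(v)<\ell(w)}r_v\,\psi'_v e_c ,
\]
where $h_{w,c}$ is a product of $W$-translates of units, hence a unit, and the lower terms are $R$-combinations of words of shorter length. The step I expect to need most care is showing these lower words lie in the $R$-span of $\{\psi_v e_c\}$. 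Under the PBW hypothesis for the $\psi$-system this is automatic, because the $R$-span of that basis is the whole algebra. Alternatively, \eqref{rc:eq:leading} says that $h_{w,c}$ is the ratio of the leading coefficients $a_{w,c}$ of the two systems.

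Ordering the pairs $(w,c)$ by length, the transition matrix from $\{\psi_w e_c\}$ to $\{\widetilde\psi_w e_c\}$ is then triangular over $R$ with unit diagonal entries $h_{w,c}$. It is therefore invertible over $R$, so $\{\widetilde\psi_w e_c\}$ is an $R$-basis of the same algebra. The roles of the two systems are symmetric, so the same argument transfers PBW in either direction.
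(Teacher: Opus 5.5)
Your route is the paper's. The crossings of the two systems differ by left unit factors, so the algebras coincide. PBW then transfers by a change of basis that is triangular in length with unit diagonal, with the old PBW basis supplying integrality.

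The gap is in the step you flagged. After moving the units leftward you have $\widetilde\psi_w e_c=h_{w,c}\psi_w e_c+x$, where $x$ is an $R$-combination of crossing words (in general non-reduced) with fewer than $\ell(w)$ letters. PBW for the $\psi$-system only places $x$ in $\bigoplus_v R\psi_v e_c$; it says nothing about which lengths $\ell(v)$ occur in that expansion. Two of your claims need the expansion of $x$ to involve only $v$ with $\ell(v)<\ell(w)$: the triangularity of the transition matrix, and even the claim that the coefficient of $\psi_w e_c$ is $h_{w,c}$. A priori a shorter word could contribute to $\psi_{w'}e_c$ with $\ell(w')\ge\ell(w)$, and "the $R$-span of the basis is the whole algebra" does not exclude this.

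The missing ingredient is the leading-symbol argument from Lemma~\ref{rc:lem:leading}, which is exactly how the paper argues.
\begin{itemize}
\item A crossing word with $k$ letters, times $e_c$, has Weyl support in $D_K\rtimes W$ only on symbols $[u]e_c$ with $\ell(u)\le k$.
\item Suppose its expansion $\sum_v r_v\psi_v e_c$ had some $r_v\ne0$ with $\ell(v)>k$. Choose such a $v$ of maximal length.
\item Every other $\psi_{v'}e_c$ with $\ell(v')\le\ell(v)$ has no $[v]$-term. So the coefficient of $[v]e_c$ on the right is $r_va_{v,c}\ne0$, whereas it is $0$ on the left, a contradiction.
\end{itemize}
With this inserted, the transition matrix is triangular with diagonal $h_{w,c}=\widetilde a_{w,c}/a_{w,c}$, which is a unit, and your proof is complete.

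Once this comparison is available, the explicit commutation via the mixed relation is unnecessary. The paper applies the same leading-symbol argument directly to an arbitrary integral expansion $\widetilde\psi_w e_c=\sum_v r_{v,w,c}\psi_v e_c$. Such an expansion exists because the algebras are equal and the old system has PBW.
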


\begin{proof}
The two crossing systems satisfy
\begin{equation}\label{str:eq:generator-change}
 \widetilde\psi_i e_c=\gamma_{i,c}\psi_i e_c,\qquad
 \gamma_{i,c}=
 \begin{cases}
 u_{\alpha_i}^{-1},&m_i(c)=0,\\
 u_{-\alpha_i}^{d(m_i(c))},&m_i(c)\ne0.
 \end{cases}
\end{equation}
All factors are units in the common coefficient algebra.  Thus each
generating set is contained in the algebra generated by the other,
and $\widetilde{\cR}=\cR$ inside $D_K\rtimes W$.

Assume PBW for the old system, and use the same reduced words for
both systems.  Equality of the algebras gives an expansion
\[
 \widetilde\psi_w e_c=\sum_v r_{v,w,c}\psi_v e_c,
 \qquad r_{v,w,c}\in R.
\]
Apply Lemma~\ref{rc:lem:leading} to both sides.  A nonzero term of
maximal length greater than $\ell(w)$ could not cancel its leading
Weyl symbol, so no such term occurs.  Among the terms of length
$\ell(w)$, only $v=w$ can occur.  Consequently
\begin{equation}\label{str:eq:triangular-change}
 \widetilde\psi_w e_c=\lambda_{w,c}\psi_w e_c+
       \sum_{\ell(v)<\ell(w)}r_{v,w,c}\psi_v e_c,
 \qquad\lambda_{w,c}=\widetilde a_{w,c}/a_{w,c}.
\end{equation}
Put $j=wc$.  The leading-coefficient product gives
\begin{equation}\label{p56:eq:unit-product}
 \lambda_{w,c}=
 \prod_{\beta\in N(w)}
 \begin{cases}
 u_\beta^{-1},&m_\beta(j)=0,\\
 u_{-\beta}^{d(-m_\beta(j))},&m_\beta(j)\ne0.
 \end{cases}
\end{equation}
This is a unit of $R$.  Thus~\eqref{str:eq:triangular-change} is
a finite triangular change-of-basis matrix with invertible diagonal.
It proves PBW for the new system.  Applying the same argument to
the inverse units proves the converse.  Notice that integrality of
the lower coefficients follows from the old PBW basis and equality
of the algebras; fraction-field triangularity alone would not give it.
\end{proof}

\begin{remark}\label{str:rem:formal-units}
Over $\widehat P=\mathbb C[[x_i:i\in I]]$, the additive coordinates $x_\alpha$ and
the multiplicative coordinates $1-\exp(x_\alpha)$ differ by
equivariant units. Proposition~\ref{str:prop:unit-change} applies
for any fixed residue-quiver data, and not only on the singleton
zero orbit. This is a formal comparison: it does not assert
equality of the uncompleted polynomial and Laurent coefficient
rings. It also explains why a formal coordinate change alone
does not supply a new deformation invariant. Formal-group
Demazure algebras provide the broader context~\cite{HMSZ}.
\end{remark}

\subsection{Unconditional PBW for type-\texorpdfstring{$A$}{A} sequence residues}
\label{str:subsec:sequence-pbw}

Root-regular data need no braid corrections. A different
unconditional family allows both zero and nonzero residues.
Here the residue orbit retains the full sequence information.
The proof uses the ordinary quiver Hecke PBW theorem and a
separate invariant mean coordinate.

\begin{theorem}[PBW for arbitrary uniform quivers on sequence residues]
\label{str:thm:sequence-pbw}
Work over $\mathbb C$ and let $W=\mathfrak S_{n+1}$ have
type $A_n$. Let $\mathbb I$ be any abelian group, let
$d:\mathbb I\setminus\{0\}\to\mathbb Z_{\geq0}$ define a
uniform residue quiver, and let $\cC\subseteq\mathbb I^{n+1}$
be a permutation orbit of sequences. Use
\[
 m_{\epsilon_p-\epsilon_q}(c)=c_p-c_q.
\]
Let the coefficient ring be $P=\Sym(V)$ or its completion
$\widehat P$ at the origin, with a permitted coordinate system
$y_\alpha=u_\alpha x_\alpha$, where every $u_\alpha$ is an
equivariant unit. Then the PBW assertion and the presentation
of Theorem~\ref{rc:thm:pbw} hold without an additional
integrality hypothesis. In particular this applies to
additive root coordinates and, in $\widehat P$, to
$y_\alpha=1-\exp(x_\alpha)$.
\end{theorem}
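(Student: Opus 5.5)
By Proposition~\ref{str:prop:unit-change}, the PBW assertion is unchanged when the root coordinates are multiplied by equivariant units. It therefore suffices to treat additive coordinates $y_\alpha=x_\alpha$ over $P$ or over $\widehat P$. For the completion, flat base change from $P$ to $\widehat P$ reduces to the polynomial case. Under this base change the corrections $b^{ij}_{v,c}$ are the same elements of $K$, and $P$-integrality implies $\widehat P$-integrality. So the task is to verify Hypothesis~\ref{rc:hyp:integral} over $P$, or to prove PBW directly, and then invoke Theorem~\ref{rc:thm:pbw}.

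\textbf{Main step: comparison with quiver Hecke algebras.} Let $V'=\mathbb C^{n+1}$ have coordinates $x_1,\dots,x_{n+1}$, so that $\Sym(V')=P\otimes\mathbb C[\bar x]$ with the invariant mean coordinate $\bar x=\frac1{n+1}\sum_p x_p$. Here $x_{\epsilon_p-\epsilon_q}=x_p-x_q$, and the $W$-action fixes $\bar x$. On $\bigoplus_{c\in\cC}\Sym(V')e_c$ the operators $\psi_i$ are then exactly the faithful polynomial representation of the Khovanov--Lauda--Rouquier algebra. This holds for the quiver $\Gamma_d$, the dimension vector of $\cC$, and the polynomials $Q_{ab}(u,v)=(u-v)^{d(a-b)}(v-u)^{d(b-a)}$, up to the usual sign normalisation. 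The zero-residue case gives the divided difference $(x_i-x_{i+1})^{-1}(s_i-1)$. The weighted crossing gives $x_{-\alpha_i}^{d(m_i(c))}=(x_{i+1}-x_i)^{d(c_i-c_{i+1})}$, which is a legitimate Rouquier choice after a sign twist of the generators. Such a twist is a unit rescaling and does not affect PBW.

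The KLR/Rouquier PBW theorem then gives a $\Sym(V')$-basis $\{\psi_we_c\}$ of the algebra generated by $\bigoplus\Sym(V')e_c$ and the $\psi_i$. In particular the rank-two braid differences expand in lower $\psi_v e_c$ with coefficients in $\Sym(V')$. These coefficients are the $b^{ij}_{v,c}$, since the expansion in $K'=\Frac\Sym(V')$ is unique by Lemma~\ref{rc:lem:leading}, applied over $K'\supset K$.

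\textbf{Descending integrality.} The $b^{ij}_{v,c}$ lie in $K$, because every computation happens inside $D_K\rtimes W$. They also lie in $\Sym(V')$. Since $\Sym(V')=P[\bar x]$ is a polynomial ring over $P$, we have $K\cap P[\bar x]=P$: an element $f/g$ with $f,g\in P$ lying in $P[\bar x]$ must lie in $P$, by comparing constant terms in $\bar x$. Hence $b^{ij}_{v,c}\in P$, and Theorem~\ref{rc:thm:pbw} applies.

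\textbf{Expected obstacle.} The delicate point is matching conventions with the KLR PBW theorem. The issues are the direction of arrows and the sign of $x_{-\alpha_i}$ versus $x_{i+1}-x_i$, and the fact that KLR is usually stated with $Q_{ab}$ built from a quiver with no loops; here there are none, since $d$ is defined off $0$. One must also check that the braid coefficient formula remains valid for asymmetric $d$, with $d(a-b)\neq d(b-a)$ allowed. Rouquier's general $Q$-framework allows arbitrary $Q_{ab}(u,v)=t\,(u-v)^{p}(v-u)^{q}$-type polynomials, so I would cite that version and verify the quadratic relation~\eqref{rc:eq:quadratic} matches $Q_{c_i,c_{i+1}}(x_i,x_{i+1})$.
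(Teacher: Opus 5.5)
Your proposal is correct. It uses the same external input as the paper: the KLR/Rouquier polynomial representation in strand variables, with the $W$-invariant mean coordinate split off. Where it differs is in how you descend to root coordinates.

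\textbf{The two descent steps.} The paper proves the operator identity $\cR_{\rm str}=\cR[\bar z]$ and takes the KLR basis over $P[\bar z]$. It then specializes $\bar z=0$ to get PBW over $P$ directly. Only afterwards does it read off integrality of \eqref{rc:eq:ranktwo}, which it needs for the presentation.

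You instead transfer only the rank-two coefficients. You use uniqueness of the expansion over $K(\bar x)$ and the equality $K\cap P[\bar x]=P$. Then Theorem~\ref{rc:thm:pbw} produces PBW and the presentation at once.

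\textbf{What each route buys.} Your route avoids identifying the two operator algebras and the specialization step. It also needs only the rank-two KLR relations in the polynomial representation, not the full KLR basis theorem. The KLR braid correction is an explicit divided difference of $Q_{ab}$, and this is already a polynomial in the differences $z_p-z_q$. The paper's route gets the basis straight from the external theorem, without passing through the filtration argument of Theorem~\ref{rc:thm:pbw}.

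\textbf{A missing step.} Your opening reduction is incomplete for the presentation. Proposition~\ref{str:prop:unit-change} transports PBW to $y_\alpha=u_\alpha x_\alpha$. However, the braid corrections in the new coordinates are different elements, so the presentation is not yet covered. Add the following. Once PBW holds in the new coordinates, each braid difference lies in $\cR$. Its $R$-expansion in the new reduced products then coincides, by uniqueness over the fraction field, with \eqref{rc:eq:ranktwo}. Hence Hypothesis~\ref{rc:hyp:integral} holds and Theorem~\ref{rc:thm:pbw} applies. The paper states this transport of the presentation explicitly.
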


\begin{proof}
First use additive coordinates over $P$.
Put $P_{\rm str}=\mathbb C[z_1,\ldots,z_{n+1}]$ and
$\bar z=(z_1+\cdots+z_{n+1})/(n+1)$.
The subalgebra generated by the differences $z_p-z_q$
is a copy of $P$, and
\[
 P_{\rm str}=P[\bar z].
\]
The group $W$ fixes $\bar z$. All crossings in
Definition~\ref{rc:def:root-algebra} depend only on the
differences. Thus adjoining $\bar z$ to the root-coordinate
operator algebra gives exactly the strand operator algebra:
\begin{equation}\label{str:eq:sequence-mean-extension}
 \cR_{\rm str}=\cR[\bar z].
\end{equation}
For clarity, this equality takes place in the polynomial
extension of $D_K\rtimes W$, embedded in the strand
fraction skew group algebra. Generation by the same crossings
and the freely adjoined central variable proves both inclusions.

The strand operators are the ordinary polynomial quiver Hecke
operators for the quadratic polynomials
\[
 Q_{ab}(u,v)=(u-v)^{d(a-b)}(v-u)^{d(b-a)}\quad(a\ne b),
 \qquad Q_{aa}=0.
\]
The polynomial PBW theorem of~\cite{KL,Rouquier} therefore gives
\[
 \cR_{\rm str}
 =\bigoplus_{c\in\cC}\bigoplus_{w\in W}
                      P[\bar z]\psi_w e_c.
\]
Specializing $\bar z=0$ in
\eqref{str:eq:sequence-mean-extension} gives PBW over $P$.
Every rank-two braid difference has coefficients in $P$
in this basis and has group support in $W_{ij}$.
Parabolic subgroups are closed under Bruhat subwords, so
triangularity forces all indices in its expansion to lie in
$W_{ij}$. Thus its coefficients are precisely those
of~\eqref{rc:eq:ranktwo}, and Hypothesis~\ref{rc:hyp:integral}
holds. The presentation follows from
Theorem~\ref{rc:thm:pbw}.

For additive coordinates over $\widehat P$, the same rank-two
identities hold, with their polynomial coefficients regarded
as power series. Uniqueness of the fraction-field expansion
again verifies the integrality condition. Thus
Theorem~\ref{rc:thm:pbw} applies over $\widehat P$ as well.
Proposition~\ref{str:prop:unit-change} transports PBW to
each of the permitted unit changes in the statement.
The resulting PBW basis makes every braid correction integral,
so the presentation assertion is transported too.
Finally, $(1-\exp(x_\alpha))/x_\alpha$ is an equivariant
unit of constant term $-1$, proving the multiplicative claim.
\end{proof}

\begin{example}[Mixed residues for an enlarged quiver]
\label{str:ex:mixed-sequence-pbw}
Take type $A_2$, $\mathbb I=\mathbb Z$, and the sequence
orbit of $(0,0,2)$. Let the arrows be $a\to a+1$ and
$a\to a+2$, so $d(-1)=d(-2)=1$ and all other
multiplicities are zero. This is a uniform quiver whose
underlying graph has triangles. At $(0,0,2)$ one simple
crossing is a divided difference and the other a weighted
reflection. The orbit has three elements and stabilizer
$\mathfrak S_2$. Theorem~\ref{str:thm:sequence-pbw} gives
PBW rank $18$ over the root coefficient ring, both in the
additive case and in the completed multiplicative case.
A direct invariant-centralizer calculation identifies its center with
$P^{\mathfrak S_2}$, or $\widehat P^{\mathfrak S_2}$.
In centered coordinates $z_1+z_2+z_3=0$, the former is
$\mathbb C[z_1+z_2,(z_1-z_2)^2]$.
In additive coordinates the middle sequence $c=(0,2,0)$
has the nonzero integral braid correction
\[
 (\psi_2\psi_1\psi_2-\psi_1\psi_2\psi_1)e_c=e_c.
\]
Indeed, put $x_i=x_{\alpha_i}$ and
$w_0=s_2s_1s_2=s_1s_2s_1$. Direct expansion in the
skew group algebra gives
\[
 \begin{split}
 \psi_2\psi_1\psi_2e_c
   &=-\frac{x_2}{x_1+x_2}([w_0]-1)e_c,\\
 \psi_1\psi_2\psi_1e_c
   &=-\frac{x_2}{x_1+x_2}[w_0]e_c
     -\frac{x_1}{x_1+x_2}e_c.
 \end{split}
\]
Thus this family includes integral braid corrections that
do not vanish, as well as mixed crossing operators.
\end{example}

\begin{remark}[Sequence data and general residue data]
\label{str:rem:sequence-pbw-scope}
Theorem~\ref{str:thm:sequence-pbw} is the sequence input.
Passing to root differences can identify sequences which differ
by a common residue translation.  The next section classifies
this loss of information and descends PBW through it.  It also
proves PBW over the uncompleted multiplicative coefficient ring.
These conclusions do not assert an unrestricted cyclotomic Hecke
comparison.
\end{remark}

\section{Residue descent, PBW, and finite root-power comparison}
\label{des:sec:descent}

We first prove descent of the operator algebra and its
coefficient-generated ideals.  The canonical zero-root cover
then gives the additive comparison with Sauter's construction
in every finite Weyl type.  A separate classification by
sequence covers gives global multiplicative PBW in type $A$;
rank-two reduction extends it to simply-laced type.  Central
completion and root-nilpotence propagation give the finite
quotient comparison.  Throughout this section the field is
$\mathbb C$.

The fixed-point method has precedents in Rostam's PBW,
presentation, and cyclotomic results
\cite[Theorems 2.25, 2.43 and Corollary 2.27]{RostamFixed}.
Here the cover and its deck group are specified by the root
residues, and the ideal statement is proved for the given
coefficient generators.

\subsection{Descent through a finite residue cover}

Let $\widetilde{\cC}$ be an admissible residue datum.  Suppose a finite
group $G$ acts freely on $\widetilde{\cC}$, commutes with $W$, and
preserves every root residue.  Set
$\cC=\widetilde{\cC}/G$ and let $\pi$ be the quotient map.
The quotient is admissible: a zero simple-root residue fixes every
lift under the corresponding simple reflection.  Use the same
coefficient ring and root coordinates for the algebras
$\widetilde{\cR}$ and $\cR$.

\begin{theorem}[Residue-cover descent]\label{des:thm:cover}
Pullback of coefficient functions defines an injective homomorphism
\[
 \iota:\cR\longrightarrow\widetilde{\cR},\qquad
 f e_c\longmapsto\sum_{\pi(\tilde c)=c}f e_{\tilde c},\qquad
 \psi_i\longmapsto\widetilde\psi_i.
\]
Its image is $\widetilde{\cR}^{G}$.  For any fixed choices of reduced
words, PBW holds for $\cR$ if and only if it holds for
$\widetilde{\cR}$.

Let $I$ be the two-sided ideal of $\cR$ generated by any family
$(g_\nu)$ in $D_R$, and let $\widetilde I$ be the ideal generated by
their images in $\widetilde{\cR}$.  Then
\begin{equation}\label{des:eq:quotient}
 \widetilde I\cap\cR=I,\qquad
 \cR/I\xrightarrow{\sim}
             (\widetilde{\cR}/\widetilde I)^G.
\end{equation}
Thus root-power ideals descend with their stated generators.
\end{theorem}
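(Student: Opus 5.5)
The plan is to work entirely inside the skew group algebras $D_K\rtimes W$ and $\widetilde D_K\rtimes W$. Define $\iota$ there first, on $D_K\rtimes W$ itself, by $fe_c[w]\mapsto\sum_{\pi(\tilde c)=c}fe_{\tilde c}[w]$. Since $G$ commutes with $W$, $\pi$ is $W$-equivariant. Pullback $D_K\to\widetilde D_K$ is then an injective $W$-equivariant ring map, so $\iota$ is an injective algebra homomorphism whose image is contained in $(\widetilde D_K\rtimes W)^G$. Here $G$ acts on idempotents by $e_{\tilde c}\mapsto e_{g\tilde c}$ and trivially on $K$ and on $[w]$.

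Because $G$ preserves root residues, $m_i(\tilde c)=m_i(\pi\tilde c)$. Comparing \eqref{rc:eq:psi} term by term then gives $\iota(\psi_i)=\sum_{\tilde c}\widetilde\psi_ie_{\tilde c}=\widetilde\psi_i$. Hence $\iota$ maps $\cR$ into $\widetilde\cR$, and the image lies in $\widetilde\cR^G$.

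For the converse inclusion and the PBW equivalence I would use Lemma~\ref{rc:lem:leading}. Every $x\in\widetilde\cR$ has a unique $K$-expansion $x=\sum_{\tilde c,w}a_{w,\tilde c}\widetilde\psi_we_{\tilde c}$. The leading coefficients \eqref{rc:eq:leading-product} depend only on residues, so $g\in G$ permutes this basis by $\tilde c\mapsto g\tilde c$, because $g\widetilde\psi_w=\widetilde\psi_w$. Consequently $x$ is $G$-invariant exactly when its coefficients are constant on $G$-orbits, and then $x=\iota\bigl(\sum_{c,w}a_{w,c}\psi_we_c\bigr)$.

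Now suppose PBW holds upstairs. Then the coefficients of $x\in\widetilde\cR^G$ lie in $R$, so $\widetilde\cR^G\subseteq\iota(\cR)$. The family $\psi_we_c$ also spans $\cR$: for $y\in\cR$, the element $\iota(y)\in\widetilde\cR$ has $R$-coefficients that are orbit-constant, and these are exactly the coefficients of $y$. This gives PBW downstairs.

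Conversely, suppose PBW holds downstairs. The pullback of $e_c$ is $\sum_{\tilde c\mapsto c}e_{\tilde c}$, and multiplying by the idempotents $e_{\tilde c}\in\widetilde D_R$ reaches the individual summands. It therefore suffices to show that $\widetilde\cR$ is spanned over $\widetilde D_R$ by $\iota(\cR)$ on the right. Since $\widetilde\cR$ is generated by $\widetilde D_R$ and the $\widetilde\psi_i=\iota(\psi_i)$, mixed relations move coefficients left, and $\widetilde D_R\,\iota(\cR)$ is closed under left multiplication by $\widetilde\psi_i$. Thus $\widetilde\cR=\widetilde D_R\,\iota(\cR)$, which is spanned by the $\widetilde\psi_we_{\tilde c}$ with $R$-coefficients. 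The case "PBW upstairs" also yields $\iota(\cR)=\widetilde\cR^G$. In the remaining case I would prove $\widetilde\cR^G\subseteq\iota(\cR)$ by averaging: the Reynolds operator $\frac1{|G|}\sum_g g$ (we are over $\mathbb C$) sends $\widetilde D_R\,\iota(\cR)$ onto $\iota(\cR)$, because averaging coefficients in $\widetilde D_R$ lands in pulled-back $D_R$.

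For the ideals I expect the main obstacle: the equality $\widetilde I\cap\cR=I$, since a priori $\widetilde I$ could contain more invariants. The plan is to use the decomposition $\widetilde\cR=\widetilde D_R\,\iota(\cR)$ and, symmetrically, $\widetilde\cR=\iota(\cR)\,\widetilde D_R$. Together these give $\widetilde I=\widetilde\cR\,\iota(I)\,\widetilde\cR=\widetilde D_R\,\iota(I)\,\widetilde D_R$, because the pulled-back generators commute with $\widetilde D_R$ and can be pushed past it. Applying the Reynolds operator, $\widetilde I^G$ is the average of sums $a\,\iota(z)\,b$ with $a,b\in\widetilde D_R$ and $z\in I$.

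The delicate point is that $a\,\iota(z)\,b$ mixes idempotents: $b$ sits at source $\tilde c$, and $\iota(z)$ carries $\tilde c$ to $w\tilde c$. I would handle it as follows. Write $\iota(z)=\sum_{h}\sum e_{h\tilde c'}\,\iota(z)\,e_{\tilde c}$, decomposed by the sheet-changing element $h\in G$ determined by $\tilde c$ and the target; freeness makes $h$ unique on each component. Each sheet-preserving component is the image of an element of the ideal generated in $\cR$: multiplying $z$ by idempotents of $D_R$ stays in $I$, and the components are recovered using the $\widetilde D_R$-decomposition. Averaging then yields $\iota$ of an element of $I$.

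This gives $\widetilde I^G=\iota(I)$, hence $\widetilde I\cap\cR=I$. Exactness of $G$-invariants in characteristic zero then gives $\cR/I\cong\widetilde\cR^G/\widetilde I^G\cong(\widetilde\cR/\widetilde I)^G$.
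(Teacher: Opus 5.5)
Everything before the ideal step is the paper's argument and is fine. That covers the normal form $\widetilde{\cR}=\widetilde D_R\,\iota(\cR)=\iota(\cR)\,\widetilde D_R$ (the paper's $\cR E=E\cR$), averaging for $\widetilde{\cR}^G=\iota(\cR)$, and both PBW directions. The averaging step needs no PBW case split, since the normal form was proved without PBW.

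The gap is in $\widetilde I\cap\cR=I$. Your identity $\widetilde I=\widetilde D_R\,\iota(I)\,\widetilde D_R$ follows from the two normal forms alone, so it holds for \emph{every} two-sided ideal of $\cR$. Whatever comes next must therefore use that $I$ is generated by elements of $D_R$, and your sheet argument never does.

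Multiplying $z$ by idempotents of $D_R$ only isolates $e_dze_c$, which still contains every sheet change $h$. Fix a section of $\pi$. Then the average of a fine component $e_{\tilde d}\,\iota(z)\,e_{\tilde c}$ is $|G|^{-1}\iota(z_h)$, where $z_h$ is the part of $e_dze_c$ with sheet change $h$. Claiming $z_h\in I$ is the inclusion $\widetilde I^G\subseteq\iota(I)$ applied to $e_{\tilde d}\,\iota(z)\,e_{\tilde c}\in\widetilde I$, which is exactly what you are trying to prove.

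Without the coefficient hypothesis the claim is false. Take type $A_1$, $\mathbb I=\mathbb Z/2\mathbb Z$, $d=0$, and let $\widetilde{\cC}$ be two points of residue $\overline1$ swapped by both $s$ and $G\cong\mathbb Z/2\mathbb Z$, so $\cC$ is a point and $\cR=P\rtimes\langle s\rangle$. For $I'=\langle 1-[s]\rangle$ one has $\cR/I'\cong\mathbb C$. Yet the sheet-preserving component of $1-[s]$ is $1\notin I'$. Indeed $e_{\tilde c}\,\iota(1-[s])\,e_{\tilde c}=e_{\tilde c}$, which forces $\widetilde{I'}=\widetilde{\cR}$.

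The missing step is a one-sided normal form for $\widetilde I$; this is where the commutation you mention is actually needed. Take $z=r\,g_\nu\,r'$ with $r,r'\in\cR$, and $b\in\widetilde D_R$.
\begin{enumerate}
\item Rewrite $b\,\iota(r)\in\widetilde D_R\,\iota(\cR)=\iota(\cR)\,\widetilde D_R$.
\item Commute the resulting coefficient factors past $\iota(g_\nu)$, inside the commutative ring $\widetilde D_R$.
\item Move them past $\iota(r')$ by the same normal form.
\end{enumerate}
This gives $\widetilde D_R\,\iota(I)\subseteq\iota(I)\,\widetilde D_R$, hence $\widetilde I=\iota(I)\,\widetilde D_R$, which is the paper's $\widetilde I=IE$.

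Now $\operatorname{Av}(\iota(z)\,b)=\iota(z)\operatorname{Av}(b)$ with $\operatorname{Av}(b)\in\widetilde D_R^{\,G}=\iota(D_R)$. So $\widetilde I^G=\iota(I)$ with no sheet decomposition, and your exactness argument then gives the quotient isomorphism.
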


\begin{proof}
The coefficient pullback extends to an embedding
$D_K\rtimes W\hookrightarrow D_K(\widetilde{\cC})\rtimes W$.
The residue values are constant on each fiber of $\pi$, so the
crossing formulas are carried to the corresponding sums upstairs.
This proves the injectivity and the formulas for $\iota$.
We identify $\cR$ with its image and write $E$ for the coefficient
algebra $D_R(\widetilde{\cC})$.

Idempotent transport gives
\begin{equation}\label{des:eq:normal-form}
 \widetilde{\cR}=\cR E=E\cR.
\end{equation}
Indeed, move each fine residue idempotent to the end of a word in
the common crossings and coefficients; moving it in the opposite
direction proves the other equality.  Here products of subspaces
mean finite sums of products.  The Reynolds map
$\operatorname{Av}(a)=|G|^{-1}\sum_{g\in G}g(a)$ is a
$\cR$-bimodule map, and $E^G=D_R$.  Averaging
\eqref{des:eq:normal-form} proves
$\widetilde{\cR}^G=\cR$.

If PBW holds upstairs, $G$ permutes the basis elements
$\widetilde\psi_w e_{\tilde c}$ and fixes their coefficients.
An invariant expansion therefore has constant coefficients on
each residue fiber.  Its orbit sums are precisely
$\iota(\psi_w e_c)$, proving PBW downstairs.  Conversely, PBW
downstairs and~\eqref{des:eq:normal-form} give spanning upstairs
by $\widetilde\psi_w e_{\tilde c}$.  Their independence is
Lemma~\ref{rc:lem:leading}.  This proves the equivalence.

Each $g_\nu$ commutes with $E$.  Using~\eqref{des:eq:normal-form},
we obtain
\[
 \widetilde I
 =\sum_\nu\cR E g_\nu\cR E
 =\sum_\nu\cR g_\nu\cR E=I E.
\]
Consequently $\operatorname{Av}(\widetilde I)=I$, and an element
of $\widetilde I\cap\cR$ belongs to $I$.  The reverse inclusion
is immediate.  Averaging a lift of a $G$-fixed quotient class proves
surjectivity in~\eqref{des:eq:quotient}; the intersection equality
proves injectivity.  This argument uses the coefficient generators,
not an unproved assertion about contraction of arbitrary ideals.
\end{proof}

\begin{corollary}[The skew-group form of descent]\label{des:cor:matrix}
Under Theorem~\ref{des:thm:cover}, put $r=|G|$.  There are
noncanonical isomorphisms
\[
 \widetilde{\cR}\rtimes G\cong\Mat_r(\cR),\qquad
 (\widetilde{\cR}/\widetilde I)\rtimes G
                         \cong\Mat_r(\cR/I).
\]
In particular, the quotient $\cR/I$ is Morita equivalent to the
displayed skew-group algebra.
\end{corollary}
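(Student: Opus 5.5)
The plan is to realize both isomorphisms as the standard Galois-descent identity for a free action by a finite group. The key structural input is the orthogonal family of idempotents $\varepsilon_g=\sum_{\tilde c\in S}e_{g\tilde c}$ for $g\in G$, where $S\subseteq\widetilde{\cC}$ is a fixed set of representatives of the $G$-orbits. This choice of $S$ is the noncanonical part. Since $G$ acts freely, $\widetilde{\cC}=\bigsqcup_{g}gS$, so the $\varepsilon_g$ are orthogonal idempotents in $E=D_R(\widetilde{\cC})$ summing to $1$. Moreover $h(\varepsilon_g)=\varepsilon_{hg}$, i.e.\ $[h]\varepsilon_g[h]^{-1}=\varepsilon_{hg}$ inside $\widetilde{\cR}\rtimes G$.

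First, in $B=\widetilde{\cR}\rtimes G$, I will show that the $\varepsilon_g$ are pairwise conjugate via the units $[hg^{-1}]$, which take $\varepsilon_g$ to $\varepsilon_h$. A complete set of $r$ orthogonal, mutually conjugate idempotents summing to $1$ gives $B\cong\Mat_r(\varepsilon_1B\varepsilon_1)$, with matrix units $E_{h,g}=[hg^{-1}]\varepsilon_g$; this is the standard Peirce argument. It then remains to identify the corner $\varepsilon_1B\varepsilon_1$ with $\cR$.

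For the corner, I will send $a\in\cR=\widetilde{\cR}^G$ to $\varepsilon_1 a\,(\sum_g[g])\varepsilon_1$, or more cleanly to $\sum_{g}\varepsilon_1 a[g]\varepsilon_1$, and check multiplicativity using $G$-invariance of $a$. The key computation is
\[
 \varepsilon_1\Bigl(\sum_{g}\widetilde a_g[g]\Bigr)\varepsilon_1
 =\sum_g\varepsilon_1\widetilde a_g\,\varepsilon_{g}[g].
\]
Every element of the corner therefore has the form $\sum_g \varepsilon_1 b_g\varepsilon_g[g]$ with $b_g\in\widetilde{\cR}$. The map $\varepsilon_1 B\varepsilon_1\to\cR$ is then given by $\sum_g\varepsilon_1b_g\varepsilon_g[g]\mapsto\sum_{h}h\bigl(\varepsilon_1 b_g\varepsilon_g\bigr)$, summed appropriately. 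Equivalently, and more robustly, I will use the normal form $\widetilde{\cR}=\cR E$ from \eqref{des:eq:normal-form} and the decomposition $E=\bigoplus_g\varepsilon_g D_R$, via the isomorphism $E\cong D_R\otimes\kk G^{*}$. These give $\widetilde{\cR}\cong\bigoplus_{g}\cR\varepsilon_g$ as a left $\cR$-module, on which $G$ acts by permuting summands regularly. This shows $B\cong\End_{\cR}(\widetilde{\cR})$, where $\widetilde{\cR}$ is a free right $\cR$-module of rank $r$, and hence $B\cong\Mat_r(\cR)$. This is the classical Galois criterion. The point needing care is that $\widetilde{\cR}=\bigoplus_g\varepsilon_g\cR$ is indeed direct and free. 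I will obtain directness from the PBW-independent injectivity in Theorem~\ref{des:thm:cover}: an element $\sum_g\varepsilon_g a_g$ with $a_g\in\cR$ vanishes iff each $a_g$ vanishes, because $\varepsilon_g a_g$ has coefficients supported on the sheet $gS$ at the source idempotent and $a_g$ is fiber-constant.

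The main obstacle is verifying that the natural map $B\to\End_{\cR}(\widetilde{\cR}_{\cR})$, given by left multiplication together with the $G$-action, is bijective rather than merely injective. For injectivity I will use the linear independence of distinct automorphisms, in the style of the faithfulness argument in Proposition~\ref{rc:prop:basic}, applied to $G$ acting freely on idempotents. For surjectivity I will use the matrix units $E_{h,g}$ constructed above, which hit all elementary endomorphisms of the free module with basis $(\varepsilon_g)$.

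Finally, for the quotient statement I pass the ideal through this isomorphism. By Theorem~\ref{des:thm:cover}, $\widetilde I=IE=\bigoplus_g I\varepsilon_g$, which is $G$-stable. Hence $\widetilde I\rtimes G$ is a two-sided ideal of $B$, and it corresponds to $\Mat_r(I)$ under the matrix-unit isomorphism, since the corner of $\widetilde I\rtimes G$ at $\varepsilon_1$ is $I$. Quotienting gives $(\widetilde{\cR}/\widetilde I)\rtimes G\cong\Mat_r(\cR/I)$, and Morita equivalence is immediate from the matrix form.
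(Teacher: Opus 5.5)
Your proof is correct, but your main route is organized differently from the paper's.

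The paper proves one ring lemma for an arbitrary $G$-algebra $A$ with $B=A^G$, and applies it separately to $A=\widetilde{\cR}$ and $A=\widetilde{\cR}/\widetilde I$. The elements $b_g(a)=\sum_h h(q_ga)$ show that $A_B$ is free with basis $(q_g)$. The action map $A\rtimes G\to\End_B(A)$, $a[h]\mapsto(v\mapsto a\,h(v))$, is then shown to be bijective, with surjectivity coming from the elements $\sum_t q_gc\,t(q_h)[t]$, $c\in B$. The quotient case needs only $(\widetilde{\cR}/\widetilde I)^G\cong\cR/I$ from Theorem~\ref{des:thm:cover}.

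You instead stay inside $\widetilde{\cR}\rtimes G$ and use the matrix units $E_{h,g}=[hg^{-1}]\varepsilon_g$. You identify the corner $\varepsilon_1(\widetilde{\cR}\rtimes G)\varepsilon_1$ with $\widetilde{\cR}^G=\cR$ through $a\mapsto\varepsilon_1 a\bigl(\sum_g[g]\bigr)\varepsilon_1$. Multiplicativity uses invariance of $a$, injectivity follows from $\varepsilon_h a=h(\varepsilon_1a)$, and your averaging map is an inverse. This avoids proving freeness and bijectivity of the action map; freeness comes out afterwards. The two routes meet exactly: the paper's surjectivity elements are $\varepsilon_gc\bigl(\sum_t[t]\bigr)\varepsilon_h=E_{g,1}\,\varepsilon_1c\bigl(\sum_t[t]\bigr)\varepsilon_1\,E_{1,h}$.

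For the quotient you push the $G$-stable ideal $\widetilde I\rtimes G$ through the matrix isomorphism. Its $\varepsilon_1$-corner is $I$ precisely because $\widetilde I^G=\widetilde I\cap\cR=I$, which is the same input the paper uses to identify the invariants of the quotient.

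Your alternative ``Galois'' sketch is the paper's argument, but two points there need correcting. First, the relevant module structure is the right one, $\widetilde{\cR}=\bigoplus_g\varepsilon_g\cR$. The action $a[h]\cdot v=a\,h(v)$ commutes with right multiplication by invariants, not with left multiplication, so the left decomposition $\bigoplus_g\cR\varepsilon_g$ is not the one that matters. Second, the matrix units $E_{h,g}$ alone do not give surjectivity onto $\End_{\cR}(\widetilde{\cR})$. Elements of $\cR$ move residue idempotents, so they do not commute with the $\varepsilon_g$. An entry $c\in\cR$ in position $(g,h)$ must be realized by the element $\varepsilon_gc\bigl(\sum_t[t]\bigr)\varepsilon_h$ above. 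Your Peirce argument already supplies these elements, so nothing is missing from the proof as a whole.
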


\begin{proof}
We record the elementary ring argument.  Write $A$ for either
upstairs algebra and $B=A^G$.  Choose one fine residue idempotent
from each $G$-orbit and let $q$ be their sum.  The elements
$q_g=g(q)$ are orthogonal and sum to $1$.  For $a\in A$, set
\[
 b_g(a)=\sum_{h\in G}h(q_g a)\in B.
\]
Orthogonality gives $q_g b_g(a)=q_g a$, and hence
$a=\sum_g q_g b_g(a)$.  If $\sum_g q_g b_g=0$ with $b_g\in B$,
multiplication by $q_g$, followed by all $G$-translates and their
sum, gives $b_g=0$.  Thus $A_B$ is free with basis $(q_g)$.

The action map
$A\rtimes G\to\End_B(A)$ sends $a[h]$ to $v\mapsto a h(v)$.
It is onto: the map $q_h b\mapsto q_g c b$, zero on the other
basis summands, for $c\in B$, is the action of
\[
 \sum_{t\in G}q_g c\,t(q_h)[t].
\]
If $\sum_t a_t[t]$ acts as zero, apply it to $q_h$ and
multiply on the right by $q_{t_0h}$.  This gives
$a_{t_0}q_{t_0h}=0$; summing over $h$ proves $a_{t_0}=0$.
The action map is therefore an isomorphism, and the chosen right
$B$-basis identifies its target with $\Mat_r(B)$.
The same orthogonal-idempotent identities survive in the quotient;
Theorem~\ref{des:thm:cover} identifies its invariant algebra.
\end{proof}

\subsection{The zero-root cover and additive PBW in every finite type}

Fix $c_0\in\cC$ and write
\begin{equation}\label{des:eq:zero-root-data}
 \begin{gathered}
 H=\Stab_W(c_0),\qquad \rho_0=\rho_{c_0},\qquad
 \Psi=\{\alpha\in\Phi:\rho_0(\alpha)=0\},\\
 K_0=W_\Psi=\langle s_\alpha:\alpha\in\Psi\rangle.
 \end{gathered}
\end{equation}
The subscript in $K_0$ distinguishes this group from the fraction
field $K$ used in the operator construction.

\begin{proposition}[The canonical zero-root cover]
\label{des:prop:zero-root-cover}
The set $\Psi$ is a closed root subsystem, $K_0$ is normal in $H$,
and
\[
 \widetilde{\cC}=W/K_0\longrightarrow W/H=\cC
\]
is a residue cover with deck group $F=H/K_0$.  Its residues are
$\rho_{wK_0}=\rho_0\circ w^{-1}$.  On this cover,
\begin{equation}\label{des:eq:fixed-zero-cover}
 s_i(wK_0)=wK_0\quad\Longleftrightarrow\quad
 \rho_0(w^{-1}\alpha_i)=0.
\end{equation}
In particular, Theorem~\ref{des:thm:cover} and
Corollary~\ref{des:cor:matrix} apply to this cover.
The descent statements require no commutativity hypothesis on $F$.
\end{proposition}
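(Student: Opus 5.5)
The plan is to verify, in order, closedness of $\Psi$, normality of $K_0$ in $H$, the residue-cover axioms, and then the fixed-point criterion \eqref{des:eq:fixed-zero-cover}.

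\emph{Closedness and normality.} Since $\rho_0$ is a homomorphism $Q\to\mathbb I$, the set $\Psi=\Phi\cap\ker\rho_0$ is the intersection of $\Phi$ with a sublattice. Hence it is closed under negation, under sums that are roots, and under every reflection $s_\alpha$ with $\alpha\in\Psi$: indeed $s_\alpha\beta=\beta-\langle\beta,\alpha^\vee\rangle\alpha$ again lies in the kernel. So $\Psi$ is a closed root subsystem and $K_0=W_\Psi$ is its reflection group.

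For $K_0\subseteq H$, I will show each $s_\alpha$ with $\alpha\in\Psi$ fixes $c_0$. Write $\alpha=u\alpha_i$ for a simple root $\alpha_i$ and $u\in W$. Equivariance gives $m_i(u^{-1}c_0)=\rho_{c_0}(u\alpha_i)=0$, so admissibility yields $s_iu^{-1}c_0=u^{-1}c_0$. Therefore $s_\alpha c_0=us_iu^{-1}c_0=c_0$.

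For normality, take $h\in H$. Then $\rho_0\circ h^{-1}=\rho_{hc_0}=\rho_0$, so $h$ preserves $\ker\rho_0$, hence $h\Psi=\Psi$, and so $hs_\alpha h^{-1}=s_{h\alpha}\in K_0$.

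\emph{Cover structure.} Set $\widetilde{\cC}=W/K_0$ with $\rho_{wK_0}=\rho_0\circ w^{-1}$. This is well defined because $K_0\subseteq H$ fixes $\rho_0$, and it is $W$-equivariant with constant residues along cosets of $H$. The group $F=H/K_0$ acts on the right by $wK_0\cdot hK_0=whK_0$. This action is well defined by normality, free, commutes with the left $W$-action, and preserves residues, since $\rho_0\circ(wh)^{-1}=\rho_0\circ w^{-1}$. The quotient by $F$ is $W/H\cong\cC$ via $wH\mapsto wc_0$, and under this identification the residues match.

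\emph{The fixed-point criterion.} I expect this to be the main step, since it is where the choice of $K_0$ rather than $H$ matters. We have $s_iwK_0=wK_0$ if and only if $w^{-1}s_iw=s_{w^{-1}\alpha_i}\in K_0$.

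If $\rho_0(w^{-1}\alpha_i)=0$, then $w^{-1}\alpha_i\in\Psi$, so the reflection lies in $K_0$. Conversely, suppose a reflection $s_\beta$ lies in $W_\Psi$. The standard fact that the reflections of a reflection subgroup $W_\Psi$ are exactly the $s_\gamma$ with $\gamma\in\Psi$ (Dyer, Deodhar) gives $\beta\in\pm\Psi$, hence $\rho_0(\beta)=0$. If I want to avoid that citation, I would instead use that $W_\Psi$ fixes pointwise the orthogonal complement of $\operatorname{span}\Psi$ in $V_{\mathbb R}$. Then $\beta\in\operatorname{span}_{\mathbb R}\Psi\cap\Phi$, and the closed-subsystem property needs more care, because $\mathbb{R}$-span is weaker than the lattice kernel. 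So the citation is the safer route.

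\emph{Admissibility of the cover.} The criterion \eqref{des:eq:fixed-zero-cover} in particular gives admissibility of $\widetilde{\cC}$. Transitivity and finiteness are clear, so Theorem~\ref{des:thm:cover} and Corollary~\ref{des:cor:matrix} apply with $G=F$. No commutativity of $F$ is used anywhere.
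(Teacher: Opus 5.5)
Your proposal is correct and follows the paper's proof essentially step for step: closedness from $\Psi=\Phi\cap\ker\rho_0$, $K_0\subseteq H$ by conjugating a zero-residue root to a simple root and applying admissibility, normality because $H$ preserves $\rho_0$, the free right action of $H/K_0$, and the criterion \eqref{des:eq:fixed-zero-cover} via the same standard fact the paper invokes, namely that the reflections in $W_\Psi$ are exactly the $s_\gamma$ with $\gamma\in\Psi$. The differences are cosmetic: you justify well-definedness of $\rho_{wK_0}$ via $K_0\subseteq H$ rather than via $\rho_0\circ s_\beta=\rho_0$, and you leave implicit the passage from the right $F$-action to a left deck action by taking inverses, which the paper mentions.
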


\begin{proof}
The kernel condition is closed under negation, root addition when
the sum is a root, and reflections in its roots.  Thus $\Psi$ is
a closed root subsystem.  If $\beta\in\Psi$, choose $w$ and $i$
with $w\beta=\alpha_i$.  Admissibility at $wc_0$ gives
$s_iwc_0=wc_0$, hence $s_\beta\in H$.  This proves $K_0\subseteq H$.
Every $h\in H$ preserves $\rho_0$ and therefore normalizes both
$\Psi$ and $K_0$.  Also $\rho_0\circ s_\beta=\rho_0$ for
$\beta\in\Psi$, so the displayed residue map is well defined.

Right multiplication by $H/K_0$ on $W/K_0$ is free, commutes with
the left $W$-action, and preserves residues.  Taking inverses in
the right action gives a left deck action if desired.  The
reflections of the reflection subgroup $W_\Psi$ are precisely
$\{s_\beta:\beta\in\Psi\}$.  Consequently
$w^{-1}s_iw\in K_0$ if and only if $w^{-1}\alpha_i\in\Psi$,
which proves~\eqref{des:eq:fixed-zero-cover} and admissibility.
\end{proof}

\begin{theorem}[Additive PBW by Sauter's construction and descent]
\label{des:thm:additive-all}
Let $\Phi$ be any finite reduced crystallographic root system.
For every admissible residue datum and every uniform residue
quiver, the additive QDA over $P=\Sym_{\mathbb C}(V)$ has basis
\[
 \{\psi_w e_c:w\in W,\ c\in\cC\}
\]
over $P$, for arbitrary choices of reduced words.  It has the
presentation of Theorem~\ref{rc:thm:pbw}.
On the cover of Proposition~\ref{des:prop:zero-root-cover}, the
operator algebra is a Borel-case generalized quiver Hecke algebra
of Sauter.  Thus the PBW input upstairs is her basis theorem.
\end{theorem}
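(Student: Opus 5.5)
The plan is to reduce to the canonical zero-root cover and then descend. By Proposition~\ref{des:prop:zero-root-cover}, $\widetilde{\cC}=W/K_0\to\cC=W/H$ is a finite free residue cover with deck group $F=H/K_0$. By Theorem~\ref{des:thm:cover}, PBW for $\cR$ with given reduced words is equivalent to PBW for $\widetilde{\cR}$ with the same words. It therefore suffices to prove PBW upstairs. The presentation then follows from Theorem~\ref{rc:thm:pbw}: once PBW holds, every braid difference~\eqref{rc:eq:ranktwo} expands with coefficients in $P$. Its group support lies in $W_{ij}$, so Bruhat triangularity together with closure of parabolic subgroups under subwords (as in the proof of Theorem~\ref{str:thm:sequence-pbw}) identifies these coefficients with the $b^{ij}_{v,c}$. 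Hence Hypothesis~\ref{rc:hyp:integral} holds. Independence of reduced words is part of Theorem~\ref{rc:thm:pbw}.

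Upstairs, I would match $\widetilde{\cR}$ with Sauter's Borel-case generalized quiver Hecke algebra, in three steps.
\begin{enumerate}[label=\textup{(\roman*)},leftmargin=*]
\item Identify the data. The reductive group is taken with Weyl group $W$, the centralizer subgroup is the one with root system $\Psi$ (Weyl group $K_0$), and the index set is $W/K_0$. The key input is~\eqref{des:eq:fixed-zero-cover}: a simple reflection fixes $wK_0$ exactly when $\rho_0(w^{-1}\alpha_i)=0$, that is, exactly when the crossing is a divided difference.
\item Encode the quiver weights. The weights of the nonzero-residue crossings are realized as Sauter's root weights, namely the representation data assigning to a root $\beta$ with $\rho_{wK_0}(\beta)=r\neq0$ the multiplicity $d(-r)$ of the weight $-\beta$. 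In the convolution description, the crossing at a non-fixed index is then a reflection multiplied by the Euler class $\prod y_{-\alpha_i}^{d(m_i)}$, and at a fixed index it is the Demazure operator.
\item Compare operators. Verify that Sauter's faithful polynomial representation on $\bigoplus_{\tilde c}P$ agrees, generator by generator, with the action in Proposition~\ref{rc:prop:basic}. Since both representations are faithful, the algebras coincide, and her basis theorem gives $\widetilde{\cR}=\bigoplus_{\tilde c,w}P\widetilde\psi_w e_{\tilde c}$, possibly after a reduced-word adjustment that is unitriangular over $P$.
\end{enumerate}

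The main obstacle is step~(ii), namely checking that the chosen representation is compatible with $W$-equivariance and with the sign convention $d(-m_\beta(j))$ of Lemma~\ref{rc:lem:leading}. One must confirm that the weight multiset at $wK_0$ is $w$ applied to the multiset at $K_0$. This holds because $\rho_{wK_0}=\rho_0\circ w^{-1}$. One must also confirm that it is $K_0$-invariant, which holds because $\rho_0\circ s_\beta=\rho_0$ for $\beta\in\Psi$, so it defines a genuine representation of the centralizer. Roots in $\Psi$ receive weight zero, which matches the Demazure case. If Sauter's normalization differs by signs or by equivariant units, Proposition~\ref{str:prop:unit-change} absorbs the discrepancy. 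Finally, I would verify that the quadratic relation upstairs reproduces $y_i^{d(r)}y_{-\alpha_i}^{d(-r)}$, which serves as a check on orientation.
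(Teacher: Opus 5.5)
Your overall route is the paper's: pass to the zero-root cover $W/K_0$, match it with Sauter's Borel-case algebra via \eqref{des:eq:fixed-zero-cover}, descend by Theorem~\ref{des:thm:cover}, and get the presentation from integrality of the braid coefficients. Your global triangular-support argument for the presentation is fine. It is a mild shortcut compared with the paper, which reruns the Sauter-plus-descent argument on each rank-two parabolic and its residue orbits.

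The gap is in step~(ii), which you name as the obstacle but do not resolve. Sauter's theorem needs an actual connected reductive group $G_0\supseteq\mathbf T$ with Weyl group $K_0$, and an actual $G_0$-module $\mathcal V$ paired with copies of $\mathfrak u$. Her indices are $K_0\backslash W$, matched with $W/K_0$ by inversion. The index enters only through which weight of this single module is read off, not through $w$-translated multisets. You never say what ``the centralizer'' is. Your justification for the module is also wrong: $K_0$-invariance of a weight multiset does not make it the weight multiset of a representation. For $SL_2$, the multiset $\{\alpha,-\alpha\}$ is Weyl-invariant but occurs in no representation.

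The missing construction is the following.
\begin{enumerate}[label=\textup{(\alph*)},leftmargin=*]
\item Take $\mathbf G$ adjoint, so that $X^*(\mathbf T)=Q$, and put $D=\Hom_{\mathbb Z}(\rho_0(Q),\mathbb C^\times)\subseteq\mathbf T$.
\item Characters separate the points of $\rho_0(Q)$, torsion included. Hence a root is trivial on $D$ exactly when it lies in $\Psi$, and $G_0=C_{\mathbf G}(D)^\circ$ has root system $\Psi$ and Weyl group $K_0$.
\item The $D$-weight spaces $\mathfrak g_r=\bigoplus_{\rho_0(\alpha)=r}(\operatorname{Lie}\mathbf G)_\alpha$ are $G_0$-modules. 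So $\mathcal V=\bigoplus_{r\neq0}\mathfrak g_r^{\oplus d(-r)}$, or the zero module when this vanishes, is permitted input.
\end{enumerate}
In step~(iii), faithfulness alone does not identify the two algebras. You also need Sauter's identification of her algebra with the one generated by coefficient operators and simple crossings; the paper uses her Corollary~0.1 for this.

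Orientation also needs more care than you allow. With the $\mathcal V$ above, Sauter's crossing into the target $wK_0$ has exponent $h_{wK_0}(i)$. This is the multiplicity of the weight $w^{-1}\alpha_i$ in $\mathcal V$, namely $d(-\rho_0(w^{-1}\alpha_i))$. The source $s_iwK_0$ has residue $-\rho_0(w^{-1}\alpha_i)$, so this is exactly our exponent, and the two crossings differ only by $(-1)^{h_{wK_0}(i)}$.

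Your assignment gives a weight $\gamma$ the multiplicity $d(\rho_0(\gamma))$, that is, $\bigoplus_r\mathfrak g_r^{\oplus d(r)}$. Under the same reading of her weight formula, this produces the algebra for the opposite quiver $r\mapsto d(-r)$. Neither of your safeguards catches this:
\begin{enumerate}[label=\textup{(\alph*)},leftmargin=*]
\item Proposition~\ref{str:prop:unit-change} cannot absorb $x_{\alpha_i}^{d(m)}$ versus $x_{\alpha_i}^{d(-m)}$, since these differ by a non-unit when $d(m)\neq d(-m)$.
\item The quadratic check is insensitive to orientation up to sign, because $y_i^{d(r)}y_{-\alpha_i}^{d(-r)}=(-1)^{d(-r)}x_i^{d(r)+d(-r)}$ in additive coordinates.
\end{enumerate}
You must compare exponents crossing by crossing. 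The slip is repairable: replacing $d$ by $r\mapsto d(-r)$ and $\rho$ by $-\rho$ gives literally the same operator algebra on the same zero-root cover. As written, however, your matching is not verified.
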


\begin{proof}
We specify the geometric data to make the comparison checkable.
Let $\mathbf G$ be the connected adjoint semisimple complex group
with root system $\Phi$, and choose
$\mathbf T\subseteq\mathbf B\subseteq\mathbf G$.
Then $X^*(\mathbf T)=Q$.  The group $J=\rho_0(Q)$ is finitely
generated.  The surjection $Q\to J$ defines a closed diagonalizable
subgroup
\[
 D=\Hom_{\mathbb Z}(J,\mathbb C^\times)\subseteq\mathbf T.
\]
Characters separate the points of $J$, including its torsion.
Hence the roots of $G_0=C_{\mathbf G}(D)^\circ$ are exactly
$\Psi$, and its Weyl group is $K_0$.
This allows disconnected $D$; only the centralizer's identity
component is used.

For each nonzero residue $r$ occurring on a root, put
\[
 \mathfrak g_r=
 \bigoplus_{\substack{\alpha\in\Phi\\\rho_0(\alpha)=r}}
                 (\operatorname{Lie}\mathbf G)_\alpha,
 \qquad
 \mathcal V=\bigoplus_{r\ne0}\mathfrak g_r^{\oplus d(-r)}.
\]
Each $\mathfrak g_r$ is a $G_0$-submodule, since it is a
$D$-weight space and $G_0$ centralizes $D$.
Use Sauter's Borel construction with these summands and one copy
of $\mathfrak u=\operatorname{Lie}(\operatorname{rad}_u\mathbf B)$
for each summand.  When $\mathcal V=0$ one may instead use the
single zero submodule and one copy of $\mathfrak u$.
These are permitted data in~\cite[\S0.1]{Sauter}.

Her indices $K_0\backslash W$ identify with $W/K_0$ by inversion.
For a target index $wK_0$, use $x=w^{-1}$ in her weight formula.
The multiplicity of $x\alpha_i$ in $\mathcal V$ is
\[
 h_{wK_0}(i)=
 \begin{cases}
 0,&\rho_0(w^{-1}\alpha_i)=0,\\
 d(-\rho_0(w^{-1}\alpha_i)),&\rho_0(w^{-1}\alpha_i)\ne0.
 \end{cases}
\]
This is independent of the representative $wK_0$.
By~\eqref{des:eq:fixed-zero-cover}, her fixed-index crossing
is $(s_i-1)/x_{\alpha_i}$, precisely our zero-residue crossing.
At a nonfixed target, her crossing from $s_iwK_0$ is
$x_{\alpha_i}^{h_{wK_0}(i)}s_i$.  The source residue is
$-\rho_0(w^{-1}\alpha_i)$, so our crossing differs from this
one by the scalar $(-1)^{h_{wK_0}(i)}$.
The coefficient multiplication operators coincide as well.
Thus~\cite[Corollary 0.1]{Sauter} identifies the two operator
algebras, with these stated sign changes.

Sauter's reduced-product basis
\cite[Lemma 19 and its proof]{Sauter} supplies PBW upstairs;
the scalar changes preserve that basis.  Descent gives PBW on
$\cC$.  To obtain the presentation, restrict to any rank-two
parabolic subgroup and any of its residue orbits.  The same
argument gives a polynomial reduced-product basis there.
Its triangular leading terms show that the difference of the
two longest braid words has polynomial coefficients on the
lower reduced products.  Uniqueness of triangular elimination
identifies these with~\eqref{rc:eq:ranktwo}.  Thus
Hypothesis~\ref{rc:hyp:integral} holds and
Theorem~\ref{rc:thm:pbw} applies.  Equivalently, the additional
$B_2$ and $G_2$ conditions in Sauter's presentation are automatic
here, since every fixed-index multiplicity is zero.
\end{proof}

The theorem gives a precise dependence on earlier geometry:
the additive cover is a Sauter algebra, and the general additive
QDA is its finite fixed algebra.  The type-$A$ argument below
remains useful because it classifies the cover by sequences and
also proves PBW for uncompleted multiplicative coefficients.

\subsection{All admissible type-\texorpdfstring{$A$}{A} residue data}

Use $\Phi=A_{n-1}$, $n\ge2$, with roots
$\epsilon_i-\epsilon_j$ and $W=\mathfrak S_n$.
For a sequence $\mathbf a=(a_1,\ldots,a_n)\in\mathbb I^n$,
write $\widetilde{\cC}=W\mathbf a$ and define its period group
\begin{equation}\label{des:eq:periods}
 T(\mathbf a)=\{t\in\mathbb I:
       \{a_1+t,\ldots,a_n+t\}=\{a_1,\ldots,a_n\}
                              \text{ as multisets}\}.
\end{equation}
This is a finite abelian group.  It acts freely on sequences by
simultaneous translation and commutes with $W$.

\begin{theorem}[Classification by sequence covers]
\label{des:thm:classification}
Every admissible type-$A_{n-1}$ residue datum is isomorphic to
\[
 \cC=(W\mathbf a)/T
 \quad\text{for a subgroup }T\le T(\mathbf a),
\]
with root residues $m_{\epsilon_i-\epsilon_j}([\mathbf a])=a_i-a_j$.
Conversely every such quotient is admissible.  If $c_0$ corresponds
to $[\mathbf a]$, and
$K=\Stab_W(\mathbf a)$, $H=\Stab_W(c_0)$, then
\begin{equation}\label{des:eq:stabilizers}
 K\triangleleft H,\qquad H/K\cong T,\qquad |T|\mid n.
\end{equation}
If the residue map $\rho$ is injective, then $T=T(\mathbf a)$.
\end{theorem}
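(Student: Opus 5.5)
Given an admissible datum $(\cC,\rho)$ for $W=\mathfrak S_n$, the plan is to lift each element to a sequence of residues and then measure the failure of injectivity by a translation group.

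\textbf{Step 1: lifting to a sequence.} Fix $c_0\in\cC$ and write $\rho_0=\rho_{c_0}$. Since $\alpha_i=\epsilon_i-\epsilon_{i+1}$ is a $\mathbb Z$-basis of $Q$, $\rho_0$ is determined by the values $\rho_0(\alpha_i)$. Define $\mathbf a$ by $a_n=0$ and $a_i=\sum_{k\ge i}\rho_0(\alpha_k)$. Then $\rho_0(\epsilon_i-\epsilon_j)=a_i-a_j$ for all $i,j$.

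\textbf{Step 2: comparing stabilizers.} Set $K=\Stab_W(\mathbf a)$. This is the Young subgroup generated by the transpositions $(ij)$ with $a_i=a_j$, that is, by the reflections $s_\beta$ with $\beta\in\Psi$. So $K=W_\Psi=K_0$. Proposition~\ref{des:prop:zero-root-cover} then gives $K\triangleleft H$, where $H=\Stab_W(c_0)$.

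\textbf{Step 3: the translation map.} For $h\in H$, equivariance gives $\rho_0\circ h^{-1}=\rho_0$. Hence $a_{h^{-1}(i)}-a_{h^{-1}(j)}=a_i-a_j$ for all $i,j$. It follows that $h\mathbf a=\mathbf a+t_h$ for a unique $t_h\in\mathbb I$. This defines a map $\tau\colon H\to\mathbb I$, $h\mapsto t_h$.
\begin{enumerate}[label=(\roman*)]
\item $\tau$ is a homomorphism, because translations commute with permutations: $hh'\mathbf a=h(\mathbf a+t_{h'})=\mathbf a+t_h+t_{h'}$.
\item Its image lies in $T(\mathbf a)$, since $h$ permutes the entries of $\mathbf a$.
\item Its kernel is $K$.
\end{enumerate}
Put $T=\tau(H)$. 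Then $H/K\cong T\le T(\mathbf a)$.

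\textbf{Step 4: identifying the datum.} The map $W\mathbf a\to\cC$, $w\mathbf a\mapsto wc_0$, is well defined because $K\subseteq H$. It is surjective by transitivity of $\cC$. Its fibres are exactly the $T$-orbits: $w\mathbf a$ and $w'\mathbf a$ have the same image iff $w^{-1}w'\in H$, iff $w'\mathbf a=w(\mathbf a+t)=w\mathbf a+t$ for some $t\in T$. Equivariance of $\rho$ then transports the residues to $a_i-a_j$ on the class $[\mathbf a]$.

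\textbf{Step 5: the converse.} Let $T\le T(\mathbf a)$. Then $T$ acts freely on $W\mathbf a$ (a nonzero translation moves every sequence), commutes with $W$, and preserves differences. Sequence orbits are admissible, since $a_i=a_{i+1}$ means $s_i$ fixes $\mathbf a$. The descent set-up at the start of the subsection on residue covers therefore shows that the quotient $(W\mathbf a)/T$ is admissible.

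\textbf{Step 6: $|T|$ divides $n$.} $T$ acts freely on the multiset of entries of $\mathbf a$. Take the multiplicity function $\mu$ of the multiset; it is $T$-invariant. The underlying finite support $S\subseteq\mathbb I$ is a union of $T$-cosets, each of size $|T|$. Summing $\mu$ coset by coset gives $n=\sum_{s\in S}\mu(s)=|T|\sum_{\text{cosets}}\mu(\text{rep})$. The same argument applied to $T(\mathbf a)$ shows it is finite. This is the one step where the multiset rather than the sequence matters, and I expect it to be the only delicate bookkeeping.

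\textbf{Step 7: injective $\rho$ forces $T=T(\mathbf a)$.} Suppose $\rho$ is injective and let $t\in T(\mathbf a)$. Then $\mathbf a+t=h\mathbf a$ for some $h\in W$. Both sequences have the same differences, so $\rho_{hc_0}=\rho_0\circ h^{-1}=\rho_0$. Injectivity gives $hc_0=c_0$, so $h\in H$ and $t=\tau(h)\in T$.
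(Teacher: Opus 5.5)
Your proposal is correct and follows the paper's proof essentially step for step. It uses the same normalized lift with $a_n=0$, the translation homomorphism $H\to T(\mathbf a)$ with kernel $K$, the identification of the fibres of $w\mathbf a\mapsto wc_0$ with $T$-orbits, the coset-counting argument for $|T|\mid n$, and the injectivity argument for $T=T(\mathbf a)$. The only difference is cosmetic: you get $K\subseteq H$ by identifying $K$ with $K_0=W_\Psi$ and citing the zero-root cover proposition, whereas the paper reruns that admissibility argument directly, and in both versions normality comes from $K$ being the kernel of $\tau$.
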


\begin{proof}
Choose $c_0\in\cC$ and set
$a_n=0$, $a_i=\rho_{c_0}(\epsilon_i-\epsilon_n)$ for $i<n$.
Then $a_i-a_j=\rho_{c_0}(\epsilon_i-\epsilon_j)$.
The group $K$ is generated by transpositions of positions with
equal entries.  Each such transposition fixes $c_0$: conjugate it
to a simple reflection and apply admissibility to its zero root
residue.  Hence $K\subseteq H$.

Two sequences have the same differences precisely when one is a
common translate of the other.  Therefore, for each $h\in H$,
there is a unique $t(h)\in\mathbb I$ with
$h\mathbf a=\mathbf a+t(h)$.  Permutations commute with
translations, so $t(hh')=t(h)+t(h')$.  The kernel is $K$.
Its image $T$ lies in $T(\mathbf a)$, and the map
\[
 W\mathbf a\longrightarrow\cC,\qquad w\mathbf a\longmapsto wc_0
\]
is well defined, surjective, and has exactly the $T$-orbits as
fibers.  This proves the classification and the first two claims
of~\eqref{des:eq:stabilizers}.  Translation by $T$ acts freely on
the finite set of labels occurring in $\mathbf a$.  Their
multiplicities are constant on each $T$-orbit, so their total
$n$ is divisible by $|T|$.

If $\rho$ is injective, $H$ is the full stabilizer of its
difference datum; hence every multiset period is $t(h)$ for some
$h\in H$.  Finally, on any quotient $(W\mathbf a)/T$, a zero
simple residue means equal adjacent entries in every lift.  The
simple reflection fixes these lifts, proving admissibility.
\end{proof}

\begin{corollary}[Additive PBW for every type-$A$ datum]
\label{des:cor:additive-A}
For every admissible type-$A$ datum and every uniform quiver,
the additive QDA over $P$ has the PBW basis
$\{\psi_w e_c\}_{w,c}$ and the presentation of
Theorem~\ref{rc:thm:pbw}.  The same holds over $\widehat P$ and
after any permitted equivariant change by root units.
\end{corollary}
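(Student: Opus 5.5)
The plan is to combine the classification of Theorem~\ref{des:thm:classification} with the sequence PBW theorem and residue descent. Given an admissible type-$A$ datum $\cC$, we fix $c_0$ and write $\cC\cong(W\mathbf a)/T$, where $T\le T(\mathbf a)$. The group $T$ acts freely on $\widetilde{\cC}=W\mathbf a$ by simultaneous translation and commutes with $W$. It also preserves every root residue, since $m_{\epsilon_i-\epsilon_j}$ depends only on differences of entries. So $W\mathbf a\to\cC$ is a finite residue cover in the sense of Theorem~\ref{des:thm:cover}, with $G=T$. The upstairs datum is a permutation orbit of sequences, and the root residues computed from the sequence agree with those pulled back from $\cC$.

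Next we invoke Theorem~\ref{str:thm:sequence-pbw} on $\widetilde{\cC}$. It gives PBW, for any choice of reduced words, for the additive QDA $\widetilde{\cR}$ over $P$. It gives the same over $\widehat P$, and for any permitted coordinate system $y_\alpha=u_\alpha x_\alpha$ with equivariant units $u_\alpha$. We use the same coefficient ring and root coordinates upstairs and downstairs. The equivalence in Theorem~\ref{des:thm:cover} then transfers PBW to $\cR$ for the same reduced words. In the $\widehat P$ case, the descent argument (the Reynolds map, $E^G=D_R$, and Lemma~\ref{rc:lem:leading}) is insensitive to the particular coefficient domain. Alternatively, one may apply Proposition~\ref{str:prop:unit-change} directly downstairs after establishing the additive case over $\widehat P$.

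For the presentation we verify Hypothesis~\ref{rc:hyp:integral} and then apply Theorem~\ref{rc:thm:pbw}. Once PBW holds for $\cR$, each alternating braid difference $(B_{ij}-B_{ji})e_c$ lies in $\cR$. It therefore has an expansion with coefficients in $R$ in the basis $\psi_ve_c$. As in the proof of Theorem~\ref{str:thm:sequence-pbw}, its support in $D_K\rtimes W$ lies in $W_{ij}$, and Bruhat triangularity (Lemma~\ref{rc:lem:leading}) with closure of parabolic subgroups under subwords forces every $v$ occurring to lie in $W_{ij}$ with $\ell(v)<m_{ij}$. Uniqueness of the fraction-field expansion then identifies these coefficients with the $b^{ij}_{v,c}$ of~\eqref{rc:eq:ranktwo}, so they lie in $R$.

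The main obstacle is the bookkeeping for the support claim: the lower terms of the $R$-expansion must sit in $W_{ij}$. We handle it by taking a top-length term outside $W_{ij}$ and noting that its leading symbol $[v]e_c$ cannot cancel against anything, by triangularity. Apart from that, the only care needed is checking that $T$ preserves residues, so that the pulled-back crossings upstairs are exactly the sequence crossings.
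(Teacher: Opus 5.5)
Your proposal is correct and follows essentially the same route as the paper. You realize the datum as the sequence cover $(W\mathbf a)/T$ of Theorem~\ref{des:thm:classification}, apply Theorem~\ref{str:thm:sequence-pbw} upstairs, and descend through Theorem~\ref{des:thm:cover}; the presentation comes from the same triangularity argument, which confines each braid difference to $W_{ij}$, and completion and unit changes are handled via Proposition~\ref{str:prop:unit-change}.
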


\begin{proof}
Apply Theorem~\ref{str:thm:sequence-pbw} to the sequence cover in
Theorem~\ref{des:thm:classification}, and descend its basis using
Theorem~\ref{des:thm:cover}.  In each rank-two braid difference,
triangularity forces the PBW expansion to have support in the
rank-two parabolic subgroup.  Its coefficients are therefore the
integral coefficients of~\eqref{rc:eq:ranktwo}.  This gives the
presentation.  Completion and unit changes are treated in exactly
the same way, using Proposition~\ref{str:prop:unit-change}.
\end{proof}

\subsection{Global multiplicative PBW and rank-two reduction}

The formal exponential alone does not prove PBW over an uncompleted
Laurent ring.  The next argument uses a faithfully flat Laurent
extension instead.

\begin{lemma}[Coefficient extension]\label{des:lem:flat}
Let $R\subseteq S$ be a flat inclusion of coefficient domains with
compatible $W$-actions.  Assume the same root coordinates in $R$
are permitted over both rings.  For fixed residue data there is an
isomorphism of left $S$-modules
\[
 S\otimes_R\cR_R\xrightarrow{\sim}\cR_S,
 \qquad f\otimes a\longmapsto fa.
\]
PBW ascends along this map; if $S$ is faithfully flat over $R$,
PBW also descends, using the same chosen reduced words.
\end{lemma}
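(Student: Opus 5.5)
The plan is to compare both algebras inside a common ambient skew group algebra and use flatness to control intersections. Write $K_R=\Frac(R)$ and $K_S=\Frac(S)$. Since $R\subseteq S$ are domains with compatible $W$-actions, we get an inclusion $D_{K_R}\rtimes W\subseteq D_{K_S}\rtimes W$. The crossings $\psi_i$ are defined by the same formulas, because the root coordinates are the same. Hence $\cR_R\subseteq\cR_S$, and $\cR_S$ is generated by $D_S$ together with $\cR_R$. The map $f\otimes a\mapsto fa$ is well defined and left $S$-linear.

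\emph{Surjectivity.} The image $S\cR_R$ contains $D_S$ and every $\psi_i$. It is also closed under multiplication: by the mixed relation \eqref{rc:eq:mixed}, $a f\in S\cR_R$ for $a\in\cR_R$ and $f\in S$. To see this, induct on the number of crossings in a word of $a$; each commutation gives $s_i(f)$ or $\nabla_i(f)$, and both lie in $S$ because the coordinates are permitted over $S$. So the image is a subalgebra containing the generators, and it equals $\cR_S$.

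\emph{Injectivity.} $\cR_R$ is torsion-free over $R$, since it sits inside $D_{K_R}\rtimes W$, which is a free $K_R$-module. Flatness of $S$ gives
\[
S\otimes_R\cR_R\hookrightarrow S\otimes_R(D_{K_R}\rtimes W)=(S\otimes_RK_R)\otimes_{K_R}(D_{K_R}\rtimes W).
\]
The remaining step is that $S\otimes_RK_R\to K_S$ is injective. This holds because $S\otimes_RK_R$ is the localization $S[(R\setminus0)^{-1}]$, a subring of $K_S$ since $S$ is a domain. After that, the ambient map is an injection of free modules with the basis $[w]e_c$. This step is where flatness is used, and it is the main point to check.

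\emph{PBW transfer.} Fix reduced words. If $\cR_R=\bigoplus R\psi_we_c$, tensoring with $S$ and applying the isomorphism gives $\cR_S=\bigoplus S\psi_we_c$. Freeness of the family over $S$ is also Lemma~\ref{rc:lem:leading}.

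For descent, let $M=\sum_{w,c}R\psi_we_c\subseteq\cR_R$, which is free by Lemma~\ref{rc:lem:leading}. Tensoring the exact sequence $0\to M\to\cR_R\to\cR_R/M\to0$ with $S$ and using flatness shows that $S\otimes_R(\cR_R/M)\cong\cR_S/SM$. Under the PBW hypothesis over $S$ this is zero, so faithful flatness gives $\cR_R/M=0$. Hence $\cR_R=M$, with the same reduced words.
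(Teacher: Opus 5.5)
Your proposal is correct and takes essentially the paper's route. You embed $S\otimes_R\cR_R$ into $D_{\Frac(S)}\rtimes W$ by flatness, identify the image $S\cR_R$ with $\cR_S$ via the mixed relation, and transfer PBW by base change of the quotient by the span of the reduced products, with faithful flatness for descent; your explicit check that $S\otimes_R\Frac(R)=S[(R\setminus0)^{-1}]\subseteq\Frac(S)$ fills in a step the paper leaves implicit, while the torsion-freeness remark is not needed.
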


\begin{proof}
Flatness embeds $S\otimes_R\cR_R$ in the coefficient extension
of $D_{\Frac(R)}\rtimes W$, which embeds in
$D_{\Frac(S)}\rtimes W$.  Its image is $S\cR_R$.
The mixed relation moves every $S$-coefficient past a crossing,
with coefficients still in $S$, so this image is the algebra
generated by $D_S$ and the crossings.  It is $\cR_S$.
Let $L_R$ be the $R$-span of the chosen reduced products.
Their leading terms give independence, and $S\otimes_R L_R=L_S$.
Flat base change of $\cR_R/L_R$ proves ascent; faithful flatness
proves the converse.
\end{proof}

\begin{theorem}[Global multiplicative PBW in type $A$]
\label{des:thm:multiplicative-A}
For every admissible type-$A_{n-1}$ datum and every uniform residue
quiver, the QDA with
\[
 R=\mathbb C[Q],\qquad y_\alpha=1-X_\alpha
\]
has the PBW basis and presentation of Theorem~\ref{rc:thm:pbw}.
No completion or localization at the identity is required.
\end{theorem}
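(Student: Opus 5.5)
The plan is to reduce to sequence data and then pass from additive to multiplicative coefficients through a faithfully flat Laurent extension. By Theorem~\ref{des:thm:classification}, every admissible datum is $(W\mathbf a)/T$ with $T$ acting freely. By Theorem~\ref{des:thm:cover}, PBW for the multiplicative QDA on $\cC$ is equivalent to PBW on the sequence orbit $W\mathbf a$, using the same coefficient ring $\mathbb C[Q]$ and coordinates $1-X_\alpha$. It therefore suffices to treat a single permutation orbit of sequences. The presentation will then follow as in Corollary~\ref{des:cor:additive-A}: triangularity confines each rank-two braid difference to $W_{ij}$, so Hypothesis~\ref{rc:hyp:integral} holds and Theorem~\ref{rc:thm:pbw} applies.

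For a sequence orbit, I would realize the multiplicative QDA as a trigonometric (affine-Hecke type) strand algebra. Put $S=\mathbb C[X_1^{\pm1},\ldots,X_n^{\pm1}]$ with $X_{\epsilon_i-\epsilon_j}=X_iX_j^{-1}$. Then $\mathbb C[Q]\subseteq S$, and $S$ is free over $\mathbb C[Q]$. Indeed $S=\mathbb C[Q][X_n^{\pm1}]\cdot(\text{coset reps})$; more precisely $S$ is graded by $\mathbb Z^n/Q\cong\mathbb Z$ with each graded piece a rank-one free $\mathbb C[Q]$-module generated by a monomial. So $S$ is faithfully flat over $\mathbb C[Q]$. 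By Lemma~\ref{des:lem:flat}, PBW over $\mathbb C[Q]$ is equivalent to PBW for $\cR_S$. Over $S$ the crossings are the multiplicative quiver Hecke operators in variables $X_i$:
\[
\frac{[s_i]-1}{1-X_iX_{i+1}^{-1}} \quad\text{and}\quad (1-X_{i+1}X_i^{-1})^{d}[s_i].
\]

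The remaining step is PBW for this Laurent strand algebra. Since $y_\alpha=1-X_iX_j^{-1}$ and $X_j$ is a unit, $y_\alpha=X_j^{-1}(X_j-X_i)$. The coordinate $X_j-X_i$ is the additive coordinate for the variables $X_i$ viewed as polynomial generators. The change $y_\alpha\mapsto X_j-X_i$ is by equivariant units of $S$ ($u_\alpha=X_j$, which transforms as $w(u_\alpha)=u_{w\alpha}$ under permutations). So by Proposition~\ref{str:prop:unit-change}, the operator algebra over $S$ agrees with the one with coordinates $X_j-X_i$ and has the same PBW status. That algebra is the localization at $X_1\cdots X_n$ of the polynomial quiver Hecke operator algebra over $\mathbb C[X_1,\ldots,X_n]$, with $Q_{ab}(u,v)=(u-v)^{d(a-b)}(v-u)^{d(b-a)}$. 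KL/Rouquier PBW, exactly as in the proof of Theorem~\ref{str:thm:sequence-pbw}, gives PBW over the polynomial ring. Lemma~\ref{des:lem:flat}, applied to the flat localization $\mathbb C[X]\subseteq S$, ascends it to $S$.

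The main obstacle is checking the hypotheses of the cited results at this step. The coefficient conditions \eqref{rc:eq:root-data} must hold over $\mathbb C[X]$ and $S$ with $W$ acting faithfully; this is routine, since divided differences preserve polynomials and $\kappa_i=0$ additively. The unit-change proposition also needs both coordinate systems to be permitted over the same ring $S$. This also routinely holds, since $\nabla_i$ for $1-X_iX_{i+1}^{-1}$ preserves $S$ and $\kappa_i=1$. I would also double-check that $S$ is faithfully flat over $\mathbb C[Q]$ via the grading argument, rather than only flat. Since $\mathbb C[Q]\to S$ is a split injection of $\mathbb C[Q]$-modules (degree-zero part), faithful flatness follows from freeness.
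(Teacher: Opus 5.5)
Your proposal is correct and is essentially the paper's proof: polynomial quiver Hecke PBW over $\mathbb C[t_1,\ldots,t_n]$, flat localization to $S=\mathbb C[t_1^{\pm1},\ldots,t_n^{\pm1}]$, the equivariant unit change to the coordinates $1-t_i/t_j$ by Proposition~\ref{str:prop:unit-change}, faithfully flat descent along $\mathbb C[Q]\subseteq S=\mathbb C[Q][t_n^{\pm1}]$ by Lemma~\ref{des:lem:flat}, and then Theorems~\ref{des:thm:classification} and~\ref{des:thm:cover} with rank-two triangularity for general data and the presentation. The only differences are that you do the sequence reduction first rather than last, and a cosmetic slip: $X_j-X_i$ is the negative of the strand coordinate for $\epsilon_i-\epsilon_j$, which is one more harmless constant unit change.
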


\begin{proof}
First use an actual sequence orbit.  Introduce independent
variables $t_1,\ldots,t_n$ and put
$S=\mathbb C[t_1^{\pm1},\ldots,t_n^{\pm1}]$.
The ordinary polynomial quiver Hecke basis over
$\mathbb C[t_1,\ldots,t_n]$ remains a basis after this
$W$-stable localization, by Lemma~\ref{des:lem:flat}.
Its root coordinates are $\delta_{ij}=t_i-t_j$.
The coordinates
\[
 1-t_i/t_j=-t_j^{-1}\delta_{ij}
\]
are obtained from them by equivariant units in $S$.  Both
coordinate systems are permitted: reflection differences of
Laurent polynomials are divisible by $t_i-t_j$, and the new
$\kappa_i$ equals $1$.  Proposition~\ref{str:prop:unit-change}
therefore gives PBW for the multiplicative operators over $S$.

Embed $R=\mathbb C[Q]$ by $X_{\epsilon_i-\epsilon_j}=t_i/t_j$.
Then $S=R[t_n^{\pm1}]$, a faithfully flat extension.  The
multiplicative coordinates belong to $R$, and their divided
differences preserve both $R$ and $S$.  Lemma~\ref{des:lem:flat}
descends PBW to $R$.  Theorem~\ref{des:thm:classification} and
Theorem~\ref{des:thm:cover} remove the sequence hypothesis.
Finally, the triangular rank-two expansion identifies the
integral braid coefficients, as in
Corollary~\ref{des:cor:additive-A}, and gives the presentation.
\end{proof}

\begin{theorem}[Global multiplicative PBW in simply-laced type]
\label{des:thm:ADE}
Let $\Phi$ be finite and simply laced.  For every admissible
residue datum and every uniform residue quiver, PBW and the
presentation of Theorem~\ref{rc:thm:pbw} hold for
$R=\mathbb C[Q]$ and $y_\alpha=1-X_\alpha$.
They remain valid under permitted equivariant root-unit changes.
\end{theorem}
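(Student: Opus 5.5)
The plan is to reduce PBW to Hypothesis~\ref{rc:hyp:integral} and verify that hypothesis one rank-two pair at a time. By Theorem~\ref{rc:thm:pbw}, PBW and the presentation over $R=\mathbb C[Q]$ follow once every braid correction $b^{ij}_{v,c}$ in \eqref{rc:eq:ranktwo} lies in $R$. Each such coefficient is computed inside the rank-two skew group algebra $D_K\rtimes W_{ij}$. It depends only on the residues $m_\beta(c')$ for $\beta$ in the rank-two subsystem $\Phi_{ij}$ and for $c'$ in the $W_{ij}$-orbit of $c$. Since $\Phi$ is simply laced, $m_{ij}\in\{2,3\}$ and $\Phi_{ij}$ is of type $A_1\times A_1$ or $A_2$.

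For $m_{ij}=2$ the two operators act on disjoint root directions. I would check directly that $\psi_i\psi_j e_c=\psi_j\psi_i e_c$. Both the divided-difference and weighted-reflection factors commute, because $s_i$ fixes $y_{\pm\alpha_j}$ and $\nabla_i,\nabla_j$ commute. So all corrections vanish.

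For $m_{ij}=3$ the main step is to identify the rank-two computation with a type-$A_2$ QDA.
\begin{itemize}
\item Restrict $\rho$ to the sublattice $Q_{ij}=\mathbb Z\alpha_i\oplus\mathbb Z\alpha_j\cong Q(A_2)$. This gives a $W_{ij}\cong\mathfrak S_3$-equivariant residue map on the orbit $W_{ij}c$.
\item This datum is admissible, since the condition \eqref{rc:eq:admissible} is inherited.
\item The multiplicative coordinates $1-X_\beta$ for $\beta\in\Phi_{ij}$ lie in the subring $R_{ij}=\mathbb C[Q_{ij}]$, and $\kappa_i=1$ lies in $R_{ij}$.
\item The crossings $\psi_i,\psi_j$ restricted to $W_{ij}c$ are exactly the operators of the $A_2$ multiplicative QDA over $R_{ij}$.
\item The skew group algebra over $\Frac(R_{ij})$ embeds in the one over $K$, and expansion \eqref{rc:eq:ranktwo} is unique. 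Hence the coefficients $b^{ij}_{v,c}$ coincide with the corresponding $A_2$ braid coefficients.
\end{itemize}
Theorem~\ref{des:thm:multiplicative-A} gives PBW for that $A_2$ algebra over $\mathbb C[Q(A_2)]$. By the triangularity argument in Corollary~\ref{des:cor:additive-A}, its braid coefficients lie in $R_{ij}\subseteq R$. The hypothesis then holds for all pairs, and Theorem~\ref{rc:thm:pbw} yields the basis for any reduced words and the presentation.

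The main obstacle is the orbit step. The $W_{ij}$-set $W_{ij}c$ need not be faithfully described by $\rho$, since $\rho$ may be non-injective. Theorem~\ref{des:thm:multiplicative-A}, however, is stated for arbitrary admissible data, so only transitivity and admissibility of the restricted datum are needed, and both hold. A second point to check is that $W_{ij}$ acts faithfully on $R_{ij}$, as required for the $A_2$ QDA. This holds because $\mathfrak S_3$ acts faithfully on $\mathbb C[Q(A_2)]$.

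Invariance under permitted equivariant root-unit changes then follows from Proposition~\ref{str:prop:unit-change}. The new braid corrections are integral because the transported basis is a PBW basis, as in the proof of Theorem~\ref{str:thm:sequence-pbw}.
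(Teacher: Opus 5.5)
Your proposal is correct and follows the paper's proof essentially step for step. Commuting pairs have zero braid correction by direct expansion. For an $A_2$ pair, you restrict to the transitive $W_{ij}$-orbits and apply Theorem~\ref{des:thm:multiplicative-A} over $\mathbb C[\mathbb Z\alpha_i\oplus\mathbb Z\alpha_j]\subseteq\mathbb C[Q]$. You then identify the coefficients with those of \eqref{rc:eq:ranktwo} by uniqueness of the triangular expansion, and invoke Theorem~\ref{rc:thm:pbw} and Proposition~\ref{str:prop:unit-change}, exactly as the paper does. Your extra checks (inherited admissibility, faithfulness of $W_{ij}$ on $\mathbb C[Q_{ij}]$, and the harmlessness of a non-injective $\rho$) are details the paper's brief argument leaves implicit.
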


\begin{proof}
Fix two simple roots.  If their reflections commute, each fixes
the other root coordinate and root residue.  Expanding the two
crossings on a residue summand shows that they commute, including
the zero-residue cases.  Thus the braid correction is zero.

Otherwise the subsystem is $A_2$.  Restrict $\cC$ to its
rank-two Weyl group and decompose it into transitive orbits,
each admissible for the restricted residues.
Theorem~\ref{des:thm:multiplicative-A} gives integral braid
coefficients over $\mathbb C[\mathbb Z\alpha_i\oplus
\mathbb Z\alpha_j]$.  This ring embeds in $\mathbb C[Q]$.
The rank-two operator expansion uses only roots of this
subsystem, so uniqueness of triangular elimination identifies
these integral coefficients with~\eqref{rc:eq:ranktwo} in the
full system.  Hypothesis~\ref{rc:hyp:integral} follows on every
residue orbit, and Theorem~\ref{rc:thm:pbw} applies.
Root-unit changes are covered by
Proposition~\ref{str:prop:unit-change}.
\end{proof}

The additive theorem in every finite type was already proved in
Theorem~\ref{des:thm:additive-all}.  For global multiplicative
coefficients with mixed residues, this argument leaves the
rank-two cases $B_2$ and $G_2$ open.  The local comparison below
does not assume their global integrality.

\subsection{Central completion and the exponential comparison}

\begin{lemma}[Completion of the coefficient representation]
\label{des:lem:central-completion}\label{kl:rem:formal-comparison}
For either standard coefficient ring $R=P$ or $R=\mathbb C[Q]$,
put $S=R^W$.  Let $\mathfrak m$ be the maximal ideal of the
origin or identity, and $\mathfrak n=\mathfrak m\cap S$.
For every admissible residue datum and uniform quiver, the QDA
$A$ is finite over the central subring $S$.  Moreover,
\[
 \widehat R=\widehat S\otimes_S R,\qquad
 \widehat A=\widehat S\otimes_S A
\]
identify with the coefficient completion and the operator algebra
generated by $\widehat R e_c$ and the same crossings on
$\bigoplus_c\widehat R e_c$.  This assertion does not require PBW.

The formal exponential gives an isomorphism
\begin{equation}\label{des:eq:formal-exp}
 \widehat A_{\mathrm{mult}}\xrightarrow{\sim}
 \widehat A_{\mathrm{add}},\qquad
 X_\lambda\longmapsto\exp(x_\lambda),
\end{equation}
with the associated root-unit change of crossings.  It identifies
corresponding root-power ideals.  A quotient in which all simple
root coordinates are nilpotent is unchanged by this central
completion.
\end{lemma}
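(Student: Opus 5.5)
The argument has three parts. First we show finiteness over $S=R^W$ without using PBW. Then we base-change along $S\to\widehat S$ and compare with the completed operator algebra. Finally we apply the formal exponential, transported through Proposition~\ref{str:prop:unit-change}.

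\emph{Finiteness and centrality.} Each $f\in S$ is $W$-invariant and has trivial divided differences, so the mixed relation~\eqref{rc:eq:mixed} gives $\psi_i f=f\psi_i$. Hence $S$, embedded diagonally in $D_R$, is central in $A$. By Lemma~\ref{rc:lem:leading}, $A$ sits inside $\bigoplus_{c,w}K[w]e_c$. I will bound it inside a finite $S$-module. Let $\delta=\prod_{\beta\in\Phi}y_\beta\in S$, which is $W$-invariant because $w(y_\alpha)=y_{w\alpha}$. By induction on word length, every monomial in the crossings and $D_R$ lies in $\bigoplus_{c,w}\delta^{-N}R[w]e_c$ for a bound $N$. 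To get a uniform $N$, I need to know how the denominators grow.

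This uniform bound is the main obstacle. Without PBW, long words could in principle accumulate powers of $\delta^{-1}$. I would handle it as follows. Since $A$ acts faithfully on $D_R$ (Proposition~\ref{rc:prop:basic}) and preserves $D_R$, we have $A\subseteq\End_S(D_R)\cap(D_K\rtimes W)$. The module $R$ is finite over $S$ for both standard rings (Noether; $\mathbb C[Q]$ is finite over $\mathbb C[Q]^W$). Therefore $D_R$ is a finite $S$-module, and $\End_S(D_R)$ is a finite $S$-module because $S$ is Noetherian. The algebra $A$ is then an $S$-submodule of a finite module over a Noetherian ring, hence finite. This bypasses denominators entirely.

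\emph{Completion.} Since $R$ is finite over $S$, the $\mathfrak m$-adic and $\mathfrak n R$-adic topologies on $R$ agree after completing at the maximal ideal above $\mathfrak n$. Here I must account for the other maximal ideals of $R$ over $\mathfrak n$: these are the $W$-translates of $\mathfrak m$, which all equal $\mathfrak m$ because the origin (respectively the identity) is $W$-fixed. So $\mathfrak n R$ is $\mathfrak m$-primary, and $\widehat S\otimes_S R=\widehat R$ by the standard fact that completion of finite modules over Noetherian rings is tensoring. Since $\widehat S$ is flat over $S$, the maps
\[
 \widehat S\otimes_S A\hookrightarrow\widehat S\otimes_S\End_S(D_R)=\End_{\widehat S}(\textstyle\bigoplus_c\widehat Re_c)
\]
are injective. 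The image is the $\widehat S$-span of $A$. By the mixed relation, this span equals the algebra generated by $\widehat R e_c$ (which is $\widehat S R e_c$) and the crossings. This gives the description of $\widehat A$.

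\emph{Exponential.} The map $X_\lambda\mapsto\exp(x_\lambda)$ gives a $W$-equivariant isomorphism $\widehat{\mathbb C[Q]}\cong\widehat P$. Under it, $1-X_\alpha$ goes to $(1-\exp(x_\alpha)/x_\alpha)\,x_\alpha$, a coordinate change by an equivariant unit (Remark~\ref{str:rem:formal-units}). The first part of Proposition~\ref{str:prop:unit-change} (equality of operator algebras, which uses no PBW) identifies the two completed algebras inside $D_{\widehat K}\rtimes W$, so we obtain~\eqref{des:eq:formal-exp}. A root-power ideal $\langle y_1^{\Lambda}e_c\rangle$ has generators differing by units, so corresponding ideals coincide. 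Finally, let $\bar A$ be a quotient of $A$ in which the simple root coordinates are nilpotent. Then every $y_\alpha$, as a $W$-translate, is nilpotent in $\bar A$. Hence $\mathfrak m$ is nilpotent on $\bar A$: its elements generate $\mathfrak m$ up to radical, because the common zero locus of the $y_\alpha$ is the point. In particular $\mathfrak n$ acts nilpotently on $\bar A$, which is finite over $S$, so $\widehat S\otimes_S\bar A=\bar A$.
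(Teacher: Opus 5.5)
Your argument is correct and is essentially the paper's proof. Faithfulness puts $A$ inside the finite $S$-module $\End_S(\bigoplus_c Re_c)$, the $W$-fixed point makes $\mathfrak nR$ an $\mathfrak m$-primary ideal, flat base change of the endomorphism inclusion gives the operator description of $\widehat A$, and Proposition~\ref{str:prop:unit-change} with the units $u_\alpha=(1-e^{x_\alpha})/x_\alpha$ gives the exponential and ideal comparison.

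The only misstep is in your last paragraph: a non-simple $y_\alpha$ is not nilpotent in $\bar A$ merely because it is a $W$-translate, since the ideal need not be $W$-stable. You do not need those coordinates, however: the simple root coordinates already generate $\mathfrak m$, so $\mathfrak n^M\bar A=0$ with $M=1+\sum_i(N_i-1)$, exactly as in the paper.
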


\begin{proof}
The invariant ring $S$ is Noetherian and $R$ is finite over $S$.
All crossings commute with $S$, and the coefficient representation
is faithful.  Hence
\[
 A\subseteq\End_S\left(\bigoplus_c R e_c\right).
\]
The endomorphism module is finite over $S$, so its submodule $A$
is finite as well.  The origin or identity is $W$-fixed and is
the unique point over its image in the finite invariant quotient.
Thus $\sqrt{\mathfrak nR}=\mathfrak m$, and the two coefficient
adic topologies agree.  This proves the assertion about
$\widehat R$.

Completion of a Noetherian ring is flat.  Since
$\bigoplus_c R e_c$ is finitely presented over $S$, flat base
change of its endomorphisms and of the displayed inclusion embeds
$\widehat S\otimes_S A$ faithfully in
$\End_{\widehat S}(\bigoplus_c\widehat R e_c)$.
Its image is generated by the completed coefficients and the
crossings: each tensor is a finite sum of completed central
coefficients times words in the original generators, and the
converse inclusion is immediate.  This proves the operator
description of $\widehat A$.

In simple-root coordinates the coefficient map and its inverse are
\[
 X_\lambda\longmapsto e^{x_\lambda},\qquad
 x_i\longmapsto\log(1-Y_i).
\]
They are $W$-equivariant formal power-series maps.  In particular,
\[
 Y_\alpha\longmapsto u_\alpha x_\alpha,
 \qquad u_\alpha=\frac{1-e^{x_\alpha}}{x_\alpha},
 \qquad u_\alpha(0)=-1.
\]
Proposition~\ref{str:prop:unit-change} identifies the generated
operator algebras.  Multiplication of a root power by the unit
$u_\alpha^m$ preserves its generated ideal, giving the ideal
assertion.

Finally suppose $y_i^{N_i}=0$ in a quotient $A/I$.  In its
commutative coefficient image, the ideal generated by the $y_i$
has zero $M$th power for $M=1+\sum_i(N_i-1)$.  Therefore
$\mathfrak n^M(A/I)=0$.  Since $A/I$ is finite over $S$,
$\widehat S\otimes_S(A/I)=A/I$.  Exactness of completion now
identifies the completed quotient with the original one.
All series act by finite truncations.  No completion at the
two-sided ideal generated by the $y_i$ is being used.
\end{proof}

\begin{corollary}[Completed PBW in every finite type]
\label{des:cor:completed-all}
In both standard coordinate systems, the completed QDA has a
reduced-product basis over its completed coefficient ring for
every finite Weyl type, admissible residue datum, and uniform
quiver.  Its integral rank-two presentation is obtained by
completion and the root-unit change.
\end{corollary}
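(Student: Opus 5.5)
The plan is to deduce the completed PBW statement from the uncompleted additive theorem, using flat base change along the central completion and then the formal exponential.

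\emph{Completed additive case.} Theorem~\ref{des:thm:additive-all} gives a free $P$-basis $\{\psi_w e_c\}$ of $A_{\mathrm{add}}$, for any reduced words. By Lemma~\ref{des:lem:central-completion}, $\widehat A_{\mathrm{add}}=\widehat S\otimes_S A_{\mathrm{add}}$, and this is the operator algebra generated by $\widehat P e_c$ and the crossings. The identification $\widehat S\otimes_S P=\widehat P$ together with the decomposition $A_{\mathrm{add}}=\bigoplus_{w,c}P\psi_w e_c$ of $P$-modules, hence of $S$-modules, gives
\[
 \widehat A_{\mathrm{add}}=\bigoplus_{w,c}(\widehat S\otimes_S P)\psi_w e_c
 =\bigoplus_{w,c}\widehat P\,\psi_w e_c .
\]
The directness of this sum can be rechecked independently from Lemma~\ref{rc:lem:leading} over $\Frac(\widehat P)$. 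The one point needing care is that the tensor decomposition is one of left $\widehat P$-modules. This holds because the left $P$-module structure is compatible with $S$-centrality, so the decomposition base-changes term by term.

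\emph{Completed multiplicative case.} The isomorphism~\eqref{des:eq:formal-exp} carries $\widehat A_{\mathrm{mult}}$ onto the algebra generated by $\widehat P e_c$ and the crossings in the unit-changed coordinates $u_\alpha x_\alpha$, where $u_\alpha=(1-e^{x_\alpha})/x_\alpha$. These $u_\alpha$ are equivariant units of $\widehat P$. Proposition~\ref{str:prop:unit-change} applied over $\widehat P$ then transfers PBW from the additive completed system to this one. The step to check is that its hypotheses hold over $\widehat P$, namely that both coordinate systems are permitted. Divided differences preserve $\widehat P$, and $\kappa_i$ lies in $\widehat P$, being the image of the multiplicative $\kappa_i=1$. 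Pulling the basis back through the exponential gives the reduced-product basis of $\widehat A_{\mathrm{mult}}$ over $\widehat{\mathbb C[Q]}$. Independence of the choice of reduced words follows as in Theorem~\ref{rc:thm:pbw}, by a unitriangular change over the completed ring.

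\emph{Presentation.} Once PBW holds over the completed ring, each braid difference~\eqref{rc:eq:ranktwo} expands in the basis with coefficients in the completed ring. Triangularity restricts the support to $W_{ij}$, and uniqueness of the fraction-field expansion identifies these coefficients with the $b^{ij}_{v,c}$. So Hypothesis~\ref{rc:hyp:integral} holds over the completion, and Theorem~\ref{rc:thm:pbw} gives the presentation. Alternatively, the additive coefficients are the completed images of the polynomial ones, and in the multiplicative case they are obtained through the root-unit change~\eqref{str:eq:triangular-change}.

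The expected obstacle is justifying the base change of the basis. The plan is to rely on the ring-level identification $\widehat S\otimes_S P\cong\widehat P$ from Lemma~\ref{des:lem:central-completion} and not to argue through completion of noncommutative modules.
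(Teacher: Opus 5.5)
Your proposal is correct and takes the same route as the paper. The paper's proof also obtains the completed additive basis from Theorem~\ref{des:thm:additive-all} by central completion (Lemma~\ref{des:lem:central-completion}), and then transfers it to multiplicative coordinates via the formal exponential and Proposition~\ref{str:prop:unit-change} over $\widehat P$. You additionally spell out the routine checks that the paper's one-line proof leaves implicit: permittedness over $\widehat P$, termwise base change of the basis, and integrality of the braid corrections for the presentation.
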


\begin{proof}
Complete Theorem~\ref{des:thm:additive-all} and apply
Lemma~\ref{des:lem:central-completion} and
Proposition~\ref{str:prop:unit-change}.
\end{proof}

\subsection{Finite root-power quotients for arbitrary uniform quivers}

Fix a simple root $\alpha_1$ and finitely supported
$\Lambda:\mathbb I\to\mathbb Z_{\ge0}$.  For either standard
coordinate system, put
\[
 A_\star^\Lambda=\cR_\star/
 \left\langle y_1^{\Lambda_{m_1(c)}}e_c:c\in\cC\right\rangle,
 \qquad \star\in\{\mathrm{add},\mathrm{mult}\}.
\]
An exponent zero kills its residue idempotent.  The nilpotence
propagation used by Kong--Li for the standard residue quiver
\cite[Lemma 4.9]{KongLiD}\cite[Lemma 4.8]{KongLiND}
extends to arbitrary arrow multiplicities.  We record its bounds,
since these make finiteness independent of a Hecke realization.

\begin{proposition}[Propagation of root nilpotence]
\label{des:prop:root-nilpotence}
Suppose every vertex admits a path to $1$ such that
$s_r(\alpha_t)=\alpha_t+\alpha_r$ at each step from $r$ toward
$t$ and then toward $1$.  This holds for every vertex $1$ in a
connected simply-laced diagram; in the other irreducible finite
types it holds when $\alpha_1$ is short.  Choose a rooted spanning
tree of such steps.  Define
\[
 \begin{aligned}
 N_1&=\max\bigl(\{1\}\cup\{\Lambda_{m_1(c)}:c\in\cC\}\bigr),\\
 B_r&=\max\bigl(\{0\}\cup
       \{d(m_r(c))+d(-m_r(c)):c\in\cC,\ m_r(c)\ne0\}\bigr),
 \end{aligned}
\]
and, if $t$ is the parent of $r$, define
$N_r=N_t(N_t+B_r)$.  In $A_\star^\Lambda$ one has
$y_r^{N_r}=0$ for every simple root.  Consequently all root
coordinates are nilpotent.  If PBW holds, then
\begin{equation}\label{des:eq:dimension-bound}
 \dim_{\mathbb C} A_\star^\Lambda
          \le |W|\,|\cC|\prod_{r\in I}N_r.
\end{equation}
The nilpotence argument itself uses no characteristic-zero assumption.
\end{proposition}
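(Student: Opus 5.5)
The plan is to propagate nilpotence along the rooted spanning tree by induction on depth, using one commutation computation per edge. The base case holds by definition. In $A_\star^\Lambda$ we have $y_1^{\Lambda_{m_1(c)}}e_c=0$ for every $c$. Since the $e_c$ are orthogonal and sum to $1$, this gives $y_1^{N_1}=0$; when every exponent is zero the quotient is zero and $N_1=1$ suffices trivially.

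For the inductive step, let $t$ be the parent of $r$. By hypothesis $s_r(\alpha_t)=\alpha_t+\alpha_r$, so $y_{\alpha_t+\alpha_r}=s_r(y_t)$, and we know $y_t^{N_t}=0$. The first step is to show $s_r(y_t)^{N_t}\psi_r e_c=0$ for each $c$. We use the mixed relation \eqref{rc:eq:mixed} with $f=y_t^{N_t}$:
\[
 \psi_r y_t^{N_t}e_c=s_r(y_t)^{N_t}\psi_r e_c+\delta_{m_r(c),0}\nabla_r(y_t^{N_t})e_c .
\]
The left side is zero. This requires the rank-one computation in both coordinate systems. In the additive case, $s_r(x_t)=x_t+x_r$, so $\nabla_r(x_t^{N})$ is a polynomial in $x_t,x_r$. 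In the multiplicative case, $s_r(X_{\alpha_t})=X_{\alpha_t}X_{\alpha_r}$, so $s_r(Y_t)=1-(1-Y_t)(1-Y_r)$, a polynomial in $Y_t,Y_r$.

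Next, multiply on the left by $\psi_r$ and use the quadratic relation \eqref{rc:eq:quadratic}; this relation tells us what $\psi_r^2 e_c$ is.

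At nonzero residue, $\psi_r^2 e_c=y_r^{d(m)}y_{-\alpha_r}^{d(-m)}e_c$, and $y_{-\alpha_r}$ is a unit multiple of $y_r$ (namely $-x_r$ or $-X_{-\alpha_r}Y_r$). This gives
\[
 s_r(y_t)^{N_t}\,y_r^{d(m)+d(-m)}e_{c}=0
\]
after moving coefficients by \eqref{rc:eq:mixed}; no correction term appears, because the residue is nonzero.

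At zero residue, the quadratic relation gives $\psi_r^2=-\kappa_r\psi_r$, which does not isolate a power of $y_r$. Instead we use $y_r\psi_r e_c=([s_r]-1)e_c$. Apply this to the identity above to get a polynomial identity in $y_t$ and $y_r$ annihilating $e_c$. Working symmetrically, applying $s_r$ to the relation $y_t^{N_t}e_c=0$ as well, we obtain that $s_r(y_t)^{N_t}$ together with $y_t^{N_t}$ annihilates $e_c$ up to bounded powers of $y_r$.

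The combination step uses the elementary commutative-algebra fact that if $a^{N}=0$ and $(a+b)^N b^B=0$ (or the multiplicative analogue $a+b-ab$), then $b^{N(N+B)}=0$. Expanding $b^{N+B}=((a+b)-a)^{N}b^B$ binomially would give a weaker statement, so instead we argue by the ideal calculation $(a,b)$ nilpotent with $b^{N+B}\in (a)$; iterating yields $b^{N(N+B)}=0$. This gives $y_r^{N_r}e_c=0$ with $N_r=N_t(N_t+B_r)$.

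The main obstacle is the zero-residue case, where $\psi_r$ is a divided difference rather than an invertible-up-to-$y_r$-powers reflection. I would first try to follow Kong--Li's argument: apply $\nabla_r$ to the central-in-residue relation $y_t^{N_t}e_c=0$ (valid since $s_rc=c$ by admissibility), and use that $\nabla_r$ preserves the annihilator ideal inside $e_cA$ because it is realized by $\psi_r$ acting on $e_c$-components. Then derive $s_r(y_t)^{N_t}e_c=0$ directly, which is the case $B=0$.

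Once every simple $y_r$ is nilpotent, every $y_\alpha$ is nilpotent, being a polynomial without constant term in the simple coordinates. In the additive case this is linear. In the multiplicative case it follows from $X_\alpha=\prod(1-Y_i)^{\pm n_i}$, which is a valid expansion because the $1-Y_i$ are units.

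Finally, the dimension bound. Under PBW, $A^\Lambda_\star$ is spanned over the image of $R$ by the $|W||\cC|$ elements $\psi_we_c$ (coefficients on the left). The image of $R$, acting on each $e_c$-component, is spanned by the monomials $\prod_r y_r^{k_r}$ with $k_r<N_r$. In the additive case $R$ is generated by the $x_r$. In the multiplicative case $R=\mathbb C[Q]$, and $X_{\pm\alpha_r}$ are polynomials in nilpotent $Y_r$ modulo $Y_r^{N_r}$. This yields \eqref{des:eq:dimension-bound}.
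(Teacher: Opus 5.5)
Your nonzero-residue step, your commutative estimate $b^{N+B}\in(a)\Rightarrow b^{N(N+B)}=0$, your passage to all roots, and your dimension count agree with the paper's proof. There is one slip in the nonzero case. Left-multiplying $s_r(y_t)^{N_t}\psi_re_c=0$ by $\psi_r$ moves $s_r(y_t)^{N_t}$ back across the crossing and yields only the trivial identity $y_t^{N_t}\psi_r^2e_c=0$. What you need is the sandwich $0=\psi_r\,y_t^{N_t}\,\psi_re_c=s_r(y_t)^{N_t}\psi_r^2e_c$. This is the relation at $s_rc$ multiplied on the \emph{right} by $\psi_re_c$, and it is what the paper does.

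The genuine gap is the zero-residue case, which you flag as the main obstacle but do not settle. None of your three sketches closes it.
\begin{enumerate}
\item Your computation with $y_r\psi_re_c=([s_r]-1)e_c$ actually yields $s_r(y_t)^{N_t}[s_r]e_c=0$. This is not a polynomial identity in $y_t,y_r$.
\item ``Applying $s_r$'' to the relation $y_t^{N_t}e_c=0$ needs a justification, since no action of $s_r$ on $A^\Lambda_\star$ is given.
\item Your justification of the Kong--Li route is circular. The crossing $\psi_r$ acts as $\nabla_r$ on the faithful module $D_R$, but in the quotient the mixed relation only gives $\nabla_r(f)e_c=\psi_rfe_c-s_r(f)\psi_re_c$. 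The last term vanishes only if $s_r(f)e_c=0$, and for $f=y_t^{N_t}$ that is exactly the claim to be proved.
\end{enumerate}

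The missing observation is this. Admissibility gives $s_rc=c$, so $\sigma=[s_r]e_c=e_c+y_r\psi_re_c$ lies in $e_c\cR e_c$. It satisfies $\sigma^2=e_c$ and $\sigma fe_c\sigma=s_r(f)e_c$ for $f\in R$; this conjugation is the only legitimate sense of ``applying $s_r$''. Right-multiplying your identity $s_r(y_t)^{N_t}\sigma=0$ by $\sigma$ gives $s_r(y_t)^{N_t}e_c=0$ with no extra power of $y_r$. Your combination step with $B=0$ then finishes.

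The paper reaches the same identity without naming $\sigma$. Put $g=s_r(y_t)$ and $\kappa_r\in\{0,1\}$ for the additive or multiplicative case. The mixed relation for $g^N$, multiplied on the right by $y_r$, gives $\psi_rg^Ny_re_c=-g^Ne_c$. Applying $\psi_r$ and using $\psi_r^2e_c=-\kappa_r\psi_re_c$ gives $\psi_rg^Ne_c=-\kappa_rg^Ne_c$. Hence $(1-\kappa_ry_r)g^Ne_c=0$, and $1-\kappa_ry_r$ is a unit. From this the paper deduces $y_r^{2N-1}e_c=0$.
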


\begin{proof}
Write $\kappa=0$ or $1$ for the additive or multiplicative case.
The coordinate laws give
\[
 y_{-\alpha}=-y_\alpha(1-\kappa y_\alpha)^{-1},\qquad
 y_{\alpha+\beta}=y_\alpha+y_\beta-\kappa y_\alpha y_\beta,
\]
where $1-\kappa y_\alpha$ is a coefficient unit.
The defining root powers give $y_1^{N_1}=0$.  Suppose
$y_t^N=0$, where $t$ is the parent of $r$, and put
$g=s_r(y_t)=y_t+y_r-\kappa y_ty_r$.

If $m_r(c)\ne0$, the mixed and quadratic relations give
\[
 0=\psi_r y_t^N\psi_r e_c
   =g^N\psi_r^2e_c=g^N y_r^b u e_c,
 \qquad b=d(m_r(c))+d(-m_r(c)),
\]
with $u$ a coefficient unit.  In the commutative coefficient
image on $e_c$, reduction modulo $(y_t e_c)$ yields
$y_r^{N+b}e_c\in(y_t e_c)$.  Taking the $N$th power proves
$y_r^{N(N+b)}e_c=0$.

If $m_r(c)=0$, admissibility gives $s_rc=c$.  The mixed relation,
multiplied on the right by $y_r$, gives
$\psi_r g^N y_r e_c=-g^N e_c$ because $s_r(g)^N=y_t^N=0$.
Applying $\psi_r$ on the left and using
$\psi_r^2e_c=-\kappa\psi_r e_c$ gives
$\psi_r g^N e_c=-\kappa g^N e_c$.
Substitution yields $(1-\kappa y_r)g^N e_c=0$, hence $g^N e_c=0$.
Since $g$ and $y_t$ commute,
\[
 y_r e_c=(g-y_t)(1-\kappa y_t)^{-1}e_c
 \quad\Longrightarrow\quad y_r^{2N-1}e_c=0.
\]
The chosen bound $N(N+B_r)$ dominates both cases.  Summing over
$c$ and proceeding outward along the tree proves the assertions
for simple roots.  The coefficient ideal generated by these
commuting nilpotents is nilpotent, so the coordinate laws give
nilpotence for every root.

In the additive case the coefficient image is spanned by monomials
with exponents less than $N_r$.  The same is true in the
multiplicative case, since
$(1-y_r)^{-1}=\sum_{j=0}^{N_r-1}y_r^j$ in the quotient.
Multiplying these monomials by the PBW spanning set proves
\eqref{des:eq:dimension-bound}.
\end{proof}

\begin{corollary}[Uncompleted comparison of root-power quotients]
\label{des:cor:cyclotomic-exp}
Assume the path hypothesis of
Proposition~\ref{des:prop:root-nilpotence}.  In every finite Weyl
type, for every admissible residue datum, every uniform quiver,
and the same $\Lambda$ on both sides, the quotients
$A_{\mathrm{add}}^\Lambda$ and
$A_{\mathrm{mult}}^\Lambda$ are finite dimensional and satisfy
\eqref{des:eq:dimension-bound}.  There is an isomorphism
\[
 A_{\mathrm{mult}}^\Lambda\xrightarrow{\sim}
 A_{\mathrm{add}}^\Lambda
\]
induced by $X_\lambda\mapsto\exp(x_\lambda)$ and the associated
root-unit change of crossings.  The exponential is a finite
polynomial on these nilpotent quotients.
If every exponent $\Lambda_{m_1(c)}$ is at least $|\Phi^+|+1$,
both quotients are nonzero and have dimension at least $|W|\,|\cC|$.
\end{corollary}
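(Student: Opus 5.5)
Under the path hypothesis, Proposition~\ref{des:prop:root-nilpotence} applies to both $\star=\mathrm{add}$ and $\star=\mathrm{mult}$. It shows that every simple root coordinate is nilpotent in $A_\star^\Lambda$, with the explicit exponents $N_r$. That proof uses only the mixed and quadratic relations, so it needs no PBW. Consequently the central ideal $\mathfrak n=\mathfrak m\cap S$ of Lemma~\ref{des:lem:central-completion} acts nilpotently. Since $A_\star$ is finite over $S$, $A_\star^\Lambda$ is a finitely generated module over $S/\mathfrak n^M$, which is finite dimensional. This gives finite dimensionality on both sides without any global PBW. For the dimension bound \eqref{des:eq:dimension-bound}, we need a reduced-product spanning set. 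In the additive case it comes from Theorem~\ref{des:thm:additive-all}. In the multiplicative case we do not have global PBW outside simply-laced type, so we will transport spanning through the isomorphism constructed next.

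The isomorphism comes from Lemma~\ref{des:lem:central-completion}. The formal exponential gives $\widehat A_{\mathrm{mult}}\cong\widehat A_{\mathrm{add}}$ and carries the root-power ideal to the root-power ideal. The reason is that $Y_1\mapsto u_1x_1$ with $u_1$ a unit, so $Y_1^{k}e_c$ and $x_1^{k}e_c$ generate the same ideal. By exactness of completion, $\widehat A_\star/\widehat I_\star=\widehat S\otimes_S A_\star^\Lambda$. The last assertion of that lemma then identifies this with $A_\star^\Lambda$, because all simple root coordinates are nilpotent there. Composing gives the isomorphism $A_{\mathrm{mult}}^\Lambda\cong A_{\mathrm{add}}^\Lambda$. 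On the nilpotent quotient, $\exp(x_\lambda)$ truncates to a polynomial, so the map is given by finite formulas. Under this map, multiplicative reduced products go to unit multiples of additive reduced products plus lower terms, as in \eqref{str:eq:triangular-change}. The additive spanning set therefore transports, and the bound holds for $A_{\mathrm{mult}}^\Lambda$ as well.

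For nonvanishing, we would build a module directly. Take the additive case, which suffices by the isomorphism. Consider the quotient of the coefficient representation $D_P$ by $\mathfrak m^{L}D_P$, with $L=|\Phi^+|+1$. Crossings need not preserve $\mathfrak m^L D_P$: divided differences lower degree by one, and weighted reflections raise it. So we instead use the graded structure and aim for a submodule/quotient argument. We look for a quotient of the regular module of $\cR_{\mathrm{add}}$ on which $x_1^{\Lambda}e_c$ acts by zero but which retains the $|W||\cC|$ independent leading terms. Our first attempt: by Lemma~\ref{rc:lem:leading}, the coefficients of reduced products involve at most $|\Phi^+|$ root factors. We would show that the ideal generated by $x_1^{k}e_c$, with $k\ge|\Phi^+|+1$, lies in $\mathfrak m\cR$ with respect to the PBW basis. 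Then $\cR/(\mathfrak m\cR+I)$ has the basis $\psi_we_c$ of dimension $|W||\cC|$. To get this we would expand $\psi_w x_1^k\psi_v$ via the mixed relation. Each divided difference lowers the degree by at most one, and there are at most $|\Phi^+|$ crossings before any reduced product, so degree $\ge1$ survives in the coefficients.

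This last degree count is the main obstacle. Products $\psi_w\psi_v$ need not be reduced, so quadratic relations may appear. The zero-residue quadratic relation $\psi_i^2=-\kappa_i\psi_i$ has $\kappa_i=0$ in the additive case, so it causes no problem. Nonzero-residue quadratics only raise degree, and braid corrections are homogeneous of the right degree. So we would filter by polynomial degree and argue that the two-sided ideal generated by $x_1^k e_c$ lies in $P_{\ge k-\ell(w_0)}$-coefficient span. If that grading argument fails, we would fall back on an explicit induced module from the nil-Hecke corner at $c_0$.
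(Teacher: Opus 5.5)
Your treatment of finite dimensionality, the dimension bound, and the exponential isomorphism is sound and is essentially the paper's argument. Getting finite dimensionality of both quotients directly from finiteness over $S$ and $\mathfrak n^M=0$ is fine. On the multiplicative side the bound follows simply because the isomorphism preserves dimension, so the triangularity remark is unnecessary.

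\textbf{The gap is in the nonvanishing claim.} You reduce it to $I\subseteq\mathfrak m\cR$, call the required degree count ``the main obstacle,'' and leave it conditional with a fallback, so as written this part is not proved. The obstacle is not real.

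\begin{itemize}
\item By Theorem~\ref{des:thm:additive-all}, $\mathfrak m\cR=\bigoplus_{w,c}\mathfrak m\,\psi_we_c$. This space is closed under right multiplication by $\cR$, and $\cR/\mathfrak m\cR$ has dimension $|W|\,|\cC|$.
\item Take a generator $a\,x_1^ke_c\,b$ of $I$. Expand only $a=\sum f_w\psi_we_{c'}$ in the PBW basis. Then push $x_1^k$ leftward through the $\ell(w)\le|\Phi^+|$ crossings of $\psi_w$, using only the mixed relation~\eqref{rc:eq:mixed}.
\item Each resulting term has the form $f_wh\,\psi_{\mathrm{sub}}e_c\,b$. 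Here $\psi_{\mathrm{sub}}$ is a subword product, and $h$ is zero or homogeneous of degree $k-j$, where $j\le\ell(w)$ counts the divided differences used. Weighted reflections produce no correction term.
\item Since $k\ge|\Phi^+|+1$, we get $f_wh\in\mathfrak m$. The term therefore lies in $\mathfrak m\cR$, whatever $\psi_{\mathrm{sub}}e_c\,b$ is.
\end{itemize}

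Nothing on the right ever needs to be reduced, so non-reduced products, quadratic relations and braid corrections play no role. Hence $A^\Lambda_{\mathrm{add}}\twoheadrightarrow\cR/\mathfrak m\cR$, which gives the lower bound $|W|\,|\cC|$. The exponential isomorphism then transfers it to the multiplicative side.

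\textbf{Comparison with the paper.} With this fix your route is valid but differs from the paper's. The paper repairs exactly your abandoned first idea. Instead of $\mathfrak m^LD_P$, it uses the $W$-stable ideal $P^W_+P$, which every crossing preserves because divided differences are $P^W$-linear. It then lets $A^\Lambda_{\mathrm{add}}$ act on the left module $\bigoplus_cC_0e_c$, with $C_0=P/P^W_+P$. The root powers act by zero because $C_0$ has top degree $|\Phi^+|$. Multiplication by $C_0$ on itself is faithful, so the coefficient operators alone already span $|W|\,|\cC|$ dimensions.

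The two routes buy different things:
\begin{itemize}
\item The paper's argument needs no PBW, but relies on the classical facts $\dim C_0=|W|$ and top degree $|\Phi^+|$.
\item Your completed argument needs additive PBW to bound word lengths by $|\Phi^+|$, and otherwise only a degree count. It realizes the lower bound on the right module $\cR/\mathfrak m\cR$ rather than on a coinvariant module.
\end{itemize}
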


\begin{proof}
By Theorem~\ref{des:thm:additive-all} and
Proposition~\ref{des:prop:root-nilpotence}, the additive quotient
is finite dimensional with the asserted bound.  The proposition
also proves nilpotence in the multiplicative quotient without
using PBW there.  Lemma~\ref{des:lem:central-completion} identifies
the completed algebras and corresponding ideals, and shows that
completion changes neither quotient.  This proves the displayed
isomorphism and transfers the dimension bound to the
multiplicative quotient.  It does not require global
multiplicative PBW.

For the last assertion use the classical coinvariant algebra
$C_0=P/(P^W_+P)$, which has dimension $|W|$ and top polynomial
degree $|\Phi^+|$~\cite{BGG,KostantKumar}.  The operator action
descends to $\bigoplus_c C_0e_c$: the ideal $P^W_+P$ is
preserved by the reflections and divided differences, hence by
all weighted crossings.  The stated exponents kill this module.
Multiplication by $\bigoplus_c C_0e_c$ on itself is faithful,
so the coefficient operators already span a subspace of dimension
$|W|\,|\cC|$ in the image of $A_{\mathrm{add}}^\Lambda$.
The isomorphism transfers the same bound to the multiplicative
quotient.
\end{proof}

This compares root-power quotients for the same residue data.
An identification with a specified affine Hecke quotient still
requires the parameter and residue-quiver hypotheses stated below.

\subsection{Mixed residue examples and direct realization obstructions}

\begin{example}[A cyclic mixed-residue cover]\label{des:ex:A5}
Take $W=\mathfrak S_6$, $\mathbb I=\mathbb Z/3\mathbb Z$, and
$\mathbf a=(0,0,1,1,2,2)$.  Then
\[
 K=\mathfrak S_2^3,\qquad T(\mathbf a)=\mathbb Z/3\mathbb Z,
 \qquad |W\mathbf a|=90,
 \qquad |(W\mathbf a)/T(\mathbf a)|=30.
\]
The quotient is the actual orbit of its root-difference datum.
It has both zero and nonzero root residues.  For any arrow
multiplicities, its additive and multiplicative QDAs have PBW
rank $720\cdot30=21600$ over their respective coefficient rings.
Its stabilizer has order $24$ and is transitive on the six
positions.  It is not a Young subgroup: a transitive Young
subgroup would be all of $\mathfrak S_6$.  Stabilizers in the
weight torus of $SL_6$ are Young subgroups, so this orbit has no
weight-torus realization.  For the cyclic quiver, the strand
cover and its cyclic fixed algebra belong to the fixed-point
setting of~\cite{RostamFixed}; the root-coordinate statement
retains only the difference variables.
\end{example}

\begin{example}[A noncyclic translation group]\label{des:ex:noncyclic}
Let $\mathbb I=(\mathbb Z/2\mathbb Z)^2$ and take a sequence of
length eight containing each of its four elements twice.  Its
period group is all of $\mathbb I$.  The sequence orbit has
$8!/(2!)^4=2520$ elements and its difference orbit has $630$.
Theorem~\ref{des:thm:classification} and the two PBW theorems
apply with a noncyclic deck group.  For example, choose
$d(r)=1$ for all three nonzero residues.  This gives mixed
divided differences and weighted reflections.  Quotients by
any prescribed root powers satisfy~\eqref{des:eq:quotient},
and the associated skew-group algebra is a $4$ by $4$ matrix
algebra over the descended quotient.
\end{example}

\begin{proposition}[A mixed type-$D$ family]\label{des:prop:D-family}
Let $\Phi=D_n$, $n\ge4$, in $\mathbb R^n$, so
\[
 Q=\{(v_1,\ldots,v_n)\in\mathbb Z^n:\textstyle\sum_i v_i
       \text{ is even}\}.
\]
Choose $J\subseteq\{1,\ldots,n\}$ of size $k$, with
$2\le k\le n-2$, and let
$\rho_J(v)=\sum_{j\in J}v_j\pmod2$.
For its orbit $\cC=W\rho_J$ one has
\[
 |\cC|=
 \begin{cases}
 \binom nk,&2k\ne n,\\
 \tfrac12\binom nk,&2k=n.
 \end{cases}
\]
For every $d(\overline1)\ge0$, the associated QDAs have mixed
root residues and unconditional additive and global multiplicative
PBW.  The orbit $\cC$ has no weight-torus realization in the sense
of Definition~\ref{liu:def:realization}.
\end{proposition}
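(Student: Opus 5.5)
The plan is to prove the four assertions of the proposition in turn: the orbit count, the mixed residues, the two PBW statements, and the absence of a weight-torus realization. The first three are direct computations or citations. The fourth reduces, by a reflection argument, to comparing the stabilizer $H$ with the reflection subgroup $K_0$ of Proposition~\ref{des:prop:zero-root-cover}.

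\textbf{Orbit size.} I would first describe the $W$-action on the functionals $\rho_J$. The group $W(D_n)$ consists of permutations composed with an even number of sign changes. A sign change does not alter $\rho_J$ modulo $2$, and a permutation $\sigma$ sends $\rho_J$ to $\rho_{\sigma J}$. Hence $\cC$ is the image of the $k$-subsets of $\{1,\dots,n\}$.

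Next I would determine when two such functionals agree on $Q$. Since $\rho_J-\rho_{J'}=\rho_{J\triangle J'}$, it suffices to know when $\rho_L$ vanishes on $Q$. Testing $\rho_L$ on $e_i\pm e_j$ shows that this happens exactly when $L=\emptyset$ or $L=\{1,\dots,n\}$, the latter because every vector of $Q$ has even coordinate sum. Thus $\rho_J=\rho_{J'}$ if and only if $J'=J$ or $J'=J^c$. When $2k\ne n$ the complement has a different size, so $|\cC|=\binom nk$; when $2k=n$ the $k$-subsets pair off and $|\cC|=\tfrac12\binom nk$.

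\textbf{Mixed residues and PBW.} At $c_0=\rho_J$ the residue of $e_i-e_j$ is $0$ when $i,j$ lie on the same side of the partition $\{J,J^c\}$, and $\overline1$ when they are separated. The hypothesis $2\le k\le n-2$ gives both kinds of root. Conjugating these roots to simple roots shows that both zero and nonzero simple residues occur somewhere in the orbit. Admissibility of $\cC$ is automatic, since it is an orbit inside $\Hom(Q,\mathbb I)$. PBW then follows by citation: additive PBW is Theorem~\ref{des:thm:additive-all}, and global multiplicative PBW is Theorem~\ref{des:thm:ADE}, because $D_n$ is simply laced.

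\textbf{No weight-torus realization.} I expect this to be the main obstacle, since it depends on the precise content of Definition~\ref{liu:def:realization}. My working assumption is that a realization forces the stabilizer $H$ to be the reflection subgroup generated by the reflections it contains, as for stabilizers of points in the weight torus of a simply connected group (Steinberg), and as the type-$A$ example used for Young subgroups. The argument then has two steps.
\begin{enumerate}[label=\textup{(\roman*)},leftmargin=*]
\item \emph{The reflections in $H$ are exactly those of $K_0$.} For any root $\alpha$, $(\rho_0\circ s_\alpha)(v)=\rho_0(v)-\langle v,\alpha^\vee\rangle\rho_0(\alpha)$. If $\rho_0(\alpha)=\overline1$, then $s_\alpha$ fixes $\rho_0$ only if $\langle v,\alpha^\vee\rangle$ is even for every $v\in Q$. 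This fails, because some simple root pairs to $-1$ with $\alpha^\vee$ in an irreducible simply-laced system of rank at least two.
\item \emph{$H\ne K_0$.} Here $\Psi=D_k\times D_{n-k}$, with the convention $D_2=A_1\times A_1$, so $K_0=W(D_k)\times W(D_{n-k})$. In $K_0$ the sign changes are even within each block separately. The element negating one coordinate in $J$ and one in $J^c$ lies in $W(D_n)$ and fixes $\rho_J$, but it is not in $K_0$. Hence $[H:K_0]\ge2$; the index is $4$ when $2k=n$, from the additional block swap.
\end{enumerate}
Together these show that $H$ is not generated by reflections. If the definition instead phrases realization through the cover $W/K_0\to W/H$ being trivial, the same inequality $H\ne K_0$ is what would be used.
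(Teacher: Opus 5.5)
Your orbit count, residue computation, and PBW citations agree with the paper's proof. For the absence of a weight-torus realization you take a genuinely different route. It is correct once your ``working assumption'' is stated as the theorem it is.

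\textbf{The paper's route.} The paper argues by hand. $H$ contains every even sign change, and evaluating an $H$-fixed character $\ell$ on the half-spin weight $\lambda=\tfrac12(\epsilon_1+\cdots+\epsilon_n)$ forces $\ell(\epsilon_i+\epsilon_j)=1$. A third index then gives $\ell(\epsilon_i-\epsilon_j)=1$, so $\ell$ is $W$-fixed although $H$ is proper. This is elementary and self-contained, and it even shows that every equivariant map $\cC\to T_{\mathrm{wt}}$ is constant.

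\textbf{Your route.} You should say explicitly that $T_{\mathrm{wt}}=\Hom_{\mathbb Z}(L_{\mathrm{wt}},\mathbb C^\times)$ has character lattice $L_{\mathrm{wt}}$, so it is the maximal torus of the simply connected group. Steinberg's connectedness theorem then gives $\Stab_W(\ell_0)=\langle s_\alpha:\ell_0(\alpha)=1\rangle$. An equivariant bijection forces $H=\Stab_W(\ell(c_0))$, and your steps (i) and (ii) show that $H$ is not generated by its reflections.

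\textbf{Comparison.} Your argument costs an external theorem but yields a general criterion. Whenever the reflections in $H$ are exactly the $s_\alpha$ with $\alpha\in\Psi$, a nontrivial deck group $H/K_0$ rules out a realization; your step (i) establishes this hypothesis in irreducible simply-laced type of rank at least two. This uniformly explains the Young-subgroup argument of Example~\ref{des:ex:A5}. It also explains why the twelve-point cover of Proposition~\ref{liu:prop:D4-cover}, whose stabilizer is $K_0$, is realizable. With Steinberg stated, your closing hedge about trivial covers is unnecessary.

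\textbf{A small correction in (i).} It is not true that some simple root pairs to $-1$ with every $\alpha^\vee$. For the highest root $\epsilon_1+\epsilon_2$, the only nonzero simple pairing is $+1$. What you need is an odd pairing, for example $\langle\epsilon_j-\epsilon_k,(\pm\epsilon_i\pm\epsilon_j)^\vee\rangle=\pm1$ for $k\notin\{i,j\}$.
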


\begin{proof}
The Weyl group consists of signed permutations with an even
number of sign changes.  Sign changes do not alter coordinate
parities.  Two forms $\rho_J$ and $\rho_{J'}$ agree on $Q$
precisely when $J'=J$ or $J'=J^c$: the annihilator of the
even-sum subspace of $(\mathbb Z/2\mathbb Z)^n$ is generated
by the all-ones vector.  Permutations now give the orbit formula.
The roots $\pm\epsilon_i\pm\epsilon_j$ have residue zero
when their two indices are both in $J$ or both in $J^c$,
and residue one otherwise.  Both cases occur.  Admissibility
is automatic for an orbit of homomorphisms, and
Theorems~\ref{des:thm:additive-all} and~\ref{des:thm:ADE} give PBW.

Let $H=\Stab_W(\rho_J)$.  It contains every even sign change.
If a weight-torus point $\ell$ were fixed by $H$, apply the
sign change in positions $i,j$ to the weight
$\lambda=\tfrac12(\epsilon_1+\cdots+\epsilon_n)$.
Since $h\lambda-\lambda=-\epsilon_i-\epsilon_j$, invariance
implies $\ell(\epsilon_i+\epsilon_j)=1$ for every $i\ne j$.
Using a third index also gives
$\ell(\epsilon_i-\epsilon_j)=1$.  Thus $\ell$ is trivial
on every root.  Every Weyl reflection fixes such a character
of the weight lattice, so $\Stab_W(\ell)=W$.  But $H$ is
proper, as the displayed orbit has more than one element.
No equivariant bijection with a weight-torus orbit can exist.
\end{proof}

For $D_4$ and $J=\{1,2\}$, the simple residues in the usual
basis $\epsilon_1-\epsilon_2,\epsilon_2-\epsilon_3,
\epsilon_3-\epsilon_4,\epsilon_3+\epsilon_4$ are $(0,1,0,0)$.
The orbit has three elements, its stabilizer has order $64$,
and the PBW rank is $192\cdot3=576$.  This is a mixed-residue
example, not a consequence of the root-regular theorem.  The
failure of a direct weight-torus realization does not exclude a
finite cover.  Proposition~\ref{liu:prop:D4-cover} constructs a
twelve-point weight-torus cover explicitly.  The root-power
quotients of the arrow-free specialization are computed next.
For general permitted coordinate systems, the rank-two
integrality hypothesis remains the criterion in
Theorem~\ref{rc:thm:pbw}.

\section{Mixed residues and finite quotients without arrows}
\label{app:sec:mixed-quotients}

We now compute a class of finite quotients, including their simple
modules and, in type $D_4$, their Cartan matrices.  In this section
the residue quiver is \emph{arrow-free}: $d(r)=0$ for every
nonzero residue.  Zero-residue crossings are still divided
differences; the other crossings are Weyl operators.  The residue
datum can therefore remain genuinely mixed.
We first use additive coefficients over $\mathbb C$.
Corollary~\ref{des:cor:cyclotomic-exp} transfers the finite-quotient
conclusions to multiplicative coordinates under its path hypothesis.

\subsection{The zero-root subsystem and its complement}

Use $H$, $\Psi$, and $K_0$ from~\eqref{des:eq:zero-root-data}.
Choose a positive system $\Psi^+$ and put
\[
 \Gamma=\{h\in H:h\Psi^+=\Psi^+\},\qquad S_\Psi=P^{K_0}.
\]
The group $H$ permutes the chambers of $\Psi$, and $K_0$ acts
simply transitively on them.  Thus every $h\in H$ has a unique
factorization $h=k\gamma$, with $k\in K_0$ and $\gamma\in\Gamma$:
\begin{equation}\label{mix:eq:stabilizer-splitting}
 H=K_0\rtimes\Gamma,\qquad \Gamma\cong H/K_0.
\end{equation}
This remains valid when $\Psi$ does not span $V$; the action on
the complementary fixed subspace is retained.

\begin{theorem}[Matrix description of the arrow-free algebra]
\label{mix:thm:matrix}
There are noncanonical algebra isomorphisms
\begin{equation}\label{mix:eq:matrix}
 A_{\mathrm{add}}
 \cong\Mat_{|\cC|}(NH_\Psi\rtimes\Gamma)
 \cong\Mat_N(S_\Psi\rtimes\Gamma),
 \qquad N=|\cC|\,|K_0|=\frac{|W|}{|\Gamma|},
\end{equation}
where $NH_\Psi=\End_{S_\Psi}(P)$ is the classical nil-Hecke
algebra for $\Psi$, with its fixed polynomial coordinates included.
The action of $\Gamma$ on $NH_\Psi$ is conjugation by its action
on $P$; its action on $S_\Psi$ is the induced coefficient action.
\end{theorem}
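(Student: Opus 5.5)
The plan is to pass to the canonical zero-root cover of Proposition~\ref{des:prop:zero-root-cover}, compute the covering algebra there as a matrix algebra over a nil-Hecke algebra, and then descend by the deck group $F=H/K_0\cong\Gamma$, identified via~\eqref{mix:eq:stabilizer-splitting}.

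\textbf{The algebra on the cover.} Let $\widetilde{\cC}=W/K_0$ and let $\widetilde{\cR}$ be the arrow-free additive QDA on it. By~\eqref{des:eq:fixed-zero-cover}, a crossing $\widetilde\psi_i e_{wK_0}$ is a divided difference exactly when $w^{-1}\alpha_i\in\Psi$, and is otherwise the bare reflection $[s_i]$, since $d=0$. I claim that
\[
 \widetilde{\cR}\cong\End_{S_\Psi}\Bigl(\bigoplus_{wK_0}P e_{wK_0}\Bigr)
 \cong\Mat_{|W/K_0|}(NH_\Psi).
\]
To see this, I would conjugate the summand at $wK_0$ by the operator $[w^{-1}]$. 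Concretely, define $\Theta\colon \bigoplus_{wK_0}Pe_{wK_0}\to\bigoplus_{wK_0}P$ by applying $w^{-1}$ to the coefficient at $wK_0$, using a fixed representative $w$ of each coset. Under $\Theta$:
\begin{itemize}
\item the coefficient $f e_{wK_0}$ becomes multiplication by $w^{-1}(f)$;
\item a reflection crossing becomes the identity map between two different slots, up to the change of representative, which lies in $K_0$;
\item a divided-difference crossing becomes $\nabla_{w^{-1}\alpha_i}$ on $P$, again twisted by the change of representative.
\end{itemize}
All of these are $S_\Psi$-linear, because $S_\Psi=P^{K_0}$ and the twists lie in $K_0$. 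This gives the inclusion into the endomorphism ring.

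For the reverse inclusion, I need the full matrix algebra. The diagonal blocks must contain $NH_\Psi=\End_{S_\Psi}(P)$. This holds because $NH_\Psi$ is generated by $P$ together with the divided differences $\nabla_\beta$ for simple $\beta$ in a positive system of $\Psi$, and each such $\nabla_\beta$ arises from a suitable $w$ and $i$, since every root of $\Psi$ is $W$-conjugate to a simple root. The off-diagonal matrix units come from products of reflection crossings, because $W$ acts transitively on $W/K_0$. The identification $NH_\Psi=\End_{S_\Psi}(P)$, with $P$ free over $S_\Psi$ of rank $|K_0|$ and the fixed coordinates of $V$ included, is the classical theorem of~\cite{KostantKumar,BGG}. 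That theorem also gives $NH_\Psi\cong\Mat_{|K_0|}(S_\Psi)$.

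\textbf{Descent.} Corollary~\ref{des:cor:matrix} gives $\widetilde{\cR}\rtimes F\cong\Mat_{|F|}(\cR)$. That is the wrong direction for what we want, so I would instead compute $\cR=\widetilde{\cR}^F$ directly, using Theorem~\ref{des:thm:cover}. The deck group acts on $W/K_0$ by right multiplication by $\gamma\in\Gamma$. In the trivialized picture above, this permutes the slots and acts on the coefficients $P$ through $\gamma$. I would choose the coset representatives compatibly, taking $w\gamma$ as the representative of $w\gamma K_0$ for $w$ ranging over a transversal of $W/H$. With that choice, the $F$-action on $\Mat_{|W/K_0|}(NH_\Psi)=\Mat_{|\cC|}(\Mat_{|\Gamma|}(NH_\Psi))$ is the regular permutation of the inner $\Gamma$-slots combined with conjugation by $\gamma$ on $NH_\Psi$.

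The invariants of $\Mat_{|\Gamma|}(B)$ under such a combined regular-plus-conjugation action are $B\rtimes\Gamma$, where an invariant matrix $(b_{g,h})$ corresponds to $\sum_g b_{1,g}[g]$. Together with the outer $\Mat_{|\cC|}$, this gives the first isomorphism in~\eqref{mix:eq:matrix}. For the second, substitute $NH_\Psi\cong\Mat_{|K_0|}(S_\Psi)$ and check that $\Gamma$ acts compatibly. I would do this by choosing a $\Gamma$-stable basis of $P$ over $S_\Psi$. If no such basis exists, I would use instead $NH_\Psi\rtimes\Gamma\cong\End_{S_\Psi\rtimes\Gamma}(P\otimes\cdots)$, realizing $NH_\Psi\rtimes\Gamma$ as the endomorphism ring of the free module $P\rtimes\Gamma$ over $S_\Psi\rtimes\Gamma$, which has rank $|K_0|$. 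The count is then $N=|\cC|\,|K_0|$, and $|W|=|\cC|\,|K_0|\,|\Gamma|$ gives the stated formula for $N$.

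\textbf{Main obstacle.} I expect the hardest step to be the precise bookkeeping of the $F$-action after trivialization, namely showing that it is exactly the combined regular-plus-conjugation action, so that the invariants are a skew group ring and not some twisted form. The key point is that the coset representatives can be chosen as above with no cocycle, which is possible because $H=K_0\rtimes\Gamma$ is split.

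A second point to handle carefully is the $\Gamma$-equivariant freeness of $P$ over $S_\Psi$ needed for the last isomorphism. I would handle it by a graded Nakayama argument applied to the $\Gamma$-stable harmonic complement of the ideal generated by $S_\Psi^+$.
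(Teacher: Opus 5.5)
Your proposal is correct, but it takes a different route from the paper.

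\textbf{The paper's route.} The paper never passes to the cover. It first notes that every Weyl operator lies in $A_{\mathrm{add}}$. It then uses the transports $[w_c]e_{c_0}$ to reduce to the corner $e_{c_0}A_{\mathrm{add}}e_{c_0}$. It identifies this corner as the operator algebra on $P$ generated by $NH_\Psi$ and the stabilizer $H$. Finally it shows that the resulting surjection $NH_\Psi\rtimes\Gamma\to e_{c_0}A_{\mathrm{add}}e_{c_0}$ is injective, because terms indexed by different $\gamma$ are supported on disjoint cosets $K_0\gamma$ in $\Frac(P)\rtimes W$.

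\textbf{Your route.} You work on $W/K_0$. There the base stabilizer is $K_0$ itself, so $\Gamma$ disappears and the algebra is all of $\Mat_{|W/K_0|}(NH_\Psi)$. Injectivity comes for free from faithfulness. You then recover $\Gamma$ as the deck group, through $\cR=\widetilde\cR^F$ and the identity $\Mat_{|\Gamma|}(B)^\Gamma\cong B\rtimes\Gamma$ for the combined slot-permutation and conjugation action. With representatives $w\gamma$, the deck action is indeed of that form with no cocycle, as you anticipated.

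\textbf{Comparison.} Your approach explains conceptually why a skew group ring over $\Gamma=F$ appears, and it reuses the descent theorem. The cost is the bookkeeping of the $F$-action. The paper's approach is shorter and needs only the coset-support argument.

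\textbf{A point to tighten.} Products of bare reflection crossings give only matrix units twisted by elements of $K_0$, and they never step across a slot fixed by $s_i$. So they do not by themselves place every $\nabla_\beta$ in a single diagonal block. Before building the matrix units, observe, as the paper does, that $[s_i]e_c=(1+x_{\alpha_i}\psi_i)e_c$ at zero residue. Then all $[u]e_c$ lie in the algebra. This gives exact matrix units $[w_bw_a^{-1}]e_a$ in your trivialization, and it lets you conjugate a zero-residue crossing to $\nabla_\beta$ at the base slot.

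\textbf{The second isomorphism.} A basis of $P$ over $S_\Psi$ that is permuted by $\Gamma$ need not exist. The paper instead uses a graded $\Gamma$-stable lift $\mathcal H$ of $C_\Psi$ and untwists by the constant matrices $M_\gamma$, which is your harmonic-complement option. Your other fallback is also valid and needs no equivariant complement: $P\rtimes\Gamma$ is a free right $S_\Psi\rtimes\Gamma$-module of rank $|K_0|$, and its endomorphism ring is $NH_\Psi\rtimes\Gamma$.
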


\begin{proof}
Every Weyl operator belongs to $A_{\mathrm{add}}$.  Indeed,
on a nonzero residue $\psi_i e_c=[s_i]e_c$, whereas on a zero
residue
\[
 [s_i]e_c=(1+x_{\alpha_i}\psi_i)e_c.
\]
Choose $w_c\in W$ with $w_cc_0=c$ and $w_{c_0}=1$.
The invertible transports $[w_c]e_{c_0}$ and their inverses
identify the algebra with the full matrix algebra over
$e_{c_0}A_{\mathrm{add}}e_{c_0}$.

We identify this corner as an operator algebra on $P$.
It contains $P$, the group $H$, and
$(s_\beta-1)/x_\beta$ for every $\beta\in\Psi$:
conjugate a zero simple-root crossing on a suitable residue by
the available Weyl operators.  Conversely, transport a generating
crossing from $c$ to $s_ic$ back to $c_0$.
If its residue is nonzero, its transported operator is
$[w_{s_ic}^{-1}s_iw_c]\in H$.
If its residue is zero, admissibility gives $s_ic=c$, and
the transported operator is the divided difference for
$w_c^{-1}\alpha_i\in\Psi$.
Transported coefficients are in $P$.  These facts identify the
corner with the algebra generated by $NH_\Psi$ and $H$.
Here we use the classical equality
$NH_\Psi=\End_{S_\Psi}(P)$ and the freeness of $P$ over
$S_\Psi$ of rank $|K_0|$~\cite{BGG,KostantKumar}.

The splitting~\eqref{mix:eq:stabilizer-splitting} now gives a
surjective map $NH_\Psi\rtimes\Gamma$ onto the corner.
It is injective: after passing to rational coefficients, the
terms indexed by different $\gamma$ are supported on disjoint
cosets $K_0\gamma$ in the skew group algebra
$\Frac(P)\rtimes W$.  Independence of its Weyl terms proves
the required independence before localization.

For the second matrix description, choose a graded
$\Gamma$-stable lift $\mathcal H$ of
$C_\Psi=P/(S_{\Psi,+}P)$ inside $P$.  Such a lift exists by
averaging a graded linear splitting over the finite group
$\Gamma$.  The usual free-module decomposition is then
$P=S_\Psi\otimes_{\mathbb C}\mathcal H$.
Choose a homogeneous basis of $\mathcal H$, and let $M_\gamma$
be the constant matrix of $\gamma$ on it.  On
$\Mat_{|K_0|}(S_\Psi)$ the action is
$B\mapsto M_\gamma\gamma(B)M_\gamma^{-1}$, where
$\gamma(B)$ acts entrywise.  The assignment
\[
 B[\gamma]\longmapsto B M_\gamma[\gamma]
\]
untwists this action and identifies its skew group algebra with
$\Mat_{|K_0|}(S_\Psi\rtimes\Gamma)$.
Combining the two matrix identifications proves the result.
\end{proof}

\begin{example}[The arrow-free hypothesis is essential in type $A_2$]
\label{mix:ex:A2-non-arrow-free}
The assumption $d=0$ in Theorem~\ref{mix:thm:matrix} is structural.
Already in type $A_2$, the matrix description can fail for a uniform
quiver with arrows.  Let $W=\mathfrak S_3$ act on the permutation
orbit
\[
 \cC=W(0,1,2)\subseteq\mathbb Z^3,
\]
and take the oriented uniform residue quiver on $\mathbb Z$ given by
\[
 d(-1)=1,\qquad d(r)=0\quad(r\ne-1).
\]
Thus there is one arrow $a\to a+1$ at every vertex.  At
$c_0=(0,1,2)$ one has
\[
 m_1(c_0)=m_2(c_0)=-1.
\]
Since the entries of $c_0$ are distinct, its stabilizer is trivial;
moreover no root has zero residue at $c_0$.  Hence
\[
 H=K_0=\Gamma=1,\qquad \Psi=\varnothing.
\]
In additive coordinates the two simple crossings at $c_0$ are weighted
reflections,
\[
 \psi_i e_{c_0}
   =x_{-\alpha_i}[s_i]e_{c_0}
   =-x_i[s_i]e_{c_0},
 \qquad i=1,2.
\]
Let $\mathfrak m=(x_1,x_2)\subseteq P$ and set
\[
 \overline A=A_{\mathrm{add}}/\langle\mathfrak m\rangle.
\]
Then
\[
 \overline\psi_1e_{c_0}=\overline\psi_2e_{c_0}=0.
\]
Every positive-length crossing word with right idempotent $e_{c_0}$
therefore vanishes in $\overline A$, because its rightmost crossing is
one of these two elements.  Consequently
\[
 \overline A e_{c_0}=\mathbb C e_{c_0},
 \qquad
 e_c\overline A e_{c_0}=0\quad(c\ne c_0).
\]
In particular, $\overline A$ is not a full matrix algebra over
$\mathbb C$.

If the conclusion of Theorem~\ref{mix:thm:matrix} extended to this
non-arrow-free datum, then $\Psi=\varnothing$ and $\Gamma=1$ would give
\[
 A_{\mathrm{add}}\cong\Mat_6(P),
\]
and reduction modulo $\mathfrak m$ would give
$\overline A\cong\Mat_6(\mathbb C)$, a contradiction.  Thus the
arrow-free hypothesis cannot in general be removed.  The obstruction is
precisely the root-coordinate factor in a nonzero-residue crossing:
when $d\ne0$ such a crossing need not be an invertible Weyl transport.
\end{example}

\subsection{Root-power ideals and an exact nonvanishing criterion}

Fix a distinguished simple root $\alpha_1$ and exponents $\Lambda$.
For each $\beta\in W\alpha_1$, let $M_\beta$ be the multiplication
matrix of
$x_\beta^{\Lambda_{\rho_0(\beta)}}$ on the free $S_\Psi$-module
$P$.  Define the coefficient ideal
\begin{equation}\label{mix:eq:entry-ideal}
 J_\Lambda=
 \big\langle\text{all entries of }M_\beta:
                         \beta\in W\alpha_1\big\rangle
                         \subseteq S_\Psi.
\end{equation}
It is independent of the chosen $S_\Psi$-basis: conjugation by an
invertible matrix preserves the ideal generated by all entries.
It is homogeneous, and it is $\Gamma$-stable because $\Gamma$
preserves $\rho_0$ and permutes $W\alpha_1$.

\begin{theorem}[Finite-quotient reduction and sharp threshold]
\label{mix:thm:quotient}
For arrow-free data there is an isomorphism
\begin{equation}\label{mix:eq:quotient}
 A_{\mathrm{add}}^\Lambda\cong
 \Mat_N\bigl((S_\Psi/J_\Lambda)\rtimes\Gamma\bigr),
 \qquad N=|W|/|\Gamma|.
\end{equation}
Define, for $\beta\in W\alpha_1$,
\begin{equation}\label{mix:eq:threshold}
 \nu_\Psi(\beta)
 =1+\#\{\gamma\in\Psi^+:
                        \langle\beta,\gamma^\vee\rangle\ne0\}.
\end{equation}
Then
\begin{equation}\label{mix:eq:nonzero}
 A_{\mathrm{add}}^\Lambda\ne0
 \quad\Longleftrightarrow\quad
 \Lambda_{\rho_0(\beta)}\ge\nu_\Psi(\beta)
               \quad\hbox{for every }\beta\in W\alpha_1.
\end{equation}
If the path hypothesis of
Proposition~\ref{des:prop:root-nilpotence} holds and the quotient
is nonzero, its simple modules are indexed by
$\operatorname{Irr}(\Gamma)$.  The simple module indexed by
$\chi$ has dimension $N\dim\chi$.
\end{theorem}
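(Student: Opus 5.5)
Start from Theorem~\ref{mix:thm:matrix} and push the root-power ideal through its two matrix identifications. The ideal $I=\langle x_1^{\Lambda_{m_1(c)}}e_c\rangle$ is generated by coefficients, so under the full matrix identification with the corner $e_{c_0}A_{\mathrm{add}}e_{c_0}$ it corresponds to $\Mat_{|\cC|}$ of the two-sided ideal of the corner generated by the transported generators. Conjugating $x_1^{\Lambda_{m_1(c)}}e_c$ back to $c_0$ by the transports $[w_c]$ gives $x_\beta^{\Lambda_{\rho_0(\beta)}}$ with $\beta=w_c^{-1}\alpha_1$. Here $m_1(c)=\rho_0(w_c^{-1}\alpha_1)$ by equivariance. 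Conjugating further by elements of $H$, every $\beta\in W\alpha_1$ occurs. So the corner ideal is the two-sided ideal of $\End_{S_\Psi}(P)\rtimes\Gamma$ generated by the multiplication operators $x_\beta^{\Lambda_{\rho_0(\beta)}}$.

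The key step is to identify this ideal. In the matrix algebra $\End_{S_\Psi}(P)\cong\Mat_{|K_0|}(S_\Psi)$, the two-sided ideal generated by a matrix $M$ is $\Mat_{|K_0|}(\mathfrak a)$, where $\mathfrak a$ is the ideal generated by the entries of $M$: multiply by elementary matrices on both sides. The ideal generated by all the $M_\beta$ is therefore $\Mat_{|K_0|}(J_\Lambda)$. Because $J_\Lambda$ is $\Gamma$-stable, adjoining $\Gamma$ gives $\Mat_{|K_0|}(J_\Lambda)\rtimes\Gamma$, which is already two-sided. Under the untwisting $B[\gamma]\mapsto BM_\gamma[\gamma]$ it becomes $\Mat_{|K_0|}(J_\Lambda\rtimes\Gamma)$, since the $M_\gamma$ are invertible constant matrices. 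Taking the quotient, and combining $|\cC|\cdot|K_0|=N$, gives \eqref{mix:eq:quotient}.

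For the threshold \eqref{mix:eq:nonzero}, the quotient is nonzero exactly when $J_\Lambda\ne S_\Psi$. Since $J_\Lambda$ is homogeneous and $S_\Psi$ is connected graded, this happens exactly when no entry of any $M_\beta$ is a nonzero constant. Equivalently, each $x_\beta^{\Lambda}$ acts by zero on the coinvariant algebra $C_\Psi=P/(S_{\Psi,+}P)$, because the constant parts of the entries are the matrix of multiplication on $C_\Psi$. So the criterion is that $x_\beta^{\nu}$ vanishes in $C_\Psi$ if and only if $\nu\ge\nu_\Psi(\beta)$. This is the step I expect to be the main obstacle.

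I would decompose $V=V_\Psi\oplus V^{K_0}$, writing $x_\beta=x'+x''$, where $x''$ is $K_0$-invariant of degree one and hence zero in $C_\Psi$. Thus $x_\beta\equiv x'$, the projection to $V_\Psi$. The group $K_0$ is a product of irreducible Weyl groups, $C_\Psi$ is the tensor product of their coinvariant algebras, and $x'$ is a sum of components $x'_k$. For a linear form $\lambda$ in a coinvariant algebra, I would use the classical nilpotence order: $\lambda^m\ne0$ exactly for $m\le\#\{\gamma\in\Psi_k^+:\langle\lambda,\gamma^\vee\rangle\ne0\}$. This follows by applying the top divided difference $\partial_{w_0}$ to $\lambda^m$ and evaluating, where $\partial_{w_0}(\lambda^{N})$ is a nonzero multiple of $\prod\langle\lambda,\gamma^\vee\rangle$. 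For the lower degrees one needs a reduction to a parabolic stabilizer of $\lambda$; this is the delicate part. Multinomial expansion across the tensor factors then gives the total count $\sum_k$, so $x_\beta^{\nu}\ne0$ in $C_\Psi$ exactly for $\nu<\nu_\Psi(\beta)$.

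For the simple modules, under the path hypothesis Proposition~\ref{des:prop:root-nilpotence} makes the quotient finite dimensional. It also makes every $x_\alpha$ nilpotent in the coefficient image, so the image of $S_{\Psi,+}$ in $S_\Psi/J_\Lambda$ is nilpotent. Then $S_\Psi/J_\Lambda$ is local with residue field $\mathbb C$. The ideal $(S_{\Psi,+}/J_\Lambda)\rtimes\Gamma$ is nilpotent and has quotient $\mathbb C\Gamma$, which is semisimple in characteristic zero. The simple modules of $\Mat_N(\mathbb C\Gamma)$ are $\mathbb C^N\otimes\chi$, indexed by $\operatorname{Irr}(\Gamma)$, and have dimension $N\dim\chi$.
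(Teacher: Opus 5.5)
Your transport of the root-power ideal through Theorem~\ref{mix:thm:matrix} matches the paper's proof: the corner at $c_0$, conjugation by $H$, entry ideals in a full matrix algebra, $\Gamma$-stability, and untwisting. So do your reduction of nonvanishing to the vanishing of $x_\beta^{\Lambda_{\rho_0(\beta)}}$ in $C_\Psi$ and your radical argument for the simple modules.

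\textbf{The gap} is the step you flag yourself: the exact nilpotence index of the image of $x_\beta$ in $C_\Psi$ is asserted, not proved. The top divided difference $\partial_{w_0}$ only sees degree $|\Psi^+|$. It shows that $\lambda^{|\Psi^+|}\neq0$ iff $\lambda$ is regular for $\Psi$, and says nothing in lower degrees. But \eqref{mix:eq:nonzero} needs both $\lambda^{m}\neq0$ and $\lambda^{m+1}=0$ for $m=\#\{\gamma\in\Psi^+:\langle\lambda,\gamma^\vee\rangle\neq0\}$. Non-regular $\lambda$ is the typical case, not an edge case. Every zero root $\beta\in W\alpha_1$ gives one; for instance a root of a $D_r$ factor in Corollary~\ref{mix:cor:Dn} has $m=4r-7<|\Psi^+|$. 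Without this lemma neither implication of the criterion is established.

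\textbf{The missing idea} is the partial flag variety, which is how the paper argues. Replace the $\Psi$-component of $\beta$, which is an integral weight for $\Psi$, by its dominant $K_0$-conjugate $\lambda$. This is harmless: the $K_0$-fixed component dies in $C_\Psi$ and pairs trivially with the coroots of $\Psi$, so the count is unchanged. Let $P_\lambda$ be the parabolic whose Levi roots are those orthogonal to $\lambda$. Under $C_\Psi\cong H^*(G_\Psi/B_\Psi;\mathbb C)$, the pullback from $H^*(G_\Psi/P_\lambda;\mathbb C)$ is injective. It carries the first Chern class of the ample bundle defined by $\lambda$ to $x_\lambda$. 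Since $\dim G_\Psi/P_\lambda=m$, the $m$th power of this class is nonzero and the $(m+1)$st vanishes for degree reasons.

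If you prefer to stay algebraic, let $W_\lambda\le K_0$ be the stabilizer of $\lambda$, with longest element $w_{0,\lambda}$ and root system $\Psi_\lambda$. Then $\lambda\in P^{W_\lambda}$, and $P^{W_\lambda}/S_{\Psi,+}P^{W_\lambda}$ injects into $C_\Psi$, because $P^{W_\lambda}$ is an $S_\Psi$-direct summand of $P$. This algebra has top degree $m$, which gives $\lambda^{m+1}=0$. Nonvanishing of $\lambda^m$ then needs the degree formula $\partial_{w_0w_{0,\lambda}}(\lambda^m)=c\prod_{\gamma\in\Psi^+\setminus\Psi^+_\lambda}\langle\lambda,\gamma^\vee\rangle$ with $c\neq0$. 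This is the algebraic form of the same positivity.

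Either way the argument applies to all of $K_0$ at once. Your splitting into irreducible factors with a multinomial expansion is correct but not needed.
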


\begin{proof}
Put $\beta_c=w_c^{-1}\alpha_1$.  Transporting the root-power
generator on $e_c$ to the $c_0$-corner gives multiplication by
$x_{\beta_c}^{\Lambda_{\rho_0(\beta_c)}}$.
Conjugating further by $H$ gives exactly the family with
$\beta\in W\alpha_1$, since $W$ is the union of the cosets
$w_cH$.  In a full matrix algebra, the ideal generated by a
family of matrices is generated entrywise by their entries.
Because~\eqref{mix:eq:entry-ideal} is $\Gamma$-stable, the
corresponding ideal in the skew group ring is
$J_\Lambda\rtimes\Gamma$.
Theorem~\ref{mix:thm:matrix} therefore proves
\eqref{mix:eq:quotient}.

The connected graded polynomial ring $S_\Psi$ has positive ideal
$S_{\Psi,+}$.  The homogeneous ideal $J_\Lambda$ is proper if
and only if it is contained in $S_{\Psi,+}$.  By reduction of
multiplication matrices, this is equivalent to every specified
$x_\beta^{\Lambda_{\rho_0(\beta)}}$ acting as zero on $C_\Psi$.
Such a multiplication operator is zero if and only if its value
at $1$ is zero.  It remains to compute the nilpotence index of
$x_\beta$ in $C_\Psi$.

The component of $\beta$ fixed by $K_0$ vanishes in $C_\Psi$.
Its component on the span of $\Psi$ is an integral weight for
$\Psi$, since its pairings with the coroots of $\Psi$ are
integers.  Applying $K_0$, make this weight dominant, say $\lambda$.
Use the characteristic presentation
$C_\Psi\cong H^*(G_\Psi/B_\Psi;\mathbb C)$, with polynomial
degrees doubled, for a simply connected semisimple group with
root system $\Psi$~\cite{BGG,KumarFlag}.
The weight $\lambda$ defines an ample line bundle on
$G_\Psi/P_\lambda$, where the Levi simple roots are those
orthogonal to $\lambda$.  Its pullback has first Chern class
$x_\lambda$ on the full flag variety, up to the immaterial
characteristic-map sign.

The pullback in cohomology is injective, by the flag fibration
and its even-dimensional Schubert cells.  If
$m=\dim(G_\Psi/P_\lambda)$, the $m$th power of the ample
class is nonzero and its $(m+1)$st power is zero.  Moreover,
\[
 m=\#\{\gamma\in\Psi^+:
                         \langle\lambda,\gamma^\vee\rangle\ne0\}.
\]
The number counts pairs of nonorthogonal roots, so it is
$K_0$-invariant and equals the count for $\beta$.
Thus the nilpotence index is exactly~\eqref{mix:eq:threshold}.
If $\lambda=0$, or $\Psi$ is empty, the same statement says
that the zero element has nilpotence index one.  This proves
\eqref{mix:eq:nonzero}.

Under the path hypothesis the quotient is finite dimensional by
Corollary~\ref{des:cor:cyclotomic-exp}.  Hence the nonzero algebra
$R_\Lambda=S_\Psi/J_\Lambda$ is finite dimensional and connected
graded.  Its positive ideal is nilpotent and its residue field
is $\mathbb C$.  The radical of $R_\Lambda\rtimes\Gamma$ is
$R_{\Lambda,+}\rtimes\Gamma$: this ideal is nilpotent, and
the quotient $\mathbb C\Gamma$ is semisimple in characteristic
zero.  Its simple modules are therefore the irreducible
$\Gamma$-modules.  The matrix factor multiplies their dimensions
by $N$, as asserted.
\end{proof}

The threshold calculation uses the classical geometry of
coinvariants.  Its role here is to turn the entry-ideal reduction
into a necessary and sufficient condition for a mixed-residue
quotient, instead of a sufficient large-exponent bound.

\subsection{The parity family in type \texorpdfstring{$D_n$}{Dn}}

\begin{corollary}[Sharp thresholds and simple modules in type $D_n$]
\label{mix:cor:Dn}
Use the residue orbit of Proposition~\ref{des:prop:D-family}, with
$|J|=k$, $2\le k\le n-2$, and no arrows.  Put
$\Lambda_{\overline0}=a$ and $\Lambda_{\overline1}=b$.
For either standard coordinate system the root-power quotient is
nonzero if and only if
\begin{equation}\label{mix:eq:Dn-threshold}
 a\ge\max\{4k-6,4(n-k)-6\},\qquad b\ge2n-3.
\end{equation}
For every nonzero quotient there are exactly $|\Gamma|$ simple
modules, all of dimension $|W(D_n)|/|\Gamma|$, where
\[
 \Gamma\cong
 \begin{cases}
 \mathbb Z/2\mathbb Z,&2k\ne n,\\
 (\mathbb Z/2\mathbb Z)^2,&2k=n.
 \end{cases}
\]
Here $D_2$ in the zero-root subsystem means $A_1\sqcup A_1$.
\end{corollary}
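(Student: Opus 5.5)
We apply Theorem~\ref{mix:thm:quotient} directly to the parity datum of Proposition~\ref{des:prop:D-family}. Then we transfer the conclusions to multiplicative coordinates using Corollary~\ref{des:cor:cyclotomic-exp}. The path hypothesis of Proposition~\ref{des:prop:root-nilpotence} holds at every vertex of the connected simply-laced diagram $D_n$, so both the finiteness input and the simple-module statement are available.

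First we identify $\Psi$ and the orbit $W\alpha_1$. By the proof of Proposition~\ref{des:prop:D-family}, a root $\pm\epsilon_i\pm\epsilon_j$ has residue $\overline0$ exactly when $i,j$ lie both in $J$ or both in $J^c$. Hence $\Psi=D_k\sqcup D_{n-k}$, where $D_2=A_1\sqcup A_1$. Since $D_n$ is simply laced, $W\alpha_1=\Phi$, so $\beta$ runs over all roots, and $\langle\beta,\gamma^\vee\rangle\ne0$ just means non-orthogonality.

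Next we compute $\nu_\Psi(\beta)$ from~\eqref{mix:eq:threshold} by a count we will carry out explicitly.
\begin{itemize}
\item For $\beta=\epsilon_i\pm\epsilon_j$ with $i,j\in J$, the positive roots of $\Psi$ not orthogonal to $\beta$ are $\beta$ itself and the $4(k-2)$ roots $\epsilon_i\pm\epsilon_l$, $\epsilon_j\pm\epsilon_l$ with $l\in J\setminus\{i,j\}$. The $D_{n-k}$ block is orthogonal to $\beta$. This gives $\nu=4k-6$, and similarly $4(n-k)-6$ for $J^c$.
\item For $\beta=\epsilon_i\pm\epsilon_l$ with $i\in J$ and $l\in J^c$, the nonorthogonal positive roots are $\epsilon_i\pm\epsilon_m$ with $m\in J\setminus\{i\}$, and $\epsilon_l\pm\epsilon_m$ with $m\in J^c\setminus\{l\}$. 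That is $2(k-1)+2(n-k-1)=2n-4$ roots, so $\nu=2n-3$.
\end{itemize}
Because $2\le k\le n-2$, both kinds of residue-$\overline0$ roots actually occur. Criterion~\eqref{mix:eq:nonzero} therefore becomes~\eqref{mix:eq:Dn-threshold}.

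For $\Gamma\cong H/K_0$, we count orders using signed permutations with an even number of sign changes.
\begin{itemize}
\item The subgroup of $H$ preserving $J$ consists of $\mathfrak S_k\times\mathfrak S_{n-k}$ together with all even sign changes. It has order $k!(n-k)!2^{n-1}$.
\item $K_0=W(D_k)\times W(D_{n-k})$ has order $k!(n-k)!2^{n-2}$.
\item The extra coset is represented by a sign change at one index of $J$ and one index of $J^c$.
\item When $2k=n$, $H$ also contains elements exchanging $J$ and $J^c$. These preserve $\rho_J$ because $\rho_J=\rho_{J^c}$ on $Q$. They give index $2$ more, consistent with $|\cC|$ in Proposition~\ref{des:prop:D-family}.
\end{itemize}
Thus $|\Gamma|=2$ or $4$.

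The step I expect to be most delicate is showing that $\Gamma\cong(\mathbb Z/2\mathbb Z)^2$ rather than $\mathbb Z/4\mathbb Z$ when $2k=n$. I will take $J=\{1,\dots,k\}$ and the standard positive system on each block. Then I exhibit two commuting involutions preserving $\Psi^+$: the block swap $\epsilon_i\leftrightarrow\epsilon_{i+k}$, and the element $\epsilon_k\mapsto-\epsilon_k$, $\epsilon_n\mapsto-\epsilon_n$ (the diagram flips of both $D_k$ blocks). These are distinct and generate a group of order $4$. This argument covers the degenerate blocks $k=2$ as well.

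Since $\Gamma$ is abelian, $\operatorname{Irr}(\Gamma)$ consists of $|\Gamma|$ characters of dimension one. Theorem~\ref{mix:thm:quotient} then gives exactly $|\Gamma|$ simple modules, each of dimension $N=|W(D_n)|/|\Gamma|$. By the exponential isomorphism of Corollary~\ref{des:cor:cyclotomic-exp}, the same count and dimensions hold in multiplicative coordinates.
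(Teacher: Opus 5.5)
Your proposal is correct and follows essentially the same route as the paper: identify $\Psi=D_k\sqcup D_{n-k}$, count nonorthogonal positive roots to get the thresholds $4k-6$, $4(n-k)-6$ and $2n-3$, determine $\Gamma\cong H/K_0$, and then apply Theorem~\ref{mix:thm:quotient} together with Corollary~\ref{des:cor:cyclotomic-exp}. Your explicit order count, and your two concrete commuting involutions in $\Gamma$ (the block swap and the sign change at $\epsilon_k,\epsilon_n$), make precise what the paper states only modulo $K_0$.
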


\begin{proof}
The zero-root subsystem is $D_k\sqcup D_{n-k}$, with Weyl
group $K_0=W(D_k)\times W(D_{n-k})$.
The stabilizer $H$ consists of even signed permutations
preserving the two blocks, with block interchange also allowed
when they have equal size.  Modulo $K_0$, simultaneous odd sign
parity in both blocks gives an involution.  In the equal-size
case block interchange gives a second commuting involution.
This proves the displayed description of $H/K_0\cong\Gamma$.

For a root of $D_r$, the positive roots nonorthogonal to it
number $1+4(r-2)=4r-7$: count the root's own pair and the
roots involving exactly one of its two indices.  Thus its
nilpotence index is $4r-6$.  A root joining the two blocks
is nonorthogonal to $2(k-1)$ positive roots in the first block
and $2(n-k-1)$ in the second.  Its index is $2n-3$.
The Weyl group of $D_n$ is transitive on all roots, so
Theorem~\ref{mix:thm:quotient} gives exactly
\eqref{mix:eq:Dn-threshold}.  The diagram is connected and
simply laced, so finiteness and the multiplicative comparison
hold.  Both displayed groups are abelian; all their irreducible
representations have dimension one, giving the simple-module
statement.
\end{proof}

\subsection{An explicit mixed \texorpdfstring{$D_4$}{D4} quotient}

Take $J=\{1,2\}$ in $D_4$.  The residue orbit has three elements,
$K_0\cong(\mathbb Z/2\mathbb Z)^4$, and
$\Gamma\cong(\mathbb Z/2\mathbb Z)^2$.  Introduce the orthogonal
zero-root coordinates
\[
 u_1=x_{\epsilon_1-\epsilon_2},\quad
 u_2=x_{\epsilon_1+\epsilon_2},\quad
 u_3=x_{\epsilon_3-\epsilon_4},\quad
 u_4=x_{\epsilon_3+\epsilon_4},\qquad z_i=u_i^2.
\]
Then $S_\Psi=\mathbb C[z_1,z_2,z_3,z_4]$, and $P$ is free
over it with basis $u^\varepsilon$ for
$\varepsilon\in\{0,1\}^4$.  One may choose the complement
$\Gamma$ to act on the four $u_i$, and hence the four $z_i$,
by the double transpositions $(12)(34)$ and $(13)(24)$.
These are induced respectively by changing the signs of
$\epsilon_2,\epsilon_4$ and by interchanging the two coordinate
blocks.

For $b\ge0$ define the explicit polynomials
\begin{equation}\label{mix:eq:F-polynomial}
 F_{\varepsilon,b}(z)=
 \sum_{\substack{r\in\mathbb Z_{\ge0}^4\\2|r|+|\varepsilon|=b}}
       \frac{b!}{\prod_{i=1}^4(2r_i+\varepsilon_i)!}\,z^r,
 \qquad \varepsilon\in\{0,1\}^4.
\end{equation}
An empty sum is zero.  Thus
$(u_1+u_2+u_3+u_4)^b=\sum_\varepsilon u^\varepsilon
F_{\varepsilon,b}(z)$.

\begin{theorem}[Presentation, dimensions, and Cartan matrices]
\label{mix:thm:D4}
Let $A_{a,b}$ be the arrow-free mixed $D_4$ root-power quotient
with exponents $a,b\ge0$, in either standard coordinate system,
and put $p=\lfloor a/2\rfloor$.  There is an isomorphism
\begin{equation}\label{mix:eq:D4-presentation}
 A_{a,b}\cong
 \Mat_{48}\left(
 \left(
 \frac{\mathbb C[z_1,z_2,z_3,z_4]}
 {(z_1^p,z_2^p,z_3^p,z_4^p,F_{\varepsilon,b}:
                                  \varepsilon\in\{0,1\}^4)}
 \right)\rtimes\Gamma\right).
\end{equation}
It is nonzero exactly when $a\ge2$ and $b\ge5$; every nonzero
$A_{a,b}$ has four simple modules, each of dimension $48$.

For $p\ge1$, the $F_{\varepsilon,b}$ relations in
\eqref{mix:eq:D4-presentation} are redundant exactly when
$b\ge8p-3$.  In this range, put
$R_p=\mathbb C[z_1,z_2,z_3,z_4]/(z_1^p,z_2^p,z_3^p,z_4^p)$.
Then
\begin{equation}\label{mix:eq:D4-numerics}
 \begin{gathered}
 A_{a,b}\cong\Mat_{48}(R_p\rtimes\Gamma),\qquad
 \dim A_{a,b}=9216p^4,\\
 \dim\rad A_{a,b}=9216(p^4-1),\qquad
 \text{Loewy length of }A_{a,b}=4p-3.
 \end{gathered}
\end{equation}
Its Cartan matrix, indexed by the four characters of $\Gamma$,
has entries
\begin{equation}\label{mix:eq:D4-cartan}
 c_{\chi,\psi}=
 \begin{cases}
 (p^4+3p^2)/4,&\chi=\psi,\\
 (p^4-p^2)/4,&\chi\ne\psi.
 \end{cases}
\end{equation}
\end{theorem}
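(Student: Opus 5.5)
The plan is to specialize Theorem~\ref{mix:thm:quotient} and Corollary~\ref{mix:cor:Dn} to $n=4$, $k=2$, compute the entry ideal $J_\Lambda$ explicitly in the coordinates $u_i$, and then read off the numerical invariants from the commutative algebra $R_p$. Here $|W(D_4)|=192$ and $|\Gamma|=4$, so $N=48$. The multiplicative case will follow at the end from Corollary~\ref{des:cor:cyclotomic-exp}, since $D_4$ is connected and simply laced and the path hypothesis holds.

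\textbf{Step 1: the entry ideal.} From the definitions of the $u_i$ one has
\[
 \epsilon_1=\tfrac12(u_1+u_2),\qquad \epsilon_2=\tfrac12(u_2-u_1),\qquad
 \epsilon_3=\tfrac12(u_3+u_4),\qquad \epsilon_4=\tfrac12(u_4-u_3)
\]
as linear coordinates. The orbit $W\alpha_1$ consists of all $24$ roots, which split into two kinds.

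The zero-residue roots have coordinates $x_\beta=\pm u_i$. Multiplication by $u_i^a$ on the basis $u^\varepsilon$ has entries that are monomials, and the smallest power of $z_i$ among them is $z_i^{p}$ with $p=\lfloor a/2\rfloor$. Parity of $a$ only decides whether a given entry is $z_i^p$ or $z_i^{p+1}$. Hence these roots contribute exactly $(z_1^p,\dots,z_4^p)$.

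The residue-one roots have coordinates $x_\beta=\tfrac12(\pm u_1\pm u_2\pm u_3\pm u_4)$. Each sign change $u_i\mapsto-u_i$ is an $S_\Psi$-algebra automorphism of $P$ that acts diagonally by $\pm1$ on the basis $u^\varepsilon$. It therefore conjugates the multiplication matrices by a diagonal sign matrix, which does not change the entry ideal. So all mixed roots give the same ideal as $(u_1+u_2+u_3+u_4)^b=\sum_\varepsilon u^\varepsilon F_{\varepsilon,b}$, the factor $2^{-b}$ being a unit. The column of the multiplication matrix at the basis vector $1$ consists of the $F_{\varepsilon,b}$. Every other entry is a $z$-monomial times some $F_{\varepsilon',b}$. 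The ideal is therefore $(F_{\varepsilon,b})$.

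With this ideal, \eqref{mix:eq:quotient} yields \eqref{mix:eq:D4-presentation}. I expect the main checking here to be confirming that the chosen complement $\Gamma$ really acts on the $u_i$ by the stated double transpositions without extra signs. If signs do appear, they are harmless on the $z_i$. Nonvanishing exactly when $a\ge2$ and $b\ge5$, and the four $48$-dimensional simples, are then Corollary~\ref{mix:cor:Dn} with $n=4$, $k=2$.

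\textbf{Step 2: redundancy.} Every coefficient of $F_{\varepsilon,b}$ is positive. Hence $F_{\varepsilon,b}$ lies in $(z_1^p,\dots,z_4^p)$ if and only if it has no monomial $z^r$ with all $r_i\le p-1$. Such a monomial exists for some $\varepsilon$ exactly when $b=2|r|+|\varepsilon|\le 8(p-1)+4=8p-4$. So the $F$-relations are redundant precisely when $b\ge8p-3$.

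\textbf{Step 3: numerics for $R_p\rtimes\Gamma$.} The dimension is $48^2\cdot4\cdot p^4=9216p^4$. The radical is $\Mat_{48}(R_{p,+}\rtimes\Gamma)$, as in the proof of Theorem~\ref{mix:thm:quotient}, which gives $\dim\rad=9216(p^4-1)$. Powers of this radical are $\Mat_{48}(R_{p,+}^k\rtimes\Gamma)$. Since the top $z$-degree of $R_p$ is $4(p-1)$, the Loewy length is $4(p-1)+1=4p-3$.

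For the Cartan matrix, Morita equivalence reduces to $R_p\rtimes\Gamma$ with projectives $R_p\otimes\chi$. Then $c_{\chi,\psi}$ is the multiplicity of $\chi\psi^{-1}$ in the $\Gamma$-module $R_p$. A nontrivial element of $\Gamma$ permutes the monomial basis of $R_p$ and fixes exactly $p^2$ monomials, so its trace is $p^2$. The character orthogonality formula then gives the trivial multiplicity $(p^4+3p^2)/4$ and each nontrivial multiplicity $(p^4-p^2)/4$, which is \eqref{mix:eq:D4-cartan}.
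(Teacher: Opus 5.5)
Your proposal is correct and follows the paper's proof essentially step by step. You compute the entry ideal from the zero roots $\pm u_i$ and from the sign-symmetric mixed roots via the column at $1$ of multiplication by $(u_1+u_2+u_3+u_4)^b$, take nonvanishing and the simple modules from Corollary~\ref{mix:cor:Dn}, use the radical powers $R_{p,+}^j\rtimes\Gamma$ for the Loewy length, and count fixed monomials with a permutation character for the Cartan matrix. The only cosmetic difference is in the redundancy step: you test each $F_{\varepsilon,b}$ monomial by monomial against the ideal $(z_1^p,\dots,z_4^p)$, whereas the paper tests vanishing of $L^b$ in $\mathbb{C}[u_1,\dots,u_4]/(u_1^{2p},\dots,u_4^{2p})$; both are the same positivity-and-degree-count argument.
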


\begin{proof}
Here $N=|\cC|\,|K_0|=3\cdot16=48$.
The zero roots are $\pm u_i$.  If $a=2p$, multiplication by
$u_i^a$ is the scalar matrix $z_i^p$.  If $a=2p+1$, its
entries generate the same ideal $(z_i^p)$: multiplication by
$u_i$ on the two parity possibilities has entries $1$ and
$z_i$.  This includes $p=0$, when the ideal is the unit ideal.

The nonzero-residue roots are
$(\pm u_1\pm u_2\pm u_3\pm u_4)/2$ with all sign choices.
The scalar $2^{-b}$ is harmless.  In the parity expansion,
changing a sign of $u_i$ only multiplies each
$F_{\varepsilon,b}$ by the corresponding sign to the power
$\varepsilon_i$.  For multiplication by
$L^b=(u_1+u_2+u_3+u_4)^b$, the matrix entry in row $\eta$ and
column $\delta$ is
\[
 F_{\eta\mathbin{\oplus}\delta,b}(z)
       \prod_{i=1}^4z_i^{(\eta\mathbin{\oplus}\delta)_i\delta_i},
 \qquad \eta,\delta\in\{0,1\}^4,
\]
where $\oplus$ is coordinatewise addition modulo two.
The column $\delta=0$ contains all the $F_{\varepsilon,b}$,
and every other entry is a monomial multiple of one of them.
The entry ideal is therefore precisely the ideal in
\eqref{mix:eq:D4-presentation}.  Apply
Theorem~\ref{mix:thm:quotient} and the exponential comparison.
The nonvanishing and simple-module assertions also follow
from Corollary~\ref{mix:cor:Dn}.  Directly, for $b\le4$ one
can choose $|\varepsilon|=b$ and obtain
$F_{\varepsilon,b}=b!$, whereas for $b\ge5$ every nonzero
$F_{\varepsilon,b}$ has positive degree.

For $p\ge1$ reduce the parity expansion modulo the $z_i^p$.
All its coefficients vanish exactly when $L^b$ vanishes in
\[
 P/(z_1^p,z_2^p,z_3^p,z_4^p)P
       =\mathbb C[u_1,u_2,u_3,u_4]/(u_1^{2p},u_2^{2p},
                                             u_3^{2p},u_4^{2p}).
\]
This ring has top degree $8p-4$.  For each degree from zero
through $8p-4$ there is a monomial with all exponents at most
$2p-1$; its coefficient in $L^b$ is a positive integer.
Thus $L^b$ first vanishes at $b=8p-3$.
This proves the exact redundancy threshold.

The dimension of $R_p$ is $p^4$.  The radical of
$R_p\rtimes\Gamma$ is $R_{p,+}\rtimes\Gamma$, its quotient
is $\mathbb C\Gamma$, and its radical powers are
$R_{p,+}^j\rtimes\Gamma$.  The largest degree in $R_p$ is
$4(p-1)$, proving the dimension and Loewy formulas.

For the Cartan matrix, the character idempotents of
$\mathbb C\Gamma$ lift to primitive idempotents of
$R_p\rtimes\Gamma$.  The space between the $\chi$ and $\psi$
idempotents has dimension equal to the multiplicity of
$\chi\psi^{-1}$ in the permutation representation of $\Gamma$
on the monomial basis of $R_p$.
The identity fixes $p^4$ monomials, and each nonidentity double
transposition fixes $p^2$.  Character averaging gives
$(p^4+3p^2)/4$ for the trivial character and $(p^4-p^2)/4$ for
each of the three nontrivial characters.  Morita equivalence
preserves the Cartan matrix, proving~\eqref{mix:eq:D4-cartan}.
\end{proof}

For $p=1$ and $b\ge5$ the algebra is the direct product of four
copies of $\Mat_{48}(\mathbb C)$.  For $a\in\{4,5\}$ and
$b\ge13$, its dimension is $147456$, its Loewy length is five,
and its Cartan matrix has diagonal entries seven and all other
entries three.  These formulas concern mixed residues, despite
the absence of arrows.  Positive arrow multiplicities still have
the PBW and finiteness theorems above, but the Weyl transports
used in~\eqref{mix:eq:matrix} need not then be invertible.

\section{Comparison with quiver double Hecke algebras}
\label{liu:sec:comparison}

Work over $\mathbb C$ with $\Phi$ irreducible and reduced.
Let $L_{\mathrm{wt}}$ be its weight lattice and put
\[
 P=\operatorname{Sym}_{\mathbb C}(V),\qquad
 T_{\mathrm{wt}}=\operatorname{Hom}_{\mathbb Z}
                         (L_{\mathrm{wt}},\mathbb C^\times).
\]
For $\lambda\in L_{\mathrm{wt}}$ and $\ell\in T_{\mathrm{wt}}$,
write $\ell(\lambda)$ for character evaluation. This torus
uses the weight lattice; it is distinct from the root torus
$\operatorname{Spec}\mathbb C[Q]$ in the coefficient construction.

\subsection{A comparison with specified orbit data}

\begin{definition}\label{liu:def:realization}
A weight-torus realization of an admissible residue datum
$\cC$ is a $W$-equivariant bijection
\[
 \ell:\cC\xrightarrow{\sim}W\ell_0\subseteq T_{\mathrm{wt}}
\]
such that $m_\alpha(c)=0$ implies $\ell(c)(\alpha)=1$
for every $\alpha\in\Phi$ and $c\in\cC$.
\end{definition}

Both the bijection and the lattice in this definition matter.
A map to the root torus, or a map identifying distinct residue
labels, is not a weight-torus realization.

For such a realization define, for $\alpha\in\Phi^+$,
\begin{equation}\label{liu:eq:order}
 \Omega_{\ell(c)}(\alpha)=
 \begin{cases}
 -1,&m_\alpha(c)=0,\\
 d(m_\alpha(c)),&m_\alpha(c)\ne0.
 \end{cases}
\end{equation}
Liu's algebra $B^\Omega$ is generated on
$\bigoplus_{\ell\in W\ell_0}P e_\ell$ by multiplication and
\begin{equation}\label{liu:eq:crossings}
 \tau_i e_\ell=
 \begin{cases}
 x_{\alpha_i}^{-1}([s_i]-1)e_\ell,
                       &\Omega_\ell(\alpha_i)=-1,\\
 x_{\alpha_i}^{\Omega_\ell(\alpha_i)}[s_i]e_\ell,
                       &\Omega_\ell(\alpha_i)\geq0.
 \end{cases}
\end{equation}
The allowed order functions and this operator definition are
from~\cite[\S9.1]{LiuKZ}.

\begin{theorem}[The additive comparison]\label{liu:thm:comparison}
Suppose $\cC$ has a realization as in
Definition~\ref{liu:def:realization}. For $R=P$ and
$y_\alpha=x_\alpha$, there is an algebra isomorphism
\[
 \cR_{\mathrm{add}}\xrightarrow{\sim}B^\Omega,
 \qquad f e_c\longmapsto f e_{\ell(c)},
\]
whose crossing map is
\[
 \psi_i e_c\longmapsto
 \begin{cases}
 \tau_i e_{\ell(c)},&m_i(c)=0,\\
 (-1)^{d(m_i(c))}\tau_i e_{\ell(c)},&m_i(c)\ne0.
 \end{cases}
\]
\end{theorem}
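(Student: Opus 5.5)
The plan is to realize both algebras inside one ambient skew group algebra and compare their generators directly. Transport the bijection $\ell$ to an isomorphism of coefficient algebras
\[
 D_K(\cC)\rtimes W\xrightarrow{\sim}\Bigl(\textstyle\bigoplus_{\ell\in W\ell_0}\Frac(P)e_\ell\Bigr)\rtimes W,
 \qquad f e_c[w]\longmapsto f e_{\ell(c)}[w].
\]
This is an algebra map because $\ell$ is $W$-equivariant: $[w]e_c=e_{wc}[w]$ is sent to $[w]e_{\ell(c)}=e_{w\ell(c)}[w]$. Both $\cR_{\mathrm{add}}$ and $B^\Omega$ are defined as subalgebras of these ambient algebras, and both act faithfully on their coefficient modules (Proposition~\ref{rc:prop:basic} on our side). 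So it suffices to check that the generators of each side are sent to unit multiples of the generators of the other. The map is then restricted to the generated subalgebras, and it is bijective there because it is the restriction of an ambient isomorphism.

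Compare the crossings case by case.
\begin{itemize}
\item If $m_i(c)=0$, then $\Omega_{\ell(c)}(\alpha_i)=-1$ by~\eqref{liu:eq:order}, since $\alpha_i\in\Phi^+$. Both crossings are then $x_{\alpha_i}^{-1}([s_i]-1)e$, and they agree on the nose.
\item If $m_i(c)\ne0$, our crossing is $x_{-\alpha_i}^{d(m_i(c))}[s_i]e_c=(-1)^{d(m_i(c))}x_{\alpha_i}^{d(m_i(c))}[s_i]e_c$, using linearity $x_{-\alpha}=-x_\alpha$. Liu's crossing at $\ell(c)$ is $x_{\alpha_i}^{\Omega_{\ell(c)}(\alpha_i)}[s_i]e_{\ell(c)}$ with $\Omega_{\ell(c)}(\alpha_i)=d(m_i(c))\ge0$. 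These differ by the sign $(-1)^{d(m_i(c))}$, which is exactly the stated crossing map.
\end{itemize}
Coefficients $fe_c\mapsto fe_{\ell(c)}$ match trivially. Since the signs are units, the image of $\cR_{\mathrm{add}}$ is generated by $\bigoplus Pe_\ell$ and the $\tau_i$. That image is therefore $B^\Omega$.

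The step needing care is matching conventions with Liu's definition. First, Liu's order function must be defined only on $\Phi^+$; at a simple root $\alpha_i$ the definition~\eqref{liu:eq:order} is evaluated directly, so no sign or negative-root ambiguity enters. Second, the realization condition enters in a specific way: $m_\alpha(c)=0$ forces $\ell(c)(\alpha)=1$, so Liu's divided-difference case occurs exactly on our zero residues. One must also confirm that~\eqref{liu:eq:order} is an allowed order function in the sense of~\cite[\S9.1]{LiuKZ}. Zero residues of positive roots correspond to $\ell(c)(\alpha)=1$, and $W$-equivariance of $\Omega$ follows from that of $\rho$ and $\ell$. I expect this admissibility check against Liu's axioms to be the main obstacle, and I would consult his conditions directly at that point.

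Finally, injectivity of $\ell$ is essential. Distinct idempotents $e_c$ must go to distinct $e_{\ell(c)}$, or the coefficient map would fail to be an isomorphism. This is why the definition requires a bijection.
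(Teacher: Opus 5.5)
Your proposal is correct and follows the paper's proof essentially step for step. You relabel the summands through the equivariant bijection $\ell$, check that \eqref{liu:eq:order} satisfies Liu's vanishing condition (from the realization) and covariance condition (from equivariance of $\rho$), and compare generators, with the sign $(-1)^{d(m_i(c))}$ coming from $x_{-\alpha_i}=-x_{\alpha_i}$ and being a unit; the admissibility check you flag as the main obstacle is settled in the paper by exactly the two observations you already list, so nothing further is needed.
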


\begin{proof}
If $\Omega_{\ell(c)}(\alpha)=-1$, the realization gives
$\ell(c)(\alpha)=1$. If $\alpha,w\alpha\in\Phi^+$, then
\[
 m_{w\alpha}(wc)=m_\alpha(c),\qquad
 \Omega_{w\ell(c)}(w\alpha)=\Omega_{\ell(c)}(\alpha).
\]
These are exactly the vanishing and covariance requirements
for the order functions in the reduced-root case. Thus
$B^\Omega$ is defined for~\eqref{liu:eq:order}.
Relabeling the direct summands by $c\mapsto\ell(c)$
intertwines multiplication and the $W$-action. A zero crossing
then agrees with~\eqref{liu:eq:crossings}. At a nonzero
residue, $y_{-\alpha_i}=-x_{\alpha_i}$ gives the displayed
sign. Each sign is invertible in the coefficient algebra.
Therefore the two sets of generators generate the same
operator algebra, proving the isomorphism.
\end{proof}

\begin{corollary}[Consequences of the comparison]
\label{liu:cor:pbw}
Under Theorem~\ref{liu:thm:comparison}, the elements
$\psi_w e_c$ form a $P$-basis, without an additional
rank-two hypothesis. The algebra is Frobenius over its center.
For compatible affine order data it is an idempotent corner
$e_\gamma\mathcal A^\omega e_\gamma$ of a quiver double
Hecke algebra.
\end{corollary}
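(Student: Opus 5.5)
The plan is to reduce each of the three assertions to results already established, transporting them through the isomorphism of Theorem~\ref{liu:thm:comparison}. That isomorphism identifies $\cR_{\mathrm{add}}$ with $B^\Omega$. It sends $fe_c\mapsto fe_{\ell(c)}$ and sends each crossing $\psi_ie_c$ to $\pm\tau_ie_{\ell(c)}$. So every structural statement about one algebra transfers to the other once the signs are controlled.

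\textbf{PBW.} There are two routes, and I would give the first as the proof and mention the second.
\begin{enumerate}[label=\textup{(\roman*)},leftmargin=*]
\item Theorem~\ref{des:thm:additive-all} already gives PBW and the presentation of Theorem~\ref{rc:thm:pbw} for the additive QDA. This holds for every finite type, admissible datum and uniform quiver, with no rank-two hypothesis. The phrase ``without an additional rank-two hypothesis'' therefore follows at once.
\item Alternatively, one can import Liu's PBW theorem for $B^\Omega$ from \cite{LiuKZ}. Under the isomorphism, a reduced product $\psi_we_c$ maps to $\pm\tau_we_{\ell(c)}$. The sign is a product of the factors $(-1)^{d(m_{i_k}(c_k))}$ along the word, so it is a unit. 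Hence the image of the basis $\{\tau_we_\ell\}$ is $\{\psi_we_c\}$ up to units, which is again a basis.
\end{enumerate}

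\textbf{Frobenius over the center.} This is not proved earlier in the paper, so I would cite Liu's Frobenius theorem for $B^\Omega$ \cite{LiuKZ}. The property is invariant under algebra isomorphism. An isomorphism carries centers to centers, and it carries a nondegenerate central-linear trace form to another such form. Hence $\cR_{\mathrm{add}}$ is Frobenius over its center.

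\textbf{Affine corner.} The goal is to realize $\cR_{\mathrm{add}}$ as $e_\gamma\mathcal A^\omega e_\gamma$. The steps are as follows.
\begin{enumerate}[label=\textup{(\alph*)},leftmargin=*]
\item Recall Liu's statement that, when the finite order function $\Omega$ is the restriction of an affine order datum $\omega$, $B^\Omega$ is the idempotent truncation $e_\gamma\mathcal A^\omega e_\gamma$. Here $e_\gamma$ is the idempotent attached to the $W$-orbit $W\ell_0$ inside the affine orbit data.
\item Verify that the order function~\eqref{liu:eq:order} arises by restricting such a compatible $\omega$. This is exactly what ``compatible affine order data'' is meant to encode.
\item Compose the corner isomorphism with Theorem~\ref{liu:thm:comparison}.
\end{enumerate}

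The main obstacle is bookkeeping in step (b). One must check that our sign and orientation conventions match Liu's hypotheses: the value $-1$ at zero residues, $d(m_\alpha)$ versus $d(-m_\alpha)$, and $W$-covariance on positive roots. These must agree so that Liu's hypotheses apply verbatim. The proof of Theorem~\ref{liu:thm:comparison} has already carried out this verification in the reduced finite case, so I would cite it there. For the affine statement I would state compatibility as an assumption rather than prove existence of $\omega$.
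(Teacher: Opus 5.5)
Your proposal is correct. For the Frobenius and corner assertions it is exactly the paper's argument: Liu's results \cite[Lemma 76 and Theorem 82]{LiuKZ}, transported through the isomorphism of Theorem~\ref{liu:thm:comparison}.

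For the basis assertion, however, the paper takes your route (ii), not route (i). It cites \cite[Theorem 74]{LiuKZ} and notes that along a reduced word the comparison signs multiply to a nonzero scalar on each source summand. Hence $\psi_w e_c\mapsto\pm\tau_w e_{\ell(c)}$, and the basis property is preserved.

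Your route (i) is also valid. Theorem~\ref{des:thm:additive-all} needs no weight-torus realization, so it proves the first clause independently of Liu, for every admissible datum. It also supplies the integral rank-two presentation and independence of the chosen reduced words. The cost is that it relies on Sauter's geometric basis theorem together with the descent Theorem~\ref{des:thm:cover}. Route (ii) needs only Liu's theorem and the elementary observation that the signs are units.

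The paper chooses route (ii) deliberately: the remark after the corollary attributes PBW in the realizable range to Liu. The corollary is meant to record inherited results, so your route (i) serves as an independent cross-check rather than the intended derivation.
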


\begin{proof}
Apply the comparison and the basis theorem, Frobenius property,
and corner theorem of~\cite[Theorem 74, Lemma 76, and
Theorem 82]{LiuKZ}. Along a reduced word, the signs in the
comparison multiply to a nonzero scalar on each source summand;
thus they preserve the basis property.
\end{proof}

These inherited results are attributed to Liu. In particular,
the additive operator construction and its PBW theorem in this
range are part of that earlier theory. The corner gives the
exact restriction $M\mapsto e_\gamma M$; an equivalence of
the full module categories would additionally require fullness
of the idempotent. The general affine order data need not be
the parameters of an ordinary trigonometric Cherednik algebra.

\subsection{Realizations and a lattice obstruction}

\begin{example}[Free orbits and strand orbits]
\label{liu:ex:realizations}
If $\cC$ is a $W$-torsor, admissibility forces every root
residue to be nonzero: conjugate a root to a simple root and
use~\eqref{rc:eq:admissible}. Choose a point $\ell_0$ of
$T_{\mathrm{wt}}$ with trivial stabilizer. Such points form
the complement of the fixed loci of the finitely many
nonidentity elements of $W$. The assignment
$wc_0\mapsto w\ell_0$ is a realization, since its vanishing
condition is vacuous.

For a genuine type-$A_{n-1}$ sequence orbit, choose distinct
numbers $t_a\in\mathbb C^\times$ for the finitely many
residue labels occurring in the sequences. For a sequence
$\mathbf i$, use the diagonal point
\[
 (\eta t_{i_1},\ldots,\eta t_{i_n}),\qquad
 \eta^n\prod_{j=1}^n t_{i_j}=1.
\]
The product is constant on the orbit, so one choice of
$\eta$ works for every sequence. This identifies the
sequence orbit with an orbit in the diagonal torus of
$SL_n$. Equality of two residues is exactly the vanishing
condition for the corresponding root. With the coefficient
ring $\mathbb C[z_1,\ldots,z_n]$, the strand algebra also
has the independent central coordinate
$u=(z_1+\cdots+z_n)/n$: it is the tensor product of the
reflection-space algebra in the theorem with $\mathbb C[u]$.
The auxiliary numbers $t_a$ are scalar torus coordinates;
the $z_j$ are polynomial strand variables.
\end{example}

\begin{example}[An $A_2$ orbit with cyclic stabilizer]
\label{liu:ex:cyclic-stabilizer}
Let $\Phi=A_2$, $\mathbb I=\mathbb Z/3\mathbb Z$, and let
$\rho:Q\to\mathbb I$ satisfy
$\rho(\alpha_1)=\rho(\alpha_2)=\overline1$. Then
\[
 s_1\rho=s_2\rho=-\rho,\qquad
 \cC=\{\rho,-\rho\},\qquad
 H=\operatorname{Stab}_W(\rho)=\langle s_1s_2\rangle.
\]
Indeed, $s_1$ sends the pair of simple residues $(1,1)$
to $(-1,2)=(2,2)$, and $s_2$ does the same. The values
on all six roots are nonzero. Thus the datum is admissible
and root-regular, and $H$ is cyclic of order three.

It has no weight-torus realization. For $A_2$ the torus
$T_{\mathrm{wt}}$ is the diagonal torus of $SL_3$, with
$S_3$ permuting its three coordinates. A point fixed by a
three-cycle has three equal coordinates and is therefore
fixed by all of $S_3$. It cannot have stabilizer $H$.
There is a root-torus realization, however:
$\lambda\mapsto\zeta^{\rho(\lambda)}$, where $\zeta$ is
a primitive cube root of unity. This distinguishes the
lattice issue from a failure of the residue equations.

The corresponding sequence cover is the orbit of $(0,1,2)$,
modulo simultaneous translation by $\mathbb Z/3\mathbb Z$.
This illustrates Theorem~\ref{des:thm:classification}, but its
root-regular PBW is already covered by
Theorem~\ref{str:thm:regular-pbw}.  The mixed type-$D$ family in
Proposition~\ref{des:prop:D-family} is outside that simpler case.
\end{example}

\begin{proposition}[A weight-torus cover of the mixed $D_4$ orbit]
\label{liu:prop:D4-cover}
The three-point orbit of Proposition~\ref{des:prop:D-family}
for $D_4$, $J=\{1,2\}$, is the quotient of a twelve-point
weight-torus orbit by a free action of
$(\mathbb Z/2\mathbb Z)^2$.  For every arrow multiplicity,
its additive QDA and its coefficient-generated quotients are
therefore obtained by finite descent from a Liu algebra
$B^\Omega$.
\end{proposition}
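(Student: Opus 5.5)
The plan is to exhibit an explicit point $\ell_0\in T_{\mathrm{wt}}$ for $D_4$ whose $W$-orbit has twelve elements, together with an equivariant surjection $W\ell_0\to\cC=W\rho_J$. Then I check that this surjection is a free $(\mathbb Z/2\mathbb Z)^2$-quotient compatible with residues. Finally Theorem~\ref{des:thm:cover} and Theorem~\ref{liu:thm:comparison} are applied upstairs.

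\textbf{Construction of $\ell_0$.} The weight lattice of $D_4$ is $\mathbb Z^4\cup(\mathbb Z^4+\tfrac12(1,1,1,1))$. I would take $\ell_0(\epsilon_i)=\epsilon_i$-values $(1,1,-1,-1)$ and $\ell_0(\lambda)=\zeta$ on $\lambda=\tfrac12(\epsilon_1+\cdots+\epsilon_4)$, where $\zeta$ is a primitive fourth root of unity. This is consistent, since $\zeta^2=\ell_0(\epsilon_1+\cdots+\epsilon_4)=1$ fails. So instead choose $\zeta^2=1\cdot1\cdot(-1)(-1)=1$ and $\zeta=\pm1$. On roots one has $\ell_0(\pm\epsilon_i\pm\epsilon_j)=1$ exactly when $i,j$ lie in the same block. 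Hence the vanishing condition $m_\alpha(c_0)=0\Rightarrow\ell_0(\alpha)=1$ holds, with equivalence.

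\textbf{Stabilizer and orbit size.} The stabilizer of $\ell_0$ preserves the blocks $\{1,2\}$ and $\{3,4\}$, since it preserves the $\pm1$ pattern on the $\epsilon_i$. It must also fix the half-spin value. Even sign changes act on $\lambda$ by subtracting $\epsilon_i+\epsilon_j$, which is trivial on $\ell_0$ iff $\ell_0(\epsilon_i\epsilon_j)=1$, i.e.\ iff $i,j$ lie in the same block. The stabilizer is therefore $K_0=W(D_2)\times W(D_2)$ together with block-preserving permutations, of order $16$ (up to checking whether the transpositions $(12)$, $(34)$ alter $\zeta$; they do not). I would then verify that it equals exactly $K_0$ of order $16$, giving $|W\ell_0|=192/16=12$.

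\textbf{Descent.} By Proposition~\ref{des:prop:zero-root-cover}, $W/K_0\to W/H=\cC$ is a residue cover with deck group $F=H/K_0\cong\Gamma\cong(\mathbb Z/2\mathbb Z)^2$ (Corollary~\ref{mix:cor:Dn}, case $2k=n$). Since $\Stab_W(\ell_0)=K_0$, the map $wK_0\mapsto w\ell_0$ is an equivariant bijection $W/K_0\to W\ell_0$. The residues on $W/K_0$ are $\rho_0\circ w^{-1}$, whose zero set $w\Psi$ matches $\{\alpha:w\ell_0(\alpha)=1\}$. This is a weight-torus realization of the twelve-point cover in the sense of Definition~\ref{liu:def:realization}.

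Theorem~\ref{liu:thm:comparison} then identifies the upstairs additive QDA with $B^\Omega$ for every $d$. Theorem~\ref{des:thm:cover} gives $\cR=\widetilde\cR^F$ and $\cR/I\cong(\widetilde\cR/\widetilde I)^F$ for coefficient-generated $I$.

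\textbf{Main obstacle.} The delicate step is the stabilizer computation, in particular tracking the half-spin coordinate to ensure that the stabilizer is exactly $K_0$ and not larger. If a candidate $\ell_0$ has too large a stabilizer, I would perturb it by choosing distinct values $a,-a$-type coordinates, e.g.\ $(t,t^{-1},s,s^{-1})$ with generic $t,s$ and half-spin value $1$. Such a choice keeps the equal-block condition on roots and cuts the stabilizer down to $K_0$.
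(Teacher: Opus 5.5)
Your proof is correct, and its skeleton matches the paper's. You use an explicit character whose $\epsilon$-values are $\pm1$, split along the blocks, with half-spin value $\pm1$. You get a twelve-point orbit, apply Theorem~\ref{liu:thm:comparison} upstairs, and descend with Theorem~\ref{des:thm:cover}. Your $(1,1,-1,-1)$ versus the paper's $(-1,-1,1,1)$ is immaterial: both restrict to $(-1)^{\rho_J}$ on $Q$, because $v_1+v_2\equiv v_3+v_4$ there.

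The difference lies in how the orbit and the free action are obtained.
\begin{itemize}
\item The paper never computes a stabilizer. It enumerates $W\ell$ directly: permutations move the negative signs, and mixed even sign changes flip $\ell(\lambda)$. Its deck group is $Z=\Hom(L_{\mathrm{wt}}/Q,\mathbb C^\times)\cong(\mathbb Z/2\mathbb Z)^2$ acting by multiplication. Freeness, commutation with $W$, and identification of the quotient via restriction to $Q$ are then immediate, and admissibility of the lifts is checked by hand.
\item You instead prove $\Stab_W(\ell_0)=K_0$ exactly, so $W\ell_0$ is the canonical zero-root cover $W/K_0$ of Proposition~\ref{des:prop:zero-root-cover}. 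Admissibility, residue compatibility and the free deck action of $H/K_0$ then come for free. The price is importing $H/K_0\cong\Gamma\cong(\mathbb Z/2\mathbb Z)^2$ from Corollary~\ref{mix:cor:Dn}.
\end{itemize}
Your route shows conceptually that Liu's cover is the canonical cover. The two deck actions agree: $h\ell_0=z_h\ell_0$ for some $z_h\in Z$ when $h\in H$, and $W$ acts trivially on $Z$. The paper's route is more self-contained and makes the group structure of the deck group transparent.

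Two clean-ups.
\begin{itemize}
\item Delete the sentence about a primitive fourth root of unity; only $\ell_0(\lambda)=\pm1$ is consistent, as you then note.
\item Drop the fallback in your last paragraph, because it cannot work. For the cover $W/K_0$, the realization condition forces $\ell(\alpha)=1$ for $\alpha\in\Psi$. Since $\ell(\alpha)=1$ implies $s_\alpha\ell=\ell$, any admissible choice has $K_0\subseteq\Stab_W(\ell)$. The perturbation $(t,t^{-1},s,s^{-1})$ with generic $t,s$ gives $\ell(\epsilon_1-\epsilon_2)=t^2\ne1$, which violates the condition at a zero-residue root.
\end{itemize}
The only thing to rule out is a stabilizer larger than $K_0$, and your computation already does so. The permutation part lies in $\langle(12),(34)\rangle$, and an even sign change on a set $S$ fixes $\ell_0(\lambda)$ if and only if $|S\cap\{3,4\}|$ is even.
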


\begin{proof}
The weight lattice of $D_4$ is generated by
$\epsilon_1,\ldots,\epsilon_4$ and
$\lambda=(\epsilon_1+\epsilon_2+\epsilon_3+\epsilon_4)/2$,
with $2\lambda=\sum_i\epsilon_i$.  Define a character $\ell$
by
\[
 (\ell(\epsilon_1),\ldots,\ell(\epsilon_4))=(-1,-1,1,1),
 \qquad \ell(\lambda)=1.
\]
Its restriction to $Q$ is $(-1)^{\rho_J}$.
Permutations give all six choices of two negative coordinates.
An even sign change involving one negative and one positive
coordinate fixes these four signs and changes $\ell(\lambda)$
by $-1$.  Thus $W\ell$ consists of the twelve characters with
two negative coordinates and $\lambda$-value $1$ or $-1$.

The kernel of restriction of characters to $Q$ is
$Z=\Hom(L_{\mathrm{wt}}/Q,\mathbb C^\times)
\cong(\mathbb Z/2\mathbb Z)^2$.
Its elements have a common value $u\in\{1,-1\}$ on the four
$\epsilon_i$, and an independent value $v\in\{1,-1\}$ on
$\lambda$.  Multiplication by these characters sends the four
signs to their common $u$-multiple and the $\lambda$-value to
its $v$-multiple.  It preserves $W\ell$, acts freely, and
commutes with $W$, since $W$ acts trivially on
$L_{\mathrm{wt}}/Q$.
Restriction identifies the signs for $J$ and $J^c$, giving
exactly the three-point residue orbit.

Give each lift its restricted root residue.  At zero root
residue, the torus character takes that root to $1$, so its
reflection fixes the lift.  The cover is admissible and has
the tautological weight-torus realization.
Theorem~\ref{liu:thm:comparison} identifies its additive algebra
with $B^\Omega$, and Theorem~\ref{des:thm:cover} descends the
algebra and the stated ideals.
\end{proof}

Consequently the failure of a direct realization in
Proposition~\ref{des:prop:D-family} does not put the descended
additive algebra outside all known geometric or quiver double
Hecke constructions.  The explicit quotient and representation
calculations of Section~\ref{app:sec:mixed-quotients} are separate
from that realization issue.

\subsection{Formal multiplicative coordinates and Cherednik theory}

Complete the additive coefficients at the origin and the
multiplicative coefficients at the identity. On each residue
summand use the formal exponential
$X_\lambda\mapsto e^{x_\lambda}$. Writing
\[
 u_\alpha=\frac{1-e^{x_\alpha}}{x_\alpha}
             \in\widehat P^\times,
\]
the completed multiplicative crossing becomes
$u_{\alpha_i}^{-1}\psi_i^{\mathrm{add}}e_c$ at a zero
residue and
$u_{-\alpha_i}^{d(m_i(c))}\psi_i^{\mathrm{add}}e_c$
otherwise. These are changes by coefficient units.
Hence the theorem also identifies the resulting completed
operator algebras with the coefficient completion of
$B^\Omega$. This assertion does not identify the uncompleted
multiplicative algebra with $B^\Omega$.

At the singleton zero residue the additive algebra is
$NH_W=\operatorname{End}_{P^W}(P)$.  This setting is also related
to the polynomial and Dunkl realizations of rational Cherednik
algebras; see~\cite{EtingofMa}.  These connections are not used
in the PBW arguments here.

\section{Completion and regular restriction at zero residue}
\label{sec:zeroorbit}\label{sec:completed-blocks}

The singleton zero residue is independent of the arrow function by
Proposition~\ref{qv:prop:zero-independence}. We give its local
endomorphism description and determine the restrictions of its
nilpotent modules to the finite $0$-Hecke algebra. This section
works over $\mathbb C$ and uses no root-power center calculation.

\subsection{The multiplicative operator algebra}

Put $B_0=\mathbb C[Q]$ and $Y_\lambda=1-X_\lambda$. Define
\[
 C_i(f)=\frac{f-s_i(f)}{1-X_{\alpha_i}}\qquad(f\in B_0),
 \qquad
 \mathcal A_0=\langle B_0,C_i:i\in I\rangle
                  \subseteq\operatorname{End}_{\mathbb C}(B_0).
\]
For a Laurent monomial the numerator is divisible by
$1-X_{\alpha_i}$, since $\lambda-s_i\lambda$ is an integral
multiple of $\alpha_i$. Hence $C_i$ preserves $B_0$.
In the notation of Section~\ref{rc:sec:operators}, $C_i=-\psi_i$.

The classical multiplicative Demazure relations give
\[
 C_i^2=C_i,\qquad C_if=s_i(f)C_i+C_i(f),
\]
and the ordinary Coxeter braid relations~\cite{Demazure,KumarFlag}.
Here $C_i(f)$ on the right means the multiplication operator
by that polynomial. The usual reduced products $C_w$ form a
left and right $B_0$-basis of $\mathcal A_0$: the relations
give spanning and the leading Weyl terms give independence,
as in Lemma~\ref{rc:lem:leading}. In particular, the algebra
generated by the $C_i$ is the finite $0$-Hecke algebra
$H_W(0)$, with standard idempotent generators $\pi_i\mapsto C_i$.
The same skew-group centralizer argument gives $Z(\mathcal A_0)=B_0^W$.

\begin{remark}[The formal ideal-monoid model]
For a generalized preprojective algebra $\Pi(C,D)$, the vertex
ideals $I_i=\Pi(1-e_i)\Pi$ form the Demazure monoid:
$I_uI_v=I_{u*v}$, with its elements indexed by $W$
\cite{BIRS,GLS,FuGeng}. Thus the algebra with formal basis
$[I_w]$ and multiplication $[I_u][I_v]=[I_{u*v}]$ is
$H_W(0)$, and $[I_i]$ acts as $C_i$. If an actual ideal $I_w$
is zero, its formal basis symbol is still nonzero. This is a
monoid-algebra realization of the finite operator subalgebra;
it does not assert an action of the full QDA by preprojective
bimodule functors.
\end{remark}

\subsection{The completed endomorphism algebra}

Let $Z_0=B_0^W$, let $\varepsilon(X_\lambda)=1$, and put
\[
 \mathfrak n_0=\ker(\varepsilon|_{Z_0}),\qquad
 \mathscr S=\widehat{(Z_0)}_{\mathfrak n_0},\qquad
 \widehat B=\mathscr S\otimes_{Z_0}B_0,\qquad
 \widehat{\mathcal A}_0=\mathscr S\otimes_{Z_0}\mathcal A_0.
\]
By Lemma~\ref{des:lem:central-completion}, this is
central completion. The identity is the only point above its
invariant orbit, so $\widehat B$ is also the completion at the
identity of the root torus.

\begin{lemma}[The local nil-Hecke description]
\label{lem:completed-nilhecke}\label{thm:completed-matrix}
Put $P=\operatorname{Sym}_{\mathbb C}(V)$, $S=P^W$, and
$n=|W|$. The formal exponential $X_\lambda\mapsto e^{x_\lambda}$
identifies $\widehat B$ with $\widehat P$ and $\mathscr S$
with $\widehat S$. It gives
\[
 \widehat{\mathcal A}_0
 \cong\operatorname{End}_{\widehat S}(\widehat P)
 \cong\operatorname{Mat}_n(\widehat S).
\]
\end{lemma}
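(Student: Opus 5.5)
The plan is to reduce everything to Lemma~\ref{des:lem:central-completion} and the classical nil-Hecke identification, applied to the singleton zero residue.

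\textbf{Step 1: coefficients.} The exponential $X_\lambda\mapsto e^{x_\lambda}$ is a $W$-equivariant isomorphism from the completion of $B_0$ at the identity onto $\widehat P$. Its inverse sends $x_i\mapsto\log(1-Y_i)$. By Lemma~\ref{des:lem:central-completion}, $\widehat B=\mathscr S\otimes_{Z_0}B_0$ is exactly this adic completion, because the identity is the only point over its invariant image. Likewise $\widehat S\otimes_S P$ is the completion $\widehat P$. Taking $W$-invariants then identifies $\mathscr S$ with $\widehat S$. Two facts are needed here: invariants commute with completion, and each invariant ring is the completion of the finite-level one. Both follow because, by the averaging (Reynolds) operator, $\widehat B^W$ is a direct summand of $\widehat B$ and equals $\mathscr S\otimes_{Z_0}Z_0$.

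\textbf{Step 2: operators.} Lemma~\ref{des:lem:central-completion}, with the same data on both sides, identifies $\widehat{\mathcal A}_0$ with the additive completed algebra generated by $\widehat P$ and the crossings. By that lemma's root-unit change, the crossings are $u_{\alpha_i}^{-1}\psi_i^{\mathrm{add}}$. Proposition~\ref{str:prop:unit-change} shows the unit factors do not change the generated algebra. The target is therefore the completion $\widehat S\otimes_S NH_W$, where $NH_W$ is generated by $P$ and the divided differences $\partial_i$.

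\textbf{Step 3: endomorphisms and matrices.} Use the classical facts, already cited in Theorem~\ref{mix:thm:matrix} with $\Psi=\Phi$: $NH_W=\End_S(P)$, and $P$ is free over $S$ of rank $n=|W|$. Since $P$ is finite free over $S$ and completion is flat, base change gives
\[
\widehat S\otimes_S\End_S(P)\cong\End_{\widehat S}(\widehat S\otimes_S P)=\End_{\widehat S}(\widehat P).
\]
A chosen basis then gives $\Mat_n(\widehat S)$.

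\textbf{Main obstacle.} The hardest point is Step 2: making sure the completed multiplicative algebra, built from the $C_i$ inside $\End(\widehat B)$, matches exactly the completion of $NH_W$, and not some larger closure. I would handle this by working with the finite module $\widehat S\otimes_S(\cdot)$ throughout, as in Lemma~\ref{des:lem:central-completion}, so that no topological closure ever enters. Under this approach, faithfulness of the coefficient action on the completion comes from flat base change of the embedding $\mathcal A_0\subseteq\End_{Z_0}(B_0)$.
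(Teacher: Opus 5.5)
Your proposal is correct and follows essentially the same route as the paper. Both arguments use $W$-equivariance of the exponential together with averaging to identify invariants and completions, the classical identification $NH_W=\End_S(P)$ with $P$ free of rank $|W|$ over $S$, flat base change of this finite free description and of the faithful operator embedding, and the root-unit change relating $C_i$ to $\partial_i$; the only cosmetic difference is that you obtain the unit change by invoking Lemma~\ref{des:lem:central-completion} and Proposition~\ref{str:prop:unit-change}, whereas the paper writes it out directly as $C_i=\frac{x_{\alpha_i}}{1-e^{x_{\alpha_i}}}\partial_i$, whose coefficient is a unit with constant term $-1$.
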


\begin{proof}
The formal exponential is $W$-equivariant. In characteristic zero,
averaging shows that invariants commute with completion.
The polynomial ring $P$ is free of rank $n$ over $S$, and
the classical nil-Hecke theorem identifies its $S$-linear
endomorphisms with the algebra generated by multiplication and
$\partial_i=(1-s_i)/x_{\alpha_i}$~\cite{KostantKumar}.
See also~\cite[Proposition 4.10(iv)]{Wojciechowski}.
This description commutes with completion because $P/S$ is
finite free. In logarithmic coordinates,
\[
 C_i=\frac{x_{\alpha_i}}{1-e^{x_{\alpha_i}}}\partial_i.
\]
The coefficient is a power-series unit with constant term $-1$.
Consequently the completed crossings generate precisely the
completed nil-Hecke endomorphisms. Flatness of central completion
preserves the faithful operator realization, so this equality
of images proves the first isomorphism. A basis of $\widehat P$
over $\widehat S$ gives the matrix identification.
\end{proof}

\subsection{Nilpotent modules and the finite Hecke restriction}

Let $\mathscr U_W$ be the category of finite-dimensional
$\mathcal A_0$-modules on which every $Y_{\alpha_i}$ is
nilpotent. Commutativity and the multiplicative root relations
then make every $Y_\alpha$ nilpotent. The category is closed
under submodules, quotients, and extensions. Write
\[
 \mathcal R_W=\widehat B/\mathfrak m_{\mathscr S}\widehat B.
\]
The exponential identifies its underlying algebra with $P/S_+P$.

\begin{lemma}[Regularity of the closed fiber]
\label{lem:invariant-linear-regular}
The module $\mathcal R_W$ has dimension $n=|W|$, and its
restriction to $H_W(0)$ is the left regular module.
\end{lemma}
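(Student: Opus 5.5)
We need to show that $\mathcal R_W=\widehat B/\mathfrak m_{\mathscr S}\widehat B$ has dimension $|W|$ and that, as an $H_W(0)$-module via the operators $C_i$, it is the left regular module. The plan is to transport everything to the additive side by Lemma~\ref{lem:completed-nilhecke}. There $\mathcal R_W\cong P/S_+P=C_0$ is the coinvariant algebra, which has dimension $n$ because $P$ is free of rank $n$ over $S$. The operators $C_i$ preserve $\mathfrak m_{\mathscr S}\widehat B$, since they are $\mathscr S$-linear. So they descend to $C_0$, where they act as $C_i=u_i\partial_i$, with $u_i=x_{\alpha_i}/(1-e^{x_{\alpha_i}})$ a unit of constant term $-1$. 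Because $C_0$ is finite dimensional and graded in degrees $\le |\Phi^+|$, this unit acts by a finite polynomial in $x_{\alpha_i}$.

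\textbf{Step 1: a cyclic generator.} Let $\omega\in C_0$ be the class of the top-degree generator $\Delta=\prod_{\alpha>0}x_\alpha$, which spans the degree-$|\Phi^+|$ part. I will show that $v_w:=C_w\,\omega$, for $w\in W$, is a basis of $C_0$. Filter $C_0$ by degree. Since $\partial_i$ lowers degree by one and $u_i\equiv -1$ modulo positive degree, we get
\[
C_w\omega=(-1)^{\ell(w)}\partial_w\omega+(\text{terms of lower degree}).
\]
By the classical BGG/Kostant--Kumar theory, the elements $\partial_w\Delta$ for $w\in W$ are homogeneous of degree $|\Phi^+|-\ell(w)$ and project to a basis of $C_0$ (they are the dual Schubert classes). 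A triangular argument with respect to degree then shows that the $v_w$ form a basis. More precisely, order $W$ by decreasing length: the leading homogeneous parts are independent, and the total count equals $n$.

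\textbf{Step 2: identify with the regular module.} Consider the map $H_W(0)\to C_0$, $h\mapsto h\,\omega$. Its image contains every $C_w\omega$, so by Step 1 it is surjective. Since $\dim H_W(0)=|W|=n$, it is bijective, and it is $H_W(0)$-linear by construction. Hence $C_0$ is isomorphic to the left regular module. We only need $C_w$ to span $H_W(0)$, which holds by the reduced-product basis recorded above.

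The main obstacle is the triangularity in Step 1. The unit factors $u_i$ interleave with the $\partial_i$, so I must check that reordering produces only lower-degree corrections. I plan to argue inductively along a reduced word: the operator $C_i-(-1)\partial_i=(u_i+1)\partial_i$ strictly increases degree relative to $\partial_i$ (it is $\partial_i$ followed by multiplication by a positive-degree element). Hence each corrected term has strictly larger degree than $|\Phi^+|-\ell(w)$. It is therefore cleaner to filter by \emph{increasing} degree: the lowest-degree component of $C_w\omega$ is $\pm\partial_w\omega$, of degree $|\Phi^+|-\ell(w)$. Independence then follows by looking at lowest-degree components, using that the $\partial_w\Delta$ of each fixed degree are linearly independent in $C_0$. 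If this sign or degree bookkeeping fails, the fallback is to apply the same argument to the multiplicative Demazure operators directly, using the $Y$-adic filtration on $B_0/\mathfrak n_0B_0$.
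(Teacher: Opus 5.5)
Your proof is correct, but it takes a different route from the paper's. The paper never works on the additive coinvariant algebra directly. It quotes Huang's invariant-linear decomposition of $A=\mathbb C[P_W]$ over $Z=A^W$. There the spans $U_J$ of the elements $\bar\varpi_w e^{\lambda_{D(w)}}$ with right descent set $J$ are the projective indecomposable $H_W(0)$-modules, so that $A\cong Z\otimes_{\mathbb C}H_W(0)$ before any specialization. The paper then specializes at the augmentation and twists by the automorphism $\pi_i\mapsto 1-\pi_i$ to pass from $\varpi_i$ to $C_i=1-\varpi_i$. Finally it uses the exponential to compare the identity fibers for the weight and root lattices.

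You instead go straight to $P/S_+P$ and write $C_i=u_i\partial_i$ with $u_i\equiv-1$ modulo positive degree. In effect you treat the $0$-Hecke action as a filtered deformation of the signed nil-Coxeter action. The classical BGG basis $\{\partial_w\Delta\}_{w\in W}$ of the coinvariant algebra then makes the top class $\omega$ a cyclic vector, and the dimension count closes the argument. Your route is more elementary and self-contained. It needs only the BGG theorem, with no weight lattice and no automorphism twist, and it produces an explicit generator and an explicit basis $\{C_w\omega\}$. The paper's route proves more, namely the global decomposition $A\cong Z\otimes H_W(0)$ over the whole invariant ring, but the lemma only uses its fiber at the augmentation.

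In the write-up, correct the first display of Step 1. The corrections to $(-1)^{\ell(w)}\partial_w\omega$ have strictly \emph{higher} degree, not lower. Independence is therefore read off from the lowest-degree components, that is, from the terms of maximal length among those with nonzero coefficient. Your final paragraph already does exactly this, so the fallback to the $Y$-adic filtration is unnecessary.
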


\begin{proof}
Let $P_W$ be the weight lattice and use the multiplicative
Demazure operators
\[
 \varpi_i(f)=\frac{f-e^{-\alpha_i}s_i(f)}{1-e^{-\alpha_i}}
 \quad\text{on }\mathbb C[P_W].
\]
Put $A=\mathbb C[P_W]$ and $Z=A^W$. We use the invariant-linear
decomposition in the proof of~\cite[Theorem 5.3]{HuangCoinvariants},
before specializing the invariant parameters. If $D(w)$ is the
right descent set, $\lambda_J$ is the sum of the fundamental
weights indexed by $J$, and $\bar\varpi_i=\varpi_i-1$, that proof
gives a $Z$-basis $
 \{\bar\varpi_w e^{\lambda_{D(w)}}:w\in W\}$.
For each $J\subseteq I$, the complex span $U_J$ of the basis
elements with $D(w)=J$ is an $H_W(0)$-submodule isomorphic
to the corresponding projective indecomposable module.
These submodules give
\[
 A=\bigoplus_{J\subseteq I}Z U_J
   \cong Z\otimes_{\mathbb C}\Bigl(\bigoplus_{J\subseteq I}U_J\Bigr)
   \cong Z\otimes_{\mathbb C}H_W(0)
\]
as invariant-linear $H_W(0)$-modules. Consequently specialization
at the augmentation character of $Z$ gives the regular module.
This step uses the decomposition over $Z$; the invariant
parameters set to zero in the stated theorem are orbit sums,
whose augmentation values need not be zero.
Replacing the standard generator $\pi_i$ by $1-\pi_i$ is an
automorphism of $H_W(0)$, and
\[
 (1-\varpi_i)(f)=\frac{f-s_i(f)}{1-e^{\alpha_i}}=C_i(f).
\]
Twisting the regular module by an automorphism leaves its
isomorphism class unchanged. Finally, $Q$ and $P_W$ have the
same complexification, so the formal exponential identifies
their identity completions and the completed invariant rings.
Their identity coinvariant fibers, with these operators, are
therefore the same. This proves the assertion for $\mathcal R_W$.
\end{proof}

\begin{theorem}[Regular restriction and extension criterion]
\label{thm:regular-restriction}
The category $\mathscr U_W$ is equivalent to the category of
finite-length $\mathscr S$-modules. It has the unique simple
object $\mathcal R_W$, up to isomorphism. For $M\in\mathscr U_W$,
\[
 M|_{H_W(0)}\cong H_W(0)^{\oplus c_M},\qquad
 c_M=\operatorname{length}_{\mathscr U_W}(M)
      =\frac{\dim_{\mathbb C}M}{|W|}.
\]
A finite-dimensional $H_W(0)$-module admits a nilpotent
zero-residue QDA extension if and only if it is free.
Every nonzero such restriction is faithful.
\end{theorem}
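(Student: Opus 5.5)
The plan is to pass through completion to a Morita equivalence. A module $M\in\mathscr U_W$ is finite dimensional and every $Y_{\alpha_i}$ acts nilpotently. Since $\mathcal A_0$ is finite over $Z_0$, the argument at the end of the proof of Lemma~\ref{des:lem:central-completion} applies: some power of $\mathfrak n_0$ kills $M$. Hence $M$ is naturally a module over $\widehat{\mathcal A}_0=\mathscr S\otimes_{Z_0}\mathcal A_0$, and the $\mathcal A_0$-action extends uniquely. Conversely, a finite-length $\widehat{\mathcal A}_0$-module restricts to an object of $\mathscr U_W$. The reason is that $\mathfrak m_{\mathscr S}^k$ kills it for some $k$, and $\sqrt{\mathfrak n_0B_0}$ is the augmentation ideal, so each $Y_{\alpha_i}$ acts nilpotently.

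So $\mathscr U_W$ is the category of finite-length $\widehat{\mathcal A}_0$-modules. By Lemma~\ref{lem:completed-nilhecke}, $\widehat{\mathcal A}_0\cong\Mat_n(\widehat S)$ with $\mathscr S\cong\widehat S$. Morita equivalence $N\mapsto\mathscr S^n\otimes_{\mathscr S}N$ then identifies $\mathscr U_W$ with finite-length $\mathscr S$-modules. Under this equivalence the residue field $\mathbb C=\mathscr S/\mathfrak m_{\mathscr S}$ corresponds to $\widehat P/\widehat S_+\widehat P\cong\mathcal R_W$. That module is the unique simple object, and its dimension is $n$ by Lemma~\ref{lem:invariant-linear-regular}. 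Length in $\mathscr U_W$ equals $\mathscr S$-length, and every composition factor has dimension $|W|$, which gives $c_M=\dim M/|W|$.

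For the restriction statement, the key observation is that $H_W(0)$, generated by the $C_i$, commutes with the central $\mathscr S$-action. I would filter $M$ by $\mathfrak m_{\mathscr S}^jM$. Under the Morita equivalence this becomes $\widehat P\otimes_{\widehat S}N$. The natural approach is to show that $\widehat P\otimes_{\widehat S}N\cong (\widehat P/\widehat S_+\widehat P)\otimes_{\mathbb C}\overline N$ as an $H_W(0)$-module, where $\overline N$ is the underlying vector space of $N$. To get this, I would use a $\widehat S$-basis of $\widehat P$ that spans an $H_W(0)$-stable complement. Huang's invariant-linear decomposition $A\cong Z\otimes_{\mathbb C}H_W(0)$, used in the proof of Lemma~\ref{lem:invariant-linear-regular}, supplies exactly such a lattice $\bigoplus_J U_J$ after the automorphism twist and completion. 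Therefore $\widehat P\cong\widehat S\otimes_{\mathbb C}H_W(0)$ as $(\widehat S,H_W(0))$-modules, and hence
\[
M\cong N\otimes_{\mathbb C}H_W(0)\cong H_W(0)^{\oplus\dim N}
\]
as $H_W(0)$-modules, with $\dim N=c_M$. An alternative route, if this identification is delicate, is to use self-injectivity of $H_W(0)$ (a Frobenius algebra). Each layer of the filtration is a direct sum of copies of $\mathcal R_W$, hence free by Lemma~\ref{lem:invariant-linear-regular}. Extensions of free modules over a self-injective algebra split, so induction on length shows $M$ is free.

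The extension criterion follows directly. Any restriction is free by the previous step. Conversely, $H_W(0)^{\oplus c}$ is the restriction of $\mathcal R_W^{\oplus c}$, which lies in $\mathscr U_W$. A nonzero free module is faithful. The main obstacle is the freeness step. I would rely on the Frobenius/self-injective argument, since it needs only Lemma~\ref{lem:invariant-linear-regular} and the semisimple layers of the $\mathfrak m_{\mathscr S}$-adic filtration. The one point to check carefully is that each layer $\mathfrak m^jM/\mathfrak m^{j+1}M$ is a direct sum of copies of $\mathcal R_W$ as an $\widehat{\mathcal A}_0$-module, which holds because it is killed by $\mathfrak m_{\mathscr S}$ and $\widehat{\mathcal A}_0/\mathfrak m_{\mathscr S}\cong\Mat_n(\mathbb C)$.
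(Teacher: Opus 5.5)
Your proposal is correct and follows essentially the same route as the paper: extend the action to the central completion $\widehat{\mathcal A}_0$, use Lemma~\ref{lem:completed-nilhecke} to get the Morita equivalence with finite-length $\mathscr S$-modules (with $\mathcal R_W$ as the unique simple object), deduce freeness of $M|_{H_W(0)}$ by splitting a filtration whose layers restrict regularly, and obtain the extension criterion by transport of structure. One simplification: the splitting only needs the free layer in each extension to be projective, so self-injectivity of $H_W(0)$ is not required, and your Huang-lattice route is an optional stronger variant rather than something the argument depends on.
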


\begin{proof}
If $Y_i^{N_i}M=0$, commutativity shows that
$(Y_i:i\in I)^N M=0$ for $N=1+\sum_i(N_i-1)$.
In particular, $\mathfrak n_0^NM=0$, and the action extends
uniquely to $\widehat{\mathcal A}_0$ through the identical
central quotient modulo $\mathfrak n_0^N$. Morphisms extend
as well. The matrix description of
Lemma~\ref{lem:completed-nilhecke} gives inverse Morita functors
\[
 M\longmapsto E_{11}M,\qquad
 L\longmapsto\widehat B\otimes_{\mathscr S}L.
\]
If $L$ has finite length, a power of $\mathfrak m_{\mathscr S}$
kills it. The maximal ideal of the local algebra
$\widehat B/\mathfrak m_{\mathscr S}\widehat B$ is nilpotent,
so some power of $(Y_i)\widehat B$ lies in
$\mathfrak m_{\mathscr S}\widehat B$. A further power
therefore kills $\widehat B\otimes_{\mathscr S}L$.
This proves that the Morita equivalence restricts to the stated
categories. The unique simple $\mathscr S$-module is $\mathbb C$;
its image is $\mathcal R_W$.

Each factor of a composition series of $M$ is $\mathcal R_W$,
which restricts regularly by the preceding lemma. Since the left
regular $H_W(0)$-module is projective, every resulting restricted
extension splits. Thus $M|_{H_W(0)}$ is a direct sum of $c_M$
regular modules, and the dimension formula follows.
Conversely, transport the action on $\mathcal R_W^{\oplus c}$
through an $H_W(0)$-module isomorphism from any free module of
rank $c$. This gives a nilpotent extension. A nonzero free
module is faithful, proving the final assertion.
\end{proof}

\begin{corollary}[Weak-order ranks]
For $M\in\mathscr U_W$ and $u,v\in W$,
\[
 \dim C_uM=c_M\#\{x\in W:x\geq_Ru\},\qquad
 C_uM\cap C_vM=C_{u\vee_Rv}M.
\]
Here $\geq_R$ is right weak order and $\vee_R$ is its join.
\end{corollary}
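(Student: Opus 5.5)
The plan is to reduce both assertions to the regular-module statement of Theorem~\ref{thm:regular-restriction}. The restriction $M|_{H_W(0)}$ is isomorphic to $H_W(0)^{\oplus c_M}$, so everything comes down to a computation inside the left regular module of $H_W(0)$. The images $C_u M$ and the intersections $C_uM\cap C_vM$ depend only on the $H_W(0)$-module structure, and they behave additively under direct sums. Hence it suffices to prove
\[
 \dim C_u H_W(0)=\#\{x\in W:x\geq_R u\},\qquad
 C_uH_W(0)\cap C_vH_W(0)=C_{u\vee_R v}H_W(0),
\]
and then multiply the first identity by $c_M$ and apply the second componentwise.

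For the first identity I would use the standard $0$-Hecke product rule for the idempotent generators:
\[
 C_wC_i=\begin{cases}C_{ws_i},&\ell(ws_i)>\ell(w),\\ C_w,&\text{otherwise}.\end{cases}
\]
This rule follows from $C_i^2=C_i$ and the braid relations stated above. It gives $C_uC_x=C_{u*x}$, with $*$ the Demazure product. The right ideal $C_uH_W(0)$ is therefore spanned by the elements $C_{u*x}$. The set $\{u*x:x\in W\}$ is exactly $\{y:y\geq_R u\}$. Indeed, $u*x\geq_R u$ by induction on $\ell(x)$. Conversely, if $y\geq_R u$ with $y=ux$ length-additive, then $u*x=y$. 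Since the $C_y$ are linearly independent, the dimension count follows.

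For the second identity, both right ideals are spanned by subsets of the basis $\{C_y\}$. Their intersection is therefore spanned by the $C_y$ with $y\geq_R u$ and $y\geq_R v$. The remaining point is that this upper set equals $\{y:y\geq_R u\vee_R v\}$. This holds because right weak order on the finite group $W$ is a lattice, and the join $u\vee_R v$ exists (Björner--Brenti). The identity then follows from the first paragraph applied to $u\vee_R v$.

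The main obstacle is a consistency check: the convention for $\geq_R$ must match the side on which $C_u$ acts. The basis theorem for $\mathcal A_0$ and the product rule above are written with left action, $C_u$ applied to module elements. The set reached by right multiplication $u\mapsto u*x$ is an upper interval in right weak order, where $y=ux$ with lengths adding. This matches the stated convention, so I would confirm that the basis property of $C_y$ in $H_W(0)$ comes from the reduced-product basis recorded above.
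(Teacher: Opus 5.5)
Your proposal is correct and follows the paper's proof. Both arguments use Theorem~\ref{thm:regular-restriction} to reduce to the left regular $H_W(0)$-module. There $C_uH_W(0)$ is the span of the basis elements $C_x$ with $x\geq_R u$, and intersections are given by the join in the weak-order lattice. The paper states these facts without proof, whereas you derive them from the Demazure product rule $C_uC_x=C_{u*x}$ and the lattice property of right weak order.
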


\begin{proof}
In the regular module, $C_uH_W(0)$ is spanned by precisely
the standard basis elements $C_x$ with $x\geq_Ru$.
Their intersections are described by the join in the finite
weak-order lattice. Apply Theorem~\ref{thm:regular-restriction}
and take direct sums.
\end{proof}

The restriction theorem does not assert semisimplicity over
$\mathcal A_0$ or uniqueness of an extension of a fixed free
$H_W(0)$-module. The corresponding finite-length $\mathscr S$-module
may have nontrivial extensions. On a cyclotomic zero-orbit
summand, when the comparison of Section~\ref{qv:subsec:zero-orbit}
applies, all root coordinates are nilpotent and the same
restriction theorem applies.

\appendix
\section{Specializations and cyclotomic affine Hecke comparisons}
\label{rc:sec:specializations}

We compare the additive and multiplicative QDA with cyclotomic
affine Hecke algebras, using the definitions
of~\cite{CherednikUnification} and the coordinate conventions and
comparison theorems of~\cite{KongLiD,KongLiND}. We first recover
these constructions, then give their cyclotomic maps and study
specified changes of residue quiver.

The two types must be distinguished throughout. The Kong--Li
root-coordinate comparisons use the type-$A$ residue quiver
$\Gamma_A$ and allow arbitrary finite Weyl type under their
hypotheses. The usual strand comparison also requires
$W=\mathfrak S_n$. When we vary the residue quiver below, the
coefficient data and finite Weyl group remain fixed.

\subsection{Additive coordinates and ordinary strand variables}
\label{kl:subsec:strand-operators}
In characteristic zero, take $R=\Sym_\kk(Q\otimes_{\mathbb Z}\kk)$
and $y_\alpha=\alpha$. Then $y_{-\alpha}=-y_\alpha$,
$\kappa_i=0$, and the zero-residue quadratic relation is
$\psi_i^2e_c=0$. Reflection differences are divisible by the
simple root, so \eqref{rc:eq:root-data} holds. On the singleton zero
residue this is the nil-Hecke algebra, with $\psi_i$ the negative
of the usual divided difference.  For the oriented residue quiver
and the full orbit data in~\eqref{kl:eq:orientation}, the
rank-two integrality condition is verified by
Theorem~\ref{rc:thm:kong-li-recovery}; the resulting algebra is the
degenerate Kong--Li algebra.  For a genuine type-$A$ sequence orbit, arbitrary uniform
residue weights are covered by Theorem~\ref{str:thm:sequence-pbw}.
For arbitrary admissible residue data over $\mathbb C$,
Theorem~\ref{des:thm:additive-all} now verifies the additive
PBW condition in every finite Weyl type.

For the strand realization take $W=\mathfrak S_n$,
$R=\kk[z_1,\ldots,z_n]$, and
$y_{\epsilon_a-\epsilon_b}=z_a-z_b$. Let $\cC$ be an orbit of
sequences $\mathbf i=(i_1,\ldots,i_n)\in \mathbb I^n$, with
$m_{\epsilon_a-\epsilon_b}(\mathbf i)=i_a-i_b$.
Formula~\eqref{rc:eq:psi} becomes
\[
 \psi_r e_{\mathbf i}=
 \begin{cases}
 \dfrac{[s_r]-1}{z_r-z_{r+1}}e_{\mathbf i},&i_r=i_{r+1},\\[2mm]
 (z_{r+1}-z_r)^{d(i_r-i_{r+1})}[s_r]e_{\mathbf i},&i_r\ne i_{r+1}.
 \end{cases}
\]
This is a standard polynomial realization of the quiver Hecke
algebra, with our sign and orientation conventions. In particular,
its quadratic polynomial is
\begin{equation}\label{rc:eq:ordinary-Q}
 Q_{ab}(u,v)=
 \begin{cases}
 0,&a=b,\\
 (u-v)^{d(a-b)}(v-u)^{d(b-a)},&a\ne b.
 \end{cases}
\end{equation}
The order of the two exponents in this formula follows directly
from \eqref{rc:eq:quadratic}. The familiar quiver Hecke braid
correction is polynomial; for example, with the usual strand
notation,
\[
 \begin{aligned}
 &(\psi_{r+1}\psi_r\psi_{r+1}-\psi_r\psi_{r+1}\psi_r)e_{\mathbf i}\\
 &\qquad=\delta_{i_r,i_{r+2}}
 \frac{Q_{i_r i_{r+1}}(z_{r+2},z_{r+1})
       -Q_{i_r i_{r+1}}(z_r,z_{r+1})}{z_{r+2}-z_r}
 e_{\mathbf i}.
 \end{aligned}
\]
This also verifies Hypothesis~\ref{rc:hyp:integral} in this example;
see the polynomial representations in~\cite{KL,Rouquier}.
The symmetric group on strands and the Cartan datum of the residue
quiver play different roles.

\subsection{The multiplicative coefficient algebra}
Put $B=\kk[Q]$ and write its basis elements as $X_\lambda$, so
$X_\lambda X_\mu=X_{\lambda+\mu}$. Set
\begin{equation*}\label{rc:eq:multiplicative}
 Y_\lambda=1-X_\lambda,
 \qquad
 Y_{\lambda+\mu}=Y_\lambda+Y_\mu-Y_\lambda Y_\mu.
\end{equation*}
The $W$-action is $w(X_\lambda)=X_{w\lambda}$. We have
\[
 Y_{-\alpha}=\frac{Y_\alpha}{Y_\alpha-1},
 \qquad \frac1{Y_\alpha}+\frac1{Y_{-\alpha}}=1.
\]
For a Laurent monomial, $f-s_i(f)$ is divisible by
$1-X_{\alpha_i}$, by the finite geometric-series identity.
Thus \eqref{rc:eq:root-data} holds with $R=B$ and $y_\alpha=Y_\alpha$.

For $\cC=\{0\}$ and $\rho=0$, define
\begin{equation}\label{rc:eq:demazure}
 C_i=-\psi_i=\frac{1-[s_i]}{1-X_{\alpha_i}},
 \qquad
 D_i(f)=\frac{f-s_i(f)}{1-X_{\alpha_i}}.
\end{equation}
The operators $C_i$ are idempotent and satisfy the ordinary Coxeter
braid relations. These are the multiplicative Demazure relations;
one may extend $B$ to the weight-lattice group algebra and use
$C_i=1-\varpi_i$, where
$\varpi_i(f)=(f-X_{-\alpha_i}s_i(f))/(1-X_{-\alpha_i})$.
Their restriction preserves $B$; see~\cite{HuangCoinvariants,HMSZ}.
Consequently Theorem~\ref{rc:thm:pbw} applies with zero braid
corrections.

\subsection{Flag varieties and the two coordinate laws}
\label{geo:subsec:operators}

Over $\mathbb C$, let $\mathbf G$ be the adjoint semisimple group
of root system $\Phi$ and choose $\mathbf T\subseteq\mathbf B$.
Then $X^*(\mathbf T)=Q$.  Put $S=P^W$.  The characteristic
presentations, with polynomial degrees doubled, are
\begin{equation}\label{geo:eq:characteristic}
 H_{\mathbf G}^*(\mathbf G/\mathbf B;\mathbb C)=P,
 \qquad H_{\mathbf G}^*(\mathrm{pt};\mathbb C)=S,
 \qquad H^*(\mathbf G/\mathbf B;\mathbb C)=P/S_+P;
\end{equation}
see~\cite{BGG,Demazure,KumarFlag}.  Choose the characteristic-map
sign for which a dominant weight gives the first Chern class of
its globally generated line bundle.  Minimal-parabolic push--pull
then acts by $\partial_i=(1-s_i)/x_i$.  In equivariant $K$-theory
the coefficient ring is $\mathbb C[Q]$, and push--pull acts by
$\varpi_i$ of~\eqref{rc:eq:demazure}.  Its complementary operator
$C_i=1-\varpi_i$ satisfies, under $X_\lambda\mapsto\exp(x_\lambda)$,
\[
 C_i=\frac{x_i}{1-\exp(x_i)}\partial_i.
\]
The displayed coefficient is a unit of constant term $-1$.
Formal Demazure algebras give the broader context~\cite{HMSZ,CZZ}.

Theorem~\ref{des:thm:additive-all} matches the additive cover
with Sauter's convolution construction by specifying its
centralizer, index set, and root weights.  Downstairs, a fixed
residue can have nonzero root residue, as in
Example~\ref{kl:ex:two-block-orbit}; the finite-cover step is
what resolves this difference in the operator rules.

\subsection{Recovery of the Kong--Li families}
\label{rc:subsec:kong-li}

We now identify the two KLR-like constructions of Kong--Li
with the additive and multiplicative specializations of
Definition~\ref{rc:def:root-algebra}. These are the polynomial
presentations used in their comparisons with modified forms of
degenerate and non-degenerate affine Hecke algebras after
localization. We specify the coefficient rings and parameter
spaces separately; the cyclotomic comparisons are given in
Section~\ref{kl:subsec:cyclotomic-maps}.

Let $\mathbb I=\mathbb Z/e\mathbb Z$, where $e=0$ means
$\mathbb I=\mathbb Z$ and, otherwise, $e\geq2$.  Give $\mathbb I$
the oriented residue quiver with one arrow $a\to a+1$. This is
$\Gamma_A$, of type $A_\infty$ when $e=0$ and cyclic affine
type $A$ otherwise. With the orientation used in
Definition~\ref{rc:def:residues}, this is
\begin{equation}\label{kl:eq:orientation}
d(a)=d_A(a):=\delta_{a,-1}\qquad(a\neq0).
\end{equation}
For the additive comparison, $e$ is the characteristic of $\kk$
with the convention $e=0$ in characteristic zero.  For the
multiplicative comparison, fix $q\in\kk^\times$ with $q\neq1$
and take $e$ to be the order of $q$ (or $e=0$ when $q$ has
infinite order).  These two choices of $e$ belong to their
respective coefficient fields; no identification of the two
parameter spaces is being made here.
Let $\cC$ be a finite $W$-orbit in
$\operatorname{Hom}_{\mathbb Z}(Q,\mathbb I)$, with its inclusion
as the residue map.  The admissibility condition
$m_i(c)=0\Rightarrow s_ic=c$ is automatic for this orbit.

We write $\cR_{\mathrm{add}}(\cC)$ and
$\cR_{\mathrm{mult}}(\cC)$ for the algebras of
Definition~\ref{rc:def:root-algebra} formed with the additive and
multiplicative coefficient data, respectively.

For the additive choice, put
\[
 P_{\mathrm{add}}=\operatorname{Sym}_{\kk}
       (Q\otimes_{\mathbb Z}\kk),\qquad
 x_\alpha=1\otimes\alpha,\qquad y_\alpha=x_\alpha .
\]
Then $x_{-\alpha}=-x_\alpha$ and $\kappa_i=0$.  The relations
for the root coordinates are the additive relations
$x_{\alpha+\beta}=x_\alpha+x_\beta$.  The polynomial ring
generated by the $x_\alpha$ subject to these relations is the
coefficient ring denoted by $\kk[y]$ in~\cite{KongLiD}.

For the multiplicative choice, put $P_{\mathrm{mult}}=\kk[Q]$ and
$Y_\alpha=1-X_\alpha$.  The relations are
\[
 Y_{\alpha+\beta}=Y_\alpha+Y_\beta-Y_\alpha Y_\beta,\qquad
 Y_{-\alpha}=\frac{Y_\alpha}{Y_\alpha-1},
\]
and $\kappa_i=1$.  The resulting root-coordinate algebra is the
ring $\kk[Y;G]$ of~\cite{KongLiND}.  Indeed, the characters
$X_{\alpha_i}=1-Y_{\alpha_i}$ generate $\kk[Q]$, and the displayed
relations express every root coordinate in terms of these
characters and their inverses.

For later reference, define
\[
 \vartheta_i=\psi_i+
 \sum_{c\in\cC}\delta_{m_i(c),0}y_i^{-1}e_c
 \quad\text{in }D_K\rtimes W.
\]
Thus $\vartheta_i e_c$ is a scalar multiple of the reflection
$[s_i]e_c$ for every $c$.  Directly from the skew-group
multiplication one obtains
\begin{equation}\begin{aligned}\label{kl:eq:theta-transport}
 \vartheta_i e_c&=e_{s_ic}\vartheta_i,
 &\vartheta_i f&=s_i(f)\vartheta_i\ (f\in K),\\
 \vartheta_i^2e_c&=
 \begin{cases}
 (y_i y_{-\alpha_i})^{-1}e_c,&m_i(c)=0,\\
 y_i^{d(m_i(c))}y_{-\alpha_i}^{d(-m_i(c))}e_c,
       &m_i(c)\neq0.
 \end{cases}
\end{aligned}
\end{equation}
By Lemma~\ref{rc:lem:leading}, the coefficient of $[w]e_c$ in a
reduced product $\vartheta_w e_c$ is nonzero and is independent of
the reduced word.  Hence the $\vartheta_i$ satisfy the ordinary Coxeter braid
relations and
\begin{equation}\label{kl:eq:theta-pbw}
D_K\rtimes W=
\bigoplus_{w\in W,\ c\in\cC}K\vartheta_w e_c .
\end{equation}
This is the operator form of the modified intertwiners in
\cite[Lemma 2.27]{KongLiD} and
\cite[Proposition 2.26]{KongLiND}.  Notice that~\eqref{kl:eq:theta-pbw}
is a fraction-field statement; it does not by itself assert
integrality of the lower braid corrections.

\begin{theorem}[Recovery of the Kong--Li KLR-like algebras]
\label{rc:thm:kong-li-recovery}
Assume that the standing hypotheses of Section~\ref{rc:sec:operators}
hold.  For the additive and multiplicative coefficient data above,
there are canonical isomorphisms
\begin{equation}\label{kl:eq:recovery}
\begin{aligned}
\cR_{\mathrm{add}}(\cC)&\cong
 L_{\mathrm{deg}}(\cC),\\
\cR_{\mathrm{mult}}(\cC)&\cong
 L_q(\cC).
\end{aligned}
\end{equation}
Here $L_{\mathrm{deg}}(\cC)$ is the polynomial KLR-like algebra
$L(E)$ in~\cite{KongLiD}, and $L_q(\cC)$ is the algebra
$\mathcal L(\cC)$ in~\cite{KongLiND}.  The isomorphisms identify
the coefficient generators, residue idempotents, and crossing
generators.  In particular, both algebras have the left and right
PBW bases indexed by $W\times\cC$.

More explicitly, in either specialization the quadratic relation
is
\begin{equation}\label{kl:eq:quadratic-table}
\psi_i^2e_c=
\begin{cases}
 -\kappa_i\psi_i e_c,&m_i(c)=0,\\
 y_{-\alpha_i}e_c,&m_i(c)=1,\ e\neq2,\\
 y_i e_c,&m_i(c)=-1,\ e\neq2,\\
 y_i y_{-\alpha_i}e_c,&m_i(c)=1,\ e=2,\\
 e_c,&m_i(c)\notin\{0,1,-1\}.
\end{cases}
\end{equation}
Thus the additive specialization gives exactly the degenerate
quadratic table, while the multiplicative specialization gives
the non-degenerate factors of~\cite[Theorem 3.2(5)]{KongLiND}.
\end{theorem}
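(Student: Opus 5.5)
The proof matches generators and relations directly, with PBW from earlier results supplying injectivity. Both $L_{\mathrm{deg}}(\cC)$ and $L_q(\cC)$ are presented in \cite{KongLiD,KongLiND} by four pieces of data. The first is a coefficient ring $\kk[y]$ or $\kk[Y;G]$, which we have identified with $P_{\mathrm{add}}$ and $\kk[Q]$. The second is a family of orthogonal idempotents indexed by the orbit $\cC$. The third is crossing generators with the relations: idempotent transport, the mixed relation involving a divided difference at zero residue, a quadratic table, and braid relations with explicit polynomial corrections. The fourth is that the algebra is realized faithfully on the polynomial representation $\bigoplus_c P e_c$. The plan is therefore to define the candidate map on generators, check that it intertwines the two faithful polynomial representations, and deduce the isomorphism from faithfulness.

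\textbf{Step 1: coefficients and the quadratic table.} Compute the specializations of \eqref{rc:eq:psi} and \eqref{rc:eq:quadratic} under \eqref{kl:eq:orientation}, where $d(r)=\delta_{r,-1}$. At $m_i(c)=0$ the quadratic relation is $-\kappa_i\psi_i$. At $m_i(c)=1$ we have $d(m_i)=0$ and $d(-m_i)=1$, giving $y_{-\alpha_i}e_c$. At $m_i(c)=-1$ we get $y_ie_c$. When $e=2$ we have $1=-1$ in $\mathbb I$, so both exponents equal one. In all other cases both exponents vanish. This yields \eqref{kl:eq:quadratic-table}, and one compares it line by line with the tables of \cite{KongLiD} and \cite[Theorem 3.2(5)]{KongLiND}.

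\textbf{Step 2: matching the operators.} Compare the Kong--Li polynomial representation operators (their Lemma 2.27 and Proposition 2.26 intertwiners) with ours. The modified intertwiners there should be, up to the sign and orientation conventions, the operators $\vartheta_i$ of \eqref{kl:eq:theta-transport}. Their crossing generators are then $\vartheta_i$ minus $y_i^{-1}$ on zero-residue summands, possibly times a coefficient unit such as $-1$ or $X_{\alpha_i}$. If a unit rescaling appears, Proposition~\ref{str:prop:unit-change} and the generator-change formula \eqref{str:eq:generator-change} show that the two generating sets produce the same subalgebra of $D_K\rtimes W$. Faithfulness (Proposition~\ref{rc:prop:basic}) then gives the isomorphism onto the operator images. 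Since $L(E)$ and $\mathcal L(\cC)$ act faithfully on the same polynomial module, this identifies the algebras compatibly with coefficients, idempotents and crossings.

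\textbf{Step 3: PBW.} PBW on our side holds in the additive case over $\mathbb C$ by Theorem~\ref{des:thm:additive-all}, and in the multiplicative case by Theorem~\ref{des:thm:ADE} when $\Phi$ is simply laced. Over general $\kk$, and for non-simply-laced types in the multiplicative case, the plan is to import the Kong--Li PBW theorem and transport it across the isomorphism. The left basis is then converted into a right basis using the mixed relation, which moves coefficients to the other side up to lower terms, by a unitriangular argument as in Theorem~\ref{rc:thm:pbw}. Rank-two integrality (Hypothesis~\ref{rc:hyp:integral}) then follows from the uniqueness of the triangular expansion.

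\textbf{Main obstacle.} The hardest part is Step 2: reconciling conventions exactly. This covers the sign of the divided difference, the orientation $a\to a+1$ versus $d(-1)$, the choice of $Y$ versus $X^{-1}$, and the $e=2$ double arrow. If the two generator sets differ by a non-unit factor, the algebras would genuinely differ, so each residue case has to be checked explicitly against their tables.
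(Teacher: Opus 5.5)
Your proposal is correct, and its first two steps are what the paper does. Those are the quadratic table from \eqref{rc:eq:quadratic} with $d=d_A$, and the identification of the Kong--Li modified intertwiners with the $\vartheta_i$, so that their crossings are $\vartheta_i-y_i^{-1}e_c$ at zero residue. The paper asserts this identification is exact, so no unit rescaling appears and the matching of crossing generators is literal.

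The two routes differ in how they pass from this fraction-field identification to the polynomial algebras. The paper uses \cite[Proposition 4.1]{KongLiD} and \cite[Proposition 4.1]{KongLiND}. By uniqueness of the triangular expansion of Lemma~\ref{rc:lem:leading}, the Kong--Li rank-two braid corrections are the coefficients $b^{ij}_{v,c}$ of \eqref{rc:eq:ranktwo}, so they lie in the coefficient ring. This verifies Hypothesis~\ref{rc:hyp:integral}. Theorem~\ref{rc:thm:pbw} then presents $\cR$ by exactly the Kong--Li relations, so the identity on generators is an isomorphism.

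You instead identify $L(E)$ and $\mathcal L(\cC)$ with their images on the polynomial module. This avoids comparing presentations and gives PBW and integrality afterwards as consequences. The cost is that the faithfulness you list as the ``fourth piece of data'' is not cited. The natural justification is Kong--Li's PBW theorem together with Lemma~\ref{rc:lem:leading}: their reduced-product spanning set is sent to an $R$-independent family. That is the same external input the paper uses. The paper's route has the extra benefit of recording Hypothesis~\ref{rc:hyp:integral} explicitly for both families, which it invokes in Section~\ref{kl:subsec:strand-operators}.

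Two small points. In Step~3, appealing to Theorems~\ref{des:thm:additive-all} and~\ref{des:thm:ADE} is unnecessary once the isomorphism is established; they also only cover $\mathbb C$. Second, a per-summand unit rescaling of crossings preserves the generated algebra simply because the units lie in $D_R$. Proposition~\ref{str:prop:unit-change} is not needed, and it concerns equivariant root-coordinate changes rather than arbitrary rescalings.
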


\begin{proof}
The idempotent transport and the mixed relation agree by
construction.  With~\eqref{kl:eq:orientation}, the second line
of~\eqref{rc:eq:quadratic} gives exactly~\eqref{kl:eq:quadratic-table}.
For the additive specialization $\kappa_i=0$ and
$y_{-\alpha_i}=-x_i$; for the multiplicative specialization
$\kappa_i=1$ and $y_{-\alpha_i}=Y_{-\alpha_i}$.
These are precisely the quadratic relations in
\cite[Theorem 3.2]{KongLiD} and
\cite[Theorem 3.2(5)]{KongLiND}.

Over the fraction field, the modified crossings in the two
Kong--Li papers are the operators $\vartheta_i$ above, after
identifying their root coordinates with $x_\alpha$ or
$Y_\alpha$.  Their ordinary Coxeter braid relations and the
decompositions by reduced products are therefore exactly
\eqref{kl:eq:theta-transport}--\eqref{kl:eq:theta-pbw}.
Subtracting the zero-residue term $y_i^{-1}e_c$ recovers the
Kong--Li crossing $\psi_i e_c$.  Consequently the two maps in
\eqref{kl:eq:recovery} are the identity on the displayed
generators inside the same localized operator algebra.

It remains to check that the maps restrict to the polynomial
subalgebras.  Proposition 4.1 of each cited paper gives a left
and right PBW basis over its coefficient ring and gives a complete
set of rank-two braid corrections.  Under the coordinate
identifications, those corrections are the unique coefficients
obtained by expanding the two alternating products in the basis
of Lemma~\ref{rc:lem:leading}.  They therefore belong to the
corresponding additive or multiplicative coefficient ring.  This
proves Hypothesis~\ref{rc:hyp:integral} for both Kong--Li
families, and the identity on generators gives the asserted
isomorphisms and PBW bases.
\end{proof}

\begin{remark}[Attribution and scope]\label{kl:rem:attribution}
The complete presentations and PBW theorems in
Theorem~\ref{rc:thm:kong-li-recovery} are due to Kong--Li.
The theorem records their operator identification in the notation
used for our root-power quotients. The earlier geometric and
formal-group constructions are discussed in
Section~\ref{geo:subsec:operators}. The additive polynomial
ring and the multiplicative Laurent ring remain distinct.
The degenerate algebra is graded as in
\cite[Remark 4.3]{KongLiD}; the non-degenerate presentation is
generally not $\mathbb Z$-graded. Our use of the term
KLR-like makes no additional quantum-group categorification
claim.
\end{remark}

\subsection{Explicit cyclotomic affine Hecke comparisons}
\label{kl:subsec:cyclotomic-maps}

Retain the coefficient fields, residue groups, full residue orbits,
and orientation~\eqref{kl:eq:orientation} from
Section~\ref{rc:subsec:kong-li}. The two choices of the residue
parameter $e$ are still made separately. Fix a distinguished
simple root $\alpha_1$ and finitely supported data
$\Lambda:\mathbb I\to\mathbb Z_{\geq0}$. Put
\begin{equation}\label{kl:eq:root-cyclotomic-definition}
 \cR_\star(\cC;\Lambda)=\cR_\star(\cC)/\mathfrak a_\star,
 \qquad
 \mathfrak a_\star=
 \left\langle y_1^{\Lambda_{m_1(c)}}e_c:c\in\cC\right\rangle,
 \quad \star\in\{\mathrm{add},\mathrm{mult}\}.
\end{equation}
Here $y_\alpha=x_\alpha$ in the additive case and
$y_\alpha=Y_\alpha$ in the multiplicative case. If
$\Lambda_{m_1(c)}=0$, the displayed relation sets $e_c=0$.

We state the directed-path hypothesis in our root--coroot
convention. Every vertex $r$ can be joined to $1$ by a path
$r=r_0,r_1,\ldots,r_\ell=1$ such that
\begin{equation}\label{kl:eq:path-condition}
 \langle\alpha_{r_{j+1}},\alpha_{r_j}^{\vee}\rangle=-1
 \qquad(0\leq j<\ell).
\end{equation}
Equivalently,
$s_{r_j}(\alpha_{r_{j+1}})=\alpha_{r_{j+1}}+\alpha_{r_j}$
along that path. This is the path condition used for the
cyclotomic nilpotence arguments in~\cite{KongLiD,KongLiND}.
It holds for every distinguished vertex in a connected
simply-laced Dynkin diagram. In the non-simply-laced finite
types it holds when the distinguished simple root is short.
These are conditions on the Dynkin diagram of $W$; they do
not change the residue quiver $\Gamma_A$.

Let $P_\star$ denote the coefficient ring of
Section~\ref{rc:subsec:kong-li}, with augmentation $f\mapsto f(0)$
at $x_\alpha=0$ or $Y_\alpha=0$. Define
\begin{equation}\label{kl:eq:local-coefficients}
 \mathscr S_\star=
 \{f\in P_\star^W:f(0)\ne0\},\qquad
 \cR_\star^\circ=\mathscr S_\star^{-1}\cR_\star(\cC).
\end{equation}
This is a central localization. It inverts every coefficient
$g$ with $g(0)\ne0$: the product $\prod_{w\in W}w(g)$ lies
in $\mathscr S_\star$. Thus its coefficient ring is the local
ring $P_\star^\circ=P_\star[g^{-1}:g(0)\ne0]$.
No root coordinate is inverted in this construction.

\begin{lemma}[Nilpotence and removal of localization]
\label{kl:lem:cyclotomic-localization}
Assume~\eqref{kl:eq:path-condition}. All root coordinates are
nilpotent in $\cR_\star(\cC;\Lambda)$. The natural map is an
isomorphism
\begin{equation}\label{kl:eq:remove-localization}
 \cR_\star(\cC;\Lambda)
 \xrightarrow{\sim}
 \cR_\star^\circ/\mathfrak a_\star\cR_\star^\circ.
\end{equation}
\end{lemma}

\begin{proof}
The nilpotence assertion is due to
\cite[Lemma 4.9]{KongLiD} and \cite[Lemma 4.8]{KongLiND}.
It is also the standard-quiver case of the argument in
Proposition~\ref{des:prop:root-nilpotence}: along the directed
paths, the identities there use only the mixed and quadratic
relations and apply over the present coefficient fields.

The commuting simple root coordinates generate a nilpotent
ideal in the coefficient image.  Every coefficient $f$ with
$f(0)\ne0$ therefore has a finite geometric-series inverse
in the quotient.  In particular all elements of
$\mathscr S_\star$ are already invertible.  The universal
property of central localization gives an inverse to
\eqref{kl:eq:remove-localization}.
\end{proof}

We recall the affine Hecke algebras whose cyclotomic quotients
occur in the comparison. We use the equal-parameter forms of
\cite[Definition~1.1(b),(c), p.~414]{CherednikUnification}, with
$q_i=q$ in the non-degenerate case and $h_i=1$ in the degenerate
case. Cherednik gives these definitions over $\mathbb C$; here
we use their defining relations over the coefficient fields
specified above, as in~\cite{KongLiD,KongLiND}.
The \emph{degenerate
affine Hecke algebra} $H_{\mathrm{deg}}$ is generated by a copy of
$\Sym_\kk(Q\otimes_{\mathbb Z}\kk)$, with additive coordinates
$\xi_\lambda$, and generators $t_i$ indexed by the simple
reflections of the finite Weyl group $W$. The \emph{non-degenerate
affine Hecke algebra} $H_q$ is generated by a copy of $\kk[Q]$,
with multiplicative coordinates $X_\lambda^{\mathrm H}$, and
generators $T_i$ indexed by the same simple reflections.
Thus $\xi_{\lambda+\mu}=\xi_\lambda+\xi_\mu$ and
$X_{\lambda+\mu}^{\mathrm H}=X_\lambda^{\mathrm H}X_\mu^{\mathrm H}$,
with $\xi_0=0$ and $X_0^{\mathrm H}=1$.
Cherednik's $y_\alpha$ and $Y_\alpha$ correspond to our
$\xi_\alpha$ and $X_\alpha^{\mathrm H}$, respectively; our
centered coordinate $Y_\alpha=1-X_\alpha$ has a different meaning.
In addition to the relations in the respective coefficient
algebras, their defining relations are the Coxeter braid
relations for the $t_i$ or $T_i$ and
\begin{equation}\label{kl:eq:hecke-coordinate-conventions}
 \begin{aligned}
 t_i^2&=1,&
 t_i f-s_i(f)t_i&=\frac{s_i(f)-f}{\xi_i},\\
 (T_i+1)(T_i-q)&=0,&
 T_i f-s_i(f)T_i&=(q-1)\frac{f-s_i(f)}{1-X_i^{\mathrm H}},
 \end{aligned}
\end{equation}
for every $f$ in the relevant coefficient algebra. Here
$\xi_i=\xi_{\alpha_i}$ and $X_i^{\mathrm H}=X_{\alpha_i}^{\mathrm H}$.
The quotients on the right are elements of the coefficient
algebra, by divisibility of reflection differences. The multiplicative mixed relation is the Bernstein relation.
The $t_i$ generate the finite Weyl-group algebra and the $T_i$
its finite Hecke deformation; the affine algebras additionally
contain their stated coefficient rings.

Define
\begin{equation*}\label{kl:eq:hecke-cyclotomic-polynomials}
 p_{\mathrm{add}}(t)=\prod_{a\in\mathbb I}(t-\bar a)^{\Lambda_a},
 \qquad
 p_{\mathrm{mult}}(t)=\prod_{a\in\mathbb I}(t-q^a)^{\Lambda_a}.
\end{equation*}
The corresponding root-coordinate cyclotomic quotients of
these affine Hecke algebras are
\[
 \begin{aligned}
 H_{\mathrm{deg}}(\Lambda)=H_{\mathrm{deg}}/\langle p_{\mathrm{add}}(\xi_1), \quad H_q(\Lambda)
   =H_q/\langle p_{\mathrm{mult}}(X_1^{\mathrm H})\rangle.
 \end{aligned}
\]

Use the generalized-weight idempotents $e(c)$ of the respective
Kong--Li cyclotomic constructions. Their defining property is
nilpotence of $(\xi_\alpha-\overline{m_\alpha(c)})e(c)$ or
$(X_\alpha^{\mathrm H}-q^{m_\alpha(c)})e(c)$ for every root.
The transport relations imply that
$e(\cC)=\sum_{c\in\cC}e(c)$
is central; some of its summands, or the whole sum, may vanish.
We write
\begin{equation}\label{kl:eq:orbit-summands}
 \begin{aligned}
 H_{\mathrm{deg}}(\cC;\Lambda)=H_{\mathrm{deg}}(\Lambda)e(\cC), \quad
 H_q(\cC;\Lambda)=H_q(\Lambda)e(\cC).
 \end{aligned}
\end{equation}
When $\Lambda$ is fixed, we may abbreviate the second algebra
to $H_q(\cC)$. Its identity is $e(\cC)$.

To specify the crossing maps without singular expressions at
zero residue, introduce the following rational functions,
regular and nonzero at $z=0$:
\begin{equation*}\label{kl:eq:additive-normalizing-units}
 U_a(z)=
 \begin{cases}
 1-z,&a=0,\\
 1,&a=1,\ e\ne2,\\
 (z-2)(1-z)^{-2},&a=-1,\ e\ne2,\\
 (1-z)^{-1},&a=1,\ e=2,\\
 1-(z+\bar a)^{-1},&a\notin\{0,1,-1\},
 \end{cases}
\end{equation*}
and
\begin{equation*}\label{kl:eq:multiplicative-normalizing-units}
 V_a(z)=
 \begin{cases}
 1-q-z,&a=0,\\
 1,&a=1,\ e\ne2,\\
 \bigl(q-q^{-1}(1-z)\bigr)
       \bigl(1-q^{-1}(1-z)\bigr)^{-2},&a=-1,\ e\ne2,\\
 \bigl(q(1-z)-1\bigr)^{-1},&a=1,\ e=2,\\
 \bigl(q^a(1-z)-q\bigr)
       \bigl(q^a(1-z)-1\bigr)^{-1},&a\notin\{0,1,-1\}.
 \end{cases}
\end{equation*}
These are the Kong--Li normalizing factors in our root
coordinates; see \cite[Section 2]{KongLiD,KongLiND} and
\cite[Section 3.4]{KongLiBK}. Their inverses are defined on
nilpotent centered coordinates by
Lemma~\ref{kl:lem:cyclotomic-localization}.

\begin{theorem}[Explicit cyclotomic affine Hecke comparison]
\label{rc:cor:kong-li-cyclotomic}
\label{kl:thm:explicit-cyclotomic}
Retain the Kong--Li coefficient and parameter assumptions of
Section~\ref{rc:subsec:kong-li}, and assume
\eqref{kl:eq:path-condition}. There are algebra isomorphisms
\begin{equation}\label{kl:eq:cyclotomic-recovery}
 \begin{aligned}
 \Phi_{\mathrm{add}}\colon \cR_{\mathrm{add}}(\cC;\Lambda)
    &\xrightarrow{\sim}H_{\mathrm{deg}}(\cC;\Lambda),\\
 \Phi_{\mathrm{mult}}\colon \cR_{\mathrm{mult}}(\cC;\Lambda)
    &\xrightarrow{\sim}H_q(\cC;\Lambda).
 \end{aligned}
\end{equation}
They send $e_c$ to $e(c)$ and satisfy
\begin{equation}\label{kl:eq:centered-coordinates}
 \begin{aligned}
 \Phi_{\mathrm{add}}(x_\alpha e_c)
   &=\bigl(\xi_\alpha-\overline{m_\alpha(c)}\bigr)e(c),\\
 \Phi_{\mathrm{mult}}(Y_\alpha e_c)
   &=\bigl(1-q^{-m_\alpha(c)}X_\alpha^{\mathrm H}\bigr)e(c).
 \end{aligned}
\end{equation}
Put $z_i=\sum_c(\xi_i-\overline{m_i(c)})e(c)$ in the
additive target and $Z_i=\sum_c(1-q^{-m_i(c)}X_i^{\mathrm H})e(c)$
in the multiplicative target. For $a=m_i(c)$, the crossing maps are
\begin{equation}\label{kl:eq:explicit-additive-crossing}
 \Phi_{\mathrm{add}}(\psi_i e_c)=
 \begin{cases}
 (t_i+1)U_0(z_i)^{-1}e(c),&a=0,\\
 (t_i+\xi_i^{-1})U_a(z_i)^{-1}e(c),&a\ne0,
 \end{cases}
\end{equation}
and
\begin{equation}\label{kl:eq:explicit-multiplicative-crossing}
 \Phi_{\mathrm{mult}}(\psi_i e_c)=
 \begin{cases}
 (T_i+1)V_0(Z_i)^{-1}e(c),&a=0,\\
 \left(T_i+\dfrac{q-1}{X_i^{\mathrm H}-1}\right)
 V_a(Z_i)^{-1}e(c),&a\ne0.
 \end{cases}
\end{equation}
All inverses in these formulas are taken in the commutative
coefficient corner with identity $e(c)$; the order of the
factors is part of the formulas. The maps identify central
orbit summands, which need not be single blocks.
\end{theorem}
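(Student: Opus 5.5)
The plan is to construct $\Phi_\star$ first on the localized algebras $\cR_\star^\circ$ and then pass to quotients. Lemma~\ref{kl:lem:cyclotomic-localization} shows that nothing changes when we do this: the quotient $\cR_\star(\cC;\Lambda)$ already equals $\cR_\star^\circ/\mathfrak a_\star\cR_\star^\circ$. I will also use Theorem~\ref{rc:thm:kong-li-recovery} to identify $\cR_\star(\cC)$ with the Kong--Li algebra $L_{\mathrm{deg}}(\cC)$ or $L_q(\cC)$. With these two inputs, most of the work reduces to the Kong--Li cyclotomic isomorphisms between their KLR-like quotients and $H_{\mathrm{deg}}(\cC;\Lambda)$, $H_q(\cC;\Lambda)$, as in \cite[Section 4]{KongLiD,KongLiND}. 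What I must supply is the conversion into the present coordinates and sign conventions, made explicit.

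\textbf{Step 1: the coefficient maps.} I will define $\Phi_\star$ on coefficients by \eqref{kl:eq:centered-coordinates}. This is a ring homomorphism $P_\star e_c\to H_\star(\Lambda)e(c)$ for each $c$: it is the shift of the coefficient algebra by the constant character $\overline{m_\cdot(c)}$ (or $q^{m_\cdot(c)}$), and it is linear (respectively multiplicative) in $\alpha$. The centered coordinates on the target are nilpotent by the defining property of $e(c)$. Hence the map extends to $P_\star^\circ e_c$, since elements with nonzero constant term become invertible. On the generators, the relation $y_1^{\Lambda_{m_1(c)}}e_c$ maps to $(\xi_1-\overline{m_1(c)})^{\Lambda_{m_1(c)}}e(c)$. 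On generalized eigenspaces this generates the same ideal as $p_\star(\xi_1)e(c)$, because the remaining factors of $p_\star$ are units on $e(c)$. This is the step that identifies $\mathfrak a_\star$ with the cyclotomic ideal.

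\textbf{Step 2: the crossing maps.} I will check that the images \eqref{kl:eq:explicit-additive-crossing} and \eqref{kl:eq:explicit-multiplicative-crossing} satisfy idempotent transport, the mixed relation \eqref{rc:eq:mixed}, the quadratic relation \eqref{kl:eq:quadratic-table}, and the rank-two braid relations \eqref{rc:eq:ranktwo}.

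Idempotent transport and the mixed relation follow from the intertwiner property. The element $t_i+\xi_i^{-1}$ (respectively $T_i+(q-1)/(X_i^{\mathrm H}-1)$) satisfies $\Phi(\cdot)f=s_i(f)\Phi(\cdot)$, a direct consequence of \eqref{kl:eq:hecke-coordinate-conventions}. At zero residue, $t_i+1$ differs from the intertwiner by $1-\xi_i^{-1}$. After the normalization by $U_0(z_i)^{-1}=(1-z_i)^{-1}$ on $e(c)$ with $\xi_i=z_i$, this difference produces exactly the divided-difference correction.

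For the quadratic relation I will compute $\Phi(\psi_i)^2e(c)$. The intertwiner squares to $(1-\xi_i^{-2})$ (respectively its $q$-analogue), and the normalizing factors $U_a(z_i)^{-1}$ and $U_{-a}(s_iz_i)^{-1}$ multiply into it. Matching the result with \eqref{kl:eq:quadratic-table} case by case is exactly how the listed $U_a$, $V_a$ were chosen. Here $\xi_i=z_i+\bar a$ on $e(c)$, and I will use $s_i z_i=-z_i$ on the target corner.

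\textbf{Step 3: the braid relations, which I expect to be the main obstacle.} I will avoid verifying the integral braid corrections directly. Instead I will use uniqueness. By Step 2 the images of $\psi_i$ agree, up to coefficient units, with the images of the operators $\vartheta_i$ from \eqref{kl:eq:theta-transport}. The images of the $\vartheta_i$ are normalized intertwiners, so they satisfy ordinary braid relations in $H_\star$ after inverting the relevant $\xi_i$ differences on nonzero residues. The relation \eqref{rc:eq:ranktwo} is an identity in the rank-two parabolic, expressed through the $\vartheta$'s with coefficients that can be computed over the fraction field. It therefore transfers, provided the normalizing factors are compatible along the two reduced words. That compatibility is the delicate point. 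I expect to follow the Kong--Li proof that their normalized crossings satisfy the KLR-like braid relations, transcribed through Theorem~\ref{rc:thm:kong-li-recovery}.

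\textbf{Step 4: bijectivity.} The construction gives a well-defined homomorphism from the presentation of Theorem~\ref{rc:thm:pbw}. For surjectivity, $t_i e(c)$ and $T_ie(c)$ are recovered from the images of $\psi_i$ by multiplying by units and subtracting coefficients, and the coefficient algebras are hit. For injectivity I will construct the inverse map: $t_i e(c)\mapsto\psi_i U_a(z_i)-x_i^{-1}$ (made integral on nilpotent quotients), with inverse coefficient shifts, and check the Hecke relations, as in Appendix~\ref{app:proof-root-comparison}. Alternatively, I could compare dimensions using PBW on both sides. The orbit summand $e(\cC)$ is central, so the map respects the decomposition into summands.
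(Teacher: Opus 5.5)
Your ingredients match the paper's: centered coefficient maps, normalized intertwiners, the unit factorization of the cyclotomic polynomial, Lemma~\ref{kl:lem:cyclotomic-localization}, and explicit inverses. The order in which you assemble them, however, leaves Step~3 without a working mechanism.

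Your $\Phi_\star$ goes from $\cR_\star^\circ$ straight into the cyclotomic summand $H_\star(\Lambda)e(\cC)$. There every zero-residue centered coordinate is nilpotent. The braid argument you sketch is a fraction-field argument that rewrites \eqref{rc:eq:ranktwo} through the $\vartheta_i$. At zero residue, $\vartheta_ie_c=\psi_ie_c+y_i^{-1}e_c$ has no image in that target. So your claim that the images of $\psi_i$ and $\vartheta_i$ agree up to units fails there; at nonzero residue the two elements are simply equal.

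A fraction-field identity only yields a relation in $H_\star(\Lambda)e(\cC)$ if you already have a homomorphism into it from a torsion-free algebra where the identity holds, and that is exactly what is being proved. The normalizing-factor compatibility you flag is not the real difficulty: transported to the right, the factors form a product over an inversion set that is word-independent, as in Lemma~\ref{rc:lem:leading}. The difficulty is where the identity lives, and deferring to ``the Kong--Li proof'' does not resolve it, since you have fixed a different target. Your zero-residue mixed-relation heuristic (``$t_i+1$ differs from the intertwiner by $1-\xi_i^{-1}$'') has the same defect. Unlike the braid relation, though, that one can be checked by a short direct computation in the corner.

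The missing step is to build the comparison before imposing the cyclotomic relations, as the paper does. By Theorem~\ref{rc:thm:kong-li-recovery} and the localized Kong--Li presentations, $\cR_\star^\circ$ is identified with the modified affine Hecke algebra whose coefficients are localized at the weights of $\cC$. This happens inside a common torsion-free localized operator algebra. There, fraction-field reasoning is legitimate, and the zero-residue pole cancels before any quotient is taken: $(t_i+z_i^{-1})(1-z_i)^{-1}-z_i^{-1}=(t_i+1)(1-z_i)^{-1}$. The localized presentations also guarantee that the crossing maps and the inverse formulas \eqref{kl:eq:inverse-finite-hecke} respect all relations, braid relations included.

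From there, the paper proceeds in three steps:
\begin{enumerate}
\item It matches the two cyclotomic ideals by \eqref{kl:eq:cyclotomic-unit-factorization}.
\item It identifies the localized quotient with $H_\star(\Lambda)e(\cC)$. This needs the argument that the image of $e(c')$ is $e_{c'}$ for $c'\in\cC$ and zero otherwise, using a root whose residues at $c$ and $c'$ differ. Your remark that $e(\cC)$ is central does not supply this.
\item It removes the localization by Lemma~\ref{kl:lem:cyclotomic-localization}.
\end{enumerate}

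Two smaller corrections to Step~4. First, the inverse is $t_ie(c)\mapsto(\psi_iU_a(x_i)-(x_i+\bar a)^{-1})e_c$ for $a\ne0$ and $(\psi_iU_0(x_i)-1)e_c$ for $a=0$. Your term $x_i^{-1}$ misses the shift by $\bar a$, and at zero residue it does not exist even on nilpotent quotients. Second, the dimension-count alternative is not available, because the PBW theorems do not compute the dimensions of these cyclotomic quotients.
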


\begin{proof}
The isomorphisms are the Kong--Li cyclotomic comparisons
\cite[Corollary 4.14]{KongLiD,KongLiND}. Their normalized
intertwiners give the displayed crossing maps. On each residue
corner the Hecke cyclotomic polynomial is the centered root
power times a unit. Lemma~\ref{kl:lem:cyclotomic-localization}
removes the localization. The full inverse maps, the unit
factorizations, and the orbit-summand identification are given
in Appendix~\ref{app:proof-root-comparison}.
\end{proof}

\begin{remark}[Scope of the comparison]
\label{kl:rem:comparison-scope}
The theorem recovers the two Kong--Li isomorphisms, with their
type-$A$ residue quiver, parameter data, and path condition.
The Weyl group need not have type $A$. It is not an isomorphism
claim for arbitrary crossing multiplicities $d$ or arbitrary
admissible residue data of Definition~\ref{rc:def:residues}.
Corollaries~\ref{qv:cor:hecke-maps} and~\ref{qv:cor:zero-hecke}
give comparisons for specified changes of quiver; an unrestricted
comparison to the same Hecke target is excluded by
Proposition~\ref{qv:prop:no-hecke-map}.
Nor does a fraction-field isomorphism suffice
for cyclotomic descent: the denominators used above remain
units after imposing the cyclotomic relations.

The distinction between original and centered coordinates
persists in the cyclotomic quotient. In particular,
$X_\alpha=1-Y_\alpha$ on the quiver Demazure side maps to
$q^{-m_\alpha(c)}X_\alpha^{\mathrm H}$ on the $c$-summand.
On the zero orbit this factor is $1$, so the notation
$X_\alpha$ can be used for both characters in the later
zero-orbit sections.
\end{remark}

\subsection{The type-\texorpdfstring{$A$}{A} strand cyclotomic comparison}
\label{kl:subsec:strand-cyclotomic}

For the usual cyclotomic quotients of the degenerate and
non-degenerate affine Hecke algebras with finite Weyl group
$\mathfrak S_n$, use the strand realization of Section~\ref{kl:subsec:strand-operators}:
\[
 W=\mathfrak S_n,\qquad P_{\mathrm{str}}=\kk[z_1,\ldots,z_n],
 \qquad y_{\epsilon_r-\epsilon_s}=z_r-z_s.
\]
Let $\cC\subseteq\mathbb I^n$ be an orbit of residue
sequences. Keep $d(a)=\delta_{a,-1}$ and denote the resulting
operator algebra by $\cR_{\mathrm{str}}(\cC)$. Define
\begin{equation}\label{kl:eq:strand-cyclotomic-definition}
 \cR_{\mathrm{str}}(\cC;\Lambda)=
 \cR_{\mathrm{str}}(\cC)/
 \left\langle z_1^{\Lambda_{i_1}}e_{\mathbf i}:
                    \mathbf i\in\cC\right\rangle.
\end{equation}
The cyclotomic datum here uses the first residue $i_1$,
rather than the root residue $i_1-i_2$.

\begin{corollary}[Type-$A$ strand recovery]
\label{kl:cor:strand-recovery}
With the residue parameter chosen for each case as in
Section~\ref{rc:subsec:kong-li}, there are algebra isomorphisms
\begin{equation}\label{kl:eq:strand-recovery}
 \begin{aligned}
 \cR_{\mathrm{str}}(\cC;\Lambda)
   &\xrightarrow{\sim}H_{n,\mathrm{deg}}^\Lambda e(\cC),\\
 \cR_{\mathrm{str}}(\cC;\Lambda)
   &\xrightarrow{\sim}H_{n,q}^\Lambda e(\cC).
 \end{aligned}
\end{equation}
Here $H_{n,\mathrm{deg}}^\Lambda$ and $H_{n,q}^\Lambda$ are
the usual strand-coordinate cyclotomic affine Hecke algebras, with
cyclotomic polynomial $p_{\mathrm{add}}(\xi_1)$ or
$p_{\mathrm{mult}}(X_1^{\mathrm H})$ in the first strand
coordinate. Both maps send $e_{\mathbf i}$ to $e(\mathbf i)$,
and their coefficient maps are
\begin{equation}\label{kl:eq:strand-coefficient-maps}
 z_r e_{\mathbf i}\longmapsto
 \begin{cases}
 (\xi_r-\bar i_r)e(\mathbf i),&\text{degenerate case},\\
 (1-q^{-i_r}X_r^{\mathrm H})e(\mathbf i),
       &\text{non-degenerate case}.
 \end{cases}
\end{equation}
Here $W=\mathfrak S_n$ has type $A_{n-1}$ and the residue
quiver is also of type $A$. In the standard strand block
parametrization, every nonzero
target orbit summand is a block. These are the
Brundan--Kleshchev--Rouquier isomorphisms, expressed using
Kong--Li's presentations.
\end{corollary}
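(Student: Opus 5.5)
The plan is to obtain the strand statement from the root-coordinate cyclotomic comparison of Theorem~\ref{kl:thm:explicit-cyclotomic}, applied to the sequence cover. The only new input is the mean coordinate. The key observation is that $P_{\mathrm{str}}=P[\bar z]$ with $\bar z=(z_1+\cdots+z_n)/n$ central and $W$-fixed, as in the proof of Theorem~\ref{str:thm:sequence-pbw}. Hence $\cR_{\mathrm{str}}(\cC)=\cR_{\mathrm{add}}(\cC)[\bar z]$ in the degenerate case. In the non-degenerate case I would use the multiplicative strand coordinates $1-q^{-i_r}X_r^{\mathrm H}$, related to additive strand coordinates by the exponential and equivariant units. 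I would not argue this way, however; instead I would go directly to the Kong--Li/BKR presentation.

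First, I would write down the candidate maps. Idempotents go to $e(\mathbf i)$, and coefficients go as in~\eqref{kl:eq:strand-coefficient-maps}. For crossings I would use the formulas~\eqref{kl:eq:explicit-additive-crossing} and~\eqref{kl:eq:explicit-multiplicative-crossing} with $i=r$, $a=i_r-i_{r+1}$, and $z_r$ replaced by $z_r-z_{r+1}$ (respectively $Z_r$ the corresponding centered ratio). These are exactly the Brundan--Kleshchev normalized intertwiners. Next I would check well-definedness. The cyclotomic generator $z_1^{\Lambda_{i_1}}e_{\mathbf i}$ maps to $(\xi_1-\bar i_1)^{\Lambda_{i_1}}e(\mathbf i)$. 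On the generalized eigenspace $e(\mathbf i)$, the polynomial $p_{\mathrm{add}}(\xi_1)$ equals this power times a unit: every other factor $\xi_1-\bar a$ with $a\ne i_1$ is invertible there, since $\xi_1-\bar i_1$ is nilpotent. The same holds multiplicatively. So the ideals correspond on each summand.

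For nilpotence of the strand coordinates in $\cR_{\mathrm{str}}(\cC;\Lambda)$, I would use the strand version of Proposition~\ref{des:prop:root-nilpotence}. Here $z_1$ is nilpotent by the defining relation, and $z_{r+1}$ is obtained from $z_r$ by the same $\psi_r(\cdot)\psi_r$ argument, along the path $1,2,\ldots,n$. This removes any localization, exactly as in Lemma~\ref{kl:lem:cyclotomic-localization}, so all the units $U_a,V_a$ make sense.

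The relations, and the existence of the inverse, would then be imported from \cite{BK} together with the Kong--Li presentations \cite{KongLiTypeA}. The point is that the quadratic table~\eqref{kl:eq:quadratic-table}, the mixed relation, and the KLR braid correction displayed in Section~\ref{kl:subsec:strand-operators} coincide with the KLR relations for $Q_{ab}$ in~\eqref{rc:eq:ordinary-Q} with $d=\delta_{\cdot,-1}$. BKR proves that these normalized elements satisfy them in $H^\Lambda_n e(\cC)$ and that the resulting map is an isomorphism onto the block. I expect the main obstacle to be precisely this bookkeeping of conventions. One has to match our sign $y_{-\alpha}$ weights and orientation $d(-1)=1$ with BK's $Q_{ij}$ and their choice of $P_r,Q_r$ power series, and to check that the centered multiplicative coordinate $1-q^{-i_r}X_r$ is their $y_r$ up to a unit. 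I would handle it by comparing quadratic relations on each residue case, then absorbing the discrepancy into $U_a,V_a$, as in Appendix~\ref{app:proof-root-comparison}.

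Finally, the block statement: in the strand parametrization $e(\cC)$ corresponds to a single weight $\Lambda-\beta$, and BKR/Lyle--Mathas blocks are exactly such $\beta$-summands when nonzero.
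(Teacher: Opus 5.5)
Your outline has the same structure as the paper's proof: centered coefficient maps, the factorization of the first-strand cyclotomic polynomial as $z_1^{\Lambda_{i_1}}$ times a unit on each summand, nilpotence to remove localization, importing the relations and the inverse from Brundan--Kleshchev and Kong--Li, and the block interpretation from \cite{BK}. Your direct $\psi_r(\cdot)\psi_r$ argument along $1,2,\ldots,n$ is a valid substitute for \cite[Lemma 2.1]{BK}. In the degenerate case your crossing formulas are also consistent, because the image of $z_r-z_{r+1}$ is exactly the centered root coordinate $(\xi_r-\xi_{r+1})-\overline{(i_r-i_{r+1})}$ used in \eqref{kl:eq:explicit-additive-crossing}.

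The gap is the non-degenerate crossing map. Both isomorphisms in the statement have the same source, the additive KLR quotient $\cR_{\mathrm{str}}(\cC;\Lambda)$. By contrast, \eqref{kl:eq:explicit-multiplicative-crossing} is a homomorphism from the multiplicative root-coordinate algebra, where $\kappa_i=1$ and $\psi_i^2e_c=-\psi_ie_c$ at zero residue. Write $\zeta_r e(\mathbf i)=(1-q^{-i_r}X_r^{\mathrm H})e(\mathbf i)$ for the image of $z_r$, and $Z_r e(\mathbf i)=\bigl(1-q^{-(i_r-i_{r+1})}X_r^{\mathrm H}(X_{r+1}^{\mathrm H})^{-1}\bigr)e(\mathbf i)$. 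Then $(z_r-z_{r+1})e_{\mathbf i}$ maps to $(1-\zeta_{r+1})Z_r e(\mathbf i)$, not to $Z_re(\mathbf i)$, and the unit $1-\zeta_{r+1}$ is not $s_r$-symmetric.

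Concretely, take $i_r=i_{r+1}$ and your element $\Psi=(T_r+1)V_0(Z_r)^{-1}e(\mathbf i)$. It satisfies $\Psi^2=-\Psi$, whereas $\psi_r^2e_{\mathbf i}=0$ in the source. The Bernstein relation also gives $\Psi\zeta_{r+1}=\zeta_r\Psi+(1-\zeta_{r+1})$, so the mixed relation $\psi_rz_{r+1}e_{\mathbf i}=(z_r\psi_r+1)e_{\mathbf i}$ fails. More generally, for $\Psi=(T_r+1)Fe(\mathbf i)$ the mixed relation holds exactly when $F^{-1}=1-q+q\zeta_{r+1}-\zeta_r=(1-\zeta_{r+1})V_0(Z_r)$. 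This is the type-$A$ normalizing unit $Q_r(\mathbf i)$, and it is not a function of $Z_r$.

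So no adjustment of $V_a$ as a function of the centered root ratio can repair your map. Your claim that these elements are exactly the Brundan--Kleshchev intertwiners is false in this case, and the relations you intend to import concern different elements. The paper instead uses the normalizing units $Q_r(\mathbf i)$ of \cite[Section 4.2]{KongLiTypeA}, evaluated at the centered strand coordinates, which depend on $\zeta_r$ and $\zeta_{r+1}$ separately. It combines these with the orientation sign $\varepsilon(i_r-i_{r+1})$. With those crossings, \cite[Theorem 4.24, Lemma 5.5, Theorem 5.6]{KongLiTypeA} supply the relations, the inverse, and the descent.
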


\begin{proof}
The strand polynomial representation is the ordinary quiver
Hecke representation. The centered coefficient maps and an
explicit crossing sign change identify it with the presentations
of~\cite{KongLiTypeA}. Nilpotence makes all localization factors
units, and the first-strand cyclotomic polynomial differs from
the centered power by a unit. Appendix~\ref{app:proof-strand-comparison}
records the crossing maps, the sign conversion, and the complete
descent argument. The resulting isomorphisms are those of
\cite[Theorem 5.6]{KongLiTypeA}; their block interpretation
uses~\cite{BK}.
\end{proof}

\begin{remark}[Root powers and strand powers]
\label{kl:rem:root-versus-strand}
Theorem~\ref{kl:thm:explicit-cyclotomic} and
Corollary~\ref{kl:cor:strand-recovery} use different
coefficient data and different cyclotomic relations.
In the additive strand realization,
$x_{\alpha_1}=z_1-z_2$, whereas the usual strand cyclotomic
relation involves $z_1$. Moreover, restriction of a sequence
$\mathbf i$ to the root lattice remembers differences
$i_r-i_s$ and can forget a common shift of all residues.
Thus the root-coordinate quotient cannot be substituted for
the strand quotient merely by taking $\Phi$ of type $A$.

The cyclotomic quotients of both degenerate and non-degenerate
strand affine Hecke algebras are compared with the additive
strand operator quotient in
\eqref{kl:eq:strand-recovery}. Its parameters are chosen
separately in the two cases; this does not identify unrelated
residue groups. The two root-coordinate specializations in
Theorem~\ref{kl:thm:explicit-cyclotomic} instead retain their
respective additive and multiplicative coordinate laws.
The recovered Hecke isomorphism theorems remain attributed to
Brundan--Kleshchev--Rouquier and Kong--Li.
\end{remark}

\subsection{Changing the residue quiver}
\label{qv:subsec:changing-quiver}

The type of the residue quiver must be distinguished from the type
of the finite Weyl group. In particular, varying the former here is
not the same operation as passing from affine Hecke type $A$ to
types $B$ and $D$ in~\cite{VVTypeB,SVVTypeD,PWTypeB,PWTypeD}.
The involution and framing in the orientifold constructions
of~\cite{PrzezdzieckiOrientifold,AppelPrzezdziecki} are additional
structure. The assertions below concern the stated QDA data and
their comparison maps.

Return to the root-coordinate construction of
Definition~\ref{rc:def:root-algebra}. Fix $W$, the coefficient
domain $R$, the coordinates $y_\alpha$, the residue group
$\mathbb I$, and the admissible orbit $\cC$. We now vary only
the arrow multiplicity $d$. Write $\cR_d$ and
$\psi_i^{(d)}$ for the resulting algebra and its crossings.
Fix a distinguished simple root $\alpha_1$ and finitely
supported $\Lambda:\mathbb I\to\mathbb Z_{\geq0}$, and set
\begin{equation*}\label{qv:eq:cyclotomic-ideal}
 \begin{split}
 \mathfrak a_d^\Lambda
   &=\left\langle y_1^{\Lambda_{m_1(c)}}e_c:c\in\cC
     \right\rangle_{\cR_d}\ \ \text{and} \ \ 
 \cR_d(\cC;\Lambda)=\cR_d/\mathfrak a_d^\Lambda.
 \end{split}
\end{equation*}
The subscript on the ideal records its ambient algebra.
No PBW or nilpotence hypothesis is imposed in the next proposition.

\begin{proposition}[Comparison under arrow containment]
\label{qv:prop:arrow-containment}
If $d(a)\ge d'(a)$ for every $a\ne0$, the identity in
$D_K\rtimes W$ restricts to an inclusion $
 \iota_{d,d'}\colon \cR_d\hookrightarrow\cR_{d'}$.
It induces a unital algebra homomorphism
\begin{equation}\label{qv:eq:cyclotomic-map}
 \overline\iota_{d,d'}\colon 
 \cR_d(\cC;\Lambda)\longrightarrow\cR_{d'}(\cC;\Lambda)
\end{equation}
with kernel
\begin{equation}\label{qv:eq:comparison-kernel}
 \ker\overline\iota_{d,d'}
 =\bigl(\cR_d\cap\mathfrak a_{d'}^\Lambda\bigr)
      /\mathfrak a_d^\Lambda.
\end{equation}
These maps compose in the expected order when
$d\ge d'\ge d''$.
\end{proposition}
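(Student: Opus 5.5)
The plan is to show directly that every generator of $\cR_d$ already lies in $\cR_{d'}$; the quotient statements will then follow formally. The coefficient algebra $D_R$ is common to both constructions. For a crossing, fix $i$ and $c$ and split into two cases. If $m_i(c)=0$, the two crossings coincide, $\psi_i^{(d)}e_c=\psi_i^{(d')}e_c=y_i^{-1}([s_i]-1)e_c$, since the zero-residue formula in \eqref{rc:eq:psi} does not involve the arrow function. If $m_i(c)=a\ne0$, then $d(a)\ge d'(a)$ gives
\[
 \psi_i^{(d)}e_c=y_{-\alpha_i}^{d(a)}[s_i]e_c
 =y_{-\alpha_i}^{\,d(a)-d'(a)}\,\psi_i^{(d')}e_c,
\]
and $y_{-\alpha_i}^{d(a)-d'(a)}e_{s_ic}$ is an element of $D_R$. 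Summing over $c$ shows that $\psi_i^{(d)}\in\cR_{d'}$. The algebra $\cR_d$ is the subalgebra of $D_K\rtimes W$ generated by these elements, so it is contained in $\cR_{d'}$. The inclusion is therefore the identity restricted to $\cR_d$, and it is injective and unital.

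Next I will descend to the quotients. The generators $y_1^{\Lambda_{m_1(c)}}e_c$ of $\mathfrak a_d^\Lambda$ and of $\mathfrak a_{d'}^\Lambda$ are the same elements of $D_R$. Hence $\iota_{d,d'}(\mathfrak a_d^\Lambda)\subseteq\mathfrak a_{d'}^\Lambda$: a two-sided ideal of $\cR_{d'}$ that contains these generators also contains $\cR_d g\cR_d$ for each generator $g$. The composite $\cR_d\to\cR_{d'}\to\cR_{d'}/\mathfrak a_{d'}^\Lambda$ thus factors through $\cR_d(\cC;\Lambda)$, giving $\overline\iota_{d,d'}$. Its kernel is the preimage of $\mathfrak a_{d'}^\Lambda$ in $\cR_d$, which is $\cR_d\cap\mathfrak a_{d'}^\Lambda$, taken modulo $\mathfrak a_d^\Lambda$. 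This is \eqref{qv:eq:comparison-kernel}.

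Compatibility under composition is immediate, because every map involved is the identity of $D_K\rtimes W$ restricted to a subalgebra. This gives $\iota_{d',d''}\circ\iota_{d,d'}=\iota_{d,d''}$, and the same identity holds for the induced maps on quotients by the uniqueness of the factorization.

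The only step that needs care is the nonzero-residue case. There one must use that the coefficient factor $y_{-\alpha_i}^{d(a)-d'(a)}$ has a nonnegative exponent and lies in $R$, not merely in $K$. One must also place this factor on the left of the group symbol, with the idempotent correctly transported to $e_{s_ic}$, in agreement with the convention stated after \eqref{rc:eq:psi}. No PBW or nilpotence input is needed anywhere, which matches the remark preceding the proposition.
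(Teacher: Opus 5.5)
Your proposal is correct and follows the paper's proof. The paper shows generator-by-generator containment, using $\psi_i^{(d)}e_c=y_{-\alpha_i}^{d(a)-d'(a)}\psi_i^{(d')}e_c$ at nonzero residue and equality of the divided differences at zero residue; it then gets $\mathfrak a_d^\Lambda\subseteq\mathfrak a_{d'}^\Lambda$ from the common generators in $D_R$ and composition from the common ambient identity. The paper also records the right-handed form $\psi_i^{(d')}y_i^{d(a)-d'(a)}e_c$, which is not needed here but is used in the later Hecke comparison.
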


\begin{proof}
All the algebras have the same coefficient subalgebra $D_R$.
For $a=m_i(c)\ne0$, formula~\eqref{rc:eq:psi} gives
\begin{equation}\label{qv:eq:weighted-inclusion}
 \begin{split}
 \psi_i^{(d)}e_c
 &=y_{-\alpha_i}^{d(a)-d'(a)}\psi_i^{(d')}e_c\\
 &=\psi_i^{(d')}y_i^{d(a)-d'(a)}e_c.
 \end{split}
\end{equation}
The second equality uses $[s_i]y_i=y_{-\alpha_i}[s_i]$.
If $a=0$, the two crossings are the same divided difference.
Thus every generator of $\cR_d$ belongs to $\cR_{d'}$,
which proves the inclusion. It fixes each generator of the
cyclotomic ideal, so
$\mathfrak a_d^\Lambda\subseteq\mathfrak a_{d'}^\Lambda$.
This gives~\eqref{qv:eq:cyclotomic-map} and its stated kernel.
All maps come from the same ambient identity, which also
proves the composition assertion.
\end{proof}

\begin{corollary}[Hecke homomorphisms for larger residue quivers]
\label{qv:cor:hecke-maps}
Assume the coefficient, residue-orbit, and directed-path
hypotheses of Theorem~\ref{kl:thm:explicit-cyclotomic}.
If $d\ge d_A$, there are unital algebra homomorphisms
\begin{equation*}\label{qv:eq:hecke-maps}
 \begin{aligned}
 \Phi_{\mathrm{add},d}\colon \cR_{\mathrm{add},d}(\cC;\Lambda)
   &\longrightarrow H_{\mathrm{deg}}(\cC;\Lambda),\\
 \Phi_{\mathrm{mult},d}\colon \cR_{\mathrm{mult},d}(\cC;\Lambda)
   &\longrightarrow H_q(\cC;\Lambda).
 \end{aligned}
\end{equation*}
They are nonzero whenever their target is nonzero. They are
the isomorphisms of Theorem~\ref{kl:thm:explicit-cyclotomic}
when $d=d_A$.

For either specialization, let
$\Psi_{i,\star}=\Phi_\star(\psi_i^{(d_A)})$ and
$z_{\alpha,\star}=\Phi_\star(y_\alpha)$ in the Hecke target.
Set $r(0)=0$ and $r(a)=d(a)-d_A(a)$ for $a\ne0$.
Then the maps are given by
\begin{equation*}\label{qv:eq:explicit-hecke-map}
 \begin{aligned}
 \Phi_{\star,d}(e_c)&=e(c),\\
 \Phi_{\star,d}(y_\alpha e_c)&=z_{\alpha,\star}e(c),\\
 \Phi_{\star,d}(\psi_i^{(d)}e_c)
   &=\Psi_{i,\star}
       z_{\alpha_i,\star}^{\,r(m_i(c))}e(c).
 \end{aligned}
\end{equation*}
The coefficient maps are those of~\eqref{kl:eq:centered-coordinates},
and the power in the last line is on the right of the crossing.
\end{corollary}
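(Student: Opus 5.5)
The plan is to obtain $\Phi_{\star,d}$ as the composite of the comparison map of Proposition~\ref{qv:prop:arrow-containment} (with $d'=d_A$) and the isomorphism $\Phi_\star$ of Theorem~\ref{kl:thm:explicit-cyclotomic}:
\[
 \Phi_{\star,d}=\Phi_\star\circ\overline\iota_{d,d_A}\colon
 \cR_{\star,d}(\cC;\Lambda)\longrightarrow\cR_{\star,d_A}(\cC;\Lambda)
 \xrightarrow{\sim}H_\star(\cC;\Lambda).
\]
The hypothesis $d\ge d_A$ is exactly the arrow-containment condition, so $\overline\iota_{d,d_A}$ is a well-defined unital homomorphism. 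No PBW or nilpotence input is needed for this map itself. Unitality, together with the fact that the identity of the target is $e(\cC)$, gives nonvanishing whenever the target is nonzero. When $d=d_A$, the map $\iota_{d_A,d_A}$ is the identity, so $\Phi_{\star,d_A}=\Phi_\star$.

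The explicit formulas then follow by tracking generators through the composite. The idempotents and coefficients are fixed by $\iota$, and $\Phi_\star$ sends them to $e(c)$ and $z_{\alpha,\star}e(c)$ by~\eqref{kl:eq:centered-coordinates}.

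For a crossing there are two cases.
\begin{itemize}
 \item At $a=m_i(c)=0$, the two crossings $\psi_i^{(d)}e_c$ and $\psi_i^{(d_A)}e_c$ are the same divided difference, which agrees with $r(0)=0$.
 \item At $a\ne0$, formula~\eqref{qv:eq:weighted-inclusion} gives
\[
 \psi_i^{(d)}e_c=\psi_i^{(d_A)}\,y_i^{\,r(a)}e_c .
\]
 Applying the homomorphism $\Phi_\star$ gives $\Psi_{i,\star}\,z_{\alpha_i,\star}^{\,r(a)}e(c)$, with the power on the right of the crossing as stated.
\end{itemize}

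The main obstacle is bookkeeping rather than substance. One must check that the right-hand placement of $y_i^{r(a)}$ in~\eqref{qv:eq:weighted-inclusion} is matched with the idempotent $e_c$ on the right. One must also check that $\Phi_\star(y_ie_c)=z_{\alpha_i,\star}e(c)$ uses the source residue $c$, so that the centered coordinate is evaluated on the correct summand. Both points follow from idempotent transport and the fact that $\Phi_\star$ is multiplicative.
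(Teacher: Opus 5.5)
Your proposal is correct and follows the paper's proof. The paper likewise composes $\overline\iota_{d,d_A}$ from Proposition~\ref{qv:prop:arrow-containment} with the isomorphism of Theorem~\ref{kl:thm:explicit-cyclotomic}, reads the crossing formula off \eqref{qv:eq:weighted-inclusion}, and gets nonvanishing from the identity mapping to $e(\cC)$; your checks on the right-hand placement of $y_i^{r(a)}$ and on the summand $c$ are accurate details that the paper leaves implicit.
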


\begin{proof}
Compose~\eqref{qv:eq:cyclotomic-map}, with $d'=d_A$, with
the relevant isomorphism in~\eqref{kl:eq:cyclotomic-recovery}.
Equation~\eqref{qv:eq:weighted-inclusion} gives the crossing
formula; all coefficient generators and idempotents have the
stated images. The identity maps to $e(\cC)$, so a nonzero
target gives a nonzero map.
\end{proof}

\begin{example}[A residue quiver with additional arrows]
\label{qv:ex:enlarged-quiver}
Let $\mathbb I=\mathbb Z$ and
$d(a)=\delta_{a,-1}+\delta_{a,-2}$. The arrows of $\Gamma_d$
are $a\to a+1$ and $a\to a+2$. Its underlying graph is
connected and every vertex has degree four, so it is not a
type-$A$ line or cycle. Corollary~\ref{qv:cor:hecke-maps}
applies over $\mathbb C$ in the additive case and, with $q$
of infinite order, in the multiplicative case. The new crossing
image is $\Psi_{i,\star}z_{\alpha_i,\star}e(c)$ when
$m_i(c)=-2$; all other crossing images are unchanged.

The induced map can fail to be both injective and surjective.
To see this, take $W$ of type $A_1$, use additive coordinates,
put $\cC=\{2,-2\}$, and set
$\Lambda_2=\Lambda_{-2}=1$, with all other entries zero.
The source has idempotents $e_+,e_-$ and crossings
$u=\psi e_+$, $v=\psi e_-$ with $u\in e_-\cR_d e_+$ and $v\in e_+\cR_d e_-$.
After the cyclotomic quotient, $y=0$ and $uv=vu=0$.
The four elements $e_+,e_-,u,v$ form a basis: before the
quotient the rank-one PBW basis is free over $\mathbb C[y]$,
and $y\cR_d$ is the two-sided ideal generated by $y$.
For $d_A$, both crossings on this orbit are unweighted,
so $\cR_{d_A}(\cC;\Lambda)\cong\Mat_2(\mathbb C)$.
The comparison sends $v$ to zero, and sends $u$ to a nonzero
off-diagonal matrix unit after a choice of matrix units.
Its kernel has dimension one and its image is a triangular
matrix algebra of dimension three.
\end{example}

\begin{remark}[The strand quotient and the choice of target]
\label{qv:rem:target-scope}
The ideal argument in Proposition~\ref{qv:prop:arrow-containment}
uses only that its specified generators belong to $D_R$.
It therefore also applies to the strand powers
$z_1^{\Lambda_{i_1}}e_{\mathbf i}$ of
\eqref{kl:eq:strand-cyclotomic-definition}. Composing with
Corollary~\ref{kl:cor:strand-recovery} gives analogous maps
to the usual strand-coordinate Hecke summands when $d\ge d_A$.

In each case the target is the already specified Hecke algebra,
with its original spectral and cyclotomic data. The construction
does not define a standard Hecke algebra for an arbitrary
uniform quiver. Formula~\eqref{qv:eq:comparison-kernel} also
shows why an inclusion before the quotient need not remain
injective after it.
\end{remark}

\subsection{The zero orbit for arbitrary residue quivers}
\label{qv:subsec:zero-orbit}

\begin{proposition}[Quiver independence at zero residue]
\label{qv:prop:zero-independence}
Fix the coefficient data and take $\cC=\{0\}$ with residue
map identically zero. The algebras $\cR_d(\{0\})$ are the
same operator subalgebra of $K\rtimes W$ for every permitted
arrow function $d$. The identification fixes the coefficients
and the crossings and passes to any quotient by specified
root-coordinate powers, as well as to the corresponding
completions.
\end{proposition}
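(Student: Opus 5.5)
The plan is to read the crossings off Definition~\ref{rc:def:root-algebra} directly. For $\cC=\{0\}$ with $\rho\equiv0$, every simple-root residue $m_i(0)=\rho_0(\alpha_i)$ vanishes. Consequently the first case of \eqref{rc:eq:psi} applies for every $i$:
\[
 \psi_i^{(d)}e_0=y_i^{-1}([s_i]-1)e_0,
\]
and $e_0=1$ because $\cC$ is a singleton. The weighted branch of \eqref{rc:eq:psi}, which is the only place $d$ enters, is never used. So the generating set $D_R=R$ together with the $\psi_i^{(d)}$ is literally the same subset of $K\rtimes W$ for every permitted $d$. The generated subalgebras $\cR_d(\{0\})$ therefore coincide, and the identification is the identity on both coefficients and crossings. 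Admissibility \eqref{rc:eq:admissible} holds trivially, since $W$ fixes the single point, so each $\cR_d(\{0\})$ is defined.

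Next I would pass to quotients. A quotient by specified root-coordinate powers, such as $\langle y_1^{\Lambda_0}\rangle$ or any family of elements of $D_R$, is a quotient by the two-sided ideal generated by that family. Because the ambient algebras are equal and the generators are the same elements, these ideals are equal as subsets. The identity therefore induces the identity on quotients. Proposition~\ref{qv:prop:arrow-containment} is a sanity check here: for $d\ge d'$ both inclusions $\iota_{d,d'}$ and $\iota_{d',d}$ are equalities, and the kernel formula \eqref{qv:eq:comparison-kernel} gives zero. For arbitrary $d,d'$ without containment, I would compare each with $d=0$ through these inclusions.

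Finally, for completions I would use Lemma~\ref{des:lem:central-completion}. The completed algebra is $\widehat S\otimes_S A$, and it is also described as the operator algebra generated by $\widehat R$ and the same crossings on $\widehat R$. Since $A$ and its central subring $S=R^W$ do not depend on $d$, the completions agree, again compatibly with the identity on generators. The same holds for completed quotients.

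I expect no real obstacle. The only point needing care is the word ``permitted'': I should note that the coefficient conditions \eqref{rc:eq:root-data} do not involve $d$, so the set of allowed coefficient data is the same for every $d$.
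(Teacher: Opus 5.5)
Your proposal is correct and follows the paper's proof: at the zero residue only the divided-difference branch of \eqref{rc:eq:psi} ever occurs, so neither the generators nor the ambient algebra depend on $d$. The equality of algebras therefore passes directly to the quotients and completions. Your detours through the arrow-containment inclusions and the central-completion lemma are unnecessary, and the latter is only stated for the standard coefficient rings; an identity of algebras passes to any quotient or completion by the same data automatically.
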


\begin{proof}
For every $i$, the only crossing formula is
$\psi_i=y_i^{-1}([s_i]-1)$. Neither the generators nor the
ambient coefficient algebra involve $d$. Thus the subalgebras
are equal, and the stated quotients and completions agree.
At this orbit the construction also forgets the residue group.
\end{proof}

For $\star\in\{\mathrm{add},\mathrm{mult}\}$ and $m\ge1$, put
\[
 \cR_{\star,d}^{(m)}(0)
 =\cR_{\star,d}(\{0\})/\langle y_1^m\rangle.
\]
Define the standard cyclotomic affine Hecke quotients
\[
 H_{\mathrm{deg}}^{(m)}
   =H_{\mathrm{deg}}/\langle\xi_1^m\rangle,
 \qquad
 H_q^{(m)}=H_q/\langle(X_1^{\mathrm H}-1)^m\rangle.
\]
In each target, $e(0)$ denotes the all-zero residue idempotent;
the original multiplicative root characters have generalized
eigenvalue $1$ there.

\begin{corollary}[Zero-orbit Hecke comparison for every uniform quiver]
\label{qv:cor:zero-hecke}
Use the additive or multiplicative coefficient data of
Section~\ref{rc:subsec:kong-li}, with its field and parameter
hypotheses, and assume~\eqref{kl:eq:path-condition}.
For every permitted uniform residue quiver and every $m\ge1$,
there are algebra isomorphisms
\begin{equation}\label{qv:eq:zero-hecke}
 \begin{aligned}
 \cR_{\mathrm{add},d}^{(m)}(0)
   &\cong H_{\mathrm{deg}}^{(m)}e(0),\\
 \cR_{\mathrm{mult},d}^{(m)}(0)
   &\cong H_q^{(m)}e(0).
 \end{aligned}
\end{equation}
The source residue group may be arbitrary. The cyclic residue
group for each Hecke target is chosen as in
Section~\ref{rc:subsec:kong-li}.
\end{corollary}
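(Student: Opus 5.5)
By Proposition~\ref{qv:prop:zero-independence}, the source algebra $\cR_{\star,d}^{(m)}(0)$ does not depend on $d$. So the plan is to reduce to the standard type-$A$ residue quiver $d=d_A$ and then apply Theorem~\ref{kl:thm:explicit-cyclotomic} to the singleton orbit $\cC=\{0\}$ with a single nonzero exponent. We also need to change the source residue group to the one required by the Hecke target. At the zero orbit the construction forgets the residue group, as noted at the end of the proof of Proposition~\ref{qv:prop:zero-independence}. We may therefore take $\mathbb I=\mathbb Z/e\mathbb Z$ with $e$ the characteristic or the order of $q$, as in Section~\ref{rc:subsec:kong-li}. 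The orbit $\{0\}$ is a finite $W$-orbit in $\Hom_{\mathbb Z}(Q,\mathbb I)$, and it is admissible with its inclusion.

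Next, choose $\Lambda$ with $\Lambda_0=m$ and $\Lambda_a=0$ for $a\ne0$. Since $m_1(0)=0$, the ideal $\mathfrak a_\star$ of \eqref{kl:eq:root-cyclotomic-definition} is generated by $y_1^m e_0=y_1^m$. Hence $\cR_\star(\{0\};\Lambda)=\cR_{\star,d_A}^{(m)}(0)$. Under \eqref{kl:eq:path-condition}, Theorem~\ref{kl:thm:explicit-cyclotomic} then gives
\[
\cR_\star(\{0\};\Lambda)\cong H_\star(\{0\};\Lambda)=H_\star(\Lambda)e(0).
\]

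It remains to identify $H_\star(\Lambda)$ with the displayed standard quotient. For this $\Lambda$ the cyclotomic polynomials are
\[
p_{\mathrm{add}}(t)=(t-\bar0)^m=t^m,\qquad p_{\mathrm{mult}}(t)=(t-q^0)^m=(t-1)^m.
\]
Thus $H_{\mathrm{deg}}(\Lambda)=H_{\mathrm{deg}}/\langle\xi_1^m\rangle=H^{(m)}_{\mathrm{deg}}$. Likewise $H_q(\Lambda)=H_q/\langle(X^{\mathrm H}_1-1)^m\rangle=H_q^{(m)}$. The idempotent $e(0)$ is the generalized-weight idempotent on which every $\xi_\alpha$ is nilpotent, or every $X^{\mathrm H}_\alpha-1$ is nilpotent. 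This matches the description in the statement, so \eqref{qv:eq:zero-hecke} follows.

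The main point to check is that Theorem~\ref{kl:thm:explicit-cyclotomic} legitimately applies to the singleton zero orbit with this degenerate $\Lambda$. Its hypotheses, the Kong--Li coefficient and parameter assumptions and the full residue orbit, are satisfied. Vanishing exponents at nonzero residues are harmless, because those residues do not occur in $\{0\}$. One should also confirm that the centered coordinates \eqref{kl:eq:centered-coordinates} reduce to $\xi_\alpha e(0)$ and $(1-X^{\mathrm H}_\alpha)e(0)$ here, so that no shift is needed. Finally, for arbitrary $d$, compose with the identity of Proposition~\ref{qv:prop:zero-independence}. This identification fixes coefficients and crossings and passes to the root-power quotient, which yields the isomorphism for every uniform quiver and any source residue group.
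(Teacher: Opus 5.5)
Your proposal is correct and is essentially the paper's argument: Proposition~\ref{qv:prop:zero-independence} replaces the source by the zero-orbit algebra for $\Gamma_A$ with the same coordinate law, and Theorem~\ref{kl:thm:explicit-cyclotomic} is then applied with $\Lambda=m\delta_0$. The paper also writes out the resulting maps, namely $\psi_i\mapsto(t_i+1)(1-\xi_i)^{-1}e(0)$ and $\psi_i\mapsto(T_i+1)(X_i^{\mathrm H}-q)^{-1}e(0)$, with inverses given by the zero-residue cases of \eqref{kl:eq:inverse-finite-hecke}; your check that $p_{\mathrm{add}}(t)=t^m$ and $p_{\mathrm{mult}}(t)=(t-1)^m$ makes explicit the identifications $H_{\mathrm{deg}}(\Lambda)=H_{\mathrm{deg}}^{(m)}$ and $H_q(\Lambda)=H_q^{(m)}$, which the paper leaves tacit.
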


\begin{proof}
By Proposition~\ref{qv:prop:zero-independence}, the source
agrees with the zero-orbit algebra for $\Gamma_A$ and the
same coordinate law. Apply
Theorem~\ref{kl:thm:explicit-cyclotomic} with
$\Lambda=m\delta_0$. Explicitly, the additive map sends
\[
 x_\alpha\longmapsto\xi_\alpha e(0),\qquad
 \psi_i\longmapsto(t_i+1)(1-\xi_i)^{-1}e(0),
\]
and the multiplicative map sends
\[
 Y_\alpha\longmapsto(1-X_\alpha^{\mathrm H})e(0),\qquad
 \psi_i\longmapsto(T_i+1)(X_i^{\mathrm H}-q)^{-1}e(0).
\]
The indicated inverses are on the right. They exist because
the centered coordinates are nilpotent, while the constant
terms of the denominators are $1$ and $1-q$, respectively.
The inverse maps are the zero-residue cases of
\eqref{kl:eq:inverse-finite-hecke}. Thus the maps are independent
of the arrows of $\Gamma_d$.
\end{proof}

The zero-orbit operator realization and its completion in
Section~\ref{sec:zeroorbit} therefore apply to every permitted
residue quiver. This intrinsic quiver independence does not require
the Hecke parameter or path hypotheses; those hypotheses are used
for the Hecke identifications in~\eqref{qv:eq:zero-hecke}.

\subsection{A rank-one obstruction with the standard spectral data}
\label{qv:subsec:obstruction}

Arrow containment is a sufficient condition for the comparison
maps above. The following example rules out a statement for
all uniform quivers with the standard Hecke target and unchanged
spectral data. It makes no claim about different target algebras.

\begin{proposition}[Failure of an unrestricted Hecke comparison]
\label{qv:prop:no-hecke-map}
Over $\mathbb C$, take $W=\langle s\rangle$ of type $A_1$,
$\mathbb I=\mathbb Z$, and
\[
 d(a)=\delta_{a,-2}+\delta_{a,-3}.
\]
Thus $\Gamma_d$ has arrows $a\to a+2$ and $a\to a+3$.
Take the root-residue orbit $\cC=\{1,-1\}$ and cyclotomic
data $\Lambda_1=\Lambda_{-1}=1$, zero elsewhere.
In the additive case, and in the multiplicative case with
$q=2$, one has
\begin{equation}\label{qv:eq:obstruction-source}
 \cR_{\star,d}(\cC;\Lambda)\cong\Mat_2(\mathbb C).
\end{equation}
There is no unital algebra homomorphism from this source to
the corresponding standard Hecke quotient
$H_{\mathrm{deg}}(\cC;\Lambda)$ or $H_2(\cC;\Lambda)$.
\end{proposition}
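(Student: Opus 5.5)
The plan is to compute both sides explicitly in rank one and then rule out a unital homomorphism by passing to semisimple quotients.

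\textbf{The source.} For $\cC=\{1,-1\}$ we have $m(\pm1)=\pm1$. Since $d(1)=d(-1)=0$, both crossings are unweighted reflections, $\psi e_{\pm}=[s]e_{\pm}$, and the datum is root-regular. By Theorem~\ref{str:thm:regular-pbw}, or simply because PBW is automatic in rank one (Remark~\ref{rc:rem:pbw-scope}), $\cR_{\star,d}$ is free over $R$ with basis $e_\pm$, $\psi e_\pm$.

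Next I identify the cyclotomic ideal. The mixed relation at nonzero residue gives $\psi y=s(y)\psi$, and $s(y)$ is a unit multiple of $y$ in both coordinate laws. Hence the two-sided ideal generated by $y e_+$ and $y e_-$ is $y\cR_{\star,d}$, and the quotient has basis $e_+,e_-,u=\psi e_+,v=\psi e_-$. The quadratic relation gives $vu=\psi^2e_+=e_+$ and $uv=e_-$, with exponent zero on both sides. These are the relations of matrix units, which proves \eqref{qv:eq:obstruction-source}. In the multiplicative case ($q=2$ has infinite order, so $e=0$ and $\mathbb I=\mathbb Z$) the same computation works: killing $Y$ sets $X=1$.

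\textbf{The target.} The path condition is vacuous in type $A_1$, so Theorem~\ref{kl:thm:explicit-cyclotomic} identifies $H_{\mathrm{deg}}(\cC;\Lambda)$, respectively $H_2(\cC;\Lambda)$, with $\cR_{\star,d_A}(\cC;\Lambda)$. I compute the latter exactly as above, now with $d_A=\delta_{a,-1}$:
\[
 \psi e_+=[s]e_+,\qquad \psi e_-=y_{-\alpha}[s]e_- .
\]
The same ideal argument gives a four-dimensional quotient with basis $e_\pm,u,v$. The quadratic relations become $\psi^2e_\pm=(\text{unit})\cdot y\,e_\pm$, and these vanish. So $uv=vu=0$, as in Example~\ref{qv:ex:enlarged-quiver}. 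Consequently $\mathrm{span}(u,v)$ is a nilpotent ideal, and the quotient by it is $\mathbb C\times\mathbb C$. In particular the target is nonzero.

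\textbf{The obstruction.} Suppose $\phi$ is a unital homomorphism from $\Mat_2(\mathbb C)$ to the target. Composing with the projection onto $\mathbb C\times\mathbb C$ gives a unital homomorphism $\Mat_2(\mathbb C)\to\mathbb C\times\mathbb C$. It is nonzero, since it sends $1$ to $1\neq0$. Because $\Mat_2(\mathbb C)$ is simple, this map is injective, which is impossible: the target is commutative of dimension $2<4$.

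The main point to check carefully is the target computation. It depends on the precise residue orientation and on the sign in $d_A$, since one must confirm that it is the product $\psi^2 e_\pm$ that acquires the factor $y$. The rest is routine.
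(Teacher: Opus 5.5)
Your argument is correct. Your computation of the source is the same as the paper's; the two proofs differ only in how they handle the target.

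The paper works entirely inside the Hecke algebras. It writes down $H_{\mathrm{deg}}(\Lambda)=\mathbb C\langle\xi,t\rangle/(\xi^2-1,\ t^2-1,\ t\xi+\xi t+2)$ and the analogous rank-one presentation of $H_2(\Lambda)$ from the defining relations. It notes that $e(\cC)=1$, because $\xi$ (respectively $X$) can only have the eigenvalues attached to $\cC$. It then exhibits explicit characters: $\xi\mapsto1$, $t\mapsto-1$, and respectively $X\mapsto q$, $T\mapsto-1$. A unital map from $\Mat_2(\mathbb C)$ would therefore give a unital map $\Mat_2(\mathbb C)\to\mathbb C$, which is impossible.

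You instead use Theorem~\ref{kl:thm:explicit-cyclotomic} to replace the target by $\cR_{\star,d_A}(\cC;\Lambda)$. This is legitimate:
\begin{itemize}
\item $\mathbb I=\mathbb Z$ is the Kong--Li residue group both over $\mathbb C$ and for $q=2$, which has infinite order;
\item $\cC=\{1,-1\}$ is a full orbit;
\item the path condition is vacuous in rank one.
\end{itemize}
You then compute there, as in Example~\ref{qv:ex:enlarged-quiver}, a four-dimensional algebra with square-zero ideal $\mathrm{span}(u,v)$ and quotient $\mathbb C\times\mathbb C$.

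What your version gains:
\begin{itemize}
\item It treats both coordinate laws uniformly.
\item It determines the whole target rather than a single character.
\item It isolates the mechanism. With $d(\pm1)=0$ one has $\psi^2e_\pm=e_\pm$. With the standard quiver, $d_A(1)+d_A(-1)=1$ makes each $\psi^2e_\pm$ a unit multiple of $y$, which dies in the quotient. So the orientation issue you flag cannot affect the conclusion.
\end{itemize}
What it costs: your argument rests on the recovered Kong--Li cyclotomic isomorphism. The paper's obstruction needs only the Hecke defining relations and a one-line check of each character.
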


\begin{proof}
The underlying graph of $\Gamma_d$ is connected, since moves
by $2$ and $3$ generate $\mathbb Z$, and every vertex has
degree four. It is therefore not a type-$A$ line or cycle.
The values entering the two rank-one crossings are
$d(1)=d(-1)=0$. Hence $\psi=[s]$, with
\[
 [s]^2=1,\qquad [s]e_+=e_-[s],\qquad [s]e_-=e_+[s].
\]
The cyclotomic relations set the centered root coordinate
$y$ equal to zero on both idempotents. Its coefficient ideal
is $s$-stable: $s(y)=-y$ in the additive case and
$s(y)=-y/(1-y)$ in the multiplicative case. The quotient is
therefore
\[
 (\mathbb Ce_+\oplus\mathbb Ce_-)\rtimes\langle s\rangle
 \cong\Mat_2(\mathbb C),
\]
where $e_+,e_-$ correspond to the diagonal matrix units and
$[s]$ to the matrix exchanging them. This proves
\eqref{qv:eq:obstruction-source} in both cases.

In the additive target, the original root coordinate $\xi$
and Hecke generator $t$ satisfy
\begin{equation*}\label{qv:eq:degenerate-obstruction-target}
 H_{\mathrm{deg}}(\Lambda)
 =\mathbb C\langle\xi,t\rangle/
    (\xi^2-1,\ t^2-1,\ t\xi+\xi t+2).
\end{equation*}
The mixed relation follows from
\eqref{kl:eq:hecke-coordinate-conventions}. Since the only
possible eigenvalues of $\xi$ are $1,-1$, the orbit idempotent
$e(\cC)$ is the identity. This algebra has the character
\[
 \xi\longmapsto1,\qquad t\longmapsto-1.
\]

For the multiplicative target write $X=X_\alpha^{\mathrm H}$.
The defining rank-one relations and cyclotomic relation are
\begin{equation*}\label{qv:eq:multiplicative-obstruction-target}
 \begin{gathered}
 (T+1)(T-q)=0,\qquad
 TX-X^{-1}T=-(q-1)(1+X^{-1}),\\
 (X-q)(X-q^{-1})=0,
 \qquad q=2.
 \end{gathered}
\end{equation*}
Here $e(\cC)=1$ as well. The assignment
$X\mapsto q$, $T\mapsto-1$ is a character: the two sides
of the mixed relation both evaluate to $-q+q^{-1}$.

In either case, composing a unital map from
$\Mat_2(\mathbb C)$ with the displayed character would give
a unital map $\Mat_2(\mathbb C)\to\mathbb C$. Such a map
would be injective because $\Mat_2(\mathbb C)$ is simple,
which is impossible by dimension. This proves both assertions.
\end{proof}

\subsection{Orbit summands and block refinement}
\label{kl:subsec:orbit-summands}

\begin{remark}[Block refinement]\label{kl:prop:block-refinement}
Under either recovered isomorphism
$\phi:A\xrightarrow{\sim}H(\Lambda)e(\cC)$, a decomposition
$e(\cC)=\sum_a b_a$ into primitive central idempotents pulls back to
the block decomposition
\begin{equation}\label{kl:eq:block-refinement}
 A=\bigoplus_a A\phi^{-1}(b_a),\qquad
 A\phi^{-1}(b_a)\cong H(\Lambda)b_a.
\end{equation}
For finite-dimensional nonzero algebras, the orbit summand is one
block precisely when $Z(A)$ is local.  This follows from the
decomposition of a commutative Artinian algebra into local factors.
The module categories decompose accordingly, and all $\Ext^n$
between distinct factors vanish.  These are standard consequences
of an algebra isomorphism, not a primitivity assertion about
$e(\cC)$.
\end{remark}

\begin{example}[A singleton orbit containing two blocks]
\label{kl:ex:two-block-orbit}
In type $A_1$ over $\mathbb C$, let $q$ have order four and take
$\mathbb I=\mathbb Z/4\mathbb Z$, $\cC=\{2\}$,
$\Lambda_2=1$, and $\Lambda_j=0$ for $j\ne2$.
The reflection fixes this nonzero residue.  For
$P_{\mathrm{mult}}=\mathbb C[z,z^{-1}]$, $s(z)=z^{-1}$,
and $Y=1-z$, the standard quiver has $d(2)=0$, so
\[
 \cR_{\mathrm{mult}}(\cC)=\mathbb C[z,z^{-1}]\rtimes\langle s\rangle.
\]
The coefficient ideal $(Y)$ is stable under $s$, and its quotient is
\begin{equation}\label{kl:eq:two-block-example}
 \cR_{\mathrm{mult}}(\cC;\Lambda)
 \cong\mathbb C[\psi]/(\psi^2-1)
 \cong\mathbb C\times\mathbb C.
\end{equation}
The primitive central idempotents are $(1\pm\psi)/2$.
On the Hecke side the cyclotomic relation is
$X=X_\alpha^{\mathrm H}=q^2=-1$.  Since $s(X)=X^{-1}$,
the Bernstein correction is zero at this point, and
\[
 H_q(\cC;\Lambda)=\mathbb C[T]/((T+1)(T-q)).
\]
Its primitive central idempotents are
$(T+1)/(q+1)$ and $(q-T)/(q+1)$.
Before the quotient the center is
$\mathbb C[z+z^{-1}]$; its image is only $\mathbb C\cdot1$.
Thus a singleton orbit summand can split, and the center can acquire
new idempotents after a cyclotomic quotient.
\end{example}

The formal exponential and its descent to nilpotent quotients
are proved in Lemma~\ref{des:lem:central-completion} and
Corollary~\ref{des:cor:cyclotomic-exp}.
In the Hecke comparison the original character on the $c$-summand
is $X_\alpha^{\mathrm H}=q^{m_\alpha(c)}X_\alpha$.
Thus the centered identity $X_\alpha=1$ represents the original
weight $q^{m_\alpha(c)}$, which need not be the identity weight.

\subsection{Generalized weights and normalized intertwiners}
\label{geo:subsec:intertwiners}

The centered coordinates in~\eqref{kl:eq:centered-coordinates}
give a representation-theoretic meaning to the recovery maps.
In the additive case a residue describes a point of the Cartan
space, and $x_\alpha$ measures displacement from its root value.
In the multiplicative case it describes the torus character
$t_c(\alpha)=q^{m_\alpha(c)}$, and $Y_\alpha$ measures
displacement from that character. The idempotent $e(c)$ projects
onto the corresponding generalized weight space.

For clarity, keep the original multiplicative Hecke coordinate
$X_i^{\mathrm H}$ separate from the centered coordinate. The
Bernstein relation and the associated intertwiner are
\[
 T_i f-s_i(f)T_i
   =(q-1)\frac{f-s_i(f)}{1-X_i^{\mathrm H}},
 \qquad
 \Phi_i=T_i+\frac{q-1}{X_i^{\mathrm H}-1}.
\]
In the localized algebra they satisfy
\begin{equation*}\label{geo:eq:intertwiner}
 \Phi_i f=s_i(f)\Phi_i,\qquad
 \Phi_i^2=
 \frac{(1-qX_i^{\mathrm H})(q-X_i^{\mathrm H})}
      {(1-X_i^{\mathrm H})^2};
\end{equation*}
see~\cite[Proposition 2.9]{KongLiND}. The denominator of the
localized intertwiner singles out the root hypertorus
$X_i^{\mathrm H}=1$. Since $q\ne1$, the numerator of
$\Phi_i^2$ equals $-(q-1)^2\ne0$ there; hence this is a
genuine double polar divisor of $\Phi_i^2$. For example, in type $A_1$ over
$\mathbb C$ with $Q=\mathbb Z\alpha_i$, the reflection takes
$X_i^{\mathrm H}$ to $(X_i^{\mathrm H})^{-1}$ and fixes both
$X_i^{\mathrm H}=1$ and $X_i^{\mathrm H}=-1$; at the latter
point the intertwiner denominator is nonzero, as in
Example~\ref{kl:ex:two-block-orbit}. The Bernstein correction
term itself remains regular at $X_i^{\mathrm H}=1$, because
$f-s_i(f)$ is divisible by $1-X_i^{\mathrm H}$.
The numerator of $\Phi_i^2$ vanishes at
$X_i^{\mathrm H}=q$ or $q^{-1}$, with multiplicity when
these values coincide. On the weight indexed by $c$, the
substitution
$X_i^{\mathrm H}=q^{m_i(c)}(1-Y_i)$ expresses these factors
in the centered coordinate. A factor with nonzero constant
term is invertible on a nilpotent generalized weight space.

The Kong--Li normalization uses these units and the remaining
root factors to produce the crossing table
\eqref{kl:eq:quadratic-table}. At zero root residue the
crossing is a divided difference; at nonzero root residue
it is a weighted reflection. The polynomial PBW theorems
ensure that these localized formulas preserve the required
coefficient subalgebras, and the cyclotomic comparison then
imposes the nilpotence relations. This explains the role of
generalized weights and Weyl intertwiners in the isomorphisms.
The operator description alone does not replace those
integrality and cyclotomic arguments.

\section{Details of the recovered cyclotomic comparisons}
\label{app:hecke-comparison-proofs}

We collect the inverse maps and localization arguments for
the comparisons stated in Section~\ref{rc:sec:specializations}.
These recover the cited Hecke isomorphisms and keep track of
the centered coordinates, crossing signs, and cyclotomic
unit factors. The zero-residue local results in
Section~\ref{sec:zeroorbit} use the operator algebra directly.

\subsection{Root-coordinate comparison}\label{app:proof-root-comparison}

\begin{proof}[Proof of Theorem~\ref{kl:thm:explicit-cyclotomic}]
The isomorphism statements are the cyclotomic comparisons of
\cite[Corollary 4.14]{KongLiD,KongLiND}. We explain the maps
and their descent in a common proof, using the localized
presentations from those papers.

First work in $\cR_\star^\circ$. By
Theorem~\ref{rc:thm:kong-li-recovery} and the localized affine
Hecke presentations in \cite[Section 4]{KongLiD,KongLiND}, the
centered coefficient maps identify this algebra with the
modified affine Hecke algebra whose coefficients are localized at
the weights in $\cC$. The crossing maps are
\eqref{kl:eq:explicit-additive-crossing} and
\eqref{kl:eq:explicit-multiplicative-crossing}. At nonzero
residue they are the normalized intertwiners. At zero residue
the apparent pole cancels before passing to a quotient. For
example, in the additive localization of the fraction algebra,
\[
 (t_i+z_i^{-1})(1-z_i)^{-1}-z_i^{-1}
   =(t_i+1)(1-z_i)^{-1}.
\]
In the multiplicative case the same calculation uses
$V_0(Z_i)=1-q-Z_i$ and the intertwiner
$T_i+(1-q)Z_i^{-1}$. Thus the displayed zero-residue maps
only invert units at the centered origin.

For the generators corresponding to the simple reflections
of $W$, the inverse formulas in $\cR_\star^\circ$ are
\begin{equation}\label{kl:eq:inverse-finite-hecke}
 \begin{aligned}
 t_i e_c&\longmapsto
 \begin{cases}
 (\psi_iU_0(x_i)-1)e_c,&a=0,\\
 \bigl(\psi_iU_a(x_i)-(x_i+\bar a)^{-1}\bigr)e_c,&a\ne0,
 \end{cases}\\[1mm]
 T_i e_c&\longmapsto
 \begin{cases}
 (\psi_iV_0(Y_i)-1)e_c,&a=0,\\
 \left(\psi_iV_a(Y_i)
       -\dfrac{q-1}{q^a(1-Y_i)-1}\right)e_c,&a\ne0.
 \end{cases}
 \end{aligned}
\end{equation}
The coefficient inverses are
$\xi_\alpha e_c\mapsto
(x_\alpha+\overline{m_\alpha(c)})e_c$ and
$X_\alpha^{\mathrm H}e_c\mapsto
q^{m_\alpha(c)}(1-Y_\alpha)e_c$.
The localized presentations ensure that these formulas respect
all relations; they are inverse on the displayed generators.

Next compare the cyclotomic ideals. On the summand indexed
by $c$, put $a=m_1(c)$. In the localized presentation,
\begin{equation}\label{kl:eq:cyclotomic-unit-factorization}
 \begin{aligned}
 p_{\mathrm{add}}(\xi_1)e_c
  &=x_1^{\Lambda_a}
    \prod_{b\ne a}(x_1+\bar a-\bar b)^{\Lambda_b}e_c,\\
 p_{\mathrm{mult}}(X_1^{\mathrm H})e_c
  &=(-q^a)^{\Lambda_a}Y_1^{\Lambda_a}
    \prod_{b\ne a}\bigl(q^a(1-Y_1)-q^b\bigr)^{\Lambda_b}e_c.
 \end{aligned}
\end{equation}
All factors other than the indicated power of the centered
coordinate are units at zero. This includes the case
$\Lambda_a=0$, when the cyclotomic ideal contains $e_c$.
The ideals generated by the Hecke cyclotomic polynomial and
by the relations in~\eqref{kl:eq:root-cyclotomic-definition}
therefore agree in the localized algebra.

We also recall why this quotient is the required orbit summand
of the cyclotomic affine Hecke algebra. The original affine
Hecke generators give a homomorphism
from $H_{\mathrm{deg}}(\Lambda)$ or $H_q(\Lambda)$ to the
localized quotient. Its image of a generalized-weight
idempotent $e(c')$ is $e_{c'}$ if $c'\in\cC$, and is zero
otherwise. Indeed, for $c\ne c'$ one can choose a root
whose residues differ. A sufficiently high power of the
corresponding shifted Hecke coordinate annihilates $e(c')$,
whereas that shifted coordinate is a unit on the $c$-summand
of the localized quotient. Thus $e_c$ times the image of
$e(c')$ is zero. Summing the orthogonal idempotents proves
the assertion. Conversely, evaluate the localized coefficients
on the generalized-weight spaces of $H(\Lambda)e(\cC)$.
Every denominator is invertible there, and the Hecke transport
relations give a homomorphism back. The two maps are inverse
on the Hecke generators, the coefficients, and the residue
idempotents. This is the inverse-map argument of
\cite[Section 4]{KongLiD,KongLiND}.

Finally, Lemma~\ref{kl:lem:cyclotomic-localization} removes
the localization on the quiver Demazure side. The result is
\eqref{kl:eq:cyclotomic-recovery}, with precisely the maps
stated above.
\end{proof}

\subsection{Strand comparison and orientation}\label{app:proof-strand-comparison}

\begin{proof}[Proof of Corollary~\ref{kl:cor:strand-recovery}]
The polynomial representation in Section~\ref{kl:subsec:strand-operators}
identifies $\cR_{\mathrm{str}}(\cC)$ with the ordinary quiver
Hecke algebra having the quadratic polynomials
\eqref{rc:eq:ordinary-Q}. We record the sign conversion to
\cite{KongLiTypeA} so that the crossing maps are specified
as well as the coefficient maps. For $a\in\mathbb I$ put
\[
 \varepsilon(a)=
 \begin{cases}
 -1,&a=-1,\ e\ne2,\\
 1,&\text{otherwise}.
 \end{cases}
\]
Let $\psi_r^{\mathrm{KL}}e(\mathbf i)$ denote the normalized
Hecke crossing of \cite[Sections 4.2--4.3]{KongLiTypeA} in
the relevant degenerate or non-degenerate algebra. The
crossing map for our orientation is
\begin{equation}\label{kl:eq:strand-crossing-sign}
 \psi_r e_{\mathbf i}\longmapsto
 \varepsilon(i_r-i_{r+1})
 \psi_r^{\mathrm{KL}}e(\mathbf i).
\end{equation}
Indeed, for $a=\pm1$ and $e\ne2$, the product
$\varepsilon(a)\varepsilon(-a)=-1$ changes the quadratic
polynomial in \cite[(4.20)]{KongLiTypeA} to
\eqref{rc:eq:ordinary-Q}. The equal-residue, double-edge,
and nonadjacent cases are unchanged. Substitution in the
mixed and braid relations gives the same presentation; in
particular, the equal-residue divided difference is unchanged.
Thus the coefficient identification and this sign change give
an isomorphism of the polynomial quiver Hecke presentations.

For completeness, the normalized crossings used in
\eqref{kl:eq:strand-crossing-sign} have the form
$(h_r+P_r(\mathbf i))Q_r(\mathbf i)^{-1}e(\mathbf i)$,
where $h_r=t_r$ or $T_r$. The units $Q_r(\mathbf i)$ are
the factors in \cite[Section 4.2]{KongLiTypeA}, evaluated
at the centered strand coordinates in
\eqref{kl:eq:strand-coefficient-maps}. For equal neighboring
residues $P_r(\mathbf i)=1$ in both cases. For distinct
neighboring residues,
\[
 P_r(\mathbf i)=
 \begin{cases}
 (\xi_r-\xi_{r+1})^{-1},&\text{degenerate case},\\
 (q-1)X_{r+1}^{\mathrm H}
       (X_r^{\mathrm H}-X_{r+1}^{\mathrm H})^{-1},
       &\text{non-degenerate case}.
 \end{cases}
\]
These denominators and all $Q_r(\mathbf i)$ are units on
the specified generalized-weight summand.

All strand coordinates $z_r$ are nilpotent in the cyclotomic
quiver Hecke quotient \cite[Lemma 2.1]{BK}. Thus localization
at functions nonzero at the centered origin adds no elements,
by the geometric-series argument in
Lemma~\ref{kl:lem:cyclotomic-localization}. On the
$\mathbf i$-summand the Hecke cyclotomic polynomial factors
as $z_1^{\Lambda_{i_1}}$ times a unit, exactly as in
\eqref{kl:eq:cyclotomic-unit-factorization}, now with
$a=i_1$. The localized presentation and inverse-map argument
of \cite[Theorem 4.24 and Lemma 5.5]{KongLiTypeA} therefore
descend to~\eqref{kl:eq:strand-recovery}. This is
\cite[Theorem 5.6]{KongLiTypeA}, with the orientation
conversion made explicit. The block interpretation is the
standard type-$A$ parametrization in~\cite{BK}.
\end{proof}

\vskip5pt
\noindent\textbf{Declaration of generative AI and AI-assisted
technologies in the manuscript preparation process.}
ChatGPT-5.6 Sol was used during manuscript preparation for assistance
with literature searches and
editorial revision of the exposition. ChatGPT-6 Astra was used during manuscript preparation for checking of proofs.
Responsibility for verifying
the mathematical statements, references, and final content of the
manuscript rests with the author.

\end{document}